\documentclass[12pt]{amsart}
\usepackage{amsthm}
\usepackage{amsmath,amsxtra,amssymb,amsfonts,latexsym, mathrsfs, dsfont,bm,mathtools}
\usepackage[initials]{amsrefs}
\usepackage[usenames]{color}
\usepackage[hyperindex,pagebackref]{hyperref}

\numberwithin{equation}{section}

\makeatletter

\newcommand{\Rmnum}[1]{\expandafter\@slowromancap\romannumeral #1@}
\makeatother
\newcommand{\8}{\infty}

\renewcommand{\for}{\begin{eqnarray*}}
\newcommand{\mel}{\end{eqnarray*}}
\def\fr{\begin{align*}}

\newcommand{\kl}{\pl \le \pl}
\newcommand{\gl}{\pl \ge \pl}

\newcommand{\lel}{\pl = \pl}

\newcommand{\Hess}{\operatorname{Hess}}
\newcommand{\Dom}{\operatorname{Dom}}
\newcommand{\Prob}{\operatorname{Prob}}

\newcommand{\nz}{{\mathbb N}}
\newcommand{\tz}{{\mathbb T}}

\newcommand{\rz}{{\mathbb R}}
\newcommand{\zz}{{\mathbb Z}}

\newcommand{\cz}{{\mathbb C}}
\newcommand{\fz}{{\mathbb F}}

\newcommand{\ten}{\otimes}

\newcommand{\pl}{\hspace{.1cm}}

\newcommand{\Om}{\Omega}
\newcommand{\om}{\omega}
\newcommand{\al}{\alpha}
\newcommand{\si}{\sigma}

\newcommand{\tet}{\theta}

\newcommand{\La}{\Lambda}
\newcommand{\la}{\lambda}
\newcommand{\bt}{\beta}
\newcommand{\eps}{\varepsilon}

%{{\rm c}$_0$ }
\newcommand{\Lc}{{\mathcal L}}
\newcommand{\F}{{\mathcal F}}

\newcommand{\A}{{\mathcal A}}
\newcommand{\B}{{\mathcal B}}
\newcommand{\D}{{\mathcal D}}
\newcommand{\M}{{\mathcal M}}
\newcommand{\K}{{\mathcal K}}
\newcommand{\R}{{\mathcal R}}
\newcommand{\Z}{\mathcal{Z}}

\newcommand{\Ha}{{\mathcal H}}

\newcommand{\Fix}{\operatorname{Fix}}

\newcommand{\var}{\operatorname{var}}
\newcommand{\Ent}{\operatorname{Ent}}
\newcommand{\td}{\widetilde}
\mathchardef\dash="2D

\newcommand{\N}{{\mathcal N}}
\newcommand{\Ga}{\Gamma}

\newcommand{\ga}{\gamma}
\newcommand{\de}{\delta}

\newcommand{\U}{{\mathcal U}}

\newcommand{\pz}{\mathbb{P}}

\newcommand{\ez}{{\mathbb E}\vspace{0.1cm}}

\newtheorem{lemma}{Lemma}[section]
\newtheorem{prop}[lemma]{Proposition}
\newtheorem{theorem}[lemma]{Theorem}
\newtheorem{cor}[lemma]{Corollary}

\theoremstyle{remark}
\newtheorem{rem}[lemma]{Remark}

\newtheorem{prob}[lemma]{Problem}

\theoremstyle{definition}
\newtheorem{defi}[lemma]{Definition}
\newtheorem{exam}[lemma]{Example}

\newcommand{\re}{\begin{rem}\rm}
  \newcommand{\mar}{\end{rem}}
\newcommand{\ex}{\begin{exam}\rm}
\newcommand{\amp}{\end{exam}}

\newcommand{\prf}{\begin{proof}[\bf Proof:]}
\newcommand{\xspace}{\hbox{\kern-2.5pt}}

\newcommand{\rge}{\rangle }
\newcommand{\lge}{\langle }

\allowdisplaybreaks
 \oddsidemargin0cm
 \evensidemargin0cm
  \textwidth16.1cm
 %\topmargin1cm
%\footskip1.3cm %1.5cm
%\footheight3cm
%\textheight21.5cm %23cm
%\parindent0em

\addtolength{\parskip}{1.1ex}

%\addtolength{\parsep}{+0.3cm}
%\setcounter{section}{-1}
\hyphenation{arith-metic}
\title[Martingale deviation and Poincar\'e type inequalities]{Noncommutative martingale deviation and Poincar\'e type inequalities with applications}
\address{Department of Mathematics, University of Illinois, Urbana, IL 61801}
\email{junge@math.uiuc.edu, \quad zeng8@illinois.edu}
\thanks{The first author was Partially supported by NSF Grant DMS-1201886}
\author{Marius Junge}
\author{Qiang Zeng}
\date{\today}
\subjclass[2010]{46L53, 60E15, 47D06}
\keywords{(Noncommutative) martingale deviation inequality, (noncommutative) Burkholder inequality, (noncommutative) Burkholder--Davis--Gundy inequality, (noncommutative) diffusion processes, (noncommutative) Poincar\'e  inequality, (noncommutative) transportation inequality, concentration inequality, noncommutative $L_p$ spaces, $\Ga_2$-criterion, group von Neumann algebras}
\begin{document}
\begin{abstract}
We prove a deviation inequality for noncommutative martingales by extending Oliveira's argument for random matrices. By integration we obtain a Burkholder type inequality with satisfactory constant. Using continuous time, we establish noncommutative Poincar\'e type inequalities for ``nice'' semigroups with a positive curvature condition. These results allow us to prove a general deviation inequality and a noncommutative transportation inequality due to Bobkov and G\"otze in the commutative case. To demonstrate our setting is general enough, we give various examples, including certain group von Neumann algebras, random matrices and classical diffusion processes, among others.
\end{abstract}

\maketitle
\section*{Introduction}
In probability theory it is well known that martingale inequalities can be used to prove and extend classical inequalities such as Riesz transforms and Poincar\'{e} inequalities to larger setting. Moreover, once a true probabilistic argument has been found, it is then often easier to prove dimension free estimates. This applies in particular to Riesz transforms; see Gundy \cite{Gun}, Pisier \cite{Pis}. The impressive work of Lust-Piquard shows that, whenever the method applies, it provides the optimal constant for Riesz transforms and Poincar\'{e} inequalities. The only drawback here is that the setup for Pisier's method is so  special that it requires ingenuity to establish it in every single case.

Our aim here is to establish a method which applies in a fairly general noncommutative situations. Let us first set up the framework. Let $\N$ be a finite von Neumann algebra equipped with a normal faithful tracial state $\tau:\N\to \cz$, i.e. $\tau(1)=1$ and $\tau(xy)=\tau(yx)$. Let $(\N_k)_{k=1, \cdots ,n}\subset \N$ be a filtration of von Neumann subalgebras with conditional expectation $E_k: \N\to \N_k$. For a martingale sequence $(x_k)$  with $x_k\in\N_k$, we write $dx_k=x_k-x_{k-1}$ for the martingale differences. The starting point here is the Burkholder inequality first proved in \cite{JX03}
 \begin{equation}
 \label{burk}  \Big\|\sum_k dx_k\Big\|_p
 \kl c(p) \Big(\Big(\sum_k \|dx_k\|_p^p\Big)^{\frac1p}+\Big\|\Big(\sum_k  E_{k-1} (dx_k^*dx_k + dx_kdx_k^*)\Big)^{1/2}\Big\|_p\Big) \pl.
 \end{equation}
The optimal order of constant here is $c(p)\sim cp$ in the noncommutative setting, and is due to Randrianantoanina \cite{Ran}. In the commutative, dissipative setting, Barlow and Yor \cite{BY82} showed a better constant in Burkholder--Davis--Gundy inequality
 \begin{align}\label{sqp}
  \|X_T\|_p \kl c\sqrt{p} \pl \|\lge X,X\rge_T^{1/2}\|_p \pl
  \end{align}
by reducing it with a time change to the case of Brownian motion. Here $\lge X,X\rge$ is the quadratic variation of the continuous (local) martingale $X$; see e.g. \cite{RY99}. Since the nature of the Brownian motion in the noncommutative setting is so vast (see \cite{CJ}), it is inconceivable that such an easy argument could work in a noncommutative situation. In fact stationarity of the Brownian motion is certainly required to perform a time change, and can no longer be guaranteed for noncommutative martingales. For many applications towards deviation inequalities it is enough to use the $L_\8$ norm on the right hand side of \eqref{sqp}. Following Oliveira's idea \cite{Ol}, we are able to use Golden--Thompson inequality to prove the following result. Throughout this paper $c,C$ and $C'$ will always denote positive constants which may vary from line to line.
\begin{theorem} Let $2\le p<\infty$ and $x_n=\sum_{k=1}^n dx_k$ be a discrete mean zero martingale. Then
\[ \|x_n\|_p \kl
 C \sqrt{p} \|(\sum_{k=1}^n E_{k-1}(dx_k^*dx_k+dx_kdx_k^*))^{1/2}\|_{\infty}+ C p \sup_k \|dx_k\|_{\infty} \pl. \]
If $x_n$ is self-adjoint, then
\[
\tau\left(1_{[t,\infty)}\left(x_n\right)\right)
 \kl \exp\left(-\frac{t^2}{8\|\sum_{k=1}^n E_{k-1}(dx_k^2)\|_\8 +{4t \sup_k\|dx_k\|_\8}}\right)\pl.
\]
\end{theorem}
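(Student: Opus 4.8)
The plan is to follow Oliveira's matrix-martingale argument, replacing the matrix exponential and the usual trace with the operator exponential and $\tau$, and using the Golden--Thompson inequality in the tracial von Neumann setting (which holds: $\tau(e^{A+B})\le\tau(e^Ae^B)$ for self-adjoint $A,B$). The first (and $L_p$) estimate will come from the deviation bound by integration, exactly as in the passage from \eqref{sqp}-type tail bounds to moment bounds, so I would concentrate on the self-adjoint tail inequality. Write $S_m=\sum_{k=1}^m E_{k-1}(dx_k^2)$ for the predictable bracket, set $\sigma^2=\|S_n\|_\8$ and $M=\sup_k\|dx_k\|_\8$, and fix a parameter $\lambda>0$ (to be optimized at the end as $\lambda = t/(\sigma^2\cdot(\text{const})+M t)$-type value). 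The goal is the exponential supermartingale-type estimate
\[
\tau\big(e^{\lambda x_n - g(\lambda)\, \|S_n\|_\8\,\mathbf 1}\big)\le 1,
\]
where $g(\lambda)=\tfrac{\lambda^2/2}{1-\lambda M/\text{const}}$ is the standard Bennett/Bernstein function controlling $\log\tau(e^{\lambda\,dx_k}\,|\,\N_{k-1})$ via $e^u\le 1+u+\tfrac{u^2}{2}\sum_{j\ge0}(u/3)^j$ for $u\le M\lambda$ small; then Chebyshev in the exponential, $\tau(1_{[t,\infty)}(x_n))\le e^{-\lambda t}\tau(e^{\lambda x_n})$, together with optimization over $\lambda$, yields the claimed denominator $8\sigma^2+4Mt$.

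The key steps, in order: (i) Peel off the last difference using Golden--Thompson: $\tau(e^{\lambda x_n})=\tau(e^{\lambda x_{n-1}+\lambda dx_n})\le\tau(e^{\lambda x_{n-1}}e^{\lambda dx_n})$. (ii) Condition: since $x_{n-1}\in\N_{n-1}$ and $E_{n-1}$ is trace-preserving, $\tau(e^{\lambda x_{n-1}}e^{\lambda dx_n})=\tau\big(e^{\lambda x_{n-1}}E_{n-1}(e^{\lambda dx_n})\big)$. (iii) Scalar bound on the conditional MGF: because $E_{n-1}(dx_n)=0$ and $\|dx_n\|_\8\le M$, estimate $E_{n-1}(e^{\lambda dx_n})\le 1+ \tfrac{\lambda^2}{2}\,\phi(\lambda M)\,E_{n-1}(dx_n^2)\le \exp\big(\tfrac{\lambda^2}{2}\phi(\lambda M)\,E_{n-1}(dx_n^2)\big)$ as positive operators, where $\phi$ is the appropriate increasing factor coming from the power-series tail; operator monotonicity of $\log$/$\exp$ on the relevant order interval is what makes this legitimate. (iv) Bound $E_{n-1}(dx_n^2)\le \|S_n-S_{n-1}\|$-type quantity — more precisely keep it operator-valued and absorb it into the running bracket, so that after using $e^{\lambda x_{n-1}}\,e^{c\,E_{n-1}(dx_n^2)}$ and Golden--Thompson once more one reduces $n$ to $n-1$ with $S_n$ replaced by $S_{n-1}$ plus the frozen top term. (v) Iterate down to $x_0=0$, collecting $\tau(e^{\lambda x_n})\le \exp\big(\tfrac{\lambda^2}{2}\phi(\lambda M)\,\|S_n\|_\8\big)$. (vi) Chebyshev and optimize $\lambda$: choosing $\lambda$ to balance $\lambda t$ against $\sigma^2\lambda^2$ and keeping $\lambda M$ bounded gives the stated bound with constants $8$ and $4$.

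The main obstacle is step (iv): in the noncommutative setting the operators $e^{\lambda x_{n-1}}$ and $E_{n-1}(dx_n^2)$ do not commute, so one cannot simply multiply exponentials of the accumulated brackets and pull out a scalar; the bracket $S_n$ is only an upper bound in the operator order, not a scalar. The fix is to apply Golden--Thompson a second time at each stage to separate $e^{\lambda x_{n-1}}$ from the exponential of the bracket term, then use that $\|A\|_\8$ dominates $A$ in operator order to replace the operator bracket by the scalar $\|S_n\|_\8\mathbf 1$ at the very end (or inductively, replacing each $E_{k-1}(dx_k^2)$ by controlling partial sums in $\|\cdot\|_\8$), at the cost of the slightly worse numerical constants $8$ and $4$ rather than the optimal $2$. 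Handling the power-series remainder $\phi(\lambda M)$ so that it contributes only the additive $4t\sup_k\|dx_k\|_\8$ term in the denominator — rather than a multiplicative correction — is the one genuinely delicate estimate, and mirrors the standard Bernstein-inequality bookkeeping.
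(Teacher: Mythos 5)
Your proposal correctly identifies the obstacle (step (iv)), but the proposed fix does not repair it, and this is exactly where the paper's argument diverges from yours in an essential way. After your steps (i)--(iii) you are holding $\tau\bigl(e^{\lambda x_{n-1}}\,E_{n-1}(e^{\lambda dx_n})\bigr)$, and after bounding $E_{n-1}(e^{\lambda dx_n})\le \exp(c\,r_n)$ with $r_n=E_{n-1}(dx_n^2)$, you have $\tau\bigl(e^{\lambda x_{n-1}}\,e^{c r_n}\bigr)$. At this point there is no way to peel off $dx_{n-1}$: Golden--Thompson gives $\tau(e^{A+B})\le\tau(e^Ae^B)$ for a \emph{single} exponent $A+B$, but it gives you nothing for $\tau(e^{A}e^{B}e^{C})$, so you cannot split $e^{\lambda x_{n-1}}=e^{\lambda x_{n-2}+\lambda dx_{n-1}}$ inside this product. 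The alternative you mention --- pull out the operator norm at each stage, i.e.\ $\tau(e^{\lambda x_{n-1}}e^{cr_n})\le e^{c\|r_n\|_\infty}\tau(e^{\lambda x_{n-1}})$ --- does iterate, but it accumulates $\exp\bigl(c\sum_k\|r_k\|_\infty\bigr)$, which can be arbitrarily larger than $\exp\bigl(c\,\|\sum_k r_k\|_\infty\bigr)=\exp(c\,\|S_n\|_\infty)$ (take the $r_k$ to be mutually orthogonal projections: $\sum_k\|r_k\|_\infty=n$ while $\|\sum_k r_k\|_\infty=1$). So the statement with $\|\sum_k E_{k-1}(dx_k^2)\|_\infty$ in the denominator is out of reach this way.

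The paper's fix is structurally different: rather than exponentiating the bracket separately and multiplying, it builds the predictable bracket into the exponent of the \emph{same} operator from the start. Set $D^2=\|S_n\|_\infty$; since $(1+\eps)\lambda^2(D^2\,\mathbf{1}-y_n)\ge 0$, one has $e^{-(1+\eps)\lambda^2(D^2-y_n)}\le 1$, so by Golden--Thompson
\[
\tau(e^{\lambda x_n})\le \tau\Bigl(\exp\bigl[\lambda x_n+(1+\eps)\lambda^2D^2-(1+\eps)\lambda^2 y_n\bigr]\Bigr)\pl.
\]
Now the running quantity is a trace of a \emph{single} exponential, so Golden--Thompson applies at every step to peel off $\exp[\lambda d_n-(1+\eps)\lambda^2 r_n]$ (using $y_n=y_{n-1}+r_n$), and the trace-preserving property lets you replace this factor by its conditional expectation. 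The elementary inequality $e^u\le 1+u+u^2$ for $|u|\le 1$ plus functional calculus gives $E_{n-1}[\exp(\lambda d_n-(1+\eps)\lambda^2 r_n)]\le 1-\eps\lambda^2 r_n+(1+\eps)^2\lambda^4 r_n^2\le 1$ in the allowed range of $\lambda$, so the whole factor drops out and the induction closes on $\tau(\exp[(1+\eps)\lambda^2 D^2])$. Chebyshev and optimizing $\lambda$ (with $\eps=1$) then gives exactly the stated $8D^2+4tM$ in the denominator. In short: the key trick you are missing is subtracting the operator-valued bracket \emph{inside} the exponent and simultaneously adding its scalar bound, so that the invariant at each stage is a trace of one exponential rather than a product of exponentials.
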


Note that in the commutative context
 \[ \tau(1_{[t,\infty)}(x))\lel  \Prob(x\ge t) \pl .\]
In the future we will simply take this formula as a definition. The deviation inequality is a martingale version of noncommutative Bernstein inequality proved in \cite{JZ}. Tropp \cite{Tro} obtained better constants for tail estimate of random matrix martingales by using Lieb's concavity theorem. However, it seems Lieb's result is only applicable for commutative randomness.

Let us now indicate how to use this result in connection with curvature condition, more precisely the $\Ga_2$-condition introduced by Bakry--Emery \cite{BE85}. Here we assume that $(T_t)_{t\ge0}$ is a semigroup of completely positive trace preserving maps on a finite von Neumann algebra $\N$ with infinitesimal generator $A\ge0$. We assume in addition that $(T_t)$ is a noncommutative diffusion process (in short $(T_t)$ is nc-diffusion), namely that Meyer's famous gradient form
 \begin{equation}\label{grad}
  2\Gamma(x,x) \lel A(x^*)x+x^*A(x)-A(x^*x)  \in L_1(N)  \end{equation}
for all $x\in\Dom(A)\cap \N$ (To be more precise, for all $x\in\Dom(A^{1/2})\cap\N$ by extension). In this paper, $\Dom(A)$ denotes the domain of the operator $A$ in the underlying Hilbert space. Recall that according to {\cite{CSa}*{Section 9},} we always have $\Gamma(x,x)\in \B^*$, where $\B=\Dom(A^{1/2})\cap\N$ is a $*$-algebra by Davies and Lindsay \cite{DL}*{Proposition 2.8}. Thus \eqref{grad} is a regularity assumption, which in general is much weaker than assuming that $(T_t)$ is a usual diffusion semigroup. The standard example for a nc-diffusion semigroup which is not a diffusion is the Poisson semigroup on the circle. Let $\al>0$ be a constant in what follows.

\begin{theorem} Let $(T_t)$ be a nc-diffusion semigroup on a von Neumann algebra $\N$ satisfying the $\Gamma_2$-condition
 \begin{equation}\label{gam2}
  \tau( \Gamma(T_tx,T_tx)y)\kl C  e^{-2\al t} \tau( T_t\Gamma(x,x)y)
  \end{equation}
for all $x\in \Dom(A^{1/2})$ and all positive $y\in \N$. Then for self-adjoint $x$ we have
 \[ \|x-E_{{\rm Fix}}(x)\|_p \kl C' {\al^{-1/2}}\min\{\sqrt{{p}} \pl \|\Gamma(x,x)^{1/2}\|_\8, ~p \|\Gamma(x,x)^{1/2}\|_p \}\pl .\]
Here ${\rm Fix}=\{x: T_tx=x\}$ is the fixed point von Neumann subalgebra given by $T_t$ and $E_{{\rm Fix}}$ the corresponding conditional expectation.
\end{theorem}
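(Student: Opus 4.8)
The plan is to realise $x-E_{{\rm Fix}}(x)$ as (a trace-preserving copy of) the terminal increment of a martingale attached to the semigroup, and to feed the predictable quadratic variation of that martingale --- which is exactly what \eqref{gam2} controls --- into Theorem 1 and into \eqref{burk}. Concretely, I would use a Markov (reversible) dilation of the nc-diffusion, available under the standing assumptions: a finite von Neumann algebra $(\hat\N,\hat\tau)$ containing $\N$, trace-preserving $*$-monomorphisms $\pi_t:\N\to\hat\N$ with $\pi_0$ the inclusion, and an increasing filtration $(\hat\N_{[0,t]})$ with conditional expectations $E_{[0,s]}$ satisfying $E_{[0,s]}\pi_t=\pi_sT_{t-s}$ for $s\le t$. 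Fix $T>0$ and set $\xi_t:=\pi_t\big(T_{T-t}x\big)$ for $0\le t\le T$; then $E_{[0,s]}\xi_t=\xi_s$, so $(\xi_t)$ is a martingale with $\xi_0=T_Tx$ and $\xi_T=\pi_T(x)$. Since every element of the fixed-point algebra is fixed by each $\pi_t$ (a standard property of Markov dilations), one has $\pi_T(E_{{\rm Fix}}x)=E_{{\rm Fix}}x$, whence $\|\xi_T-\xi_0\|_p\ge\|\pi_T(x-E_{{\rm Fix}}x)\|_p-\|T_Tx-E_{{\rm Fix}}x\|_p=\|x-E_{{\rm Fix}}x\|_p-o(1)$ as $T\to\8$ (the last term tends to $0$ in $L_p$ because $A\ge0$ forces $T_Tx\to E_{{\rm Fix}}x$ in $L_2$, hence in $L_p$ by interpolation with the $L_\8$ bound). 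So it suffices to bound $\|\xi_T-\xi_0\|_p$ uniformly in $T$.

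Next I would compute the quadratic variation. Discretise $[0,T]$ with mesh $\delta=T/n$ and write $\xi_j:=\xi_{j\delta}=\pi_{j\delta}(y_j)$ with $y_j:=T_{(n-j)\delta}x$ (self-adjoint). Using the bimodule property of $E_{[0,(j-1)\delta]}$ and multiplicativity of $\pi_{j\delta}$, the martingale difference satisfies $E_{j-1}(d\xi_j^2)=\pi_{(j-1)\delta}\big(T_\delta(y_j^2)-(T_\delta y_j)^2\big)$, and a Taylor expansion gives $T_\delta(y^2)-(T_\delta y)^2=2\delta\,\Gamma(y,y)+o(\delta)$ for $y\in\Dom(A)\cap\N$; hence $\sum_j E_{j-1}\big(d\xi_j^*d\xi_j+d\xi_jd\xi_j^*\big)=2\sum_j E_{j-1}(d\xi_j^2)\longrightarrow Q_T:=4\int_0^T\pi_{T-v}\big(\Gamma(T_vx,T_vx)\big)\,dv$. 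Tested against all positive $y\in\N$, the $\Gamma_2$-hypothesis \eqref{gam2} says exactly that $\Gamma(T_vx,T_vx)\le C e^{-2\al v}\,T_v\Gamma(x,x)$ in $L_1$ (the positive cone of $L_1$ being the dual cone of $\N_+$); since $\pi_{T-v}$ and $T_v$ are positive contractions on $L_\8$ and on each $L_r$, $r\ge1$, and $\int_0^\8 e^{-2\al v}\,dv=(2\al)^{-1}$, we obtain, uniformly in $T$, $\|Q_T^{1/2}\|_\8\le\sqrt{2C/\al}\;\|\Gamma(x,x)^{1/2}\|_\8$ and $\|Q_T^{1/2}\|_p\le\sqrt{2C/\al}\;\|\Gamma(x,x)^{1/2}\|_p$ (using $\|a^{1/2}\|_p^2=\|a\|_{p/2}$ for $a\ge0$, together with the triangle inequality in $L_{p/2}$, valid since $p\ge2$).

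Finally I would apply Theorem 1 to the discrete mean-zero martingale $\sum_{k\le n}d\xi_k=\xi_n-\xi_0$ for the $\sqrt p$-estimate, and \eqref{burk} with Randrianantoanina's constant $c(p)\sim p$ for the $p$-estimate. Here lies the main obstacle: both inequalities carry an extra term --- $p\sup_k\|d\xi_k\|_\8$ in Theorem 1 and $(\sum_k\|d\xi_k\|_p^p)^{1/p}$ in \eqref{burk} --- which does not vanish term-by-term as $n\to\8$, since each $\|d\xi_j\|_\8$ is of order $1$ rather than of order $\sqrt\delta$. Removing it forces us to work in continuous time: one must set up the continuous-parameter martingale $(\xi_t)_{0\le t\le T}$ and establish continuous-time versions of Theorem 1 and of \eqref{burk} in which, for the continuous part of the martingale, these terms are absent --- via a stopping-time argument (stop when the running increment first reaches size $\eps$, apply the discrete bound with $\sup_k\|d\xi_k\|_\8\le\eps$, and let $\eps\to0$), while for genuinely non-diffusive examples such as the Poisson semigroup the surviving jump contribution is itself dominated by $\Gamma$. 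Granting the continuous-time bounds $\|\xi_T-\xi_0\|_p\le C\sqrt p\,\|Q_T^{1/2}\|_\8$ and $\|\xi_T-\xi_0\|_p\le c\,p\,\|Q_T^{1/2}\|_p$, the second paragraph gives $\|\xi_T-\xi_0\|_p\le C'\al^{-1/2}\min\{\sqrt p\,\|\Gamma(x,x)^{1/2}\|_\8,\;p\,\|\Gamma(x,x)^{1/2}\|_p\}$ uniformly in $T$, and letting $T\to\8$ as in the first paragraph yields the theorem. The reduction to $x\in\Dom(A)\cap\N$ (the general case following by density, the right-hand side being $+\8$ when $\Gamma(x,x)^{1/2}\notin L_p$) and the routine expansion of $T_\delta(y^2)-(T_\delta y)^2$ complete the argument.
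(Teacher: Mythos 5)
Your high-level strategy is exactly the paper's: dilate the semigroup, view $x-E_{\rm Fix}x$ as a limit of martingale increments, compute the predictable quadratic variation as $\int_0^T\pi(\Gamma(T_\cdot x,T_\cdot x))$, feed the $\Gamma_2$-hypothesis into that, and invoke a BDG-type inequality. Your quadratic-variation computation and the estimate $\|Q_T^{1/2}\|_\8\le\sqrt{2C/\al}\,\|\Gamma(x,x)^{1/2}\|_\8$ (and its $L_p$ analogue) are correct, and you correctly diagnose the central obstacle: the discrete estimates (Theorem~1 and \eqref{burk}) carry an error term $p\sup_k\|d\xi_k\|_\8$ (resp.\ $(\sum_k\|dx_k\|_p^p)^{1/p}$) that does \emph{not} shrink with the mesh, since $\|d\xi_j\|_\8=O(1)$.

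The gap is in how you propose to kill that term. You ``grant'' continuous-time versions of Theorem~1 and \eqref{burk} and sketch a proof by ``noncommutative stopping time'' (stop when the running increment first reaches size $\eps$). This does not work: there is no usable analogue in this setting of a stopping time that controls $\|d\xi_k\|_\8$ pointwise, and even in the commutative case that device relies on path realizations. The paper's mechanism is structurally different and is where the nc-diffusion hypothesis does all of its work. Namely: (i) Theorem~\ref{jrs} (Junge--Ricard--Shlyakhtenko) shows the \emph{reversed} Markov dilation $m_s(x)=\pi_s(T_sx)$ of a standard nc-diffusion semigroup has a.u.\ continuous paths; (ii) Lemma~\ref{auct} then gives vanishing variation ($\var_p(m)=0$); (iii) Theorem~\ref{decom} decomposes the discrete increments via \emph{spectral truncation}, $d_jx=d_j(d_jx\,1_{[|d_jx|>\de]})+d_j(d_jx\,1_{[|d_jx|\le\de]})$, and the truncated part satisfies $\sup_j\|d_j(d_jx1_{[|d_jx|\le\de]})\|_\8\le2\de$ so the error term in Proposition~\ref{pmes} contributes only $O(p\de)$; (iv) vanishing variation forces the big-jump part to disappear in the weak limit, yielding Theorem~\ref{ctpm}, which is the continuous-time inequality you assume. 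Without the nc-diffusion hypothesis this chain breaks at (i), so the remark ``for non-diffusive examples such as the Poisson semigroup the jump contribution is dominated by $\Gamma$'' is off the mark --- the Poisson semigroup \emph{is} nc-diffusion, Theorem~\ref{jrs} applies, and there is no surviving jump contribution to dominate. In short: right architecture and right estimates, but the load-bearing step --- eliminating the $L_\8$-increment term --- is genuinely open in your write-up, and the fix you gesture at is not the one that works.
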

Condition \eqref{gam2} is usually formulated in the form $\Ga_2(x,x)\ge \al \Ga(x,x)$ where
\[ 2\Gamma_2(x,y) \lel \Gamma(Ax,y)+\Gamma(x,Ay)-A\Gamma(x,y)\pl. \]
As in the commutative case, this result implies the deviation inequality and exponential integrability.
\begin{cor}
  Under the hypotheses above, we have for $t>0$,
  \begin{equation*}
  \tau(e^{t(x-E_{\rm Fix}x})\kl \exp\Big(c(\al) t^2 \|\Ga(x,x)\|_\8\Big).
 \end{equation*}
 and
 \begin{equation*}
 \Prob(x-E_{\rm Fix} x\ge t) \kl  \exp\Big(-\frac{t^2}{C(\al)\|\Ga(x,x)\|_\8}\Big)\pl.
\end{equation*}
\end{cor}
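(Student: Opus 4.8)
The plan is to deduce both statements from the moment bound supplied by the preceding theorem, via the classical implication ``sub-Gaussian moment growth $\Rightarrow$ sub-Gaussian Laplace transform $\Rightarrow$ sub-Gaussian tail''. First I would set $Y:=x-E_{\rm Fix}(x)$, which is self-adjoint because conditional expectations preserve adjoints and which satisfies $\tau(Y)=\tau(x)-\tau(E_{\rm Fix}x)=0$ because $E_{\rm Fix}$ is trace preserving. By the preceding theorem (keeping the first term in the minimum and writing $\|\Ga(x,x)^{1/2}\|_\8=\|\Ga(x,x)\|_\8^{1/2}$, valid since $\Ga(x,x)\ge0$) we then have the moment bound $\|Y\|_p\le K\sqrt p$ for all $2\le p<\8$, where $K:=C'\al^{-1/2}\|\Ga(x,x)\|_\8^{1/2}$.

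Next I would fix $t>0$ and expand $\tau(e^{tY})=\sum_{k\ge0}\frac{t^k}{k!}\tau(Y^k)$, which converges absolutely because $Y\in\N$ is bounded. The $k=0$ term equals $1$, the $k=1$ term vanishes since $\tau(Y)=0$, and for $k\ge2$ one has
\[ |\tau(Y^k)|\le\tau(|Y|^k)=\|Y\|_k^k\le K^kk^{k/2}; \]
this is the only place where noncommutativity intervenes. Combining these bounds with $k!\ge(k/e)^k$ and summing is then exactly the scalar computation that turns $L_p$ growth of order $\sqrt p$ into a Gaussian Laplace transform, and it produces an absolute constant $c$ with $\tau(e^{tY})\le\exp(c\,t^2K^2)$. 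Substituting the value of $K$ gives the first displayed inequality with $c(\al)=c(C')^2\al^{-1}$.

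For the tail estimate I would use the noncommutative Chebyshev inequality already employed in the paper: since $1_{[t,\8)}(\la)\le e^{s(\la-t)}$ for every $\la\in\rz$ and every $s>0$, the spectral theorem for the self-adjoint element $Y$ yields $1_{[t,\8)}(Y)\le e^{-st}e^{sY}$, whence
\[ \Prob(Y\ge t)=\tau\big(1_{[t,\8)}(Y)\big)\le e^{-st}\tau(e^{sY})\le\exp\big(c(\al)s^2\|\Ga(x,x)\|_\8-st\big). \]
Minimising the right-hand side over $s>0$, at $s=t/(2c(\al)\|\Ga(x,x)\|_\8)$, gives $\exp\big(-t^2/(4c(\al)\|\Ga(x,x)\|_\8)\big)$, i.e.\ the second assertion with $C(\al)=4c(\al)$.

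I do not expect a serious obstacle here: everything reduces to the one-variable functional calculus of $Y$ together with the trace inequality $|\tau(Y^k)|\le\|Y\|_k^k$. The single point that needs a little care is the constant bookkeeping in the Laplace-transform step: one must exploit $\tau(Y)=0$ so that the estimate comes out as $\exp(c\,t^2K^2)$ and not merely $C\exp(c\,t^2K^2)$ with $C>1$, because such a prefactor cannot be absorbed into a Gaussian exponent and would spoil the exponential-integrability statement near $t=0$ as well as the clean form of the tail bound.
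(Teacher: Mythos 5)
Your proof is correct and follows essentially the same route as the paper: derive the moment bound $\|Y\|_p\le K\sqrt p$ from the $\Gamma_2$-Poincar\'e theorem, expand $\tau(e^{tY})$ in a power series, use $\tau(Y)=0$ to remove the linear term, bound $|\tau(Y^k)|\le\|Y\|_k^k\le K^kk^{k/2}$, sum, and finish with noncommutative Chebyshev and optimisation in $s$. The only presentational difference is that the paper makes the Laplace-transform step explicit by a two-case argument (for $t^2\|\Ga(x,x)\|_\8\ge1$ it quotes the earlier $\cosh$-based bound $\tau(e^{tx})\le 2\exp(C\|\Ga(x,x)\|_\8 t^2/\al)$ and absorbs the factor $2$ into the exponent, while for $t^2\|\Ga(x,x)\|_\8<1$ it does precisely your series expansion); your one-shot summation is also valid since $u\mapsto u^{-2}\ln\bigl(1+\sum_{k\ge2}k^{k/2}u^k/k!\bigr)$ is bounded on $(0,\8)$, and your closing remark that $\tau(Y)=0$ is what lets one avoid a multiplicative prefactor $C>1$ is exactly the point the paper's case split is designed to address.
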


Quite surprisingly, these results apply to many commutative and noncommutative examples which cannot be treated with the usual commutative diffusion semigroup approach. Moreover, Bobkov and G\"otze's \cite{BG} application to the $L_1$ Wasserstein distance
\[W_1(f,h) \lel \sup\{ |\phi_f(x)-\phi_h(x)|: x \text{ self-adjoint },\|\Ga(x,x)\|_\8\le 1\} \]
for normal state $\phi_f(x)=\tau(fx)$,  $\phi_h(x)=\tau(hx)$, remains applicable in the noncommutative setting. This leads to the transportation inequality.

\begin{cor} Under the assumptions above
 \[ W_1(f,E_{\rm Fix} f) \kl C(\al) \sqrt{\tau(f\ln  f)} \]
for all normal states $\phi_f(x)=\tau(fx)$.
\end{cor}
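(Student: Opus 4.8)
The strategy is to combine the sub-Gaussian Laplace transform bound of the preceding Corollary with the Bobkov--G\"otze duality between transportation cost and relative entropy, now carried out in the tracial von Neumann algebra setting.

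First I would rewrite the Wasserstein distance in a more usable form. Since the conditional expectation $E_{\rm Fix}$ is trace preserving, it is self-adjoint for $\tau$, so $\tau((E_{\rm Fix}f)x)=\tau(f\,E_{\rm Fix}(x))$ for every self-adjoint $x$; moreover $E_{\rm Fix}f$ is again a normal density. Hence $\phi_f(x)-\phi_{E_{\rm Fix}f}(x)=\tau(f(x-E_{\rm Fix}x))$ and therefore
\[
W_1(f,E_{\rm Fix}f)\lel \sup\big\{\,\bet\tau\big(f(x-E_{\rm Fix}x)\big)\rag:\ x=x^*,\ \|\Ga(x,x)\|_\8\le1\,\big\}\pl.
\]
It thus suffices to bound $\tau(f(x-E_{\rm Fix}x))$ for such $x$; the modulus is then recovered by replacing $x$ with $-x$.

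Next I would use the noncommutative Gibbs variational inequality, i.e.\ the nonnegativity of the (Umegaki) relative entropy $S(\phi_f\|\psi)\ge0$ applied to the state $\psi$ with density $e^{h}/\tau(e^{h})$: for every self-adjoint $h\in\N$ one has $\tau(fh)-\ln\tau(e^{h})\le \tau(f\ln f)$. Taking $h=t(x-E_{\rm Fix}x)$ with $t>0$ and invoking the exponential integrability bound $\tau(e^{t(x-E_{\rm Fix}x)})\le \exp(c(\al)t^2\|\Ga(x,x)\|_\8)$ from the preceding Corollary (with $\|\Ga(x,x)\|_\8\le1$), this gives
\[
\tau\big(f(x-E_{\rm Fix}x)\big)\kl \frac{\tau(f\ln f)}{t}+c(\al)\,t\pl.
\]
Optimizing over $t>0$, that is $t=\sqrt{\tau(f\ln f)/c(\al)}$, yields $\tau(f(x-E_{\rm Fix}x))\le 2\sqrt{c(\al)\tau(f\ln f)}$, and taking the supremum over admissible $x$ gives the claim with $C(\al)=2\sqrt{c(\al)}$.

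The only point requiring genuine care is the class of test elements and the attendant finiteness issues: the supremum defining $W_1$ should be read over a sufficiently rich class of ``nice'' self-adjoint $x$ (bounded $x$, or $x\in\Dom(A^{1/2})\cap\N$) so that $e^{t(x-E_{\rm Fix}x)}\in L_1(\N)$ and the exponential bound of the Corollary applies verbatim; if $\tau(f\ln f)=\8$ there is nothing to prove, and the passage to general normal densities $f$ is handled by the usual approximation arguments. All the analytic substance -- the $\Ga_2$-driven sub-Gaussian estimate -- has already been extracted in the previous Corollary, so this last step is essentially the transcription of the Bobkov--G\"otze duality to the tracial setting.
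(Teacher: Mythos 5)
Your proof is correct and follows essentially the same route as the paper: both reduce to the noncommutative Gibbs variational inequality $\tau(fh)-\ln\tau(e^h)\le\tau(f\ln f)$, substitute $h=t(x-E_{\rm Fix}x)$, invoke the exponential integrability corollary, pass to $\tau(E_{\rm Fix}f\,x)=\tau(f\,E_{\rm Fix}x)$, and optimize over $t$. The one thing the paper does that you elide is to actually prove the variational formula for the entropy in the tracial setting (reduction to the abelian von Neumann algebra generated by $h$, Jensen, and the monotonicity of entropy under trace-preserving conditional expectation, together with Fatou-type approximation for unbounded $h$), but you correctly flag this as the technical point and the overall argument is the same.
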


The Wasserstein distance has been extensively studied in the noncommutative setting; see\cites{BV,Rie, OR}. This probabilistic connection which provides universal upper bound given by the entropy functional, however, seems to be new. Our definition of Wasserstein distance in the noncommutative setting is closely related to the metric used by Rieffel to define his quantum metric space. Inspired by Connes' work in noncommutative geometry \cite{Con}, Rieffel defined the metric on the state space of a $*$-algebra $A$
\[
 \rho_L(\phi,\psi)\lel \sup\{|\phi(a)-\psi(a)|: L(a)\le 1, a\in A\}
\]
where $\phi$,$\psi$ are states and $L(a)$ is a seminorm. For Connes' spectral triple, $L(a)=\|[D,a]\|$ where $D$ is a self-adjoint operator; see \cite{Rie} and the references therein for more details. It is an interesting question but beyond the scope of this paper to determine whether the transportation inequality is possible for $\rho_L$.

At this point it seems helpful to compare our approach with previous ones. Using classical diffusion theory, it is proved by Bakry and Emery \cite{BE85} that the $\Ga_2$-criterion implies the logarithmic Sobolev inequality (LSI). Bobkov and G\"otze \cite{BG} deduced an exponential integrability (EI) result based on a variant of LSI and showed that the EI is equivalent to the transportation inequality (TI). The relation can be illustrated by the following
\[
 \Ga_2\mbox{-criterion}\xRightarrow{ \rm diffusion} \mbox{ LSI } \Rightarrow \mbox{ EI }\Leftrightarrow \mbox{ TI } \pl.
\]
We refer the reader to the lecture notes \cite{Gui} for more details on this subject and its applications to random matrices. In the noncommutative setting, however, the $\Ga_2$-criterion no longer implies LSI; see Example \ref{cond} below. But we can still use our $L_p$ Poincar\'e inequality ($L_p{\rm PI}$) to deduce EI and TI. In particular, this gives an alternative proof in the commutative case. Our approach is illustrated as follows
\[
 \Ga_2\mbox{-criterion}\xRightarrow{\rm nc\dash diffusion} L_p{\rm PI} \Rightarrow \mbox{ EI }\Leftrightarrow \mbox{ TI } \pl.
\]
At the time of this writing, we are not sure whether this alternative is known or not in the commutative case. In addition, a simple argument shows that EI would hold (thus TI follows) provided the space has finite diameter. This gives a criterion for the validity of TI when we only have $\Ga_2(x,x)\ge0$ instead of $\Ga_2(x,x)\ge \al \Ga(x,x)$.

At the end of the paper we consider an algebraic version of the $\Gamma_2$-condition and say that $\Gamma_2\gl \al \Gamma$ (in the form sense) if
 \[ [\Gamma_2(x_j,x_k)]_{j,k}  \gl \al [\Gamma(x_j,x_k)]_{j,k} \]
for all finite families in a weakly dense $A$ invariant algebra $\A\subset \N$. Then we collect/prove the following facts
 \begin{itemize}
 \item $\Gamma_2\gl \Gamma$ for a suitable semigroup on group von Neumann algebra of the free group $\fz_n$ and the noncommutative tori.
 \item $\Gamma_2\gl \frac{n+2}{2n} \Gamma$ for suitable semigroups on group von Neumann algebra of the  discrete Heisenberg group and the hyperfinite $II_1$ factor.
 \item Let $\N=L_{\infty}(\{-1,1\})$ and $T_t(1)=1$, $T_t(\eps)=e^{-t}\eps$ where $(1,\eps)$ is the orthonormal basis of $L_2(\{-1,1\})$. Then $\Gamma_2\gl \Gamma$.
 \item Let $\N=L_{\infty}(\{1, \cdots ,n\})$ and $T_t(e^{\frac{2\pi ik}{n}})=e^{-t(1-\delta_{k,0})} e^{\frac{2\pi ik}{n}}$. Then $\Gamma_2\gl \frac{n+2}{2n}\Gamma$.
 \item $\Gamma_2\gl \Gamma$ for all $q$-Gaussian random variables and the number operator.
 \item $\Gamma_2\gl \al \Gamma$ for compact Riemannian manifolds with strictly positive Ricci curvature.
 \item $\Gamma_2\gl \al \Gamma$ is stable under tensor products.
 \item $\Gamma_2\gl \al \Gamma$ is stable under free products.
 \item $\Gamma_2\gl \frac{n+2}{2n}\Gamma$ for a suitable semigroup on random matrices $M_n$.
 \end{itemize}

We hope that this ample evidence that Poincar\'{e} type inequalities occur frequently in the commutative and the noncommutative setting even without assuming the strong diffusion assumption used in the Bakry--Emery theory justifies our new noncommutative theory. As special cases of our general theory, previous results obtained by Efraim/Lust-Piquard \cite{ELP} and Li \cite{Li08} are generalized or improved and many new inequalities are established in different contexts. For instance, the following deviation inequality for product probability spaces is an easy consequence of these examples: for $f\in L_\8(\Om_1\times \cdots \times\Om_n, \pz_1\otimes \cdots  \otimes\pz_n)$,
 \[
  \pz(f-\ez(f)\ge t) \kl \exp\Big(-\frac{ct^2}{\sum_{i=1}^n\| f-\int fd\pz_i\|_\8^2+\|\int (|f|^2-|\int f d\pz_i |^2 ) d\pz_i\|_\8}\Big)\pl.
 \]
 where $\pz=\pz_1\otimes \cdots  \otimes \pz_n$ and $\ez$ is the corresponding expectation operator.

The paper is organized as follows. The martingale deviation inequality and Burkholder type inequality are proved in Section 1. After recalling some results in the continuous filtration in von Neumann algebras, we deduce two BDG type inequalities in Section 2. The Poincar\'e type inequalities and the transportation inequalities are proved in Section 3, which is also the most technical section. Then the group von Neumann algebras are considered in Section 4. In section 5 we prove that the $\Ga_2$-criterion is stable under tensor products and free products with amalgamation. The general theory is applied to classical diffusion processes in Section 6.

\section{Noncommutative martingale deviation inequality}
Our proof of the martingale deviation inequality relies on the well known Golden--Thompson inequality. The fully general case is due to Araki \cite{Ar}. The version for semifinite von Neumann algebras we used here was proved by Ruskai in \cite{Ru}*{Theorem 4}.
\begin{lemma}[Golden--Thompson inequality]
Suppose that $a,b$ are self-adjoint operators, bounded above and that $a+b$ are essentially self-adjoint (i.e. the closure of $a+b$ is self-adjoint). Then $$\tau(e^{a+b})\kl\tau(e^{a/2}e^be^{a/2})\pl.$$
Furthermore, if $\tau(e^a)<\infty$ or $\tau(e^b)<\infty$ then
\begin{equation}\label{GT}
\tau(e^{a+b})\kl\tau(e^ae^b)\pl.
\end{equation}
\end{lemma}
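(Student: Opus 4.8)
The plan is to derive the inequality from the Trotter--Kato product formula together with a single elementary trace inequality for products of two positive operators, obtained by a dyadic iteration of noncommutative H\"older. A preliminary step is to reduce to $a,b\in\N$ bounded: replacing $a,b$ by their spectral truncations $a_R,b_R$ from below one has $a_R\downarrow a$, hence $e^{a_R+b_R}\downarrow e^{a+b}$ and $e^{a_R}e^{b_R}\to e^ae^b$ strongly, and normality of $\tau$ passes the inequality to the limit. (In the finite setting of the paper all traces in sight are then automatically finite; the semifinite version --- where one needs $\tau(e^a)<\infty$ or $\tau(e^b)<\infty$ to give meaning to $e^ae^b\in L_1$ --- is precisely \cite{Ru}*{Theorem 4}.)

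The core is the \emph{doubling inequality}: for positive $p,q\in\N$ and $m\ge1$,
\[ \tau\big((pq)^{2m}\big)\kl\tau\big((p^2q^2)^m\big). \]
I would prove it as follows. First, $\tau((pq)^{2m})$ is real and nonnegative: using the identity $\tau((xy)^j)=\tau((yx)^j)$ (a consequence of $(xy)^j=x(yx)^{j-1}y$ and traciality), $\tau((pq)^{2m})=\tau((q^{1/2}pq^{1/2})^{2m})\ge0$, while $\overline{\tau((pq)^{2m})}=\tau((qp)^{2m})=\tau((pq)^{2m})$. Next, noncommutative H\"older with $2m$ equal factors gives
\[ \tau\big((pq)^{2m}\big)\kl\big|\tau\big((pq)^{2m}\big)\big|\kl\|pq\|_{2m}^{2m}\lel\tau\big(|pq|^{2m}\big)\lel\tau\big((qp^2q)^m\big), \]
and applying $\tau((xy)^m)=\tau((yx)^m)$ once more with $x=q$, $y=p^2q$ turns the last term into $\tau((p^2q^2)^m)$.

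Now by the Trotter--Kato product formula --- applicable because $a+b$ is assumed essentially self-adjoint --- the uniformly bounded operators $\big(e^{a/2^k}e^{b/2^k}\big)^{2^k}$ converge to $e^{a+b}$ in the strong operator topology, so normality of $\tau$ yields $\tau(e^{a+b})=\lim_k\tau\big((e^{a/2^k}e^{b/2^k})^{2^k}\big)$. Inserting $p=e^{a/2^j}$, $q=e^{b/2^j}$ into the doubling inequality repeatedly produces the decreasing chain
\[ \tau\big((e^{a/2^k}e^{b/2^k})^{2^k}\big)\kl\tau\big((e^{a/2^{k-1}}e^{b/2^{k-1}})^{2^{k-1}}\big)\kl\cdots\kl\tau\big((e^{a/2}e^{b/2})^2\big)\kl\tau(e^ae^b), \]
and letting $k\to\infty$ gives \eqref{GT}. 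The symmetrized bound $\tau(e^{a+b})\le\tau(e^{a/2}e^be^{a/2})$ is the same statement: $\tau(e^{a/2}e^be^{a/2})=\tau(e^ae^b)$ by traciality, and in fact the symmetric Trotter formula $(e^{a/2^{k+1}}e^{b/2^k}e^{a/2^{k+1}})^{2^k}\to e^{a+b}$ produces exactly the same numerical sequence, since $\tau((e^{a/2^{k+1}}e^{b/2^k}e^{a/2^{k+1}})^{2^k})=\tau((e^{a/2^k}e^{b/2^k})^{2^k})$ by the trace identity above; so running the argument that way handles the case where only the left-hand side is a priori defined.

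The only genuinely delicate point is the unbounded/semifinite bookkeeping --- confirming that $e^{a+b}$, defined via the closure of $a+b$, is indeed the Trotter limit when $a,b$ are merely bounded above, and justifying the interchange of $\tau$ with the limit when $\tau$ is infinite. An alternative that avoids Trotter is Araki's complex-interpolation argument, a three-lines estimate on analytic families built from the powers $z\mapsto(e^{za/2}e^{zb}e^{za/2})^{1/z}$ reducing to the $L_1$--$L_\infty$ endpoints; otherwise one simply invokes \cite{Ar} and \cite{Ru}*{Theorem 4}.
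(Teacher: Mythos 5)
The paper does not prove this lemma: it is stated as a quoted known result, with the general case attributed to Araki \cite{Ar} and the semifinite--von-Neumann--algebra version to Ruskai \cite{Ru}*{Theorem 4}. Your proposal is therefore, in effect, an independent reconstruction of the proof, and the algebraic core of it is sound. The ``doubling inequality'' $\tau((pq)^{2m})\le\tau((p^2q^2)^m)$ is correctly derived: $\tau((pq)^{2m})=\tau((q^{1/2}pq^{1/2})^{2m})\ge 0$ shows the trace is nonnegative, noncommutative H\"older with $2m$ equal exponents gives $\tau((pq)^{2m})\le\tau(|pq|^{2m})=\tau((qp^2q)^m)$, and cyclicity of the trace in the form $\tau((xy)^m)=\tau((yx)^m)$ moves this to $\tau((p^2q^2)^m)$. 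Iterating down a dyadic chain and invoking Trotter--Kato to identify $\lim_k\tau\big((e^{a/2^k}e^{b/2^k})^{2^k}\big)=\tau(e^{a+b})$ gives \eqref{GT}, and your observation that $\tau(e^{a/2}e^be^{a/2})=\tau(e^ae^b)$ by traciality correctly collapses the two formulations. This Lie--Trotter-plus-power-trick route is in fact close in spirit to Ruskai's original argument, so you are reproducing the cited proof rather than departing from it.

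Where your write-up is incomplete --- and you flag this yourself --- is exactly where the real work lies in the semifinite/unbounded setting: (i) the reduction to bounded $a,b$ via spectral truncation $a_R\downarrow a$, $b_R\downarrow b$ needs justification, since $\exp$ is not operator-monotone, so $e^{a_R+b_R}\downarrow e^{\overline{a+b}}$ does not follow from monotonicity alone but requires strong-resolvent convergence of $a_R+b_R$ to $\overline{a+b}$; (ii) the applicability of Trotter--Kato to the self-adjoint closure of $a+b$ when $a,b$ are only bounded above and possibly unbounded below needs to be stated carefully (e.g.\ via Chernoff's theorem for contraction semigroups after a shift); and (iii) normality of $\tau$ handles increasing nets automatically, but passing $\tau$ through a decreasing net and through a strong-operator limit of a possibly non-monotone sequence requires an integrability hypothesis --- this is exactly where the assumption $\tau(e^a)<\infty$ or $\tau(e^b)<\infty$ enters to make the right side of \eqref{GT} a genuine $L_1$ quantity. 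Since these points are the substantive content of Ruskai's theorem and the paper chooses to cite it rather than reprove it, your acknowledged fallback to \cite{Ar} and \cite{Ru}*{Theorem 4} is entirely in line with the paper; but as a stand-alone proof your sketch is not yet complete.
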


\begin{lemma}\label{expin}
Let $(x_k)$ be a self-adjoint martingale sequence with respect to the filtration $(\N_k, E_k)$ and $d_k:=dx_k=x_k-x_{k-1}$ be the associated martingale differences such that
\begin{enumerate}
\item[i)] $\tau(x_k)=x_0=0$; {\rm ii)} $\|d_k\|\le M$; {\rm iii)} $\sum_{k=1}^n E_{k-1}(d_k^2) \le D^2 1$.
\end{enumerate}
Then \[\tau(e^{\la x_n}) \kl \exp[(1+\eps)\la^2 D^2]\] for all $\eps\in(0,1]$ and all $\la\in[0, \sqrt{\eps}/(M+M\eps)]$.
\end{lemma}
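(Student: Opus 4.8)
The plan is to run the classical exponential--supermartingale argument, but to keep everything inside a single trace exponential, so that Golden--Thompson is only ever invoked in its valid direction $\tau(e^{a+b})\le\tau(e^ae^b)$, the ``backward'' half of the argument being supplied instead by the Peierls--Bogoliubov inequality
\[ \tau(e^{H+V})\;\ge\;\tau(e^H)\exp\!\Big(\tfrac{\tau(e^HV)}{\tau(e^H)}\Big)\;\ge\;\tau(e^H)+\tau(e^HV),\qquad H=H^*,\ V=V^*\in\N \]
(a standard consequence of the convexity of $t\mapsto\log\tau(e^{H+tV})$); in particular $H\le K$ forces $\tau(e^H)\le\tau(e^K)$. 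First I would record, from the monotonicity of $u\mapsto(e^u-1-u)/u^2$ on $\rz$, the scalar bound $e^s\le 1+s+\psi s^2$ for $|s|\le\la M$, where $\psi:=\frac{e^{\la M}-1-\la M}{(\la M)^2}$. The spectral theorem applied to $d_k$ (with $\|d_k\|\le M$) upgrades this to $e^{\la d_k}\le 1+\la d_k+g\,d_k^2$ with $g:=\psi\la^2$, and applying $E_{k-1}$ together with $E_{k-1}(d_k)=0$ gives the operator Bernstein bound $E_{k-1}(e^{\la d_k})\le 1+g\,\si_k^2$, where $\si_k^2:=E_{k-1}(d_k^2)$. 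Since the restriction $\la\le\sqrt\eps/(M+M\eps)$ forces $\la M\le1$ and the displayed quotient is increasing, $\psi\le e-2<1$, hence $g\le\la^2\le(1+\eps)\la^2$; this is the only point where the bound on $\la$ is used.

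The heart of the proof is a downward induction, over $m=n,n-1,\dots,0$, of the estimate
\[ \tau(e^{\la x_n})\;\le\;\tau\big(e^{\,\la x_m+g(D^21-R_m)}\big),\qquad R_m:=\si_1^2+\cdots+\si_m^2,\quad R_0:=0. \]
As $\si_k^2\in\N_{k-1}$ we have $R_m\in\N_{m-1}$, and by hypothesis iii) $0\le R_m\le R_n\le D^21$, so every exponent appearing is a bounded self-adjoint operator and the hypotheses of Golden--Thompson are met throughout. The case $m=n$ follows from $\la x_n\le\la x_n+g(D^21-R_n)$ and monotonicity of the trace exponential. For the inductive step $m\to m-1$, write $\la x_m+g(D^21-R_m)=P+\la d_m$ with $P:=\la x_{m-1}+g(D^21-R_{m-1})-g\si_m^2\in\N_{m-1}$; then
\[ \tau(e^{P+\la d_m})\;\overset{\mathrm{GT}}{\le}\;\tau(e^Pe^{\la d_m})\;=\;\tau\big(e^PE_{m-1}(e^{\la d_m})\big)\;\le\;\tau\big(e^P(1+g\si_m^2)\big)\;=\;\tau(e^P)+g\,\tau(e^P\si_m^2), \]
the middle equality using $\tau\circ E_{m-1}=\tau$ and the $\N_{m-1}$--module property of $E_{m-1}$, and the second inequality using the operator Bernstein bound with $e^P\ge0$. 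Finally Peierls--Bogoliubov with $H=P$ and $V=g\si_m^2\ge0$ bounds this by $\tau(e^{P+g\si_m^2})=\tau(e^{\la x_{m-1}+g(D^21-R_{m-1})})$, closing the induction. Taking $m=0$ and using $x_0=0$, $\tau(1)=1$ and $g\le(1+\eps)\la^2$ gives $\tau(e^{\la x_n})\le e^{gD^2}\le\exp[(1+\eps)\la^2D^2]$.

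The one genuinely delicate issue is structural rather than computational. A naive iteration --- peel $d_m$ off with Golden--Thompson and then re-absorb the factor $e^{g\si_m^2}$ it produces --- cannot work: recombining a product of exponentials into a single exponential is exactly a reverse Golden--Thompson inequality, which is false, whereas merely bounding $e^{g\si_m^2}\le e^{g\|\si_m^2\|}1$ throws away the joint control $\|\sum_k\si_k^2\|\le D^2$ and leaves an unacceptable factor $n$. The resolution, and the step that requires foresight, is to carry the entire remaining variance budget $g(D^21-R_m)$ inside the exponent from the outset and to let Peierls--Bogoliubov --- not Golden--Thompson --- perform the backward move; once this invariant is in hand, the rest is bookkeeping.
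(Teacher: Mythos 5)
Your proof is correct. It is, however, genuinely different from the paper's, and the difference is worth recording. Both proofs maintain the same invariant --- the entire remaining ``variance budget'' $\approx\la^2(D^2 1-\sum_{k\le m}E_{k-1}(d_k^2))$ is kept additively inside a single trace exponential, which is iteratively shrunk by peeling off one martingale difference at a time via Golden--Thompson and then collapsing it with the module property of $E_{m-1}$. But the two proofs differ in what exactly they peel and in how they close the resulting gap. The paper peels off the block $\exp[\la d_n-(1+\eps)\la^2 r_n]$, with the variance increment already bundled into the exponent, and then proves directly that $E_{n-1}\bigl[\exp(\la d_n-(1+\eps)\la^2 r_n)\bigr]\le 1$ by means of the scalar bound $e^x\le 1+x+x^2$ on $|x|\le1$ and the somewhat delicate elementary inequality $\eps\la^2 x-(1+\eps)^2\la^4x^2\ge0$ for $x\in[0,1]$; this is the step that consumes the full strength of the constraint $\la\le\sqrt\eps/(M(1+\eps))$. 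You instead peel off only $e^{\la d_m}$, obtain the cleaner operator Bernstein estimate $E_{m-1}(e^{\la d_m})\le 1+g\,\si_m^2$ from the (sharper) Bennett-type scalar bound, and then close the gap by an appeal to the noncommutative Peierls--Bogoliubov inequality $\tau(e^{P+V})\ge\tau(e^P)+\tau(Ve^P)$, which plays the role of a one-sided ``reverse Golden--Thompson'' and re-absorbs the additive variance increment back into the exponent. The constraint on $\la$ is used by you only in the very loose form $\la M\le1$. Two cosmetic remarks. First, your base case does not actually need Golden--Thompson (unlike the paper's analogous step): monotonicity of $\tau\circ\exp$ on self-adjoints, which is itself a consequence of Peierls--Bogoliubov, suffices as you note. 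Second, your proposal uses an extra tool (Peierls--Bogoliubov plus the log-convexity of $t\mapsto\log\tau(e^{H+tV})$ in the von Neumann setting) in place of the paper's self-contained elementary computation; this is a fair trade, and the paper's version has the virtue of relying on Golden--Thompson alone, but yours makes the structural role of each inequality more transparent.
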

\begin{proof}
We follow Oliveira's original proof for matrix martingales \cite{Ol} and generalize it to the fully noncommutative setting. With the help of functional calculus, we actually have fewer technical issues. Let $\eps\in(0,1]$. Put $y_n=\sum_{k=1}^n E_{k-1}(d_k^2)$. Then $y_n\le D^21$. We simply write $D^2$ for the operator $D^2 1\in\N$ in the following. Let us first assume $M=1$. Since $e^{-((1+\eps)\la^2D^2-(1+\eps)\la^2y_n)}\le 1$, it follows from \eqref{GT} that
\begin{align*}
\tau\big(e^{\la x_n}\big)&\kl \tau\big(\exp[\la x_n+(1+\eps)\la^2D^2-(1+\eps)\la^2y_n]\exp[-((1+\eps)\la^2D^2-(1+\eps)\la^2y_n)]\big) \\
&\kl \tau\big(\exp[\la x_n+(1+\eps)\la^2D^2-(1+\eps)\la^2y_n]\big) \pl.
\end{align*}
Put $r_n=E_{n-1}d_n^2$. Then $y_n=y_{n-1}+r_n$. Using \eqref{GT} again we find
\begin{align*}
  &\tau\big(\exp[\la x_n+(1+\eps)\la^2D^2-(1+\eps)\la^2y_n]\big)\\
   \lel & \tau\big(\exp[\la x_{n-1}+\la d_n+(1+\eps)\la^2D^2-(1+\eps)\la^2y_{n-1}-(1+\eps)\la^2r_n]\big)\\
  \kl& \tau\big(\exp[\la d_n-(1+\eps)\la^2 r_n]\exp[\la x_{n-1}+(1+\eps)\la^2D^2-(1+\eps)\la^2y_{n-1}]\big)\pl.
  \end{align*}
Since $x_{n-1},y_{n-1}\in \N_{n-1}$ and $E_{n-1}$ is trace preserving, we obtain
\begin{equation}\label{indu}
\begin{split}
&\tau\big(\exp[\la d_n-(1+\eps)\la^2 r_n]\exp[\la x_{n-1}+(1+\eps)\la^2D^2-(1+\eps)\la^2y_{n-1}]\big)\\
\lel &\tau\big(E_{n-1}[\exp(\la d_n-(1+\eps)\la^2 r_n)]\exp[\la x_{n-1}+(1+\eps)\la^2D^2-(1+\eps)\la^2y_{n-1}]\big)\pl.
\end{split}
\end{equation}

We claim that $E_{k-1}[\exp(\la d_k-(1+\eps)\la^2 r_k)]\le 1$ for all $k=1, \cdots ,n$ and $0\le\la \le\sqrt{\eps}/(1+\eps)$. Indeed, $$\|\la d_k-(1+\eps)\la^2 r_k\|\le \frac{\sqrt{\eps}}{1+\eps}+(1+\eps)\frac{\eps}{(1+\eps)^2}=\frac{\sqrt{\eps}+\eps}{1+\eps}\le 1.$$
Note that $e^x\le 1+x+x^2$ for $|x|\le 1$. It follows from functional calculus that $e^A\le 1+A+A^2$ for any self-adjoint operator $A$ with $\|A\|\le 1$. Plugging in $A=\la d_k-(1+\eps)\la^2 r_k$ and using $r_k\in \N_{k-1}$ and $E_{k-1}d_k=0$ we obtain
\begin{align*}
&E_{k-1}[\exp(\la d_k-(1+\eps)\la^2 r_k)]\\
\kl & E_{k-1} [ 1+\la d_k-(1+\eps)\la^2 r_k + \la^2 d_k^2-(1+\eps)\la^3 d_k r_k - (1+\eps)\la^3 r_k d_k + (1+\eps)^2\la^4 r_k^2 ] \\
\lel & 1-\eps\la^2 r_k+(1+\eps)^2\la^4r_k^2 \pl.
\end{align*}
An elementary calculation shows that $\eps\la^2x-(1+\eps)^2\la^4x^2\ge 0$ for all $x\in[0,1]$ and $\la\in(0,\sqrt{\eps}/(1+\eps)]$. Using functional calculus of $r_k$ again, we find
\[ \eps\la^2 r_k-(1+\eps)^2\la^4r_k^2\gl 0 \]
which gives the claim. Combining with \eqref{indu}, we obtain
\begin{align*}
&\tau\big(\exp[\la x_n+(1+\eps)\la^2D^2-(1+\eps)\la^2y_n]\big)\\
\kl& \tau\big(\exp[\la x_{n-1}+(1+\eps)\la^2D^2-(1+\eps)\la^2y_{n-1}]\big).
\end{align*}
Iteratively using \eqref{GT} and the claim $n-1$ times yields
\[\tau(e^{\la x_n}) \kl \tau(\exp[(1+\eps)\la^2D^2])\lel\exp[(1+\eps)\la^2D^2] \] which completes the proof for $M=1$. For arbitrary $x_k$, considering $x_k'=x_k/M$ leads to the conclusion.
\end{proof}
We remark that the exponential inequality in this lemma is crucial for the proof of law of the iterated logarithms for noncommutative martingales by the second named author in \cite{Ze12}.
\begin{theorem}\label{fman}
Let $(x_k)$ be a self-adjoint martingale sequence with respect to the filtration $(\N_k, E_k)$ and $d_k:=dx_k=x_k-x_{k-1}$ be the associated martingale differences such that
\begin{enumerate}
\item[i)] $\tau(x_k)=x_0=0$; {\rm ii)} $\|d_k\|_\8\le M$; {\rm iii)} $\sum_{k=1}^n E_{k-1}(d_k^2) \le D^2 1$.
\end{enumerate}
Then for $t\ge 0$,
\[ \Prob\left(x_n\ge t\right)
 \kl \exp\left(-\frac{t^2}{4(1+\eps)D^2+{2(1+\eps)t M}/{\sqrt{\eps}}}-\frac{\sqrt{\eps}Mt^3}{2(1+\eps)(2\sqrt{\eps}D^2+M t)^2}\right)\pl \] for all $0<\eps\le1$.
\end{theorem}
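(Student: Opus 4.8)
The plan is to run the standard exponential Chebyshev (Chernoff) argument, with the exponential moment bound of Lemma~\ref{expin} as the only real input. First I would note that $x_n$ is a bounded self-adjoint element of $\N$ (finitely many differences, each of norm $\le M$), so there are no integrability or domain issues. Since $0\le 1_{[t,\infty)}(s)\le e^{\la(s-t)}$ for every real $s$ and every $\la\ge0$, functional calculus gives the operator inequality $0\le 1_{[t,\infty)}(x_n)\le e^{\la(x_n-t)}$, and positivity of $\tau$ yields
\[
 \Prob(x_n\ge t)=\tau\big(1_{[t,\infty)}(x_n)\big)\kl e^{-\la t}\,\tau\big(e^{\la x_n}\big)
\]
for all $\la\ge0$. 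For $\la$ in the admissible range $[0,\sqrt\eps/(M(1+\eps))]$, Lemma~\ref{expin} bounds $\tau(e^{\la x_n})\le \exp[(1+\eps)\la^2D^2]$, so
\[
 \Prob(x_n\ge t)\kl \exp\big[(1+\eps)\la^2D^2-\la t\big]
\]
for every such $\la$. If $D=0$ then $\tau(d_k^2)=\tau(E_{k-1}(d_k^2))=0$ forces $x_n=0$ and the estimate is trivial, so I may assume $D>0$.

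The one genuine point is to choose $\la$ well. The unconstrained minimizer $t/(2(1+\eps)D^2)$ leaves the admissible range once $t$ is large relative to $D^2/M$, which would force an unpleasant case split; to get the single clean formula in the statement I would instead take
\[
 \la_0=\frac{t}{2(1+\eps)D^2+(1+\eps)Mt/\sqrt\eps}\pl .
\]
The extra summand $(1+\eps)Mt/\sqrt\eps$ in the denominator is tailored precisely so that admissibility is automatic: the inequality $\la_0\le \sqrt\eps/(M(1+\eps))$, after clearing the (positive) denominators, collapses to $0\le 2\sqrt\eps(1+\eps)D^2$, which holds for all $t\ge0$.

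Finally I would substitute $\la_0$ into $(1+\eps)\la^2D^2-\la t=\la\big((1+\eps)D^2\la-t\big)$. Writing $B=2(1+\eps)D^2+(1+\eps)Mt/\sqrt\eps$, so $\la_0=t/B$, a short computation gives $(1+\eps)D^2\la_0-t=-t(1+\eps)(D^2+Mt/\sqrt\eps)/B$, hence the exponent equals $-t^2(1+\eps)(D^2+Mt/\sqrt\eps)/B^2$. Using $B+(1+\eps)Mt/\sqrt\eps=2(1+\eps)(D^2+Mt/\sqrt\eps)$ this splits as $-\tfrac{t^2}{2B}-\tfrac{(1+\eps)Mt^3}{2\sqrt\eps\,B^2}$, and rewriting $B=\tfrac{1+\eps}{\sqrt\eps}(2\sqrt\eps D^2+Mt)$ in the second denominator turns these two summands into exactly the two terms in the statement. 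The whole argument is routine once $\la_0$ is in hand; the only thing to be careful about is the bookkeeping in this last algebraic simplification and the verification that $\la_0$ stays inside the interval where Lemma~\ref{expin} applies.
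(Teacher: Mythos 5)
Your proposal is correct and follows essentially the same route as the paper's proof: exponential Chebyshev, the moment bound from Lemma~\ref{expin}, and the same choice $\lambda_0=t/\big(2(1+\eps)D^2+(1+\eps)Mt/\sqrt\eps\big)$, followed by the algebraic split of the exponent into the two displayed terms. The only cosmetic differences are that you carry $M$ through directly instead of normalizing to $M=1$ and rescaling at the end, and you make the trivial $D=0$ case explicit; the bookkeeping and admissibility check are right.
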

Note that if $\eps=1$ the first term in our upper bound reduces to the same estimate as Oliveira's. In fact, the first term is always dominating.
\begin{proof}
We assume $M=1$ first. Let $\eps\in(0,1]$.  By exponential Chebyshev's inequality we have $\tau\left(1_{[t,\infty)} \left(x_n\right)\right)
 \le e^{-\la t}\tau\big(e^{\la x_n}\big)$ for $t>0$.
It follows from Lemma \ref{expin} that
\[\tau\left(1_{[t,\infty)}\left(x_n\right)\right) \kl \exp(-\la t+(1+\eps)\la^2D^2).\]
Now we set
\[\la \lel \frac{t}{2(1+\eps)D^2+{(1+\eps)t}/{\sqrt{\eps}}}\] which is less than $\sqrt{\eps}/(1+\eps)$. Then,
\begin{align*}
&-\la t+(1+\eps)\la^2D^2\lel -t^2\cdot\frac{1+t/(\sqrt{\eps} D^2)}{4(1+\eps)D^2[1+t/(2\sqrt{\eps}D^2)]^2}\\
\lel& -\frac{t^2}{4(1+\eps)D^2[1+t/(2\sqrt{\eps}D^2)]} - \frac{\sqrt{\eps}t^3}{2(1+\eps)(2\sqrt{\eps}D^2+t)^2}\pl.
%\lel -\frac{t^2}{4(1+\eps)D^2+2t(1+\eps)/\sqrt{\eps}}
\end{align*}
Replacing $t$ and $D$ with $t/M$ and $D/M$ respectively yields the assertion.
\end{proof}

Similar to the classical probability theory, we have for positive $a\in\M$ and for all $0<p<\8$,
\begin{equation}\label{pnorm}
\|a\|_p^p\lel p\int_0^\infty t^{p-1}{\rm Prob}(a>t) dt\pl.
\end{equation}
From here it is routine to estimate the $p$-th moment of $x_n$ using Theorem \ref{fman}.
\begin{prop}\label{pmom}
Under the assumption of Theorem \ref{fman}, for $2\le p<\infty$ we have
\begin{equation}\label{pmes}
\|x_n\|_p\kl
2^{3/2}(1+\eps)^{1/2}\sqrt{p}\Big\|\sum_{i=1}^nE_{i-1}(dx_i^2)\Big\|_\8^{1/2} + 2^{5/2}\Big(\frac{1+\eps}{\sqrt{\eps}}\Big)p\sup_{i=1, \cdots ,n}\|dx_i\|_\8\pl
\end{equation}
for all $0<\eps\le1$.
\end{prop}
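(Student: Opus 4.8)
The plan is to feed the tail bound of Theorem~\ref{fman} into the layer-cake formula \eqref{pnorm}. Since $x_n$ is self-adjoint but not positive, I would first observe that $-x_n$ is again a mean-zero self-adjoint martingale with the same constants $M$ and $D$, apply Theorem~\ref{fman} to both $x_n$ and $-x_n$, and simply discard the nonnegative cubic term in the exponent. Writing $1_{(t,\infty)}(|x_n|)=1_{(t,\infty)}(x_n)+1_{(-\infty,-t)}(x_n)$ in the spectral calculus, this gives for $t>0$
\[
\Prob(|x_n|>t)\kl\Prob(x_n\ge t)+\Prob(-x_n\ge t)\kl 2\exp\Big(-\frac{t^2}{a+bt}\Big),\qquad a=4(1+\eps)D^2,\ \ b=\frac{2(1+\eps)M}{\sqrt{\eps}}.
\]
Applying \eqref{pnorm} to the positive element $|x_n|$, together with $\|x_n\|_p=\||x_n|\|_p$, then reduces the whole problem to estimating the scalar integral $\int_0^\infty t^{p-1}\exp(-t^2/(a+bt))\,dt$.

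For that integral I would split the range at $t_0=a/b$: on $[0,t_0]$ one has $a+bt\le 2a$, so the exponent is dominated by $-t^2/(2a)$, while on $[t_0,\infty)$ one has $a+bt\le 2bt$, so it is dominated by $-t/(2b)$. Enlarging each of the two resulting integrals to all of $[0,\infty)$ and evaluating the standard Gamma integrals
\[
\int_0^\infty t^{p-1}e^{-t^2/(2a)}\,dt=\tfrac12(2a)^{p/2}\Gamma(\tfrac p2),\qquad \int_0^\infty t^{p-1}e^{-t/(2b)}\,dt=(2b)^p\Gamma(p),
\]
one obtains $\|x_n\|_p^p\le p(2a)^{p/2}\Gamma(p/2)+2p(2b)^p\Gamma(p)$. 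Taking $p$-th roots and using $(u+v)^{1/p}\le u^{1/p}+v^{1/p}$ yields $\|x_n\|_p\le (p\Gamma(p/2))^{1/p}(2a)^{1/2}+(2p\Gamma(p))^{1/p}(2b)$.

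To recover the exact constants in \eqref{pmes}, the last step is to bound the two $\Gamma$-prefactors. Using $\Gamma(x)\le x^{x-1}$ for $x\ge1$ one gets $p\Gamma(p/2)\le p^{p/2}$, hence $(p\Gamma(p/2))^{1/p}\le\sqrt p$, and $2p\Gamma(p)\le 2p^p\le 2^{p/2}p^p$ for $p\ge2$, hence $(2p\Gamma(p))^{1/p}\le\sqrt2\,p$. Plugging in $(2a)^{1/2}=2^{3/2}(1+\eps)^{1/2}D$ and $2b=4(1+\eps)M/\sqrt{\eps}$, and choosing the sharp values $D^2=\|\sum_iE_{i-1}(dx_i^2)\|_\8$ and $M=\sup_i\|dx_i\|_\8$ permitted by the hypotheses of Theorem~\ref{fman}, produces exactly the claimed bound. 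The only points needing a little care are the choice of the split point $t_0=a/b$ so that both elementary bounds on the exponent are valid on their respective ranges, and verifying the two $\Gamma$-function inequalities cleanly enough to land on the stated constants; I do not expect a genuine obstacle, since this is essentially the classical passage from a Bernstein-type tail estimate to an $L_p$-moment bound, transported to the tracial setting through \eqref{pnorm}.
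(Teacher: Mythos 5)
Your proposal is correct and follows essentially the same route as the paper: symmetrize to get a two-sided tail bound, feed it into the layer-cake formula \eqref{pnorm}, split the resulting integral at the same threshold $t_0 = a/b = 2\sqrt{\eps}D^2/M$, bound the exponent on each range, evaluate the two Gamma integrals, and finish with $\Gamma(x)\le x^{x-1}$. The only difference is cosmetic (the paper performs the substitutions explicitly in each integral rather than introducing the shorthand $a,b$), and both arguments land on the same constants.
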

\begin{proof}
Our strategy is to integrate the first term in Theorem \ref{fman}. The proof is similar to that of \cite{JZ}*{Corollary 0.3}. Note that it follows from symmetry that
\[
{\rm Prob}\left(|x_n| \ge t\right)\kl 2\exp\left(-\frac{t^2}{4(1+\eps)D^2+{2(1+\eps)t M}/{\sqrt{\eps}}}\right)\pl.
\]
Using \eqref{pnorm}, we obtain
\[\frac{\|x_n\|_p^p}{2p}\kl \int_0^{\frac{2\sqrt{\eps}D^2}{M}} t^{p-1}\exp\left(-\frac{t^2}{8(1+\eps)D^2}\right)dt+\int_\frac{2\sqrt{\eps}D^2}{M}^\8 t^{p-1} \exp\left(-\frac{t\sqrt{\eps}}{4(1+\eps) M}\right) dt.\]
Let us estimate the first term on the right hand side. Using the fact that $\Ga(x)\le x^{x-1}$ for $x\ge1$, we have
\begin{align*}
&\int_0^{\frac{2\sqrt{\eps}D^2}{M}} t^{p-1}\exp\left(-\frac{t^2}{8(1+\eps)D^2}\right)dt \lel 2^{3p/2-1}(1+\eps)^{p/2}D^p\int_0^{\frac{\eps D^2}{2M^2(1+\eps)}}r^{p/2-1}e^{-r} dr\\
\kl & 2^{3p/2-1}(1+\eps)^{p/2}D^p \int_0^\8 r^{p/2-1}e^{-r} dr \kl 2^{3p/2-1}(1+\eps)^{p/2}D^p ({p}/2)^{p/2-1}\\
\kl & 2^{p}(1+\eps)^{p/2}D^p p^{p/2-1}\pl.
\end{align*}
%But \[\int_0^{\frac{\eps D^2}{2M^2(1+\eps)}}r^{p/2-1}e^{-r} dr\kl \int_0^pr^{p/2-1}e^{-r} dr+\int_p^\8 r^{p/2-1}e^{-r} dr\kl 2p^{p/2-1}+2p^{p/2-1}\]
For the second term on the right hand side,
\begin{align*}
&\int_\frac{2\sqrt{\eps}D^2}{M}^\8 t^{p-1} \exp\left(-\frac{t\sqrt{\eps}}{4(1+\eps) M}\right) dt \\
\kl& 4^p\Big(\frac{1+\eps}{\sqrt{\eps}}\Big)^pM^p\int_0^\8 r^{p-1}e^{-r}dr
\kl  4^p\Big(\frac{1+\eps}{\sqrt{\eps}}\Big)^pM^p p^{p-1}\pl.
\end{align*}
Hence, we find
\[ \|x_n\|_p^p\kl 2^{p+1}(1+\eps)^{p/2}D^p p^{p/2}+2^{2p+1}\Big(\frac{1+\eps}{\sqrt{\eps}}\Big)^pM^p p^{p}.\]
This yields
\begin{align*}
\|x_n\|_p&\kl 2^{1+1/p}(1+\eps)^{1/2}D \sqrt{p}+ 2^{2+1/p}\Big(\frac{1+\eps}{\sqrt{\eps}}\Big)Mp  \\
&\kl 2^{3/2}(1+\eps)^{1/2}D \sqrt{p} + 2^{5/2}\Big(\frac{1+\eps}{\sqrt{\eps}}\Big)Mp\pl.
\end{align*}
Setting $D^2=\big\|\sum_{i=1}^n E_{i-1}(dx_i^2)\big\|$ and $M=\sup_{i=1, \cdots ,n}\|dx_i\|$ gives the assertion.
\end{proof}
Another way to obtain \eqref{sqp} would be an improved Burkholder inequality for noncommutative martingales:

\begin{prob}
  Is it true that for some function $f(p)$  and constant $C$,
 \begin{equation}
 \label{bbur}  \|\sum_k dx_k\|_p
 \kl C\sqrt{p}\|(\sum_k dx_k^*dx_k + dx_kdx_k^*)^{1/2}\|_p+f(p) \big(\sum_k \|dx_k\|_p^p\big)^{1/p}
 \end{equation}
holds for all noncommutative martingales.
\end{prob}
For independent increments this has recently been proved in \cite{JZ}. One would actually expect $f(p)=p$. As will become clear in the following, the validity of \eqref{bbur} would improve our main results and imply a number of results in different contexts. At the time of this writing we are unable to decide whether \eqref{bbur} holds. However, the commutative case was known to be true due to the work of Pinelis \cite{Pin}, who attributed it to Hitczenko.

\section{Noncommutative Burkholder--Davis--Gundy type inequalities}
We refer the readers to \cites{JKo, JMe, JP} for further details about the facts mentioned in this section.
Let $x=(E_1x, \cdots ,E_nx)$ be a (finite) martingale sequence with martingale differences $dx_k$. For $1\le p\le \8$, we define
%\|x\|_{H_p^c}\lel \left\|\Big(\sum_{k}d_kx^* d_kx\Big)^{1/2}\right\|_p,
\[ \|x\|_{h_p^c} \lel \left\|\Big(\sum_{k}E_{k-1}(dx_k^* dx_k)\Big)^{1/2}\right\|_p,\quad \|x\|_{h_p^r}=\|x^*\|_{h_p^c}\pl,\]
and \[\|x\|_{h_p^d}=\Big(\sum_{k}\|dx_k\|_p^p\Big)^{1/p}. \]

We are going to use the continuous filtration $(\N_t)_{t\ge0}\subset \N$ in the following. Recall that a martingale $x$ is said to have almost uniform (or a.u. \!for short) continuous path if for every $T>0$, every $\eps>0$ there exists a projection $e$ with $\tau(1-e)<\eps$ such that the function $f_e:[0,T]\to \N$ given by $f_e(t)=x_te\in \N$ is norm continuous. Let $\si=\{0=s_0, \cdots ,s_n=T\}$ be a partition of the interval $[0,T]$ and $|\si|$ its cardinality. Put
\[
\|x\|_{h_p^c([0,T];\si)} \lel \Big\|\sum_{j=0}^{|\si|-1}E_{s_{j}}|E_{s_{j+1}}x-E_{s_j}x|^2 \Big\|^{1/2}_{p/2}, \quad 2\le p\le \8\pl,
\]
\[
 \|x\|_{h_p^d([0,T];\si)} \lel \Big(\sum_{j=0}^{|\si|-1}\|E_{s_{j+1}}x-E_{s_j}x\|_p^p \Big)^{1/p}, \quad 2\le p<\8\pl,
\]
and $\|x\|_{h_p^r([0,T];\si)}=\|x^*\|_{h_p^c([0,T];\si)}$. Let $\U$ be an ultrafilter refining the natural order given by inclusion on the set of all partitions of $[0,T]$. Let $x\in L_p(\N)$. For $2\le p< \8$, we define
\[\langle x,x\rangle_T\lel \lim_{\si, \U}\sum_{i=0}^{|\si|-1}E_{s_i}|E_{s_{i+1}}x-E_{s_i}x|^2\pl.\]
Here the limit is taken in the weak* topology and it is shown in \cite{JKo} that the convergence is also true in  $L_{p}$ norm $\|\cdot\|_{p/2}$ for all $2<p<\8$. We define the continuous version of $h_p$ norms for $2\le p<\8$,
\[\|x\|_{h_p^c([0,T])}\lel\lim_{\si, \U}\|x\|_{h_p^c([0,T];\si)} \pl,\]
\[\|x\|_{h_p^d([0,T])}\lel\lim_{\si, \U}\|x\|_{h_p^d([0,T];\si)} \pl.\]
and $\|x\|_{h_p^r([0,T])}=\|x^*\|_{h_p^c([0,T])}$ for $2\le p < \8$. Then for all $2<p<\8$
\begin{equation}\label{hpc}
 \|x\|_{h_p^c([0,T])}\lel \|\lge x,x\rge_T\|^{1/2}_{p/2}\pl.
\end{equation}
A martingale $x$ is said to be of vanishing variation if $\|x\|_{h^d_p([0, T])}=0$ for all $T>0$ and all $2<p<\infty$.
We also write
\[\var_p(x)=\|x\|_{h_p^d([0,T])},\quad \]
and let $V_p(\N)$ denote the $L_2(\N)$ closure of $\{x\in L_p(\N): \var_p(x)=0\}$.

The following results are proved in \cite{JKo}. For any $y\in L_p(\N)$, we write $d_j y= E_{s_{j}}y-E_{s_{j-1}}y$. Put $\|x\|_{L_p(\var)}=\sup_{\si} \|(d_jx)\|_{L_p(\ell_1)}$, where the supremum is taken over all finite partitions of $[0,T]$, and the norm $\|\cdot\|_{L_p(\ell_1)}$ was defined in \cite{Ju}, which we will not use after the next result.
\begin{theorem}\label{decom}
Let $2<p<\8$ and $x\in L_p(\N_T)$. Then for all $\de>0$, there exists a decomposition $x=y^\de+z^\de$ satisfying the following
\begin{enumerate}
 \item $\var_p(y^\de)<\de$, $z^\de\in L_p(\var)$.
 \item Let $P(x)=w^*\dash\lim_\de y^\de$. Here $w^*\dash\lim$ denotes the weak* limit. Then $P: L_p(\N)\to V_p(\N)$ is an orthogonal projection.
 \item $P(x)=x$ for all $x$ with vanishing variation.
\end{enumerate}
One {may take} $y^\de=w^*\dash\lim_\si \sum_{j=1}^{|\si|} d_j(d_jx1_{[|d_jx|\le \de]})$ where $1_B$ is the spectral projection of $d_jx$ restricted to the Borel set $B$.
\end{theorem}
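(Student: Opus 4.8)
The plan is to build the decomposition by truncating martingale increments, exactly as one does in the discrete setting, and then to transport the construction to the continuous filtration via a weak${}^*$ limit along the ultrafilter $\U$. Fix $\de>0$ and a finite partition $\si=\{0=s_0<\cdots<s_m=T\}$, and set $d_jx=E_{s_j}x-E_{s_{j-1}}x$. Since $1_{[|d_jx|\le\de]}$ is a spectral projection of $|d_jx|$, it belongs to $\N_{s_j}$, so
\[
 d_jx\lel d_j\big(d_jx\,1_{[|d_jx|\le\de]}\big)+d_j\big(d_jx\,1_{[|d_jx|>\de]}\big)
\]
exhibits $d_jx$ as a sum of two martingale differences (adding the two $E_{s_{j-1}}$--compensators costs nothing because $E_{s_{j-1}}d_jx=0$). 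Summing over $j$ gives $x-E_0x=y_\si+z_\si$, where $y_\si=\sum_j d_j(d_jx\,1_{[|d_jx|\le\de]})$ and $z_\si=\sum_j d_j(d_jx\,1_{[|d_jx|>\de]})$ are martingales for the discrete filtration $(\N_{s_j})_j$; the first of these is the candidate for the ``one may take'' formula.

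Next I would obtain uniform $L_p$ control. For $p\ge2$ the sum $\sum_j\|d_jx\|_p^p$ decreases when $\si$ is refined (subdividing one martingale difference only decreases $\sum\|\cdot\|_p^p$), so evaluation at the two-point partition gives $\sum_j\|d_jx\|_p^p\le\|x-E_0x\|_p^p$ for every $\si$; orthogonality gives $\sum_j\|d_jx\|_2^2=\|x-E_0x\|_2^2$; and $\|x\|_{h_p^c([0,T];\si)}\to\|x\|_{h_p^c([0,T])}$ along $\U$ by the $\|\cdot\|_{p/2}$-convergence of $\langle x,x\rangle_T$ recalled above (for $2<p<\infty$). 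Feeding these into the noncommutative Burkholder inequality \eqref{burk}, applied to the discrete martingale $\big(\sum_{l\le j}d_l(d_lx\,1_{[|d_lx|>\de]})\big)_j$, bounds $\|z_\si\|_p$ along $\U$, hence also $\|y_\si\|_p$. As $L_p(\N)$ is reflexive, passing to the weak${}^*$ limit along $\U$ produces $y^\de=w^*\dash\lim_\si y_\si$ and $z^\de=w^*\dash\lim_\si z_\si$ with $x-E_0x=y^\de+z^\de$; moving the constant $E_0x$ into $z^\de$ gives $x=y^\de+z^\de$.

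For (1), note first that $\var_p(\cdot)\le2\|\cdot\|_p$ together with sub-additivity of $\var_p$ makes each sub-level set $\{\var_p\le r\}$ norm-closed and convex, hence weak${}^*$-closed, so $\var_p$ is weak${}^*$-lower-semicontinuous; the same holds for $\|\cdot\|_{L_p(\var)}$. Using $|d_jx\,1_{[|d_jx|\le\de]}|\le\de$ we get $\|d_jx\,1_{[|d_jx|\le\de]}\|_p^p\le\de^{p-2}\|d_jx\|_2^2$, hence $\var_p(y_\si)\le\|y_\si\|_{h_p^d([0,T];\si)}\le 2\de^{1-2/p}\|x-E_0x\|_2^{2/p}$; passing to the weak${}^*$ limit, $\var_p(y^\de)\le 2\de^{1-2/p}\|x-E_0x\|_2^{2/p}\to0$ as $\de\to0$, which after relabelling the truncation level yields the first part of (1). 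The membership $z^\de\in L_p(\var)$ is the heart of the matter: from $|d_jx\,1_{[|d_jx|>\de]}|=|d_jx|\,1_{[|d_jx|>\de]}\ge\de\,1_{[|d_jx|>\de]}$ one gets $\sum_j\tau\big(1_{[|d_jx|>\de]}\big)\le\de^{-2}\|x-E_0x\|_2^2$, so the ``large increments'' are $\tau$-rare, and combining this with $\sum_j\|d_jx\,1_{[|d_jx|>\de]}\|_p^p\le\|x-E_0x\|_p^p$ one shows, via the factorization description of the $L_p(\ell_1)$ norm from \cites{Ju,JKo}, that $z^\de$ is a compensated sum of those rare large jumps and therefore has finite $L_p(\var)$-norm. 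I expect this $L_p(\ell_1)$ estimate to be the main obstacle; it is the one place where the soft weak${}^*$-compactness argument must be supplemented by the genuinely continuous-time variation technology of \cite{JKo}.

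Finally, set $P(x)=w^*\dash\lim_\de y^\de$ (that this weak${}^*$ limit exists, not merely along a subnet, is part of what one must verify). Lower semicontinuity of $\var_p$ and $\var_p(y^\de)\to0$ force $\var_p(P(x))=0$, so $P$ maps $L_p(\N)$ into $V_p(\N)$. When $x$ already has vanishing variation, the estimate $\var_p(z_\si)\le2\big(\sum_j\|d_jx\,1_{[|d_jx|>\de]}\|_p^p\big)^{1/p}\le2\big(\sum_j\|d_jx\|_p^p\big)^{1/p}\to0$ along $\U$ gives $\var_p(z^\de)=0$; since also $z^\de\in L_p(\var)$, and a martingale with vanishing $h_p^d$-variation and finite $L_p(\var)$-norm must vanish — i.e. $V_p(\N)\cap L_p(\var)=\{0\}$, which comes from the Burkholder-type description of $L_p(\var)$ — we conclude $z^\de=0$, hence $P(x)=x$, which is (3). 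Then $P^2=P$ follows from (3) applied to $P(x)\in V_p(\N)$, and self-adjointness/orthogonality of $P$ on $L_2\cap L_p$ from $\tau((Px)y)=\tau(x(Py))$, both sides being limits of the same truncated expression. In summary, the delicate steps, in decreasing order of difficulty, are: (a) the uniform $L_p(\ell_1)$ bound placing $z^\de$ in $L_p(\var)$; (b) the identity $V_p(\N)\cap L_p(\var)=\{0\}$; and (c) the genuine (not merely subnet) weak${}^*$ convergence defining $P$.
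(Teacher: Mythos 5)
The paper does not prove Theorem \ref{decom} itself; it is quoted from Junge--K\"ostler with the prefatory remark ``The following results are proved in \cite{JKo},'' so there is no in-paper argument for your attempt to be compared against. Your outline is at least consistent with the formula for $y^\de$ stated in the theorem (truncate the increments at level $\de$, then take a weak${}^*$ limit along $\U$), so it is presumably in the same spirit as the cited source.

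As a self-contained argument, however, it has genuine unfilled gaps, which you yourself flag honestly. First, you never actually establish $z^\de\in L_p(\var)$: the remark that ``$z^\de$ is a compensated sum of rare large jumps and therefore has finite $L_p(\var)$-norm'' describes what one wants, not how to get it, and this uniform $L_p(\ell_1)$ estimate is precisely the analytic content that cannot be reached by soft weak${}^*$/reflexivity arguments and requires the variation machinery of \cite{JKo}. Second, the identity $V_p(\N)\cap L_p(\var)=\{0\}$, which you invoke to derive part (3) and hence idempotence and orthogonality of $P$, is asserted but not proved. Third, you observe but do not verify that $w^*\dash\lim_{\de\to0}y^\de$ exists as a genuine limit rather than only along a subnet, which is needed for $P$ to be well defined. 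Beyond these three, the intermediate claim that $\sum_j\|d_jx\|_p^p$ decreases when $\si$ is refined (for $p\ge2$) is stated without justification; what you actually need at that spot is only a uniform bound $\sum_j\|d_jx\|_p^p\le 2^{p-2}\|x-E_0x\|_p^p$, which does follow by interpolating the map $x\mapsto(d_jx)_j$ between $\ell_2(L_2)$ (isometry onto its range) and $\ell_\infty(L_\infty)$ (norm at most $2$), but the monotonicity under refinement as you state it is not obvious. So the skeleton is reasonable and compatible with the theorem's formula for $y^\de$, but the load-bearing technical steps from \cite{JKo} remain outstanding.
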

\begin{lemma}\label{auct}
 If $x$ has a.u. continuous path, then it is of vanishing variation.
\end{lemma}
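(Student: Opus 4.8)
The plan is to show that $\|x\|_{h^d_p([0,T])}=0$ for every $T>0$ and every $2<p<\8$, which is exactly the definition of vanishing variation. Since $\U$ refines the inclusion order, it suffices to prove: for each $\eta>0$ there is a $\de>0$ such that every partition $\si$ of $[0,T]$ with mesh less than $\de$ satisfies $\sum_j\|d_jx\|_p^p<\eta$; indeed every refinement of a mesh-$<\de$ partition again has mesh less than $\de$, and the tails $\{\si'\supseteq\si\}$ lie in $\U$. A preliminary truncation/localization reduces everything to the case of a bounded self-adjoint martingale $x$, using that the seminorm $y\mapsto\|y\|_{h^d_p([0,T])}$ is dominated by $C_p\|y\|_p$ (a consequence of the Burkholder--Gundy inequalities for $p\ge2$), so that the truncation error is uniformly small.

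Now fix $\eta>0$ and a bounded self-adjoint $x$. For a parameter $\eps\in(0,1]$ to be fixed, the a.u.\ continuity hypothesis provides a projection $e$ with $\tau(1-e)<\eps$ such that $t\mapsto x_te$ is norm continuous, hence uniformly continuous on $[0,T]$ with some modulus $\om$, $\om(0^+)=0$. Put $q=1-e$ and split $d_jx=(d_jx)e+(d_jx)q$, so that $\sum_j\|d_jx\|_p^p\le 2^{p-1}\big(\sum_j\|(d_jx)e\|_p^p+\sum_j\|(d_jx)q\|_p^p\big)$. For the $e$-part I would use that, since $p>2$, $\|a\|_p^p\le\|a\|_\8^{p-2}\|a\|_2^2$; applied to $a=(d_jx)e$, together with $\|(d_jx)e\|_\8=\|x_{s_{j+1}}e-x_{s_j}e\|_\8\le\om(\de)$ and $\|(d_jx)e\|_2\le\|d_jx\|_2$, this gives $\sum_j\|(d_jx)e\|_p^p\le\om(\de)^{p-2}\sum_j\|d_jx\|_2^2=\om(\de)^{p-2}\,\|x_T-x_0\|_2^2$, where the last step is the orthogonality of the martingale differences in $L_2$ — this is precisely what removes the otherwise dangerous factor equal to the number of subdivision points. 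Choosing $\de$ small makes this $<\eta/2$.

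The $q$-part is the crux. Here $\|(d_jx)q\|_p^p\le\|(d_jx)q\|_\8^{p-2}\|(d_jx)q\|_2^2\le(2\|x\|_\8)^{p-2}\,\tau(|d_jx|^2q)$, so that $\sum_j\|(d_jx)q\|_p^p\le(2\|x\|_\8)^{p-2}\,\tau\big(\big(\textstyle\sum_j|d_jx|^2\big)q\big)$. The decisive input is that the approximate quadratic variations are bounded in $L_{p/2}$, i.e. $K:=\sup_\si\big\|\sum_j|d_jx|^2\big\|_{p/2}<\8$. This I would obtain from the continuous-filtration theory of \cite{JKo}: the predictable brackets $\sum_jE_{s_j}|d_jx|^2$ converge in $\|\cdot\|_{p/2}$, and the martingale $\sum_j\big(|d_jx|^2-E_{s_j}|d_jx|^2\big)$ is controlled in $\|\cdot\|_{p/2}$ by the noncommutative Burkholder inequality \eqref{burk} (for bounded $x$ one may first work at an exponent $\ge4$ and then use $\tau(1)=1$). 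Granting this, Hölder's inequality gives $\tau\big(\big(\sum_j|d_jx|^2\big)q\big)\le K\,\tau(q)^{1-2/p}\le K\eps^{1-2/p}$, so fixing $\eps$ small at the outset makes $\sum_j\|(d_jx)q\|_p^p<\eta/2$ for every $\si$. Adding the two parts yields $\sum_j\|d_jx\|_p^p<\eta$ for all $\si$ of mesh less than $\de$, hence $\|x\|_{h^d_p([0,T])}=0$; letting $T$ and $p$ vary, $x$ has vanishing variation.

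The main obstacle is exactly the $q$-part: any termwise estimate there manufactures a spurious factor equal to the number of subdivision points, so the argument genuinely requires the uniform bound $\sup_\si\|\sum_j|d_jx|^2\|_{p/2}<\8$ (equivalently, uniform integrability of the approximate quadratic variations), which rests on the $L_{p/2}$-convergence results of \cite{JKo} together with a Burkholder estimate. A secondary technical point is the reduction from general $x\in L_p(\N_T)$ to a bounded $x$, where one must check that the truncation (or localization) preserves the a.u.\ continuity of the path.
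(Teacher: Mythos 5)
The paper does not include its own proof of this lemma; it is stated among the results quoted from \cite{JKo}, so a direct comparison with "the paper's proof" is not possible. I therefore assess the argument on its own merits.

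The overall structure of your argument is sound for a \emph{bounded} self-adjoint martingale. The ultrafilter bookkeeping (tails of mesh-$<\delta$ partitions lie in $\U$, so it suffices to bound the $h_p^d$-seminorm on all small-mesh partitions) is correct. The split through the projection $e$ provided by a.u.\ continuity is the right move, and the $e$-part estimate — $\|(d_jx)e\|_p^p\le\|(d_jx)e\|_\infty^{p-2}\|(d_jx)e\|_2^2$, summed and controlled via $\sum_j\|d_jx\|_2^2=\|x_T-x_0\|_2^2$ — correctly kills the would-be factor of $|\si|$. For the $q$-part, the bound $K:=\sup_\si\|\sum_j|d_jx|^2\|_{p/2}<\infty$ is the key uniform input, but it is obtained more directly than you propose: the Pisier--Xu noncommutative Burkholder--Gundy inequality gives $\|(\sum_j|d_jx|^2)^{1/2}\|_p\le c_p\|x\|_p$ for $2\le p<\infty$, whence $K\le c_p^2\|x\|_p^2$ uniformly in $\si$, without needing to split into predictable bracket plus remainder martingale and worry about $p/2<2$. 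So for bounded $x$ your scheme closes up.

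The genuine gap is the reduction to bounded $x$, which you mention only as a secondary technical point but which is in fact where the argument is incomplete. You use $\|(d_jx)q\|_\infty\le 2\|x\|_\infty$ in the $q$-part; this fails if $x\in L_p(\N_T)$ is not in $\N_T$. Your proposed remedy — approximate $x$ in $L_p$ by bounded martingales and use $\|\cdot\|_{h_p^d([0,T])}\le C_p\|\cdot\|_p$ to transfer the conclusion — requires the bounded approximants themselves to have a.u.\ continuous path, because your argument only shows vanishing variation for martingales with that property. But a spectral truncation $y=x\,1_{[-M,M]}(x)$ produces a different martingale $(E_t y)_t$, and there is no reason that $t\mapsto (E_ty)e'$ is norm continuous for projections $e'$ of large trace. (Closedness of the a.u.-continuous class under $L_2$-limits, as in Remark~\ref{doma}, runs in the wrong direction for this.) In the commutative theory one would localize by stopping times, but noncommutative localization of this kind is delicate and is not supplied here. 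Since the lemma is subsequently applied to the reversed martingales $m_s(x)=\pi_s(T_sx)$ with $x\in\Dom(A^{1/2})\cap L_p^0(\N)$, which need not be bounded, this reduction must be made precise (or the lemma restricted to bounded martingales, with an explicit approximation step in its applications) before the proof is complete.
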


Now let us prove the main results of this section, which can be regarded as the noncommutative version of Burkholder--Davis--Gundy type inequalities.

\begin{theorem}\label{ctpm}
Let $x$ be a mean 0 martingale with a.u. \hspace{-0.2em}continuous path. Then for every $T>0$, we have
\begin{enumerate}
 \item For $2\le p<\8$, if $x$ is self-adjoint, then
\[\|E_T x\|_p\kl C\sqrt{p}\liminf_{\si,\U}\|x\|_{h_\8^c([0,T];\si)}\pl.\]
 If $x$ is not necessarily self-adjoint, then
\[\|E_T x\|_p\kl C\sqrt{p}\liminf_{\si,\U}\big(\|x\|_{h_\8^c([0,T];\si)}+\|x\|_{h_\8^r([0,T];\si)}\big)\pl,\]
where we may take $C=2\sqrt{2}$.
\item For all $2\le p < \8$,
\[\|E_T x\|_p\kl C'{p}\max\big\{\|x\|_{h_p^c([0,T])},\pl\|x\|_{h_p^r([0,T])}\big\}\pl.
\]
\end{enumerate}
\end{theorem}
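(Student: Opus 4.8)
The plan is to deduce both estimates by applying the discrete inequalities of Section~1 to the sampled martingale $(E_{s_j}x)_j$ over a partition $\sigma=\{0=s_0<\cdots<s_n=T\}$ of $[0,T]$ and then refining $\sigma$ along the ultrafilter $\U$, the crucial input being that an a.u.\ continuous path has vanishing variation (Lemma~\ref{auct}), so that the diagonal $h^d_p$-terms disappear in the limit. Part (2) is the quicker half. Fix $\sigma$ and apply the Burkholder inequality \eqref{burk}, with the optimal order $c(p)\le cp$ of Randrianantoanina, to $(E_{s_j}x)_j$; this martingale has mean zero because $x$ does. Its diagonal term is exactly $\|x\|_{h_p^d([0,T];\sigma)}$, and splitting $E_{s_{j-1}}(d_jx^*d_jx+d_jxd_jx^*)$ into its two halves and using the triangle inequality in $L_{p/2}$ (valid since $p\ge2$) bounds the square function term by $\|x\|_{h_p^c([0,T];\sigma)}+\|x\|_{h_p^r([0,T];\sigma)}$. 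Passing to the limit along $\U$, the column and row pieces tend to $\|x\|_{h_p^c([0,T])}$ and $\|x\|_{h_p^r([0,T])}$ by the very definition of these norms, while the diagonal piece tends to $\|x\|_{h^d_p([0,T])}=0$ by Lemma~\ref{auct}; since $\|E_Tx\|_p$ does not depend on $\sigma$, this gives $\|E_Tx\|_p\le cp\bigl(\|x\|_{h_p^c([0,T])}+\|x\|_{h_p^r([0,T])}\bigr)$, i.e.\ (2) with $C'=2c$.

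For part (1) I would first reduce to the self-adjoint case by the $2\times2$ dilation $X=\begin{pmatrix}0&x\\x^*&0\end{pmatrix}$ in $M_2(\N)$ with the normalized trace: one checks $\|E_TX\|_p=\|E_Tx\|_p$, that $X$ again has a.u.\ continuous path, and that $\|X\|_{h_\infty^c([0,T];\sigma)}=\max\{\|x\|_{h_\infty^c([0,T];\sigma)},\|x\|_{h_\infty^r([0,T];\sigma)}\}\le\|x\|_{h_\infty^c([0,T];\sigma)}+\|x\|_{h_\infty^r([0,T];\sigma)}$. So assume $x=x^*$, and assume $L:=\liminf_{\sigma,\U}\|x\|_{h_\infty^c([0,T];\sigma)}<\infty$ (otherwise there is nothing to prove). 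Since the differences $d_jx=E_{s_j}x-E_{s_{j-1}}x$ need not be bounded, Proposition~\ref{pmom} cannot be applied directly; instead I truncate. Fix $\delta>0$, let $q_j=1_{[-\delta,\delta]}(d_jx)\in\N_{s_j}$ be the spectral projection of the self-adjoint $d_jx$, and split $x=x^\delta+r^\delta$ by declaring the martingale differences of $x^\delta$ to be $d_jx\,q_j-E_{s_{j-1}}(d_jx\,q_j)$. Then $x^\delta$ has differences of norm $\le2\delta$; because $q_j$ commutes with $d_jx$ one gets $\sum_jE_{s_{j-1}}(d_jx^\delta)^2\le\sum_jE_{s_{j-1}}(d_jx)^2$; and $(\sum_j\|d_jr^\delta\|_p^p)^{1/p}\le2\|x\|_{h^d_p([0,T];\sigma)}$. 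Applying Proposition~\ref{pmom} to $x^\delta$ then yields, for every $0<\eps\le1$,
\[
\|E_Tx^\delta\|_p\le 2^{3/2}(1+\eps)^{1/2}\sqrt p\;\|x\|_{h_\infty^c([0,T];\sigma)}+2^{7/2}\,\frac{1+\eps}{\sqrt\eps}\,p\,\delta .
\]

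It remains to absorb the remainder $r^\delta$ (which depends on $\sigma$ as well as $\delta$). The point is that $r^\delta$ inherits the vanishing-variation property from $x$ — its diagonal norm is dominated by that of $x$ — and this together with the a.u.\ continuity of $x$ should force $\|E_Tr^\delta\|_p$ to be negligible once the partition is fine enough and $\delta$ is small; quantitatively, $\lim_{\delta\to0}\liminf_{\sigma,\U}\|E_Tr^\delta\|_p=0$, which I would establish through the decomposition $x=y^\delta+z^\delta$ of Theorem~\ref{decom} together with Lemma~\ref{auct}. Granting this, one combines the two estimates, takes the $\liminf$ over $\sigma$ along $\U$ (so that $\|x\|_{h_\infty^c([0,T];\sigma)}$ is replaced by $L$ and the $r^\delta$-contribution is swallowed), then sends $\delta\to0$ to kill the second term, and finally $\eps\to0^+$; since $2^{3/2}=2\sqrt2$ this gives $\|E_Tx\|_p\le2\sqrt2\,\sqrt p\,L$, which is (1). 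The main obstacle is exactly this last reduction: Proposition~\ref{pmom} is an inherently bounded-jump statement, so everything rests on controlling the large-jump remainder $r^\delta$ in $L_p$ after refining the partition. This is precisely why one is forced to set up the continuous filtration, the quadratic variation $\langle x,x\rangle_T$ and the $V_p(\N)$/vanishing-variation apparatus of this section rather than arguing on a single partition.
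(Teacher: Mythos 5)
Your part (2) follows the paper's argument almost verbatim: apply the Burkholder inequality \eqref{burk} with Randrianantoanina's $c(p)\sim p$ to the sampled martingale $(E_{s_j}x)_j$, dominate the conditional square function by the sum of row and column pieces using the triangle inequality in $L_{p/2}$, and pass to the limit along $\U$, where the diagonal term vanishes because a.u.\ continuity implies vanishing variation (Lemma~\ref{auct}). Your $2\times2$ dilation to reduce to the self-adjoint case in part (1) is a perfectly serviceable alternative to the paper's decomposition $x=\Re(x)+i\Im(x)$, and your truncation $d_j\bigl(d_jx\,1_{[|d_jx|\le\delta]}\bigr)$ is exactly the paper's $y^\delta_\sigma$.

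The genuine gap is the final step of part (1), which you explicitly defer. You propose to establish $\lim_{\delta\to0}\liminf_{\sigma,\U}\|E_Tr^\delta\|_p=0$, i.e.\ to control the large-jump remainder in $L_p$-norm, and to get this from Theorem~\ref{decom} and Lemma~\ref{auct}. This reformulation is not what the paper does and it is not clear it can be made to work as stated: Theorem~\ref{decom} provides only \emph{weak*} limits, and the vanishing-variation property merely gives $x=w^*\text{-}\lim_{\delta\to0}y^\delta$, hence $z^\delta\to 0$ weak*; this does not yield $\|z^\delta\|_p\to0$, and a weak*-null net can have norms bounded away from $0$. The paper's device is to avoid ever estimating the remainder. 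Since for each $\delta$ the truncated martingale satisfies the uniform bound
\[
\|y^\delta_\sigma\|_p\kl 2^{3/2}(1+\eps)^{1/2}\sqrt p\,\|x\|_{h^c_\8([0,T];\sigma)}+2^{7/2}\tfrac{1+\eps}{\sqrt\eps}\,p\,\delta,
\]
and since $x$ lies in the weak* closure of the set $\{y^\delta:\delta<\eta\}$ for every $\eta>0$, one invokes Mazur's theorem (norm closure $=$ weak closure of a convex set, using reflexivity of $L_p(\N)$) to produce a net of convex combinations of $y^\delta$'s, $\delta<\eta$, converging to $x$ in $L_p$-\emph{norm}. The uniform bound then transfers to $x$ directly, giving $\|x\|_p\le 2^{3/2}(1+\eps)^{1/2}\sqrt p\,L+2^{7/2}\tfrac{1+\eps}{\sqrt\eps}\,p\,\eta$ with $L=\liminf_{\sigma,\U}\|x\|_{h^c_\8([0,T];\sigma)}$, and sending $\eta\to0$ then $\eps\to0$ finishes. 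This weak*-to-norm passage via convexity is the missing idea; without it, your sketch does not close, and chasing $\|E_Tr^\delta\|_p$ leads into a dead end.
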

\begin{proof}
(1) First assume that $x$ is self-adjoint and that $x\in \N_T$. We follow the strategy used in the proof of Theorem \ref{decom}. Fix a partition $\si$ of $[0,T]$. We write $h_p(\si)$ for $h_p([0,T];\si)$ in the following proof. Let $\de>0$. We have
$d_jx = d_jx 1_{[|d_jx|>\de]}+d_jx 1_{[|d_jx|\le\de]}$. Conditioning again, we obtain
\[d_jx\lel d_j(d_jx 1_{[|d_jx|>\de]})+d_j(d_jx 1_{[|d_jx|\le \de]})\pl.\]
Put $z^\de_\si = \sum_{j=1}^{|\si|}d_j(d_jx 1_{[|d_jx|>\de]}) $ and $y^\de_\si=\sum_{j=1}^{|\si|}d_j(d_jx 1_{[|d_jx|\le \de]})$. Then clearly
\[\sup_{j=1, \cdots , |\si|} \|d_j(d_jx 1_{[|d_jx|\le \de]})\|_\8 \kl 2\de \pl. \]
Using Proposition \ref{pmes} for some fixed $0<\eps \le 1$, we find
\begin{equation}\label{part}
 \|y^\de_\si-\tau(y^\de_\si)\|_p\kl 2^{3/2}(1+\eps)^{1/2}\sqrt{p}\|y^\de_\si\|_{h^c_\8(\si)}+2^{7/2} \Big(\frac{1+\eps}{\sqrt{\eps}}\Big) p\de\pl.
 \end{equation}
%Note that $1_{[|d_jx|\ge\eps]}$ and $1_{[|d_jx|<\eps]}$ are orthogonal, we have
Note that
\begin{align*}
  0&\kl E_{s_{j-1}}|d_j(d_jx 1_{[|d_jx|<\de]})|^2\lel E_{s_{j-1}}[(d_jx 1_{[|d_jx|<\de]})^2]-[E_{s_{j-1}}(d_jx 1_{[|d_jx|<\de]})]^2\\
  &\kl E_{s_{j-1}}[(d_jx 1_{[|d_jx|<\de]})^2]\kl E_{s_{j-1}}[(d_jx)^2]\pl.
  \end{align*}
Then we have
\begin{align*}
  &\|y^\de_\si\|_{h^c_\8(\si)} \lel \Big\|\sum_{i=0}^{|\si|-1}E_{s_{j-1}}|d_j(d_jx 1_{[|d_jx|<\de]})|^2\Big\|_{\8}^{1/2}\\
  %\kl &  \Big\|\sum_{i=0}^{|\si|-1}E_{s_{j-1}}|d_j(d_jx 1_{[|d_jx|\ge \eps]})|^2+ \sum_{i=0}^{|\si|-1}E_{s_{j-1}}|d_j(d_jx 1_{[|d_jx|<\eps]})|^2\Big\|^{1/2}_\8\\
  \kl& \Big\|\sum_{i=0}^{|\si|-1}E_{s_{j-1}} |d_jx|^2\Big\|^{1/2}_\8\lel \|x\|_{h^c_\8(\si)}.
  \end{align*}
According to Theorem \ref{decom} (in our context, $y^\de =w^*\dash\lim_\si y^\de_\si$) and Lemma \ref{auct}, we have $x=w^*\dash\lim_{\de\to 0} y^\de$. Hence for any $\la_i\ge0,\sum_{i=1}^k \la_i=1$, we have
\[x\lel w^*\dash\lim_{\substack{\de_i\to 0\\ i=1, \cdots ,k}}\sum_{i=1}^k \la_i y^{\de_i}\pl.\]
Since in a Banach space the weak closure and the norm closure of a convex set are the same, by the reflexivity of $L_p(\N)$ we can find a net $x_\al$ in the convex hull of $\{y^\de\}$ such that $x_\al \to x$ in $L_p(\N)$. Therefore by sending $\de\to 0$, we deduce from \eqref{part} that
\[\|x\|_p\kl 2\sqrt{2}(1+\eps)^{1/2}\sqrt{p}\liminf_{\si,\U}\|x\|_{h_\8^c(\si)}\pl,\]
for all $0<\eps\le 1$. Sending $\eps\to 0$ yields the first assertion.
If $x$ is not self-adjoint, we write $x=\Re(x)+i\Im(x)$ where $\Re(x)=\frac{x+x^*}2$ and $\Im(x)=\frac{x-x^*}{2i}$. Then the second assertion follows from the self-adjoint case by triangle inequality.

(2) Since $x$ is of vanishing variation, $\|x\|_{h_p^d([0,T])}=0$ for all $2< p<\8$. Using \eqref{burk}, we have
\[
 \|x\|_p\kl C'p (\|x\|_{h_p^c([0,T];\si)}+\|x\|_{h_p^r([0,T];\si)})+p\|x\|_{h_p^d([0,T];\si)}\pl.
\]
Taking limits on the right hand side yields the assertion for $2<p<\8$. The case $p=2$ is proved by sending $p\downarrow 2$.
\end{proof}

\section{Poincar\'{e} type inequalities and applications}
\subsection{Poincar\'{e} type inequalities}
Let $(T_t)_{t\ge0}$ be a semigroup of operators acting on a finite von Neumann algebra $(\N,\tau)$ where $\tau(1)=1$. Following \cites{JMe, JMe2} we say $(T_t)$ is a {\em standard} semigroup if it satisfies the following assumptions:
\begin{enumerate}
 \item Every $T_t$ is a normal completely positive map on $\N$ such that $T_t(1) = 1$;
 \item Every $T_t$ is self-adjoint, i.e. $\tau(T_t (x) y)=\tau(x T_t(y))$ for all $x,y\in\N$.
 \item The family $(T_t)$ is pointwise weak* continuous. Equivalently, $\lim_{t\to t_0} T_t x = T_{t_0}x$ with respect to the strong operator topology in $\N$ for any $x\in \N$; see \cite{MS10}.
% \item There exists a weakly dense self-adjoint subalgebra $\A\subset \N$ such that $A(\A)\subset \A$ and $T_t(\A)\subset \A$.
\end{enumerate}
It is well known that Assumption (3) is further equivalent to that $(T_t)$ is a strongly continuous semigroup on $L_2(\N,\tau)$, where $T_t$ extends to $L_2(\N,\tau)$ by $T_t \La(x)=\La(T_t x)$ for $x\in \N$ and $\La: \N\to L_2(\N,\tau)$ is the natural embedding. By \cite{DL}, $(T_t)$ extends to a strongly continuous contraction semigroup on $L_p(\N)$ for every $1\le p< \8$ with generator $A$, i.e. $T_t=e^{-tA}$. Write for $1\le p<\8$
\[ \Dom_p(A)=\{f\in L_p(\N): \lim_{t\to0} (f-T_tf)/t \mbox{ converges in } L_p(\N)\}.\]
Then the classical semigroup theory asserts that $\Dom_p(A)$ is dense in $L_p(\N)$ and that if $x\in\Dom_p(A)$ then $T_tx\in\Dom_p(A)$. We also denote $\Dom(A)=\Dom_2(A)$. Note that $A$ is a positive operator on $L_2(\N,\tau)$. The standard assumptions also imply that $\tau(T_tx)=\tau(x)$ and thus $T_t$'s are faithful. In addition, $T_t$ is a contraction on $\N$. Indeed, for $x\in\N$, we have
\[\|T_t x\|_\8=\sup_{\|y\|_1\le 1} |\tau((T_tx)y)|=\sup_{\|y\|_1\le 1} |\tau(x(T_ty))|\le \sup_{\|y\|_1\le 1} \|T_ty\|_1\|x\|_\8\le \|x\|_\8\pl.
\]

Recall that $T_t$ is said to admit a reversed Markov dilation if
\begin{enumerate}
\item[(H1)] there exists a larger finite von Neumann algebra $\M$ and a family $\pi_t: \N\to \M$
of trace preserving $*$-homomorphism;
\item[(H2)] there is a decreasing filtration $(\M_{[s})_{0\le s<\8}$ with $\pi_r(x)\in \M_{[s}$ for all $r>s$ such that $E_{[s}(\pi_t(x))=\pi_s(T_{s-t}x)$ for all $t<s$ and $x\in \N$.
\end{enumerate}
Here we have $\M_{[t}=E_{[t}(\M)$. For elements $x,y\in \Dom(A)$ we may define the gradient form, which is called Meyer's ``carr\'e du champ'' in the commutative theory,
\[2\Ga(x,y)=A(x^*)y+x^*A(y)-A(x^*y)\]
and for $x,y \in \Dom(A^2)$ the second order gradient
\[2\Ga_2(x,y)=\Ga(Ax,y)+\Ga(x, Ay)-A\Ga(x,y).\]
Recall that $(T_t)$ is called a noncommutative diffusion (or nc-diffusion for short) semigroup if $\Ga(x,x)\in L_1(\N)$ for all $x\in \Dom (A^{1/2})$. If $(T_t)$ is nc-diffusion, then $\Ga(x,x)\in L_1(\N)$ is well-defined for $x\in\Dom(A^{1/2})$ by extension. By duality, $\Ga(x,x)\in L_p(\N)$ for $1\le p< \8$ if and only if there exists $C>0$ such that $|\tau(\Ga(x,x)y)|\le C \|y\|_{p'}$ for all $y$ and $1/p+1/p'=1$.

We will use the following crucial results proved by Junge, Ricard, and Shlyakhtenko in \cite{JRS}, which is a noncommutative version of the Stroock--Varadhan martingale problem.
\begin{theorem}\label{jrs}
Suppose that $(T_t)_{t\ge0}$ is a standard nc-diffusion semigroup. Then $T_t$ admits a reversed Markov dilation $({\pi}_t)$ with a.u.\! continuous path, i.e. in addition to (H1) and (H2), for all $x\in \Dom(A)$ and all $S>0$,
$${m}_s(x):={\pi}_s(T_s(x)),\quad 0\le s \le S $$
is a (reversed) martingale with a.u.\! continuous path.
\end{theorem}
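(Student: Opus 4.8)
The plan is to carry out the noncommutative analogue of the Stroock--Varadhan programme: extract from $(T_t)$ a ``gradient'' and a ``generator'', realise the associated noise on a larger finite von Neumann algebra by a Fock-space stochastic calculus, and then verify the reversed Markov property (H1), (H2) together with the path regularity of $m_s(x)=\pi_s(T_sx)$.

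\emph{Step 1 (differential structure).} Since $(T_t)$ is standard and nc-diffusion, the form $\Ga$ of \eqref{grad} is a positive sesquilinear form on a core of $\Dom(A^{1/2})\cap\N$; by the Cipriani--Sauvageot theory of Dirichlet forms \cite{CSa} (see also Davies--Lindsay \cite{DL}) it is implemented by a closable derivation $\de:\B\to\Ha$ into an $\N$--$\N$ Hilbert bimodule $\Ha$, with $\langle\de x,\de y\rangle_\Ha=\tau(\Ga(x,y))$ and $A=\bar\de^*\bar\de$ on a core. This $\de$ is the object playing the role of the classical gradient, and the nc-diffusion hypothesis is precisely what makes $\Ga(x,x)$ integrable, hence makes $\de$ available.

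\emph{Step 2 (the dilation).} Form the noncommutative Brownian motion $(B_t)$ driven by $\Ha$: over the continuous tensor product / Guichardet--Fock space attached to $L_2([0,\8),\Ha)$ one obtains a finite von Neumann algebra $\M\supset\N$ with trace-preserving embedding, a canonical shift, and a decreasing family $\M_{[s}$ generated by $\N$ and the increments living past time $s$. Define $\pi_t$ as the solution of the stochastic cocycle equation with generator $A$ and noise $\de$; concretely $m_s(x):=\pi_s(T_sx)$ is characterised by $dm_s(x)=\pi_s(\de\,T_sx)\,dB_s$, the drift cancelling (in the time-reversed sense) because $\partial_s T_s x=-A\,T_s x$. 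That each $\pi_t$ is a trace-preserving $*$-homomorphism $\N\to\M$ is built into the construction (shift composed with embedding), which is (H1); the semigroup law of $(T_t)$ together with the freeness/independence of disjoint increments yields $E_{[s}(\pi_t(x))=\pi_s(T_{s-t}x)$ for $t<s$, which is (H2). In particular $m$ is a reversed martingale: $E_{[s}m_t(x)=\pi_s(T_{s-t}T_t x)=\pi_s(T_s x)=m_s(x)$ for $t<s$.

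\emph{Step 3 (a.u.\! continuity, and the main obstacle).} First establish that the driving noise $s\mapsto B_s$ has a.u.\! continuous path: this is a structural property of the Fock-space construction, proved by a Kolmogorov--Chentsov type argument from moment estimates of the form $\|B_t-B_s\|_p\kl C\sqrt p\,|t-s|^{1/2}$ -- the uniform $\sqrt p$ growth of the constant is exactly what lets one run the classical dyadic chaining after restricting to a projection of trace arbitrarily close to $1$. Then transfer regularity to $m(x)$ for $x\in\Dom(A)$ using $dm_s(x)=\pi_s(\de\,T_sx)\,dB_s$, strong continuity of $s\mapsto T_s x$, and the noncommutative Burkholder--Davis--Gundy estimates of Section~2 (Theorem~\ref{ctpm}) applied to the dyadic partitions approximating $m(x)$: these control $\|m_t(x)-m_s(x)\|_p$ by $\sqrt p$ times the square root of the norm of the bracket increment $\int_s^t\pi_r(\Ga(T_rx,T_rx))\,dr$, which tends to $0$ as $t\to s$, and the passage from this quantitative $L_p$ modulus to genuine a.u.\! (rather than merely $L_2$) continuity is then the same maximal-function argument used in \cite{Ze12}. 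The hard part is exactly here: the domain bookkeeping for $\de$ and $A$ (closability, cores, the bimodule only densely defined, the control of $\Ga(T_rx,T_rx)$) and, above all, making the continuous tensor product rigorous for a general bimodule and upgrading to a.u.\! path regularity. An alternative that sidesteps the continuous Fock space is to dilate $T_{2^{-n}}$ in $2^n$ discrete steps by an iterated tensor (K\"ummerer-type) dilation, pass to an ultraproduct as $n\to\8$, and recover continuity in the limit from the same uniform moment bounds; either way the crux is this path-regularity estimate.
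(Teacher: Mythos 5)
The paper does not prove Theorem~\ref{jrs} at all: it is stated and used as a black box, explicitly attributed to Junge--Ricard--Shlyakhtenko \cite{JRS} (``We will use the following crucial results proved by Junge, Ricard, and Shlyakhtenko in \cite{JRS} \ldots''). So you are not reconstructing an argument from this paper; you are sketching a proof of a theorem whose proof lives in a different paper, and the comparison to be made is with \cite{JRS}, not with anything here. That said, your Steps~1 and 2 are in the right spirit: the Cipriani--Sauvageot/Davies--Lindsay derivation $\delta$ implementing $\Gamma$ is indeed the differential object, and the dilation is produced via a noise construction (in \cite{JRS} the actual construction is closer to your ``alternative'' -- iterated discrete dilations and a limiting/ultraproduct argument -- than to a fully rigorous continuous Fock space over a general bimodule, which is exactly the delicate part you flag). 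You are also right that the nc-diffusion hypothesis, i.e.\ $\Gamma(x,x)\in L_1(\N)$, is precisely what makes the a.u.\ continuity of paths go through; for non-diffusion semigroups such as the Poisson semigroup on the circle the paths genuinely jump.

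There is, however, one concrete logical flaw in Step~3 that you should repair. You propose to obtain the a.u.\ path continuity of $m(x)$ by applying the noncommutative BDG estimates of Theorem~\ref{ctpm}, but Theorem~\ref{ctpm} takes a.u.\ continuity of the martingale as a \emph{hypothesis} (it is fed through Lemma~\ref{auct}, which converts a.u.\ continuity into vanishing variation, which is then what lets one pass from the conditioned square function to the bracket). Invoking it to \emph{prove} a.u.\ continuity is circular. What you actually have available at this stage is only the discrete moment bound of Proposition~\ref{pmes} applied to a fixed dyadic partition; from that you can try to extract a Kolmogorov--Chentsov modulus and a maximal inequality, but that is precisely the hard analytic content of \cite{JRS} (and of the discrete-to-continuous approximation) and cannot be outsourced to a statement that already assumes the conclusion. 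Similarly, beware a second near-circularity: Theorem~\ref{jrs} is the input to Lemma~\ref{brac} and Theorem~\ref{poin}, so no result downstream of those may be used in its proof. With those two cautions noted, the overall strategy is the correct one, but as written the path-regularity step is not yet a proof.
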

\re\label{doma}
Let $2\le p <\8$. For the purpose of our main result, we extend the theorem to $x\in\Dom(A^{1/2})$. Indeed, since $\Dom(A^{1/2})\cap \N$ is a $*$-subalgebra of $\N$ by \cite{DL} and $\Dom(A)$ is dense in $L_2(\N)$, there exists a sequence $(x_n)\in \Dom(A)$ such that $\lim_{n\to\8}\|x_n-x\|_2=0$. But $E_{[r}({\pi}_s T_s(x_n))={\pi}_r T_r(x_n)$ for $s<r$. Taking limits on both sides, we find $E_{[r}({\pi}_s T_s(x))={\pi}_r T_r(x)$ in $L_2(\N)$. According to \cite{Luc}, the set of a.u. \!continuous path martingales is closed in $L_2(\N)$. Hence ${\pi}_s T_s(x)$ has a.u. \!continuous path. Similar argument applies to the forward martingales, but we only need the reversed martingales in this paper.
\mar
%It follows from \cite{JMe}*{Lemma 2.2.1} that $m_t$ is a martingale and a straightforward verification shows that $\td{m}_t$ is a (reversed) martingale.
Put $L_p^0(\N)=\{x\in L_p(\N): \lim_{t\to \8}T_tx=0 \}$ for $1\le p\le \8$. Here the limit is taken with respect to $\|\cdot\|_{L_p(\N)}$ for $1\le p<\8$ and with respect to the weak* topology for $p=\8$. Let ${\rm Fix}=\{x\in\N: T_t x=x \mbox{ for all }t\ge0\}$. Then it was shown in \cite{JX07} that ${\rm Fix}$ is a von Neumann subalgebra and ${\rm Fix}^\perp=L^0_\8(\N)$. Denote by $E_{\rm Fix}: \N\to{\rm Fix}$ the conditional expectation which extends to a contraction on $L_p(\N)$. Then for all $x\in L_p(\N)$ we have $x-E_{\rm Fix} x\in L_p^0(\N)$ and $L_p^0(\N)$ is a complemented subspace of $L_p(\N)$.
\begin{lemma}\label{brac}
Let $2\le p<\8$ and $(T_t)_{t\ge0}$ be a standard nc-diffusion semigroup. Then for all $0\le s<t\le \8$, and $x\in \Dom(A^{1/2})\cap L_p^0(\N)$ with $\Ga(T_rx,T_rx)$ uniformly bounded for $r\ge 0$ in $L_p(\N)$  we have
\[\|{m}(x)\|_{h^c_p([s,t])}\lel \left\|2\int_s^t{\pi}_r(\Ga(T_{r}x, T_{r}x))dr\right\|^{1/2}_{p/2}\pl.\]
\end{lemma}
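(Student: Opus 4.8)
The plan is to compute $\|m(x)\|_{h_p^c([s,t])}$ from its definition as a limit over partitions and identify the limiting quadratic variation with the time integral of $\pi_r(\Ga(T_rx,T_rx))$. Fix a partition $\si=\{s=s_0<s_1<\dots<s_N=t\}$ of $[s,t]$. By definition of the reversed Markov dilation (H1), (H2) and the formula $m_r(x)=\pi_r(T_rx)$, the martingale difference over $[s_{j},s_{j+1}]$ is $d_jm(x)=m_{s_j}(x)-m_{s_{j+1}}(x)=\pi_{s_j}(T_{s_j}x)-\pi_{s_{j+1}}(T_{s_{j+1}}x)$, and the conditional second moment $E_{[s_{j+1}}|d_jm(x)|^2$ is what enters $\|x\|_{h_p^c([s,t];\si)}$. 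The first step is the routine but key algebraic identity: using that $\pi_r$ is a trace-preserving $*$-homomorphism and the dilation intertwining relation $E_{[s_{j+1}}\pi_{s_j}(y)=\pi_{s_{j+1}}(T_{s_{j+1}-s_j}y)$, one expands $E_{[s_{j+1}}|d_jm(x)|^2$ and finds that the cross terms produce exactly $-\,\pi_{s_{j+1}}\big(T_{s_{j+1}-s_j}(T_{s_j}x^*)\,T_{s_j}x - \text{(its conjugate)}\big)$ type expressions, so that after telescoping the sum over $j$ one gets
\[
\sum_{j=0}^{N-1} E_{[s_{j+1}}|d_jm(x)|^2 \lel \sum_{j=0}^{N-1}\pi_{s_{j+1}}\Big(T_{s_j}(x^*)T_{s_j}x - T_{s_{j+1}-s_j}\big((T_{s_j}x^*)(T_{s_j}x)\big)\Big)\pl,
\]
up to controllable remainder terms. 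The summand is a discrete approximation of the derivative $-\frac{d}{du}\big|_{u=0}T_u\big((T_{s_j}x^*)(T_{s_j}x)\big)$, which by the definition of $A$ and of the gradient form equals $2\,T_{s_j}(x^*)\cdot A(T_{s_j}x)+\dots = 2\Ga(T_{s_j}x,T_{s_j}x)$ in $L_{p/2}$, using the nc-diffusion hypothesis and the uniform $L_p$-boundedness of $\Ga(T_rx,T_rx)$.

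The second step is to pass to the limit along the ultrafilter $\U$ and over mesh size tending to zero. Here I would use the $L_{p/2}$-convergence of $\langle m(x),m(x)\rangle$ established in \cite{JKo} (cited after equation \eqref{hpc}), together with the strong continuity of $(T_t)$ on $L_p(\N)$ and the norm-continuity (in $L_p$) of $r\mapsto \Ga(T_rx,T_rx)$ under the uniform boundedness assumption, to show that the Riemann-type sum $\sum_j (s_{j+1}-s_j)\,\pi_{s_{j+1}}\big(2\Ga(T_{s_j}x,T_{s_j}x)\big)$ converges in $\|\cdot\|_{p/2}$ to $\int_s^t 2\pi_r(\Ga(T_rx,T_rx))\,dr$; the path $r\mapsto\pi_r(\Ga(T_rx,T_rx))$ is measurable with values in $L_{p/2}(\M)$ because $\pi_r$ is an isometric embedding and $r\mapsto\Ga(T_rx,T_rx)$ is continuous in $L_{p/2}(\N)$. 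Taking square roots of $L_{p/2}$-norms and invoking \eqref{hpc} then gives the claimed identity $\|m(x)\|_{h_p^c([s,t])}=\|2\int_s^t\pi_r(\Ga(T_rx,T_rx))\,dr\|_{p/2}^{1/2}$, at least for $2<p<\8$; the endpoint $p=2$ follows by the limiting argument $p\downarrow 2$ already used in the proof of Theorem~\ref{ctpm}(2), or directly since for $p=2$ everything reduces to a trace computation.

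The main obstacle I expect is the justification of the remainder-term estimates in the first step, namely controlling $[E_{s_{j-1}}(d_jx\,1_{[|d_jx|\le\de]})]^2$-type corrections and, more seriously, the error between the discrete difference quotient $T_{s_{j+1}-s_j}(\cdot)$ applied to the (partition-dependent) element $(T_{s_j}x^*)(T_{s_j}x)$ and the infinitesimal generator $2\Ga(T_{s_j}x,T_{s_j}x)$, uniformly enough in $j$ to survive the sum. This is precisely where the uniform $L_p$-boundedness of $\Ga(T_rx,T_rx)$ for $r\ge0$ is needed: it gives an equicontinuity/domination that lets one replace the difference quotient by the generator with an error that is $o(1)$ after summation as $|\si|\to0$. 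A secondary technical point is that $x\in\Dom(A^{1/2})$ rather than $\Dom(A)$, so that $A(T_rx)$ need not make sense pointwise; this is handled exactly as in Remark~\ref{doma} by approximating $x$ in $L_2$ by elements of $\Dom(A)$, for which all the identities above are literal, and using that both sides of the claimed identity depend continuously on $x$ in the appropriate norms.
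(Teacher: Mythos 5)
The paper's own proof of Lemma~\ref{brac} is a short citation-driven argument: it first establishes $\var_p(m)=0$ via Theorem~\ref{jrs}, Remark~\ref{doma} and Lemma~\ref{auct}, then invokes \eqref{hpc} to reduce the claim to identifying the bracket $\langle m,m\rangle_t-\langle m,m\rangle_s$, and finally quotes \cite{JMe}*{Lemma 2.4.1} (together with \cite{JMe}*{Proposition 2.4.3} for $t=\infty$) for the exact identity $\langle m,m\rangle_s-\langle m,m\rangle_t = 2\int_s^t\pi_r(\Gamma(T_rx,T_rx))\,dr$. Your plan — compute $E_{[s_{j+1}}|d_jm|^2$ over partitions and pass to a Riemann-integral limit — is essentially a re-derivation of that cited lemma, so conceptually you are on the right track; but as written it has two genuine problems.

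First, the algebraic expansion is wrong. Since $m_r(x)=\pi_r(T_rx)$ is a reversed martingale with $E_{[s_{j+1}}m_{s_j}=m_{s_{j+1}}$, one gets
\[
E_{[s_{j+1}}|d_jm|^2 \lel E_{[s_{j+1}}\pi_{s_j}\big(|T_{s_j}x|^2\big)-\pi_{s_{j+1}}\big(|T_{s_{j+1}}x|^2\big)\lel \pi_{s_{j+1}}\Big(T_{s_{j+1}-s_j}|T_{s_j}x|^2 - |T_{s_{j+1}}x|^2\Big)\pl,
\]
which is a nonnegative operator by Kadison's inequality. You wrote $T_{s_j}(x^*)T_{s_j}x-T_{s_{j+1}-s_j}\big((T_{s_j}x^*)(T_{s_j}x)\big)$, which has the sign flipped and misses the $|T_{s_{j+1}}x|^2$ term; moreover $-\frac{d}{du}\big|_{u=0}T_u(|T_{s_j}x|^2)=A(|T_{s_j}x|^2)$, which is not $2\Gamma(T_{s_j}x,T_{s_j}x)$ (the gradient form requires all three terms in its definition, not just this one). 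The truncation corrections $[E_{s_{j-1}}(d_jx\,1_{[|d_jx|\le\delta]})]^2$ you mention belong to the proof of Theorem~\ref{ctpm}, not to this lemma, and shouldn't appear here at all.

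Second, and more importantly, the obstacle you flag — controlling the error between the discrete difference quotient and the generator, uniformly in $j$, after summing — is real and is not handled by your argument. The paper sidesteps it entirely because \cite{JMe}*{Lemma 2.4.1} gives the exact (not approximate) fundamental-theorem-of-calculus identity
\[
T_u(|y|^2)-|T_uy|^2 \lel 2\int_0^u T_{u-r}\Gamma(T_ry,T_ry)\,dr\pl,
\]
applied with $u=s_{j+1}-s_j$ and $y=T_{s_j}x$, so that each summand is already an exact integral and the sum telescopes to $2\int_s^t E_{[s_{j+1}}\pi_r(\Gamma(T_rx,T_rx))\,dr$ before any limit is taken; refining the partition then only needs to remove the conditional expectations $E_{[s_{j+1}}$, which is a much milder continuity statement. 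Without this exact formula, you would need an equicontinuity estimate that the uniform $L_p$-boundedness of $\Gamma(T_rx,T_rx)$ alone does not obviously supply; so as it stands your route has a gap precisely where you predicted it might. You also do not address why $\var_p(m)=0$ is needed to apply \eqref{hpc}, nor the $t=\infty$ case.
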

\begin{proof}
By Theorem \ref{jrs}, Remark \ref{doma} and Lemma \ref{auct}, $\var_p(m)=0$ for all $2<p<\8$. \eqref{hpc} implies for $2<p<\8$,
\begin{equation*}
 \|m\|_{h_p^c([s,t])}\lel \|\lge m,m\rge_t- \lge m, m \rge_s\|^{1/2}_{p/2}\pl.
\end{equation*}
It follows from \cite{JMe}*{Lemma 2.4.1} and uniform boundedness that
\[\lge m, m\rge_s-\lge m, m\rge_t \lel 2\int_s^t{\pi}_r(\Ga(T_{r}x, T_{r}x))dr\pl.\]
Here the integral when $t=\8$ is well-defined for $x\in L_p^0(\N)$ according to \cite{JMe}*{Proposition 2.4.3}.
This gives the assertion for $2<p<\8$. The case $p=2$ follows by sending $p\downarrow 2$.
\end{proof}

We are now ready to state our main result of this section.
\begin{theorem}\label{poin}
Suppose $2\le p<\8$. Let $T_t=e^{-tA}$ be a standard nc-diffusion semigroup and $\Ga$ the gradient form associated with $A$. Assume $x\in L_p(\N)\cap \Dom(A^{1/2})$ satisfies
\begin{equation}\label{ga2}
\tau(y\Ga(T_t x,T_t x))\kl e^{-2\al t} \tau(y T_t\Ga(x,x)), \quad  y\in \N, y\ge0\pl,
\end{equation}
for some $\al>0$. Then we have the following Poincar\'e type inequalities
\begin{equation}\label{pcr1}
 \|x-E_{\rm Fix} x\|_p \kl C\sqrt{p/\al}\max\{\|\Ga(x,x)^{1/2}\|_\8, \Ga(x^*,x^*)^{1/2}\|_\8\},
 \end{equation}
\begin{equation}
 \label{pcr2}
 \|x-E_{\rm Fix} x\|_p \kl C'\al^{-1/2}p\max\{\|\Ga(x,x)^{1/2}\|_p,\Ga(x^*,x^*)^{1/2}\|_p\},
\end{equation}
where we can take $C=4\sqrt{2}$ in general and $C=2\sqrt{2}$ if $x$ is self-adjoint.
%Moreover, the order $\sqrt{p}$ in \eqref{pcr1} is optimal.
\end{theorem}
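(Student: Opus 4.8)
The plan is to exhibit $x-E_{\rm Fix}x$ as the terminal value of a noncommutative martingale with almost uniformly continuous path, apply the Burkholder--Davis--Gundy estimates of Theorem \ref{ctpm}, compute the continuous bracket via Lemma \ref{brac}, and then feed the $\Ga_2$-condition \eqref{ga2} into the resulting time integral.

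\emph{Reduction.} First I would reduce to $x\in L_p^0(\N)\cap\Dom(A^{1/2})$ with $\tau(x)=0$. Replacing $x$ by $x-E_{\rm Fix}x$ leaves the right-hand sides unchanged: since ${\rm Fix}$ is a von Neumann subalgebra on which every $T_t$ acts as the identity, $T_t$---hence $A$ and $\Ga$---is a ${\rm Fix}$-bimodule map and $\Ga(c,c)=0$ for $c\in{\rm Fix}$, so expanding the sesquilinear form gives $\Ga(x-E_{\rm Fix}x,\,\cdot\,)=\Ga(x,\,\cdot\,)$; in particular $\Ga(T_t(x-E_{\rm Fix}x),T_t(x-E_{\rm Fix}x))=\Ga(T_tx,T_tx)$, so $x-E_{\rm Fix}x$ still satisfies \eqref{ga2} with the same $\Ga(x,x)$, and $x-E_{\rm Fix}x\in L_p^0(\N)$. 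By Theorem \ref{jrs} and Remark \ref{doma}, $m_s(x)=\pi_s(T_sx)$, $0\le s\le S$, is a reversed martingale with a.u.\ continuous path; reversing time turns it into a forward, mean-zero martingale with terminal value $\pi_0(x)\cong x$ and initial value $\pi_S(T_Sx)$, which tends to $0$ in $L_p$ as $S\to\8$ because $x\in L_p^0(\N)$. Hence $\|x\|_p=\lim_{S\to\8}\|\,\pi_0(x)-\pi_S(T_Sx)\,\|_p$, and it suffices to bound this quantity by Theorem \ref{ctpm} uniformly in $S$.

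\emph{Decay from $\Ga_2$ and the two estimates.} Testing \eqref{ga2} against positive $y$ with $\|y\|_q\le1$ (for $q=1$ and $q=(p/2)'$) and using that $T_r$ is a contraction on $L_q(\N)$, one gets
\[\|\Ga(T_rx,T_rx)\|_\8\kl e^{-2\al r}\|\Ga(x,x)\|_\8\und\|\Ga(T_rx,T_rx)\|_{p/2}\kl e^{-2\al r}\|\Ga(x,x)\|_{p/2};\]
in particular the uniform-boundedness hypothesis of Lemma \ref{brac} holds (otherwise the asserted inequality is trivial). For \eqref{pcr2}, apply Theorem \ref{ctpm}(2) and Lemma \ref{brac} (taking $S\to\8$): for self-adjoint $x$,
\[\|x\|_p\kl C'p\,\|m(x)\|_{h_p^c([0,\8])}=C'p\,\Big\|2\int_0^\8\pi_r\big(\Ga(T_rx,T_rx)\big)\,dr\Big\|_{p/2}^{1/2}.\]
By duality, pushing a positive element of the unit ball of $L_{(p/2)'}(\M)$ down through the trace-preserving $*$-homomorphism $\pi_r$ to $\N$ and applying \eqref{ga2} together with the $L_{(p/2)'}$-contractivity of $T_r$, the integral is bounded by $\frac1{2\al}\|\Ga(x,x)\|_{p/2}$, which yields \eqref{pcr2}. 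For \eqref{pcr1}, use Theorem \ref{ctpm}(1): for each partition $\si$ of $[0,S]$, the identity $\lge m,m\rge_s-\lge m,m\rge_t=2\int_s^t\pi_r(\Ga(T_rx,T_rx))\,dr$ of \cite{JMe}*{Lemma 2.4.1}, the contractivity of $E_{[s_j}$ and $\pi_r$ on $\N$, and the decay estimate give
\[\|m(x)\|_{h_\8^c([0,S];\si)}^2\kl 2\int_0^\8 e^{-2\al r}\|\Ga(x,x)\|_\8\,dr=\frac1\al\|\Ga(x,x)\|_\8,\]
so $\liminf_{\si,\U}\|m(x)\|_{h_\8^c([0,S];\si)}\le\al^{-1/2}\|\Ga(x,x)^{1/2}\|_\8$ and \eqref{pcr1} follows with $C=2\sqrt2$. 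For non-self-adjoint $x$ one repeats the argument for $x^*$, using $m(x)^*=m(x^*)$ and the $\Ga_2$-bound for $x^*$, and adds the column and row contributions, which costs the extra factor $2$.

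\emph{Main obstacle.} The delicate point is the passage to the $L_\8$ scale in \eqref{pcr1}: Lemma \ref{brac} is only available for $p<\8$, so one cannot simply quote it for the $h_\8^c$-norm, and instead one must dominate the discrete column bracket $\sum_jE_{[s_j}|d_jm|^2$ of the continuous martingale directly in operator norm---this is exactly where the conditional-expectation contractions and the identification of $\lge m,m\rge$ with $2\int\pi_r(\Ga(T_rx,T_rx))\,dr$ are indispensable. A secondary nuisance is the bookkeeping around the reversed Markov dilation: the time reversal needed to bring $(m_s)$ into the forward framework of Theorem \ref{ctpm}, the extension from $\Dom(A)$ to $\Dom(A^{1/2})$ via Remark \ref{doma}, and the verification that the various $L_p$-limits (in $S$, and in the $\de,\eps$ of the underlying estimates) behave as expected.
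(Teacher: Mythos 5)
Your proposal follows the paper's proof essentially step for step: the same reduction to $L_p^0(\N)$ via the multiplicative-domain property of $E_{\rm Fix}x$, the same use of the reversed dilation $m_s(x)=\pi_s(T_sx)$ from Theorem \ref{jrs} and Remark \ref{doma}, the same application of Theorem \ref{ctpm} with the bracket computed through the dilation and \cite{JMe}*{Lemma 2.4.1(iii)} and then controlled by \eqref{ga2} and $T_r$-contractivity, and the same $S\to\8$ limit with $\|E_{[S}m_0\|_p\le\|T_Sx\|_p\to0$. You also correctly flag the one genuine subtlety---that the $h_\8^c$-estimate cannot be obtained by simply citing Lemma \ref{brac} (which is stated for $p<\8$) and must instead be extracted directly from the partition sum $\sum_j E_{[s_{j+1}}|d_jm|^2$ in operator norm---which is exactly how the paper's proof proceeds.
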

\begin{proof}
First assume $2<p<\8$. Notice that $E_{\rm Fix} x$ is in the multiplicative domain of $T_t$. Then $\Ga(x,x)=\Ga(x-E_{\rm Fix} x,x-E_{\rm Fix} x)$. Without loss of generality we may assume $x\in L_p^0(\N)$, which implies  $\lim_{t\to\8}T_tx=E_{\Fix}(x)=0$ in $L_p$.  Fix a constant $0<M<\8$ and consider the reversed martingale ${m}_t(x)$ in Theorem \ref{jrs} for $t\in[0,M]$. By Theorem \ref{ctpm} (applied to reversed martingales), noticing that ${m}_0(x)={\pi}_0(x)$, we have
\begin{align*}
&\|{m}_0-E_{[M}({m}_0)\|_p \\
\kl& C\sqrt{p}\liminf_{\si,\U}\big(\|{m}_0-E_{[M}({m}_0)\|_{h_\8^c([0,M];\si)} +\|{m}_0-E_{[M}({m}_0)\|_{h_\8^r([0,M];\si)}\big)\pl.
\end{align*}
%&\lel 4C\sqrt{p}\left\|\int_0^\8 \td{\pi}_r(\Ga(T_{r}x, T_{r}x))dr \right\|_\8^{1/2}\pl.
Using the reversed Markov dilation and \cite{JMe}*{Lemma 2.4.1 (iii)}, we find (see \cite{JMe}*{(2.12)})
\begin{align*}
 \|{m}_0-E_{[M}({m}_0)\|_{h_\8^c([0,M];\si)}&\lel \Big\|\sum_{j=0}^{|\si|-1} E_{[s_{j+1}}|{m}_{s_j}(x)-{m}_{s_{j+1}}(x)|^2\Big\|^{1/2}_{\8}\\
 &\lel \Big\|\sum_{j=0}^{|\si|-1} E_{[s_{j+1}}(\pi_{s_j}(|T_{s_j}x|^2)) -\pi_{s_{j+1}}(|T_{s_{j+1}}x|^2)\Big\|^{1/2}_{\8}\\
 &\lel \Big\|\sum_{j=0}^{|\si|-1} \pi_{s_{j+1}}(T_{s_{j+1}-s_j}|T_{s_j}x|^2 -|T_{s_{j+1}}x|^2)\Big\|^{1/2}_{\8}\\
 &\lel \Big\|2\sum_{j=0}^{|\si|-1}\pi_{s_{j+1}}\Big(\int_{0}^{s_{j+1}-s_j} T_{s_{j+1}-s_j-r} (\Ga(T_{r+s_j} x,T_{r+s_j} x)) dr\Big) \Big\|^{1/2}_{\8}\\
 &\lel \Big\|2\sum_{j=0}^{|\si|-1}\int_{s_j}^{s_{j+1}}E_{[s_{j+1}} {\pi}_r(\Ga(T_rx,T_rx)) dr \Big\|^{1/2}_{\8}\pl.
 \end{align*}
Since $E_{[s_{j+1}}$ and ${\pi}_r$ are contractions, we deduce from \eqref{ga2} that
\begin{align*}
   &\|{m}_0-E_{[M}({m}_0)\|_{h_\8^c([0,M];\si)} \kl \sqrt{2}\Big(\sum_{j=0}^{|\si|-1}\int_{s_j}^{s_{j+1}} \|\Ga(T_rx,T_rx)\|_\8 dr\Big)^{1/2} \\
   \lel& \sqrt{2}\Big(\sum_{j=0}^{|\si|-1}\int_{s_j}^{s_{j+1}}\sup_{y\ge0,y\in \N, \|y\|_1\le 1}\tau(y\Ga(T_rx,T_rx))dr\Big)^{1/2}\\
   \kl & \sqrt{2}\Big(\sum_{j=0}^{|\si|-1}\int_{s_j}^{s_{j+1}}e^{-2\al r}\sup_{y\ge0,y\in \N, \|y\|_1\le 1}\tau(yT_r\Ga(x,x))dr\Big)^{1/2}\\
   \lel &\sqrt{2}\Big(\sum_{j=0}^{|\si|-1}\int_{s_j}^{s_{j+1}} e^{-2\al r} \|T_r\Ga(x,x)\|_\8 dr \Big)^{1/2}
   \kl \sqrt{2}\Big(\int_0^M  e^{-2\al r} \|\Ga(x,x) \|_\8 dr \Big)^{1/2}\\
   \kl& \al^{-1/2}\|\Ga(x,x)\|_\8^{1/2}\pl.
   \end{align*}
 Similarly,
 \[\|{m}_0-E_{[M}({m}_0)\|_{h_\8^r([0,M];\si)} \kl \al^{-1/2}\|\Ga(x^*,x^*)\|_\8^{1/2}\pl.\]
Hence we have
\[ \|{m}_0-E_{[M}({m}_0)\|_p \kl C\Big(\frac{p}{\al}\Big)^{1/2} (\|\Ga(x,x)\|_\8^{1/2}+\|\Ga(x^*,x^*)\|_\8^{1/2})\pl.\]
By the reversed Markov dilation,
\[\|E_{[M}({m}_0)\|_p=\|E_{[M}({\pi}_0(x))\|_p=\|\pi_M T_M x\|_p\le \|T_M x\|_p\pl.\]
Note that $\lim_{M\to\8}\|T_M x\|_p=0$ and that $\|x\|_p=\|{m}_0\|_p\le  \|{m}_0-E_{[M}({m}_0)\|_p+\|E_{[M}({m}_0)\|_p$.
Sending $M\to \8$ gives the first assertion for $2<p<\8$. Sending $p\downarrow 2$ gives the case $p=2$.

For \eqref{pcr2}, note that \eqref{ga2} implies $\Ga(T_t x,T_t x)$ is uniformly bounded in $L_p(\N)$. Then Theorem \ref{ctpm} and Lemma \ref{brac} imply that for $M>0$, $2<p<\8$, and $x\in\Dom(A^{1/2})$, we have
\begin{align*}
&\|{m}_0-E_{[M}({m}_0)\|_p\\
\kl &\sqrt{2}C'{p}\max\left\{\Big\|\int_0^M {\pi}_r(\Ga(T_{r}x, T_{r}x))dr \Big\|_{p/2}^{1/2},\pl\Big\|\int_0^M {\pi}_r(\Ga(T_{r}x^*, T_{r}x^*))dr \Big\|_{p/2}^{1/2}\right\}\pl.
\end{align*}
Similar to the above argument, \eqref{ga2} yields
\[
\Big\|\int_0^M {\pi}_r(\Ga(T_{r}x, T_{r}x))dr \Big\|_{p/2}^{1/2} \kl \frac1{\sqrt{2\al}}\|\Ga(x,x)\|_{p/2}^{1/2}\pl.
\]
The rest of proof is the same as that of the first assertion.
\end{proof}
If $A$ has a spectral gap in $L_p$, we can deduce the second inequality \eqref{pcr2} from the main result of \cite{JMe} on the noncommutative Riesz transform. However, we have explicit order $p$ here. So far as we know, no previous method has achieved the order $\sqrt{p}$ in the first inequality in the noncommutative setting.
\re
In fact, if \eqref{ga2} holds for all $x\in\Dom(A^{1/2})$, then $T_t$ is a nc-diffusion semigroup. Indeed, it was proved in \cite{JRS} that {$T_t\Ga(x,x)\in L_1(\N)$ for $t > 0$.} Then \eqref{ga2} implies that $\Ga(T_tx, T_tx)\in L_1(\N)$. Taking limit gives $\Ga(x,x)\in L_1(\N)$.
\mar
Condition \eqref{ga2} is not convenient to check. In practice, we may pose stronger assumptions which are easy to verify. The following lemma is of course well known in the commutative case.

\begin{lemma}\label{ga22}
 Let $T_t=e^{-tA}$ be a standard nc-diffusion semigroup. Let $x\in\N$ be such that $\Ga_2(x,x)$ is well-defined. Then $\Ga_2(x,x)\ge \al\Ga(x,x)$ implies \eqref{ga2}.
\end{lemma}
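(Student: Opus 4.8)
The plan is to run the Bakry--Émery interpolation argument, which is classical in the commutative case and carries over once the domain bookkeeping is handled. Fix $t>0$ and $y\in\N$ with $y\ge 0$. Assuming first that $x$ is smooth enough (say $x$ lies in a core on which $A$, $\Ga$ and $\Ga_2$ are all given by the algebraic formulas and which is invariant under $(T_s)_{s\ge 0}$), consider
\[ \phi(s) = \tau\big(y\,T_{t-s}\,\Ga(T_s x, T_s x)\big), \qquad s\in[0,t]. \]
The two endpoints of $\phi$ are exactly the two sides of \eqref{ga2}: $\phi(0)=\tau\big(y\,T_t\Ga(x,x)\big)$ and $\phi(t)=\tau\big(y\,\Ga(T_t x, T_t x)\big)$, so it suffices to show $\phi(t)\le e^{-2\al t}\phi(0)$, and for that, by a Grönwall type comparison, it is enough to establish the differential inequality $\phi'(s)\le -2\al\,\phi(s)$ on $[0,t]$.

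Next I would differentiate $\phi$. Using $\tfrac{d}{ds}T_s x = -AT_s x$, $\tfrac{d}{ds}T_{t-s} = AT_{t-s}$, the (sesqui)linearity of $\Ga$, the defining identity $2\Ga_2(z,z)=\Ga(Az,z)+\Ga(z,Az)-A\Ga(z,z)$, and the fact that $A$ commutes with $T_{t-s}$, one gets the telescoping identity
\[ \tfrac{d}{ds}\big[T_{t-s}\Ga(T_s x, T_s x)\big] = AT_{t-s}\Ga(T_s x, T_s x) - T_{t-s}\big(2\Ga_2(T_s x, T_s x)+A\Ga(T_s x, T_s x)\big) = -2\,T_{t-s}\Ga_2(T_s x, T_s x), \]
hence $\phi'(s) = -2\,\tau\big(y\,T_{t-s}\Ga_2(T_s x, T_s x)\big)$. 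Applying the hypothesis in the form $\Ga_2(T_s x, T_s x)\ge\al\,\Ga(T_s x, T_s x)$ --- legitimate because the core in which $x$ sits is carried into itself by $(T_s)$, so $T_s x$ satisfies the same pointwise bound --- and using that $T_{t-s}$ is completely positive (hence positivity preserving) while $y\ge 0$, so that $z\mapsto\tau(y\,T_{t-s}z)$ is a positive functional, we obtain $\tau\big(y\,T_{t-s}\Ga_2(T_s x, T_s x)\big)\ge\al\,\tau\big(y\,T_{t-s}\Ga(T_s x, T_s x)\big)=\al\,\phi(s)$. Therefore $\phi'(s)\le -2\al\,\phi(s)$, and $s\mapsto e^{2\al s}\phi(s)$ is nonincreasing, which gives $\phi(t)\le e^{-2\al t}\phi(0)$, i.e. \eqref{ga2}.

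Finally I would remove the smoothness assumption by approximation: pick $x_n$ in the core with $x_n\to x$ and with $Ax_n$, $A\Ga(\cdot,\cdot)$, etc.\ converging as needed, use continuity of $\Ga$ on the $*$-algebra $\Dom(A^{1/2})\cap\N$ from \cite{DL} together with contractivity of the $T_t$ to pass $\Ga(x_n,x_n)\to\Ga(x,x)$ and $\Ga(T_tx_n,T_tx_n)\to\Ga(T_tx,T_tx)$ to the limit, and conclude \eqref{ga2} for the given $x$.

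I expect the main obstacle to be exactly this domain and regularity layer rather than the interpolation scheme itself: making rigorous sense of $\Ga_2(T_s x, T_s x)$ for every $s\in[0,t]$, justifying the term-by-term differentiation of $\phi$ in a topology in which $\tau(y\,\cdot)$ is continuous, and --- most delicately --- ensuring that the pointwise inequality $\Ga_2\ge\al\Ga$ is available for $T_s x$ and not only for $x$, which is what forces one to work with a $(T_s)$-invariant core on which $\Ga$ and $\Ga_2$ are compatible with the algebraic formulas. Once such a core is identified the Grönwall step and the endpoint identifications are routine.
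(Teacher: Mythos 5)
Your proposal is essentially the paper's own proof: the paper also runs the Bakry--\'Emery interpolation by differentiating $f(s)=e^{2\al(t+s)}T_{t-s}\Gamma(T_sx,T_sx)$, uses the chain-rule identity together with $2\Gamma_2 = \Gamma(A\cdot,\cdot)+\Gamma(\cdot,A\cdot)-A\Gamma$ to obtain $f'(s)=2\al e^{2\al(t+s)}T_{t-s}\Gamma(T_sx,T_sx)-2e^{2\al(t+s)}T_{t-s}\Gamma_2(T_sx,T_sx)\le 0$, and concludes $\Gamma(T_tx,T_tx)\le e^{-2\al t}T_t\Gamma(x,x)$ by monotonicity (the paper works at the operator level rather than pairing against $y$ first, but the two are equivalent). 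Your explicit flag that the hypothesis must be available for $T_sx$, $s\in[0,t]$ --- not just for $x$ --- is a genuine point of care that the paper's Lemma~\ref{ga22} statement glosses over; in the paper this is supplied by Corollary~\ref{ga2a}, where the $\Gamma_2$-criterion is assumed on a weakly dense $T_t$-invariant subalgebra.
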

\begin{proof}
 Since $T_t$ is positive, $\al T_t\Ga(x,x)\le T_t\Ga_2(x,x)$. Let $\td{T}_t=e^{2\al t}T_t$. Consider the function $$f(s)=\td{T}_{t-s}\Ga(\td{T}_sx,\td{T}_sx)=e^{2\al(t+s)}T_{t-s}\Ga(T_sx,T_sx).$$ Due to the assumption, $f(s)$ is differentiable. Then
\begin{align*}
f'(s)\lel& 2\al e^{2\al(t+s)}T_{t-s}\Ga(T_sx,T_sx)+e^{2\al(t+s)}T_{t-s}A\Ga(T_sx,T_sx)\\
&-e^{2\al(t+s)}T_{t-s}[\Ga(AT_sx,T_sx) +\Ga(T_sx,AT_sx)]\\
\lel& 2\al e^{2\al(t+s)}T_{t-s}\Ga(T_sx,T_sx) - 2e^{2\al(t+s)}T_{t-s}\Ga_2(T_sx,T_sx)\kl 0
\end{align*} for all $0< s< t$.
We have by continuity $\Ga(\td{T}_t x,\td{T}_tx)=f(t)\kl f(0)=\td{T}_t\Ga(x,x)$, or $\Ga(T_t x,T_t x)\le e^{-2\al t} T_t\Ga(x,x)$ which implies \eqref{ga2}.
\end{proof}
For the purpose of future development, let us recall the definition of positive forms. Suppose $\Theta: \N\times \N\to L_1(\N,\tau)$ is a sesquilinear form whose domain is a weakly dense $*$-subalgebra $\Dom(\Theta)$ such that $1\in \Dom(\Theta)$. In this paper, we follow the convention that a sesquilinear form is conjugate linear in the first component. $\Theta$ is said to be positive if for all $n\in \nz$, $x_1, \cdots ,x_n\in \Dom(\Theta)$, $(\Theta(x_i,x_j))_{i,j=1}^n$ is positive in $M_n(L_1(\N))\cong L_1(M_n\bar\otimes \N)$. Given another sesquilinear form $\Phi$, $\Theta\ge\Phi$ if $\Theta-\Phi\ge0$. We refer the readers to \cites{Sau, Pet}  for more details. For any $n\in\nz$, and any $a_1, \cdots ,a_n\in \Dom(A)$ the $n\times n$ matrix $(\Ga(a_i,a_j))_{i,j=1}^n$ with entries in $\N$ is positive in $M_n(\N)$. The following useful fact was due to Peterson \cite{Pet}; see also \cite{Sau} for the implication ``$\Rightarrow$''.
\begin{theorem}\label{cps}
 Let $(T_t)$ be a strongly continuous semigroup on $L_2(\N)$. Then $(T_t)$ is a completely positive semigroup if and only if $\Ga$ is a positive form.
\end{theorem}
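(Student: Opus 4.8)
The plan is to prove the two implications separately, using that in the paper's terminology a \emph{positive} form is automatically completely positive (positivity of all its matrix amplifications), which matches the two amplification operations $\N\rightsquigarrow M_n(\N)$ on the semigroup side.

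For the implication ``$(T_t)$ completely positive $\Rightarrow$ $\Ga$ positive'', I would start from the elementary matrix Kadison--Schwarz inequality: writing a unital completely positive $\Phi$ on $\N$ via a Stinespring representation $\Phi(\cdot)=V^*\pi(\cdot)V$ with $V$ a contraction, one has, for any finite family $(x_i)$, $[\Phi(x_i^*x_j)-\Phi(x_i)^*\Phi(x_j)]_{i,j}=[w_i^*w_j]_{i,j}\ge0$ in $M_n(\N)$ with $w_j=(1-VV^*)^{1/2}\pi(x_j)V$. Apply this to $\Phi=T_t$ and to $x_i$ in a weakly dense $*$-subalgebra of $\Dom(A)\cap\N$ (a $*$-algebra by \cite{DL}). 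Dividing by $t$ and letting $t\downarrow0$, the difference quotient of $\psi_t(x_i,x_j):=T_t(x_i^*x_j)-T_t(x_i)^*T_t(x_j)$ converges in $L_1(\N)$ — here one uses $x_i^*x_j\in\Dom(A)$, strong continuity, and that $T_t$, being positive, is $*$-preserving so that $T_t(x_i)^*=T_t(x_i^*)$ — to $A(x_i^*)x_j+x_i^*A(x_j)-A(x_i^*x_j)=2\Ga(x_i,x_j)$. Since the positive cone of $M_n(L_1(\N))$ is norm closed, $[\Ga(x_i,x_j)]_{i,j}\ge0$, i.e. $\Ga$ is a positive form.

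For the converse I would first reduce to a bare positivity statement. The ampliated semigroup $\mathrm{id}_{M_n}\ten T_t$ on $M_n(\N)$ has generator $\mathrm{id}_{M_n}\ten A$, and the $(i,j)$ entry of its gradient form at $X=[x_{kl}],Y=[y_{kl}]$ equals $\sum_k\Ga(x_{ki},y_{kj})$; a short index bookkeeping shows that this ampliated form is positive whenever $\Ga$ is. Hence it suffices to prove: a positive gradient form $\Ga$ forces every $T_t$ to be a positive map (applying this to the ampliations then yields complete positivity of $T_t$). For the latter I would invoke Sauvageot's representation $\Ga(x,y)=\lge\de(x),\de(y)\rge_\Ha$ of a positive form by a closable derivation $\de$ of a dense $*$-subalgebra into an $\N$-bimodule $\Ha$, which places $A$ in a dissipative (Lindblad type) form, from which positivity of $e^{-tA}$ follows by a Trotter-type approximation together with the semigroup law $T_t=(T_{t/n})^n$.

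The main obstacle is this last step. A direct attempt rests on the identity $\frac{d}{ds}\tau\big(T_s(a_s^*a_s)\,y\big)=2\,\tau\big(\Ga(a_s,a_s)\,T_s(y)\big)$ for $a_s:=T_{t-s}(a)$, obtained from $2\Ga(a_s,a_s)=A(a_s^*)a_s+a_s^*A(a_s)-A(a_s^*a_s)\ge0$ and the commutation of $T_s$ with $A$; integrating gives $\tau(T_t(a^*a)y)=\|T_t(a)y^{1/2}\|_2^2+2\int_0^t\tau(\Ga(a_s,a_s)\,T_s(y))\,ds$ for $y\ge0$. Reading off positivity of $T_t$ from this still needs positivity of $T_s$ for $s<t$, so one must either bootstrap carefully from a neighborhood of $0$ (using that the positive $s$'s form a closed set and the semigroup law) or, more robustly, build the completely positive structure from the derivation as above. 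Making all of this precise for an unbounded generator — fixing a common core on which $\Ga$, $\de$ and the products $T_s(a)^*T_s(a)$ are defined and the differentiation is legitimate — is exactly the technical content of Peterson \cite{Pet} (and, for ``$\Rightarrow$'', Sauvageot \cite{Sau}); I would present the forward direction in full and cite these works for the converse.
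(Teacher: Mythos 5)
The paper does not actually prove Theorem~\ref{cps}; it is stated as a known fact, attributing the full equivalence to Peterson \cite{Pet} and the implication ``$\Rightarrow$'' to Sauvageot \cite{Sau}. Your proposal therefore does strictly more than the paper. The forward direction — matrix Kadison--Schwarz via a Stinespring dilation for the CP contraction $T_t$, then differentiating $T_t(x_i^*x_j)-T_t(x_i)^*T_t(x_j)$ at $t=0$ on a dense $*$-subalgebra and invoking closedness of the positive cone of $M_n(L_1(\N))$ — is a correct sketch of the standard argument. One small inaccuracy: \cite{DL} establishes that $\Dom(A^{1/2})\cap\N$ is a $*$-algebra, not $\Dom(A)\cap\N$; what your argument really needs (and what makes $A(x_i^*x_j)$ meaningful) is a weakly dense, $A$-invariant $*$-subalgebra $\A$, exactly the setting the paper uses when discussing the algebraic $\Gamma_2$-condition. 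Your ampliation reduction of the converse to plain positivity is also correct, and you honestly diagnose that the obvious integral-identity argument is circular ($\tau(\Ga(a_s,a_s)T_s(y))\ge0$ already requires $T_s(y)\ge0$), that invoking Sauvageot's derivation picture must be done with care since the Dirichlet-form framework usually presupposes Markovianity, and you defer to Peterson for the genuine technical content — which is precisely what the paper itself does. So there is no gap relative to what the paper claims to have established; your account simply makes explicit where the real difficulty lies.
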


As in the commutative case, the domain of $\Ga$ and $\Ga_2$ is a delicate issue. Theorem \ref{poin} avoided this difficulty by considering individual element. In many cases we are interested in the Poincar\'e type inequalities for the whole space. Our next result is meant for this purpose.
\begin{cor}\label{ga2a}
 Let $T_t=e^{-tA}$ be a standard nc-diffusion semigroup. Suppose that there exists a weakly dense self-adjoint subalgebra $\A\subset \N$ such that \\
 i) $A(\A)\subset \A$; ii) $T_t(\A)\subset \A$; iii) $\A$ is dense in $\Dom(A^{1/2})$ in the graph norm of $A^{1/2}$.\\
 Assume $\Ga_2(x,x)\ge \al \Ga(x,x)$ for some $\al>0$ and for all $x\in\A$. Then \eqref{ga2} holds for all $x\in \Dom(A^{1/2})$. Moreover, all $x\in L_p(\N)$ satisfies \eqref{pcr1} and \eqref{pcr2}.
\end{cor}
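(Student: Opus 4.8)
The plan is to deduce Corollary \ref{ga2a} from Theorem \ref{poin} by a density argument. The strategy has two parts: first, upgrade the pointwise inequality $\Ga_2(x,x)\ge\al\Ga(x,x)$ on $\A$ to the functional inequality \eqref{ga2} for every $x\in\Dom(A^{1/2})$; second, check that \eqref{pcr1} and \eqref{pcr2} then hold for every $x\in L_p(\N)$, not merely for $x$ in the domain.

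For the first part, I would start from Lemma \ref{ga22}: for $x\in\A$ the hypothesis iii) of the corollary together with i) (so that $\Ga_2(x,x)$ is well-defined on $\A$ and $A(\A)\subset\A$) lets us invoke that lemma to get \eqref{ga2} for all $x\in\A$, i.e. $\tau(y\Ga(T_tx,T_tx))\le e^{-2\al t}\tau(yT_t\Ga(x,x))$ for positive $y\in\N$. Now fix $x\in\Dom(A^{1/2})$. By iii) there is a net (or sequence) $x_n\in\A$ with $x_n\to x$ in the graph norm of $A^{1/2}$, hence $A^{1/2}x_n\to A^{1/2}x$ in $L_2(\N)$ and $x_n\to x$ in $L_2(\N)$. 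The key analytic point is the continuity of the gradient form: using the identity $\tau(y\Ga(a,a))$ expressed through $A^{1/2}$ (bilinearity plus the fact that $\Ga$ extends to $\Dom(A^{1/2})$, cf. \eqref{grad} and the cited \cite{DL}, \cite{CSa}), one shows $\tau(y\Ga(x_n,x_n))\to\tau(y\Ga(x,x))$ and likewise $\tau(y\Ga(T_tx_n,T_tx_n))\to\tau(y\Ga(T_tx,T_tx))$, the latter because $T_t$ is bounded on $\Dom(A^{1/2})$ in the graph norm (standard semigroup theory, $T_tA^{1/2}=A^{1/2}T_t$ on the domain). Since also $T_t\Ga(x_n,x_n)\to T_t\Ga(x,x)$ in $L_1(\N)$ and $y\in\N$, we may pass to the limit in \eqref{ga2} and obtain it for $x$. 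Thus every $x\in\Dom(A^{1/2})$ satisfies \eqref{ga2}, so Theorem \ref{poin} applies and gives \eqref{pcr1}, \eqref{pcr2} for all $x\in L_p(\N)\cap\Dom(A^{1/2})$.

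For the second part, to reach \emph{all} $x\in L_p(\N)$ I would argue by density again, but now the right-hand sides of \eqref{pcr1}, \eqref{pcr2} involve $\|\Ga(x,x)^{1/2}\|_\8$ or $\|\Ga(x,x)^{1/2}\|_p$, which may be $+\infty$; in that case the inequality is vacuous, so there is nothing to prove. When the right-hand side of, say, \eqref{pcr2} is finite, one takes $x_n\in\A$ approximating $x$ suitably: the estimate \eqref{pcr2} already holds for each $x_n$, and since $E_{\Fix}$ is a contraction on $L_p$ the left-hand sides $\|x_n-E_{\Fix}x_n\|_p$ converge to $\|x-E_{\Fix}x\|_p$, while lower semicontinuity (or direct convergence) of the $\Ga$-norms on the right controls the limit; one concludes \eqref{pcr2} for $x$. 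The same reasoning with $L_\8$-norms handles \eqref{pcr1}.

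The main obstacle I anticipate is the continuity/closability statement for $\Ga$ along the approximating net: one must be careful that $x_n\to x$ in the graph norm of $A^{1/2}$ genuinely forces $\tau(y\Ga(x_n,x_n))\to\tau(y\Ga(x,x))$ uniformly enough in $y$, and simultaneously that $T_tx_n\to T_tx$ in the same graph norm so the left side of \eqref{ga2} converges — this is exactly where hypotheses i)--iii) on $\A$ are used, and it is the technical heart of the argument. The outer density step from $\Dom(A^{1/2})$ to all of $L_p$ is comparatively routine, modulo the harmless observation that the inequalities are trivially true when their right-hand sides diverge.
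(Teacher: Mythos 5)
Your treatment of the first assertion (passing \eqref{ga2} from $\A$ to $\Dom(A^{1/2})$) follows the same density route as the paper. You are a bit vague on why $\tau(y\Ga(x_n,x_n))\to\tau(y\Ga(x,x))$, but the mechanism you gesture at is correct; the paper makes it explicit via a Cauchy--Schwarz inequality for the positive form $\Ga$, namely $\|\Ga(a,b)\|_1\le\|\Ga(a,a)\|_1^{1/2}\|\Ga(b,b)\|_1^{1/2}$, combined with $\|\Ga(a,a)\|_1=\langle A^{1/2}a,A^{1/2}a\rangle_{L_2(\N)}$ from \cite{CSa}, which converts graph-norm convergence of $x_n$ into $L_1$ convergence of $\Ga(x_n,x_n)$, and similarly for $\Ga(T_tx_n,T_tx_n)$ using $T_tA^{1/2}=A^{1/2}T_t$. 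So far so good.

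The ``moreover'' part, however, has a genuine gap, and your proposed repair would not close it. You correctly discard the case where the right-hand side is $+\infty$, but when it is finite you launch a second density argument, approximating $x\in L_p(\N)$ by $x_n\in\A$ and invoking ``lower semicontinuity (or direct convergence) of the $\Ga$-norms.'' Two problems. First, the semicontinuity goes the wrong way: to pass the estimate $\|x_n-E_{\Fix}x_n\|_p\le C' p\,\|\Ga(x_n,x_n)^{1/2}\|_p$ to the limit you need an \emph{upper} bound $\limsup_n\|\Ga(x_n,x_n)^{1/2}\|_p\le\|\Ga(x,x)^{1/2}\|_p$, whereas lower semicontinuity gives $\liminf_n\ge$. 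Second, and more fundamentally, there is no way to produce such an approximating net from the hypotheses: assumption iii) gives density of $\A$ only in the graph norm of $A^{1/2}$ inside $\Dom(A^{1/2})$, not any control of $\Ga$ in $L_{p/2}$ for $x\notin\Dom(A^{1/2})$. The key observation you are missing, and the one the paper uses, is that the finiteness of the right-hand side already forces $x\in\Dom(A^{1/2})$: since $\Gamma(x,x)$ is by definition the weak* limit of $\Gamma^{A_\eps}(x,x)$ with $A_\eps=(I+\eps A)^{-1}A$, if this limit exists in $L_{p/2}(\N)$ for some $p>2$ then $\tau(\Gamma(x,x))<\infty$, hence $x\in\Dom(A^{1/2})$. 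Once this is noted, no second density argument is needed at all; the first assertion ensures $x$ satisfies \eqref{ga2}, and Theorem \ref{poin} applies directly to $x\in L_p(\N)\cap\Dom(A^{1/2})$.
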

\begin{proof}
For $x\in \Dom(A^{1/2})$ we deduce from assumption iii) that there exist $(x_n)\subset \A$  with $\Ga_2(x_n,x_n)\ge\al\Ga(x_n,x_n)$ such that $\|x_n-x\|_2\to 0$ and $\| A^{1/2}x_n-A^{1/2}x\|_2\to 0$ as $n\to \8$. By \cite{CSa}*{Section 9}, we have $\|\Ga(x,x)\|_1=\lge A^{1/2} x, A^{1/2}x\rge_{L_2(\N)}$. Since $\Ga$ is a complete positive form, we have for $x,y\in\Dom(A^{1/2})$,
\[
 \left(\begin{array}{cc}
  \Ga(x,x)&  \Ga(x,y)\\
  \Ga(y,x) & \Ga(y,y)
 \end{array}
\right)\ge 0\pl.
\]
Note that $\Ga(x,y)^*=\Ga(y,x)$. Then $\|(\Ga(x,x)+\eps 1 )^{-1/2}\Ga(x,y)(\Ga(y,y)+\eps 1)^{-1/2}\|_\8\le 1$ for any $\eps>0$. Hence
\begin{align*}
&\|\Ga(x,y)\|_1\\
\kl& \|(\Ga(x,x)+\eps 1)^{1/2}\|_2 \|(\Ga(x,x)+\eps 1)^{-1/2}\Ga(x,y)(\Ga(y,y)+\eps 1)^{-1/2}\|_\8\|(\Ga(y,y)+\eps 1)^{1/2}\|_2\\
\kl& \|\Ga(x,x)+\eps 1\|_1^{1/2}\|\Ga(y,y)+\eps\|_1^{1/2}\pl.
\end{align*}
Sending $\eps\to 0$, we have $\|\Ga(x,y)\|_1\le \|\Ga(x,x)\|_1^{1/2}\|\Ga(y,y)\|_1^{1/2}$. It follows that
\begin{align*}
 &\|\Ga(x_n,x_n)-\Ga(x,x)\|_1\kl \|\Ga(x_n-x,x_n-x)\|_1+2\|\Ga(x_n-x,x)\|_1\\
 \kl& \lge A^{1/2} (x_n-x), A^{1/2}(x_n-x)\rge_{L_2(\N,\tau)} +2 \lge A^{1/2} (x_n-x), A^{1/2}(x_n-x)\rge^{1/2}_{L_2(\N,\tau)} \|\Ga(x,x)\|_1^{1/2} \pl.
\end{align*}
Hence $\lim_{n\to\8} \Ga(x_n,x_n)=\Ga(x,x)$ in $L_1(\N)$. Notice that $T_t$ and $A^{1/2}$ commute. Then for all $t\ge0$ and $x\in\Dom(A^{1/2})$, $T_t x\in\Dom(A^{1/2})$ and a similar argument as above gives that $\lim_{n\to\8} \Ga(T_t x_n,T_t x_n)=\Ga(T_t x,T_t x)$ in $L_1(\N)$. Since Lemma \ref{ga22} implies
$$\tau(y\Ga(T_t x_n,T_t x_n))\le e^{-2\al t} \tau(y T_t\Ga(x_n,x_n))$$
for all $y\in\N,y\ge0$, sending $n\to\8$ on both sides yields the first assertion. For the ``moreover'' part, note that we only need to prove \eqref{pcr1} and \eqref{pcr2} for
$$\max\{\|\Ga(x,x)^{1/2}\|_p, \|\Ga(x^*,x^*)^{1/2}\|_p\}<\8.$$
Recall that $\Gamma(x,x)$ is understood as the weak* limit of $\Gamma^{A_\eps}(x,x)$ where $A_\eps = (I+\eps A)^{-1} A$ (see \cite{CSa}*{(3.2)}). If this limit exists in $L_{p/2}$ for $p> 2$, then $\tau(\Gamma(x,x))$ is finite and hence $x\in \Dom(A^{1/2})$.  The individual result Theorem \ref{poin} then comes into play and completes the proof.
\end{proof}
The condition $\Ga_2(x,x)\ge \al \Ga(x,x)$ we posed here is usually called the curvature condition or $\Ga_2$-criterion.
The expression $\max\{\|\Ga(x,x)^{1/2}\|_\8,~\|\Ga(x^*,x^*)^{1/2}\|_\8\}$ is the so-called Lipschitz norm in the commutative theory. In the classical diffusion setting, Bakry and Emery \cite{BE85} showed that the $\Ga_2$-criterion implies logarithmic Sobolev inequality, which in turn yields the $L_p$ Poincar\'e inequalities with constant $C\sqrt{p}$ due to Aida and Stroock \cite{AS94}; see also \cite{AW13} for another proof. In the general non-diffusion setting, we will show that the first implication is not true. At the time of this writing, we do not know whether the $L_p$ Poincar\'e inequalities follow from LSI in full generality. However, adapting our theory to the classical diffusion setting will result in a shortcut. Namely, we can directly show that $\Ga_2$-criterion implies the $L_p$ Poincar\'e inequalities with constants $C\sqrt{p}$. Let $T_t=e^{-tL}$ be a symmetric classical diffusion semigroup with infinitesimal generator $L$ acting on a probability space $(\rz^d, \mu)$. By the well known diffusion theory (see e.g. \cite{RY99}*{Section VII.2}), under certain regularity conditions on $L$, one can always construct a diffusion process $X_t$ corresponding to $T_t$ by solving a martingale problem.

\begin{theorem}\label{rbes}
 Assume that $X_t$ is a classical diffusion process defined on $(\Om, \pz)$ for the semigroup $T_t=e^{-tL}$. Suppose $\Ga_2(f,f)\ge \al \Ga(f,f)$ for the real-valued function $f$. Then for $2\le p<\8$,
\[
\|f-E_{\Fix}f \|_p\le C\sqrt{p}\|\Ga(f,f)^{1/2}\|_p.
\]
\end{theorem}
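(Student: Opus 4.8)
The plan is to run the proof of Theorem \ref{poin} in the commutative setting, but with the abstract reversed Markov dilation replaced by the explicit diffusion process $X_t$, and with the noncommutative Burkholder--Davis--Gundy estimate Theorem \ref{ctpm}(1) replaced by the classical sharp inequality \eqref{sqp} of Barlow--Yor. It is precisely this substitution that produces the square-root dependence on $p$ \emph{together} with the $L_p$ (rather than $L_\8$) square function, which is what the statement asks for and what the noncommutative BDG theorems do not provide. As always we may assume $E_{\Fix}f=0$, so that $T_Sf\to0$ in $L_p(\mu)$ as $S\to\8$; by approximation we first prove the estimate for $f$ in a suitable core (regular enough that the Itô calculus below is classical), and remove the regularity assumption at the end by density in $L_p(\mu)$.

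Fix $S>0$ and work with the stationary process, $X_0\sim\mu$. Put $u(t,x)=(T_{S-t}f)(x)$ for $t\in[0,S]$; since $\partial_t u=Au=-\mathcal Gu$, where $\mathcal G$ is the (probabilists') generator of $X_t$, Itô's formula shows that $N_t:=u(t,X_t)$ is a continuous martingale on $[0,S]$ with $N_0=(T_Sf)(X_0)$, $N_S=f(X_S)$, and quadratic variation $d\langle N\rangle_t=2\,\Ga(T_{S-t}f,T_{S-t}f)(X_t)\,dt$, using the identity $\mathcal G(g^2)-2g\mathcal Gg=2\Ga(g,g)$. Since $f$ is real-valued, only the column square function appears, and applying \eqref{sqp} to the continuous martingale $N-N_0$ gives
\[
\|f(X_S)-(T_Sf)(X_0)\|_{L_p(\pz)}\le C\sqrt p\,\big\|\langle N\rangle_S^{1/2}\big\|_{L_p(\pz)}=C\sqrt p\,\big\|\langle N\rangle_S\big\|_{L_{p/2}(\pz)}^{1/2}.
\]

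To control the bracket we invoke the $\Ga_2$-criterion through Lemma \ref{ga22}: in the commutative case $\Ga_2(f,f)\ge\al\Ga(f,f)$ is equivalent, after testing \eqref{ga2} against all positive $y$, to the pointwise bound $\Ga(T_rf,T_rf)\le e^{-2\al r}\,T_r\Ga(f,f)$ $\mu$-a.e.; hence
\[
\langle N\rangle_S\;\le\;2\int_0^S e^{-2\al(S-t)}\,\big(T_{S-t}\Ga(f,f)\big)(X_t)\,dt .
\]
Taking $L_{p/2}(\pz)$-norms, using Minkowski's integral inequality, stationarity (so $\|(T_rg)(X_t)\|_{L_{p/2}(\pz)}=\|T_rg\|_{L_{p/2}(\mu)}$) and $L_{p/2}$-contractivity of $T_r$ yields $\|\langle N\rangle_S\|_{L_{p/2}(\pz)}\le 2\|\Ga(f,f)\|_{L_{p/2}(\mu)}\int_0^S e^{-2\al s}ds\le\al^{-1}\|\Ga(f,f)\|_{L_{p/2}(\mu)}$. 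Combining with the previous display, $\|f(X_S)-(T_Sf)(X_0)\|_{L_p(\pz)}\le C\sqrt{p/\al}\,\|\Ga(f,f)^{1/2}\|_{L_p(\mu)}$. Finally $\|f\|_{L_p(\mu)}=\|f(X_S)\|_{L_p(\pz)}\le\|f(X_S)-(T_Sf)(X_0)\|_{L_p(\pz)}+\|T_Sf\|_{L_p(\mu)}$; letting $S\to\8$ kills the last term and gives the claim, with $C$ absorbing the $\al^{-1/2}$ as usual.

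The main obstacle is not any single estimate but the stochastic-analytic bookkeeping behind the first step: one must know the diffusion $X_t$ exists and is conservative, that $u(t,X_t)$ is genuinely a continuous martingale with the asserted bracket for a class of $f$ large enough to be $L_p(\mu)$-dense and stable under the final approximation — this is exactly the martingale-problem regularity on $L$ alluded to just before the statement — and one must match the sharp BDG constant $\sqrt p$ in \eqref{sqp} with the given normalization. The commutative specialization of Lemma \ref{ga22} is the other point that needs a word, but it is exactly the reading of \eqref{ga2} pointwise.
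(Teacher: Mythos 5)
Your proof is correct and follows essentially the same strategy as the paper: build a continuous martingale out of the semigroup evaluated along the diffusion, compute its bracket as a time-integral of the carr\'e du champ, apply the sharp BDG inequality \eqref{sqp}, estimate the bracket via the pointwise commutative reading of the $\Gamma_2$-criterion from Lemma \ref{ga22}, and let the time horizon go to infinity. The one cosmetic difference is parametrization: you build the forward martingale $N_t=(T_{S-t}f)(X_t)$ directly by It\^o, while the paper builds the reversed martingale $N_t^f=(T_tf)(X_{K-t})$ relative to the decreasing filtration $\F_{K-t}$ (to mirror the reversed Markov dilation used in the noncommutative theorems); the two are time-reversals of one another and give the identical bracket $2\int_0^S\Ga(T_rf,T_rf)(X_{S-r})\,dr$ and the identical final estimate, so this is not a genuinely different route.
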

\begin{proof}
The proof follows the same strategy as \eqref{pcr2}. Assume $2<p<\8$ and $\lim_{t\to \8}T_tf=0$ in $L_p$. By approximation, we may assume that $f$ is compactly supported. Then we obtain a martingale
$$M_t^f=f(X_t)-f(X_0)+\int_0^t Lf(X_s)ds
$$
adapted to $\F_t:=\si(X_s: s\le t)$. Fix a large constant $K>0$. For $0\le t\le K$, define $\td X_t =X_{K-t}$, $\td\F_{[t}=\F_{K-t}$, $N_t^f=(T_t f)(X_{K-t})$. Note that in this setting, the reversed Markov dilation is given by $\td\pi_t f= f(\td X_t)$. Then $(N_t^f)_{0\le t\le K}$ is a reversed martingale with respect to the filtration $\td\F_{[t}$. Indeed, by the Markov property, for $s<t$
\begin{align*}
  E[ N_s^f|\td\F_{[t}] &=E[(T_sf) (X_{K-s})|\F_{K-t}]\\%=E_{X_{K-t}}[(T_sf)(X_{t-s})]\\
  &=T_{(K-s)-(K-t)}T_sf(X_{K-t})=T_tf(X_{K-t})=N_t^f.
\end{align*}
By Lemma \ref{brac} with $\td\pi_r f= f(X_{K-r})$, we have
\[
\lge N^f,N^f\rge_K -\lge N^f,N^f\rge_0=2\int_0^K \Ga(T_rf,T_rf)(X_{K-r})dr.
\]
Applying the BDG inequality \eqref{sqp} (see \cite{BY82}*{Proposition 4.2}) to $N_t^f$ on $[0,K]$, we have
\[
\|T_Kf(X_0)-f(X_K)\|_p\le C\sqrt{p} \|\lge N^f,N^f\rge_K -\lge N^f,N^f\rge_0\|_{p/2}^{1/2}.
\]
Here we used the continuity of the sample paths of the diffusion process $X_t$ so that the conditional square function and the unconditional square functions coincide in continuous time. Since $\mu$ is the invariant measure, we have for any $0\le t\le K$,
\[
\|f(X_t)\|_p^p=\int |f(X_t)|^p d\pz=\int \ez_x|f(X_t)|^p \mu(dx)=\int T_t|f|^p(x)\mu(dx)=\int |f|^pd\mu \pl.
\]
It follows that $\|(T_Kf)(X_0)\|_p=\|T_K f\|_p \to 0$ as $K\to \8$. By the triangle inequality,
\[
\|f\|_p\lel \|f(X_K)\|_p\le \|f(X_K)-T_Kf(X_0)\|_p+\|T_K f(X_0)\|_p.
\]
The rest of the proof is the same as that of \eqref{pcr2}.
\end{proof}

Our first example is very simple, but it clarifies that $\Ga_2$-criterion no longer implies LSI in the general non-diffusion setting. Let us recall the following generalized Schwartz inequality, which is called Choi's inequality; see \cite{Ch74}*{Corollary 2.8}.
\begin{lemma}\label{ccpp}
 Let $\phi: \M\to \N$ be a contractive completely positive map between von Neumann algebras. Then $[\phi(x_i^*x_j)]\ge [\phi(x_i^*)\phi(x_j)]$ for any $n\in\nz$ and any $x_1, \cdots ,x_n\in\M$.
\end{lemma}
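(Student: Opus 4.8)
The plan is to deduce Choi's inequality in the usual way: transfer a manifestly positive $(n+1)\times(n+1)$ block matrix through $\phi$ using complete positivity, correct the corner using contractivity, and read off the conclusion from the Schur complement criterion. (This is the $n$-fold analogue of the familiar $2\times 2$ statement $\phi(x^*x)\ge\phi(x^*)\phi(x)$, and I would include the short argument for completeness even though one may simply cite Choi \cite{Ch74}*{Corollary 2.8}.)

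First I would fix $x_1,\dots,x_n\in\M$ and consider the column vector $v=(1,x_1^*,\dots,x_n^*)^{\mathrm t}$ with entries in $\M$, so that $Y:=vv^*\in M_{n+1}(\M)$ is positive and has block form
\[
Y=\begin{pmatrix} 1 & [\,x_1\ \cdots\ x_n\,]\\ [\,x_i^*\,]_i & [\,x_i^*x_j\,]_{i,j}\end{pmatrix}.
\]
Since $\phi$ is completely positive, the amplification $\Phi:=\mathrm{id}_{M_{n+1}}\otimes\phi\colon M_{n+1}(\M)\to M_{n+1}(\N)$ is positive, hence $\Phi(Y)\ge 0$; using that a positive map between $C^*$-algebras is $*$-preserving (so $\phi(x_i^*)=\phi(x_i)^*$), this says
\[
\begin{pmatrix}\phi(1) & [\,\phi(x_j)\,]_j\\ [\,\phi(x_i)^*\,]_i & [\,\phi(x_i^*x_j)\,]_{i,j}\end{pmatrix}\ge 0.
\]
Next I would bring in contractivity: for a positive map on a unital algebra the norm is attained at the unit, so $0\le\phi(1)\le 1$ and therefore $(1-\phi(1))\oplus 0_{M_n(\N)}\ge 0$. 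Adding this to the matrix above replaces the $(1,1)$ corner by the invertible element $1_\N$ while preserving positivity. Applying the Schur complement criterion $\bigl(\begin{smallmatrix} a & b\\ b^* & c\end{smallmatrix}\bigr)\ge 0 \iff c\ge b^*a^{-1}b$ (valid when $a$ is invertible, here $a=1_\N$) then gives
\[
[\,\phi(x_i^*x_j)\,]_{i,j}\ \ge\ [\,\phi(x_i)^*\,]_i\,[\,\phi(x_j)\,]_j\ =\ [\,\phi(x_i^*)\phi(x_j)\,]_{i,j},
\]
which is the assertion.

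The proof is essentially bookkeeping, so there is no genuine obstacle; the only points needing a little care are identifying $Y$ with $vv^*$ so that positivity is visible, invoking complete positivity in the precise form that $\mathrm{id}_{M_{n+1}}\otimes\phi$ is positive, verifying that contractivity of a completely positive map forces $0\le\phi(1)\le 1$, and recording the Schur complement fact cited above.
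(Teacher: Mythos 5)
Your proof is correct, and takes a genuinely (if mildly) different route from the paper's. The paper forms the $n\times n$ matrix $X$ with $x_1,\dots,x_n$ in the first row and zeros elsewhere, notes that $\phi\otimes \mathrm{id}_{M_n}$ is $2$-positive, and then cites the Kadison--Schwarz inequality for contractive $2$-positive maps (Paulsen, Exercise 3.4) to get $(\phi\otimes\mathrm{id}_{M_n})(X)^*(\phi\otimes\mathrm{id}_{M_n})(X)\le (\phi\otimes\mathrm{id}_{M_n})(X^*X)$; reading off the block entries gives the conclusion directly, without ever touching $\phi(1)$. You instead pass to an $(n+1)\times(n+1)$ amplification, exhibit the explicit rank-one positive matrix $vv^*$, and close the argument by correcting the $(1,1)$ corner with contractivity and then invoking the Schur complement criterion. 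Both proofs apply an amplification of $\phi$ to a manifestly positive matrix and then extract the inequality, so they are close in spirit; the trade-off is that the paper's version reduces to a cited $2\times 2$ Schwarz lemma and stays in size $n$, while yours is self-contained (proving the Schwarz-type step via Schur complements) at the cost of one extra matrix dimension and the explicit appeal to $\phi(1)\le 1$. Either is fine; the steps you flagged (positivity of $vv^*$, $*$-preservation, $0\le\phi(1)\le1$ from contractivity of a positive map, and the Schur complement with invertible corner) are all handled correctly.
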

\begin{proof}
  Since $\phi$ is complete positive, $\phi\otimes I_n$ is 2-positive, here $I_n$ is the identity matrix. Let
  \[
  X=\left(\begin{array}{cccc}
  x_1 &x_2&\cdots &x_n\\
  0& 0&\cdots&0\\
  \vdots& &\ddots& \\
  0&0&\cdots &0
  \end{array}\right).
  \]
  By \cite{Pau}*{Exercise 3.4}, $[\phi\otimes I_n (X)]^*[\phi\otimes I_n (X)]\le \phi\otimes I_n (X^*X)$. The proof is complete.
\end{proof}

\begin{exam}[Conditional expectation]\label{cond}\rm
Let $E:\M\to\N$ be the conditional expectation and $A=I-E$. For $x,y\in\M$, a calculation gives
 \[
  2\Ga(x,y)\lel x^* y-E(x^*)y-x^*E(y)+E(x^*y)\pl.
 \]
By Lemma \ref{ccpp}, $2[\Ga(x_i,x_j)]\ge [(x_i-Ex_i)^*(x_j-Ex_j)]\ge 0$ for $x_1, \cdots ,x_n\in\M$. We deduce from Theorem \ref{cps} that $A$ generates a completely positive semigroup $T_t=e^{-tA}$ acting on $\M$. It is easy to check $T_t$ is a standard nc-diffusion semigroup. Let $\Ga,\Ga_2$ be the gradient forms associated to $T_t$.
\begin{prop}
 $\Ga_2\ge \frac12\Ga$ in $\M$.
\end{prop}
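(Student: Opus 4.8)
The plan is a direct computation that exploits the idempotency of $E$. Since $E^2=E$ the generator $A=I-E$ satisfies $A^2=A$; moreover $E$ is $*$-preserving and an $\N$-bimodule map ($E(ayb)=aE(y)b$ for $a,b\in\N$), and $\Dom(A)=\M$ because $A$ is bounded, so all the forms below are defined on all of $\M$. The heart of the argument is the claim that $A$ acts trivially inside $\Ga$: $\Ga(Ax,y)=\Ga(x,y)=\Ga(x,Ay)$ for all $x,y\in\M$. For the first equality, note $E(Ax)=0$, hence $A((Ax)^*)=(Ax)^*$ (i.e.\ $A$ is the identity on $\ker E$), and then expand
\[
2\Ga(Ax,y)=A((Ax)^*)y+(Ax)^*A(y)-A((Ax)^*y)
\]
using $A=I-E$ and the module property; the terms collapse to $x^*y-E(x^*)y-x^*E(y)+E(x^*y)=2\Ga(x,y)$. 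The identity $\Ga(x,Ay)=\Ga(x,y)$ is the mirror computation.

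Granting the claim,
\[
2\Ga_2(x,y)=\Ga(Ax,y)+\Ga(x,Ay)-A\Ga(x,y)=2\Ga(x,y)-(I-E)\Ga(x,y)=\Ga(x,y)+E(\Ga(x,y)),
\]
so $\Ga_2(x,y)-\tfrac12\Ga(x,y)=\tfrac12\,E(\Ga(x,y))$. To conclude in the form sense, fix $n$ and $x_1,\dots,x_n\in\M$; then
\[
\bigl[\Ga_2(x_i,x_j)-\tfrac12\Ga(x_i,x_j)\bigr]_{i,j}=\tfrac12\,(E\otimes\mathrm{id}_{M_n})\bigl([\Ga(x_i,x_j)]_{i,j}\bigr).
\]
Since $T_t$ is completely positive, Theorem \ref{cps} (or Lemma \ref{ccpp}, via $2[\Ga(x_i,x_j)]\ge[(x_i-Ex_i)^*(x_j-Ex_j)]\ge0$) shows $[\Ga(x_i,x_j)]_{i,j}\ge0$ in $M_n(\M)$, and $E$ is completely positive, so the right-hand side is positive. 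This is exactly $\Ga_2\ge\tfrac12\Ga$ in $\M$.

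There is no genuine obstacle beyond bookkeeping: the one point to check carefully is the sign cancellation in $\Ga(Ax,y)=\Ga(x,y)$, where it is precisely the $\N$-bimodule property of $E$ that makes the $E(E(x^*)y)$-type terms recombine. One should also note that $\Ga$ takes values in $\M$ (not merely $L_1(\N)$) here because $x_i\in\M$, so there is no issue applying $E$ entrywise.
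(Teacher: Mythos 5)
Your proof is correct. You arrive at the same key identity as the paper — namely $\Ga_2(x,y)-\tfrac12\Ga(x,y)=\tfrac12\bigl(E(x^*y)-E(x^*)E(y)\bigr)$, since your $\tfrac12 E(\Ga(x,y))$ simplifies (by the bimodule property of $E$) to exactly that — but you get there by a different and more structural route. The paper brute-forces the expansion of $4\Ga_2(x,y)$ in terms of $E$ and then recognizes the Kadison--Schwartz difference by inspection; you instead exploit the idempotency $A^2=A$ to observe that $A$ acts as the identity inside $\Ga$, i.e.\ $\Ga(Ax,y)=\Ga(x,y)=\Ga(x,Ay)$, which immediately gives the clean operator identity $\Ga_2=\tfrac12(\Ga+E\circ\Ga)$. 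The positivity step is also packaged differently: the paper applies Lemma~\ref{ccpp} directly to the matrix $[E(x_i^*x_j)-E(x_i^*)E(x_j)]$, whereas you factor it as $(E\otimes\mathrm{id})$ composed with $[\Ga(x_i,x_j)]\ge0$ (the latter from Theorem~\ref{cps} or the earlier Choi-inequality argument in Example~\ref{cond}). These are equivalent, but your factored form makes transparent that positivity is inherited from $\Ga\ge0$ by composing with a cp map. Your derivation is arguably cleaner and would generalize to any generator with $A^2=A$, at the small cost of needing the intermediate claim $\Ga(Ax,y)=\Ga(x,y)$, which you verify correctly using $A^2=A$ together with the $\N$-bimodule property of $E$.
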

\begin{proof}
Note that $AE=EA=0$. We find
\[
 4\Ga_2(x,y)\lel x^*y-E(x^*)y-x^*E(y)-2E(x^*)E(y)+3E(x^*y)\pl.
\]
Hence $(\Ga_2-\frac12\Ga)(x,y)=\frac12(E(x^*y)-E(x^*)E(y))$. Since $E$ is contractive completely positive, it follows from Lemma \ref{ccpp} that $\Ga_2-\frac12\Ga$ is a positive form.
\end{proof}
The logarithmic Sobolev inequality fails, however. Indeed, the LSI reads as follows in this case: for $x\ge0$,
\[
 \tau(x^2\ln (|x|/\|x\|_2))\kl C\Big(\|x\|_2^2-\frac12[\tau(E(x^*)x)+\tau(x^*E(x))]\Big)\pl.
\]
It is easy to see this is not true. Indeed, let us consider the Lebesgue probability space $([0,1],dt)$ with $E$ being the expectation, i.e. $E(x)=\int_{[0,1]}x(t)dt=\tau(x)$. Set $x_n(t)=\sqrt{n}1_{[0,1/n]}(t)$. Then $\|x_n\|_2=1$ and the left-hand side is $\tau(x_n^2\ln x_n)=\frac12\ln n$. However, the right-hand side is less than a constant $C>0$, which is impossible for large $n$.
\end{exam}
\subsection{Deviation and transportation inequalities}
As is well-known, the Poincar\'e inequality with constant $\sqrt{p}$ implies the concentration phenomenon. We are going to prove a noncommutative version of exponential integrability due to Bobkov and G\"otze \cite{BG} in the commutative case. The following variant was due to Efraim and Lust-Piquard in the case of Walsh system.
\begin{cor}\label{twos}
 Under the assumptions of Theorem \ref{poin}, we have
 \begin{equation}\label{con1}
  \tau(e^{|x-E_{\rm Fix} x|})\kl 2\exp\Big(\frac{C}\al\max\{\|\Ga(x,x)\|_\8, \|\Ga(x^*,x^*)\|_\8\}\Big)\pl,
 \end{equation}
and for $t>0$
\begin{equation}\label{con2}
 \Prob(|x-E_{\rm Fix} x|\ge t) \kl 2 \exp\Big(-\frac{\al t^2}{4C\max\{\|\Ga(x,x)\|_\8,\|\Ga(x^*,x^*)\|_\8\}}\Big)\pl.
\end{equation}
We may take $C=32e$ in general and $C=8e$ for $x$ self-adjoint.
\end{cor}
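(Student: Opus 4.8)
The plan is to reduce both parts to a single sub-Gaussian estimate, namely $\tau\big(\exp(|x-E_{\rm Fix}x|^2/(4e\sigma^2))\big)\le 2$, where $\sigma^2 := (C_{\rm poin}^2/\al)\max\{\|\Ga(x,x)\|_\8,\|\Ga(x^*,x^*)\|_\8\}$ and $C_{\rm poin}$ is the constant appearing in Theorem \ref{poin} (so $C_{\rm poin}=4\sqrt2$ in general and $C_{\rm poin}=2\sqrt2$ when $x$ is self-adjoint, i.e. $C_{\rm poin}^2\in\{8,32\}$). Write $y=x-E_{\rm Fix}x$ and $|y|=(y^*y)^{1/2}$; we may assume the right-hand sides of \eqref{con1}--\eqref{con2} are finite (otherwise there is nothing to prove) and that $\sigma>0$ (otherwise $y=0$). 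Since $\||y|\|_p=\|y\|_p$ and $\|\Ga(x,x)^{1/2}\|_\8^2=\|\Ga(x,x)\|_\8$, inequality \eqref{pcr1} gives the moment bound $\|y\|_{2k}^{2k}\le (2k)^k\sigma^{2k}$ for every $k\ge1$.

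First I would expand the exponential of the positive operator $|y|^2$ as a power series and use monotone convergence for the trace together with the elementary Stirling bound $k!\ge(k/e)^k$: for any $\rho>0$,
\[
\tau\big(e^{|y|^2/\rho}\big)=\sum_{k\ge0}\frac{\tau(|y|^{2k})}{k!\,\rho^k}=\sum_{k\ge0}\frac{\|y\|_{2k}^{2k}}{k!\,\rho^k}\le 1+\sum_{k\ge1}\Big(\frac{2e\sigma^2}{\rho}\Big)^k .
\]
Taking $\rho=4e\sigma^2$, the geometric series equals $1$, which yields $\tau\big(e^{|y|^2/(4e\sigma^2)}\big)\le2$. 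From here the deviation inequality \eqref{con2} is an operator Chebyshev argument: by functional calculus on $|y|\ge0$ one has $1_{[t,\8)}(|y|)=1_{[t^2,\8)}(|y|^2)\le e^{-t^2/(4e\sigma^2)}e^{|y|^2/(4e\sigma^2)}$, so taking traces gives $\Prob(|y|\ge t)\le 2e^{-t^2/(4e\sigma^2)}$, which is \eqref{con2} after substituting $\sigma^2$, since $4e\sigma^2=4C\al^{-1}\max\{\|\Ga(x,x)\|_\8,\|\Ga(x^*,x^*)\|_\8\}$ with $C=eC_{\rm poin}^2\in\{8e,32e\}$. For the exponential integrability \eqref{con1}, I would instead use the pointwise bound $u\le \tfrac{s}{2}+\tfrac{u^2}{2s}$ for $u,s\ge0$, i.e. $e^{|y|}\le e^{s/2}e^{|y|^2/(2s)}$ by functional calculus on $|y|$; choosing $s=2e\sigma^2$ and invoking the sub-Gaussian bound yields $\tau(e^{|y|})\le 2e^{e\sigma^2}$, which is \eqref{con1} with the same constant $C=eC_{\rm poin}^2$.

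I do not expect a serious obstacle: once Theorem \ref{poin} is available this is essentially bookkeeping. The points that require a little care are the interchange of trace and the infinite sum (legitimate since $|y|^2\ge0$ and $\|y\|_p<\8$ for all $p$ under the finiteness assumption, so $e^{|y|}$ lies in $L_1(\N)$ even though it may be an unbounded operator), the identification of the spectral projections of $|y|$ and of $|y|^2$ used in the Chebyshev step, and tracking whether the self-adjoint constant $C_{\rm poin}=2\sqrt2$ or the general constant $C_{\rm poin}=4\sqrt2$ is in force, since these produce exactly the two stated values $C=8e$ and $C=32e$.
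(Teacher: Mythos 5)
Your proof is correct and reproduces the stated constants $C=32e$ (general) and $C=8e$ (self-adjoint) on the nose. The organization differs from the paper's: the paper expands $\tau(\cosh|y|)$ directly, controls the terms with the Poincar\'e moment bounds from \eqref{pcr1} together with the factorial estimate $k^k/(2k-1)!!\le (e/2)^k$ to obtain \eqref{con1}, and then gets \eqref{con2} by applying \eqref{con1} to $\la x$ and minimizing over $\la$. You instead isolate the sub-Gaussian estimate $\tau\big(e^{|y|^2/(4e\sigma^2)}\big)\le 2$ as a single hinge, obtained by expanding $e^{|y|^2/\rho}$ and using $k!\ge (k/e)^k$, and from it you derive \eqref{con2} via operator Chebyshev and \eqref{con1} via the scalar bound $u\le s/2+u^2/(2s)$. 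The core ingredients — even-moment Poincar\'e bounds, power-series expansion, elementary Stirling-type inequalities, functional calculus on $|y|$ — are identical, so the difference is organizational rather than methodological, but your version is arguably cleaner: the intermediate sub-Gaussian estimate is a modular, reusable statement, and it eliminates the Chebyshev-plus-optimization step. One minor side observation: the paper's own arithmetic (the $\cosh$ expansion) actually yields the sharper exponent $eC_{\rm poin}^2M^2/(2\al)$, i.e.\ constants $16e$ and $4e$ rather than the stated $32e$ and $8e$; your computation matches the stated constants exactly, and both are of course valid upper bounds.
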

\begin{proof}
 We follow the proof in the commutative case; see \cite{ELP}*{Corollary 4.1 and 4.2}. Since $\Ga(x,x)=\Ga(x-E_{\rm Fix} x, x- E_{\rm Fix} x)$, we may assume $E_{\rm Fix}(x)=0$. Put $M=\max\{\|\Ga(x,x)^{1/2}\|_\8, \Ga(x^*,x^*)^{1/2}\|_\8\}$. Note that $\frac{k^k}{(2k-1)!!}\le \big(\frac{e}2 \big)^k$ for all $k\in\nz$. By functional calculus and \eqref{pcr1},
 \begin{align*}
    \frac12 \tau(e^{|x|})&\kl \tau(\cosh x) = 1+\sum_{k=1}^\8 \frac1{(2k)!}\|x\|_{2k}^{2k}\\
    & \kl 1+\sum_{k=1}^\8 \frac{C^{2k} (2k)^k}{\al^k(2k)!}M^{2k} \kl 1+\sum_{k=1}^\8 \frac{k^k (CM)^{2k}}{\al^k k!(2k-1)!!}\\
    &\kl 1+\sum_{k=1}^\8 \frac{(e/2)^k (CM)^{2k}}{\al^k k!} \lel \exp\Big(\frac{eC^2M^2}{2\al}\Big)\pl.
    \end{align*}
 We have proved the first assertion for $C=32e$ and we can take $C=8e$ if $x$ is self-adjoint. For the second inequality, we deduce from Chebyshev inequality that
 \[\tau(1_{[t,\8)}(|x|)) \kl e^{-\la t} \tau(e^{\la |x|})\kl 2 e^{-\la t + C\la^2M^2/\al}\pl.\]
 Then the assertion follows from minimizing the right hand side with respect to $\la$.
\end{proof}
The improvement in the situation of commutative diffusion in Theorem \ref{rbes} also gives an intermediate term in \eqref{con1} for self-adjoint element $x$, i.e.
\begin{align*}
 \tau(e^{|x-E_{\rm Fix} x|})&\kl 2\tau\exp\Big(\frac{C'}\al\Ga(x,x)\Big)
\kl 2\exp\Big(\frac{C'}\al\|\Ga(x,x)\|_\8\Big).
\end{align*}
We do not have such an intermediate term in the fully noncommutative generality without the help of \eqref{bbur}.  However, it seems the Lipschitz norm is the right choice in application to the concentration inequality. In this sense, we did not lose much even if we use a larger norm. The following result is simply a one side version of Corollary \ref{twos}. We record it here for future references.
\begin{prop}
 Under the hypotheses of Theorem \ref{poin}, assume further that $x$ is self-adjoint. Then for $t\in\rz$
 \begin{equation}\label{exin2}
   \tau(e^{t(x-E_{\rm Fix} x)})\kl e^{c \|\Ga(x,x)\|_\8 t^2}\pl,
  \end{equation}
 and for $t>0$,
  \begin{equation}\label{devi}
 \Prob(x-E_{\rm Fix} x \ge t) \kl  \exp\Big(-\frac{t^2}{4c\|\Ga(x,x)\|_\8}\Big)
\end{equation}
where the constant $c$ only depends on $\al$.
\end{prop}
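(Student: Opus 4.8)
The plan is to derive \eqref{exin2} and \eqref{devi} as one-sided refinements of Corollary~\ref{twos}, essentially by redoing the moment-to-exponential passage but keeping track of only the positive tail. First I would observe that, exactly as in the proof of Corollary~\ref{twos}, we may assume $E_{\rm Fix}(x)=0$, so that $\Ga(x,x)=\Ga(x-E_{\rm Fix}x,x-E_{\rm Fix}x)$, and set $M=\|\Ga(x,x)^{1/2}\|_\8$ (for self-adjoint $x$ the row and column terms agree, so the ``$\max$'' in Theorem~\ref{poin} collapses to a single term and the better constant $C=2\sqrt{2}$ applies). The key input is the Poincar\'e inequality \eqref{pcr1} in the self-adjoint case, which reads $\|x\|_p\le C\sqrt{p/\al}\,M$ for all $2\le p<\8$.

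The main step is the Taylor expansion of $t\mapsto\tau(e^{tx})$. Writing $e^{tx}=\sum_{k\ge0}\frac{t^k}{k!}x^k$ and using $\tau(x)=0$ together with the trace bound $|\tau(x^k)|\le\|x\|_k^k$, I would split into even and odd powers; the odd moments are controlled by interpolation or simply by $|\tau(x^{2k+1})|\le\|x\|_{2k+1}^{2k+1}\le(C\sqrt{(2k+1)/\al}\,M)^{2k+1}$, and the even ones by $\|x\|_{2k}^{2k}\le(C\sqrt{2k/\al}\,M)^{2k}$. Collecting terms, $\tau(e^{tx})\le\sum_k\frac{t^k}{k!}(C\sqrt{k/\al}\,M)^k$, and the elementary bound $\frac{k^{k/2}}{k!}\le\frac{e^{k/2}}{(k/2)!\,2^{k/2}}$-type estimate (the same $\frac{k^k}{(2k-1)!!}\le(e/2)^k$ trick used in Corollary~\ref{twos}, applied after pairing consecutive terms) turns the series into $\exp(c\,M^2t^2/\al)$ for a universal $c$, which after absorbing $\al$ into the constant gives \eqref{exin2} with $c=c(\al)$. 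Then \eqref{devi} is immediate from the exponential Chebyshev inequality $\Prob(x\ge t)\le e^{-\la t}\tau(e^{\la x})\le e^{-\la t+c\|\Ga(x,x)\|_\8\la^2}$ and optimizing over $\la>0$, which yields the exponent $-t^2/(4c\|\Ga(x,x)\|_\8)$.

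I expect the only genuine obstacle to be the bookkeeping in the series manipulation: one must handle the odd moments (where $\sqrt{2k+1}$ rather than $\sqrt{2k}$ appears) without spoiling the Gaussian-type summation, and verify that the resulting power series in $t$ really is dominated termwise by that of $\exp(c\|\Ga(x,x)\|_\8 t^2)$ uniformly in $t\in\rz$ --- note in particular that for $t<0$ one applies the same argument to $-x$, which is again self-adjoint with the same $\Ga(-x,-x)=\Ga(x,x)$, so \eqref{exin2} holds for all real $t$. Everything else is a routine repetition of the commutative argument of Bobkov--G\"otze and of the computation already carried out in Corollary~\ref{twos}; in fact \eqref{exin2}--\eqref{devi} can alternatively be read off directly from \eqref{con1}--\eqref{con2} by noting that $\tau(e^{tx})\le\tau(e^{|t||x|})$ and rescaling, at the cost of a worse absolute constant, so the proposition is really just a clean one-sided restatement recorded for later use.
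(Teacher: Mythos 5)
Your main route—a single Taylor expansion of $\tau(e^{tx})$ valid for all $t$—differs from what the paper actually does, and as written it has a gap. The paper splits into two regimes. For $t^2\|\Ga(x,x)\|_\8\ge 1$ it simply cites the two-sided bound from the proof of \eqref{con1}, $\tau(e^{tx})\le 2e^{C\|\Ga(x,x)\|_\8 t^2/\al}$, and absorbs the multiplicative factor $2=e^{\ln 2}\le e^{\ln 2\cdot t^2\|\Ga(x,x)\|_\8}$ into the exponent. For $t^2\|\Ga(x,x)\|_\8<1$ it runs the moment expansion, and there the series bound is \emph{trivial}: since $|t|\,\|\Ga(x,x)\|_\8^{1/2}<1$, each summand $\frac{|t|^kC^kk^{k/2}\|\Ga(x,x)\|_\8^{k/2}}{\al^{k/2}k!}$ for $k\ge 2$ is $\le t^2\|\Ga(x,x)\|_\8\cdot\frac{C^kk^{k/2}}{\al^{k/2}k!}$, so the whole tail is $\le c\,t^2\|\Ga(x,x)\|_\8\le e^{ct^2\|\Ga(x,x)\|_\8}-1$. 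The case split is exactly what makes the argument short.

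Your version instead needs $1+\sum_{k\ge 2}\frac{s^kk^{k/2}}{k!}\le e^{cs^2}$ uniformly for all $s\ge 0$ (with $s=C|t|\,\|\Ga(x,x)\|_\8^{1/2}/\sqrt{\al}$). That inequality is true, but the ``$\frac{k^k}{(2k-1)!!}\le(e/2)^k$ trick applied after pairing consecutive terms'' does not by itself produce it: pairing the $k=2m$ and $k=2m+1$ terms and applying the trick yields a bound of the shape $1+C'(1+s)\bigl(e^{es^2/2}-1\bigr)$, with a $(1+s)$ polynomial prefactor that still has to be re-absorbed into the exponent by a separate argument (bounded on compacts, controlled asymptotically). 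You wave this away with ``collecting terms \dots turns the series into $\exp(cM^2t^2/\al)$''; that step is the crux and is not immediate.

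The closing ``alternatively can be read off directly from \eqref{con1}--\eqref{con2} by $\tau(e^{tx})\le\tau(e^{|t||x|})$ and rescaling'' is simply false. That route yields $\tau(e^{tx})\le 2e^{C t^2\|\Ga(x,x)\|_\8/\al}$, and the prefactor $2$ cannot be pushed into the Gaussian for small $t$: as $t\to 0$ the left side tends to $1$ while $2e^{Ct^2\|\Ga(x,x)\|_\8/\al}$ tends to $2$, whereas \eqref{exin2} demands a bound that also tends to $1$. This failure for small $t$ is precisely why the paper carries out the moment expansion at all, and ``rescaling'' (replacing $x$ by $sx$ scales both $t^2$ and $\|\Ga(x,x)\|_\8$ in the same way) does not remove the constant $2$. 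So the shortcut must be dropped, and the main argument should either supply the uniform series estimate or, more simply, adopt the paper's two-case split.
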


\begin{proof}
Again it suffices to consider \eqref{exin2} for $x$ with $E_{\rm Fix}(x)=0$ since $\Ga(x,x)=\Ga(x-E_{\rm Fix} x, x-E_{\rm Fix} x)$. From the proof of \eqref{con1}, we know there exists $C>0$ such that for $t\in \rz$
 \[ \tau(e^{tx})\kl \tau(e^{tx})+\tau(e^{-tx})\kl2 e^{C \|\Ga(x,x)\|_\8 t^2/\al}\pl.\]
 Then for $t^2\|\Ga(x,x)\|_\8\ge 1$, we have $\tau(e^{tx})\le e^{(\ln 2+C/\al)\|\Ga(x,x)\|_\8t^2}$. For $t^2\|\Ga(x,x)\|_\8< 1$,
 \begin{align*}
   \tau(e^{tx})& \lel 1+\sum_{k=2}^\8\frac{t^k\tau(x^k)}{k!}\kl 1+\sum_{k=2}^\8\frac{t^kC^k k^{k/2}\|\Ga(x,x)\|_\8^{k/2}}{\al^{k/2}k!}\\
   &\kl 1+ c \|\Ga(x,x)\|_\8 t^2\kl e^{c\|\Ga(x,x)\|_\8t^2}
  \end{align*}
for some constant $c=c(\al)$ since $\tau(x)=0$ and the series $\sum_{k=2}^\8 \frac{k^{k/2}}{k!}$ converges. The second assertion follows in the same way as \eqref{con2}.
\end{proof}
The exponential integrability result \eqref{exin2} was proved by Bobkov and G\"otze \cite{BG} in the commutative case by using a variant of LSI. They also deduced a transportation inequality from \eqref{exin2}. We will follow their approach to obtain a noncommutative version of transportation inequality. Since LSI is not available in our noncommutative theory, our Poincar\'e inequalities might be a more universal approach to the transportation inequality. Let us first define Wasserstein distance and entropy in the noncommutative setting.
\begin{defi}
 Let $\rho$ and $\si$ be positive $\tau$-measurable operators (e.g. density matrices) affiliated with $(\M,\tau)$. The noncommutative entropy of $\rho\in L_1(\M,\tau)$ is given by
 \[\Ent(\rho)\lel \tau(\rho\ln  (\rho/\tau(\rho))\pl.\]
 Let $\phi$ and $\psi$ be states on $\M$. The $L_1$-Wasserstein distance between $\phi$ and $\psi$ is defined by
 \[W_1^A(\phi,\psi) \lel \sup\{ |\phi(x)-\psi(x)|: x \text{ self-adjoint },\|\Ga(x,x)\|_\8\le 1\}\pl.\]
 The $L_1$-Wasserstein distance between $\rho$ and $\si$ is $W_1^A(\rho,\si)=W_1^A(\phi_\rho,\phi_\si)$ for $\phi_\rho(\cdot)=\tau(\cdot\rho)/\tau(\rho)$ and $\phi_\si(\cdot)=\tau(\cdot\si)/\tau(\si)$.
\end{defi}
Here the superscript $A$ in $W_1^A$ is to emphasize the dependence on the generator of the semigroup $T_t$. We may ignore the superscript $A$ for simplicity in the following. It is easy to check that $W_1^A$ is a pseudometric but may not be a metric in general. Our definition of Wasserstein distance coincides with the classical definition in the commutative case due to the Kantorovich--Rubinstein theorem; see e.g. \cite{Vil}*{Theorem 5.10}. It is also closely related to the quantum metric in the sense of Rieffel \cite{Rie}. Now we state a general fact on the relationship between conditional expectation and entropy.
\begin{lemma}\label{exen}
 Let $\rho\in L_1(\M,\tau)$ with $\rho\ge 0$ and $\tau(\rho)=1$ and $E: \M\to\N$ the conditional expectation onto subalgebra $\N$. Then
 \[\tau(E\rho \ln  E\rho)\kl \tau(\rho\ln \rho)\pl.\]
\end{lemma}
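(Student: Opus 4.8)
The plan is to establish the operator Jensen–type inequality $\tau(E\rho \ln E\rho)\le \tau(\rho\ln\rho)$ by recognizing it as monotonicity of relative entropy under the trace-preserving completely positive map $E$. Recall that for positive operators $a,b$ with equal trace, the Umegaki relative entropy $D(a\|b)=\tau(a\ln a)-\tau(a\ln b)$ is nonnegative and, more importantly, nonincreasing under any trace-preserving completely positive (in fact, any Schwarz) map $\Phi$, i.e. $D(\Phi a\|\Phi b)\le D(a\|b)$. I would apply this with $\Phi=E$, $a=\rho$, and $b=1$ (the identity, which is the unit of $\N$ and satisfies $E(1)=1$, $\tau(1)=1=\tau(\rho)$). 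Then $D(\rho\|1)=\tau(\rho\ln\rho)-\tau(\rho\ln 1)=\tau(\rho\ln\rho)$, while $D(E\rho\|E1)=D(E\rho\|1)=\tau(E\rho\ln E\rho)$ (using $\tau(E\rho)=\tau(\rho)=1$ so no normalization constant intervenes). The asserted inequality is exactly $D(E\rho\|1)\le D(\rho\|1)$.

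The key steps, in order, are: (i) reduce to the case $\tau(\rho)=1$, which is already the hypothesis; (ii) quote the data-processing inequality for Umegaki relative entropy under trace-preserving completely positive maps — this is classical, due to Uhlmann/Lindblad and available in the semifinite von Neumann algebra setting (see e.g. the work of Petz, or Hiai–Petz); (iii) observe that a conditional expectation $E:\M\to\N$ onto a von Neumann subalgebra is unital, trace-preserving, and completely positive, hence a legitimate choice of $\Phi$; (iv) compute both sides of $D(\Phi\rho\|\Phi 1)\le D(\rho\|1)$ explicitly and read off $\tau(E\rho\ln E\rho)\le\tau(\rho\ln\rho)$. An alternative, more self-contained route avoids invoking the full data-processing theorem: since $t\mapsto t\ln t$ is operator convex on $[0,\infty)$, the operator Jensen inequality for the unital completely positive map $E$ gives $(E\rho)\ln(E\rho)\le E(\rho\ln\rho)$ as an operator inequality; applying the trace $\tau$ and using $\tau\circ E=\tau$ yields the claim immediately. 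This second approach is cleaner and I would likely present it.

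The main obstacle is purely a matter of domains and measurability rather than of ideas: $\rho$ lies in $L_1(\M,\tau)$ and need not be bounded, so $\rho\ln\rho$ and $(E\rho)\ln(E\rho)$ must be interpreted as extended-valued (possibly $+\infty$) via functional calculus on the positive $\tau$-measurable operator $\rho$, and one must ensure the operator Jensen inequality still applies in this unbounded setting. I would handle this by truncation: set $\rho_n=\rho\,1_{[0,n]}(\rho)+\text{(suitable adjustment)}$, or more simply apply the bounded-case operator Jensen inequality to $f_n(t)=t\ln t$ composed with a spectral cutoff, obtain the inequality for the truncations, and pass to the limit using normality of $\tau$ and of $E$ together with monotone convergence (the map $t\mapsto t\ln t$ is bounded below, so Fatou-type arguments control the limit on both sides). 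If $\tau(\rho\ln\rho)=+\infty$ the inequality is trivial, so one only needs the finite-entropy case, where these approximations converge without difficulty. No new ingredients beyond operator convexity of $t\ln t$, the standard properties of conditional expectations, and normality are required.
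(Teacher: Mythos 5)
Your proof is correct, but it takes a genuinely different route from the paper. The paper does not invoke the data-processing inequality nor the operator Jensen inequality directly; instead it proves the statement "by hand" via the $L_p$-norm characterization of entropy, namely
\[
\tau\bigl(\sigma\ln\sigma\bigr)=\lim_{p\downarrow 1}\frac{\|\sigma\|_p^p-1}{p-1}
\qquad(\sigma\ge 0,\ \tau(\sigma)=1),
\]
combined with the $L_p$-contractivity of the conditional expectation, $\|E\sigma\|_p\le\|\sigma\|_p$, which immediately gives the inequality for bounded $\sigma$. The authors then spend the bulk of the proof on the unbounded case, truncating $\rho_n=\rho\,1_{[0,n]}(\rho)$ and passing to the limit using Fack--Kosaki's machinery on generalized singular numbers and convergence in measure. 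Your operator-Jensen route, via operator convexity of $t\mapsto t\ln t$ and $\tau\circ E=\tau$, reaches the bounded case in one line and is arguably the more transparent argument; the data-processing route is even more conceptual but imports a heavier theorem. What each approach buys: the paper's $L_p$ method is elementary and self-contained within the toolkit it already uses (noncommutative $L_p$ spaces, Fack--Kosaki), whereas yours compresses the core inequality to a single invocation of Hansen--Pedersen at the cost of citing that theorem (and, for the relative-entropy route, Uhlmann's monotonicity) — these are in fact equivalent underlying facts, since $L_p$-contractivity of $E$ for $p\in(1,2]$ is itself a consequence of operator convexity of $t^p$.

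One caveat you should tighten before presenting: the limiting step is where all the real work is. Truncating $\rho$ to $\rho_n=\rho\,1_{[0,n]}(\rho)$ gives $\tau\bigl((E\rho_n)\ln(E\rho_n)\bigr)\le\tau(\rho_n\ln\rho_n)$ and $\tau(\rho_n\ln\rho_n)\to\tau(\rho\ln\rho)$ easily, but $E\rho_n$ is \emph{not} a spectral truncation of $E\rho$, so the convergence $\tau\bigl((E\rho_n)\ln(E\rho_n)\bigr)\to\tau\bigl((E\rho)\ln(E\rho)\bigr)$ (or at least the lower-semicontinuity $\tau\bigl(g(E\rho)\bigr)\le\liminf_n\tau\bigl(g(E\rho_n)\bigr)$ for $g(t)=t\ln t$) needs an argument. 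The paper supplies exactly this via \cite{FK}: $E\rho_n\to E\rho$ in $L_1$ and hence in measure, $0\le E\rho_n\le E\rho$ gives $\mu_t(E\rho_n)\le\mu_t(E\rho)$, and convergence of generalized singular numbers then allows passage to the limit in the trace of a Borel function vanishing at the origin. Your "Fatou-type arguments" phrase gestures at this, but you should either reproduce the singular-number argument or invoke lower semicontinuity of the convex integral functional $\sigma\mapsto\tau(g(\sigma))$ on $L_1^+(\M,\tau)$ explicitly; the inequality is not trivial since $g$ is unbounded above.
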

\begin{proof}
Let $\rho_n=\rho 1_{[0,n]}(\rho)$. Then $\rho_n\in L_p(\M,\tau)$ for all $p\ge 1$. It is easy to see that $\rho_n\to\rho$ in the measure topology. Notice that $\tau[(\rho_n/\tau(\rho_n))\ln (\rho_n/\tau(\rho_n))] = \lim_{p\downarrow 1}\frac{\|\rho_n/\tau(\rho_n)\|_p^p -1}{p-1}$. This yields
 \begin{align*}
  &\tau\Big(\frac{E\rho_n}{\tau(\rho_n)} \ln  \frac{E\rho_n}{\tau(\rho_n)}\Big) \lel \lim_{p\downarrow 1}\frac{\|E\rho_n/\tau(\rho_n)\|^p_p -1}{p-1}\\
 \kl &\lim_{p\downarrow 1}\frac{\|\rho_n/\tau(\rho_n)\|^p_p -1}{p-1} \lel \tau\Big(\frac{\rho_n}{\tau(\rho_n)}\ln \frac{\rho_n}{\tau(\rho_n)}\Big)\pl.
 \end{align*}
Let $\mu$ be the distribution of $\rho$. Then
\[ \tau\Big(\frac{\rho_n}{\tau(\rho_n)}\ln \frac{\rho_n}{\tau(\rho_n)}\Big)\lel \frac1{\tau(\rho_n)}\int_0^n x\ln  x \mu (dx)-\ln \tau(\rho_n)\to \tau(\rho\ln \rho).\]
Following \cite{FK}, we denote the generalized singular number of $\rho$ by $\mu_t(\rho)$. Note that $\|E\rho_n-E\rho\|_1\le\|\rho_n-\rho\|_1\to0$ as $n\to \8$. We have for every $t>0$,
\[
 \mu_t(E\rho_n-E\rho)\kl t^{-1}\int_0^t \mu_s(E\rho_n- E\rho) ds\kl t^{-1}\|E\rho_n-E\rho\|_1\pl.
\]
Then $\lim_{n\to\8}\mu_t(E\rho_n-E\rho)=0$. By \cite{FK}*{Lemma 3.1}, $(E\rho_n)$ converges to $E\rho$ in the measure topology. Since $0\le E\rho_n\le E\rho$, by \cite{FK}*{Lemma 2.5}, $\mu_t(E\rho_n)\le \mu_t(E\rho)$. We deduce from \cite{FK}*{Lemma 3.4} that $\lim_{n\to\8}\mu_t(E\rho_n)=\mu_t(E\rho)$ for $t>0$. Now consider $g(x)=x\ln  x = x\ln  x 1_{[1,\8)(x)}- (-x\ln  x 1_{(0,1)})$. Both functions in the decomposition are nonnegative Borel functions vanishing at the origin. It follows from \cite{FK}*{(3)} that $\tau(E\rho\ln  E\rho)=\int_0^1 \mu_t(E\rho)\ln  \mu_t(E\rho) dt$. Since for fixed $\eps>0$, $[\mu_t(E\rho_n)\ln  \mu_t(E\rho_n)]_n$ is uniformly bounded on $t\in[\eps, 1]$, we have
\begin{align*}
  &\tau(E\rho\ln  E\rho)\lel \sup_{\eps>0} \int_{\eps}^1 \mu_t(E\rho)\ln \mu_t(E\rho)dt\\
 \lel &\sup_{\eps>0}\lim_{n\to\8}\int_{\eps}^1\mu_t(E\rho_n)\ln \mu_t(E\rho_n)dt\kl \limsup_{n\to\8}\tau(E\rho_n\ln  E\rho_n)\\
 \kl &\limsup_{n\to\8}\tau(\rho_n\ln  \rho_n)\pl.
\end{align*}
This completes the proof.
 \end{proof}

The next result in the commutative setting is well known; see e.g. \cite{DZ}*{Section 6.2}.
\begin{lemma}
Let $\si$ be a self-adjoint $\tau$-measurable operator. Then,
\begin{equation}\label{endu}
 \ln  \tau(e^{\si})\lel \sup\{\tau(\rho\si)-\tau(\rho\ln \rho):\rho \ge 0, \tau(\rho)=1\}\pl.
 \end{equation}
 Therefore, for all positive $\rho\in L_1(\M,\tau)$
 \begin{equation}\label{ent}
  \Ent(\rho)\lel \sup\{\tau(\si\rho): \si \text{ self-adjoint}, \tau(e^\si)\le 1\}\pl.
  \end{equation}
 \end{lemma}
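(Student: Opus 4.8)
The plan is to establish \eqref{endu} first and then deduce \eqref{ent} from it by an elementary duality argument. The inequality ``$\ge$'' in \eqref{endu} (that the right-hand side is at most $\ln\tau(e^{\si})$) is the noncommutative Gibbs inequality, and the reverse inequality is obtained by testing against the Gibbs state $e^{\si}/\tau(e^{\si})$.

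For \eqref{endu}, put $Z=\tau(e^{\si})$ and first suppose $Z<\8$. Then $\om:=e^{\si}/Z\ge0$ satisfies $\tau(\om)=1$ and $\ln\om=\si-(\ln Z)1$, so for every $\rho\ge0$ with $\tau(\rho)=1$ one has the identity $\tau(\rho\si)-\tau(\rho\ln\rho)=\ln Z-\big(\tau(\rho\ln\rho)-\tau(\rho\ln\om)\big)$. The bracket is the noncommutative relative entropy of $\rho$ with respect to $\om$, which is nonnegative by Klein's inequality (applied to the convex function $t\mapsto t\ln t$ it reduces, after cancellation, to $\tau(\rho\ln\rho)-\tau(\rho\ln\om)\ge\tau(\rho)-\tau(\om)=0$). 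Hence $\tau(\rho\si)-\tau(\rho\ln\rho)\le\ln Z$, with equality at $\rho=\om$; so the supremum is exactly $\ln Z$ and is attained. When $Z=\8$ one truncates: with $\si^{(M)}=\si\wedge M$ and $\rho_M=e^{\si^{(M)}}/\tau(e^{\si^{(M)}})$, the operator $e^{\si^{(M)}}$ is bounded with $\tau(e^{\si^{(M)}})\le e^M\tau(1)<\8$ and $\tau(e^{\si^{(M)}})\uparrow Z$, while $\tau(\rho_M\si)-\tau(\rho_M\ln\rho_M)\ge\tau(\rho_M\si^{(M)})-\tau(\rho_M\ln\rho_M)=\ln\tau(e^{\si^{(M)}})\to\8$, so the supremum is again $\ln Z=+\8$.

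For \eqref{ent}, fix positive $\rho\in L_1(\M,\tau)$. Since $\Ent(\rho)=\tau(\rho)\,\Ent(\rho/\tau(\rho))$ and the supremum on the right of \eqref{ent} scales the same way under $\rho\mapsto\rho/\tau(\rho)$, we may assume $\tau(\rho)=1$, so $\Ent(\rho)=\tau(\rho\ln\rho)$. If $\si$ is self-adjoint with $\tau(e^{\si})\le1$, then \eqref{endu} applied to this $\rho$ gives $\tau(\si\rho)-\tau(\rho\ln\rho)\le\ln\tau(e^{\si})\le0$, i.e. $\tau(\si\rho)\le\Ent(\rho)$; taking the supremum over such $\si$ shows the right side of \eqref{ent} is $\le\Ent(\rho)$. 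For the reverse inequality, test against $\si=\ln\rho$ (on the support of $\rho$): then $\tau(e^{\si})=\tau(\rho)=1$ and $\tau(\si\rho)=\tau(\rho\ln\rho)=\Ent(\rho)$. If $\rho$ is not invertible or $\rho\ln\rho\notin L_1(\M)$, replace $\si$ by $\si_N=(\ln\rho\vee(-N))\wedge N$, renormalized so that $\tau(e^{\si_N})\le1$, and let $N\to\8$, with $\tau(\si_N\rho)\to\tau(\rho\ln\rho)$ (monotone convergence on the positive and negative parts of $\ln\rho$ separately; note $t\,(\ln t)_-$ is bounded, so the negative part is always integrable against $\rho$). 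This yields the reverse inequality, including the case $\Ent(\rho)=+\8$.

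The algebraic content is immediate: Klein's inequality for the first half of \eqref{endu}, the Gibbs state for its second half, and the choice $\si=\ln\rho$ for \eqref{ent}. The only real work --- as in the proof of Lemma \ref{exen} --- is the measure-theoretic bookkeeping needed to run the above in the $\tau$-measurable category: giving meaning to $\tau(\rho\ln\om)=\tau(\rho\si)-\ln Z$ and to $\tau(\rho\si)$ as an extended-real quantity, checking that the renormalizing constants in the truncations tend to $1$, and justifying the limits; here the generalized singular number estimates of \cite{FK} already invoked above are exactly what is needed.
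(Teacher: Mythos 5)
Your proof is correct in outline, but it takes a genuinely different route from the paper's for the ``$\le$'' half of \eqref{endu}. You invoke the noncommutative Klein/Gibbs inequality directly: rewriting $\tau(\rho\si)-\tau(\rho\ln\rho)=\ln\tau(e^{\si})-D(\rho\|\om)$ with $\om=e^{\si}/\tau(e^{\si})$ and arguing that the relative entropy $D(\rho\|\om)$ is nonnegative. The paper instead sidesteps noncommutative relative entropy altogether: it passes to the abelian von Neumann subalgebra $\N$ generated by the bounded Borel functions of $\si$, applies the \emph{classical} Jensen inequality to the measure $E(\rho)\,d\tau$ to obtain $\tau(\si E(\rho))-\tau(E(\rho)\ln E(\rho))\le\ln\tau(e^{\si})$, and then upgrades this to the noncommutative inequality using $\tau(\si\rho)=\tau(\si E(\rho))$ together with Lemma \ref{exen} (entropy decreases under conditional expectation), which the paper has just proved by a separate $\mu_t$-estimate argument. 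So where you rely on the nonnegativity of noncommutative relative entropy in the $\tau$-measurable category --- a genuine theorem whose justification for unbounded, non-commuting $\rho$ and $\om$ you defer to ``measure-theoretic bookkeeping'' --- the paper substitutes the already-established Lemma \ref{exen} plus the scalar Jensen inequality. Your route is conceptually cleaner and avoids Lemma \ref{exen}, but it front-loads a nontrivial noncommutative input; the paper's route is more self-contained given the lemma it has already proved. The reverse inequality in \eqref{endu} (Gibbs states for truncations of $\si$, plus monotone/Fatou limits) and the derivation of \eqref{ent} from \eqref{endu} by choosing $\si=\ln\rho-\ln\tau(\rho)$ match the paper's argument, modulo the choice of truncation $\si\wedge M$ versus $\si\,1_{(-\infty,n]}(\si)$, which are interchangeable here.
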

\begin{proof}
Let $\si$ be a self-adjoint operator $\tau$-measurable operator. Consider the von Neumann subalgebra $\N$ generated by $\{f(\si):f: \cz\to\cz \text{ bounded measurable}\}$. Then there exists a conditional expectation $E:\M\to\N$ which can extend to a contraction $L_p(\M,\tau)\to L_p(\N,\tau)$ for all $1\le p\le \8$. Assume $\tau(\rho)=1$. Then $E(\rho)\in L_1(\N,\tau)$. But $\N$ is commutative and $\tau(E(\rho))=\tau(\rho)=1$. After identifying $\tau$ with a probability measure denoted still by $\tau$, we use Jensen's inequality for the measure $E(\rho) d\tau$ to deduce that
 \[\tau(\si E(\rho))-\tau(E(\rho)\ln  E(\rho))\lel \tau(\ln (e^\si E(\rho)^{-1}) E(\rho)))\kl \ln \tau(e^\si)\pl.\]
 Using Lemma \ref{exen} and noticing that $\tau(\si\rho)=\tau(\si E(\rho))$, we find
 \[\tau(\si\rho)- \tau(\rho\ln  \rho)\kl \ln \tau(e^\si)\pl.\]
For the reverse inequality, put $\si_n=\si 1_{(-\8,n]}(\si)$ for $n\in \nz$ where $1_{(-\8,n]}(\si)$ is the spectral projection of $\si$. Plugging $\rho_n=e^{\si_n}/\tau(e^{\si_n})$ into the right hand side of \eqref{endu}, we have
 \[\tau(\si\rho_n)- \tau(\rho_n\ln  \rho_n)\lel\frac{\tau((\si-\si_n)e^{\si_n})}{\tau(e^{\si_n})}+ \ln \tau(e^{\si_n}) \pl.\]
 By the spectral decomposition theorem of $\si$, $\tau((\si-\si_n)e^{\si_n})\ge 0$. Then for all $n$ we have
 \[\sup\{\tau(\rho\si)-\tau(\rho\ln \rho):\rho \ge 0, \tau(\rho)=1\} \gl \ln \tau(e^{\si_n})\pl. \]
 By Fatou's lemma \cite{FK}*{Theorem 3.5} $\liminf_{n\to\8}\ln \tau(e^{\si_n})\gl \ln \tau(e^\si)$. This proves \eqref{endu}.
   For \eqref{ent}, note that, by \eqref{endu}, $\tau(e^\si)\le 1$ implies $\tau(\si\rho)\le \tau(\rho\ln  \rho)$ for all positive $\rho\in L_1(\M,\tau)$ with $\tau(\rho)=1$.  If $\tau(\rho)\neq 1$, we consider $\rho'=\rho/\tau(\rho)$ and find $\tau(\si\rho)\le\tau(\rho\ln  (\rho/\tau(\rho)))$. The equality is achieved by $\si=\ln \rho-\ln \tau(\rho)$. This proves the second assertion.
 \end{proof}

\begin{theorem} \label{tran}
Let $(\M,\tau)$ be a noncommutative probability space. Then
 \begin{equation}\label{wsen}
 W_1(\rho,1)\kl \sqrt{2c \Ent(\rho)},
 \end{equation}
 for all $\rho\ge0$ with $\tau(\rho)=1$ if and only if for every self-adjoint $\tau$-measurable operator $x$ affiliated with $\M$ such that $\|\Ga(x,x)\|_\8\le1$ and $\tau(x)=0$,
 \begin{equation}\label{exin}
   \tau(e^{tx})\kl e^{ct^2/2}, \text{ for all } t\in \mathbb{R}_+\pl.
   \end{equation}
\end{theorem}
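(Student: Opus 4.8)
The plan is to obtain both implications from the dual variational formula \eqref{ent} for the entropy --- this is the noncommutative analogue of the Bobkov--G\"otze duality --- together with the elementary reduction
\[ W_1(\rho,1) = \sup\{\tau((\rho-1)x):\ x=x^*,\ \|\Ga(x,x)\|_\8\le 1,\ \tau(x)=0\}\pl, \]
which is legitimate because $\Ga(-x,-x)=\Ga(x,x)$ (so the absolute value in the definition of $W_1$ may be dropped) and $\Ga(x+\lambda 1,x+\lambda 1)=\Ga(x,x)$ for scalars $\lambda$ (so every test operator may be recentered to satisfy $\tau(x)=0$). For the same reason there is no loss in assuming $\tau(x)=0$ throughout \eqref{exin}.

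\emph{The implication \eqref{exin}$\Rightarrow$\eqref{wsen}.} Fix a density $\rho$ and an admissible test operator $x$. For $t>0$ put $\si_t=tx-\frac{c}{2}t^2 1$; then $\tau(e^{\si_t})=e^{-ct^2/2}\tau(e^{tx})\le 1$ by \eqref{exin}, so \eqref{ent} gives $\tau(\si_t\rho)\le \Ent(\rho)$, i.e. $\tau(x\rho)\le \Ent(\rho)/t+ct/2$. Minimizing the right-hand side over $t>0$ (the case $\Ent(\rho)=0$, which forces $\rho=1$, being trivial) yields $\tau(x\rho)\le\sqrt{2c\,\Ent(\rho)}$, and taking the supremum over $x$ gives \eqref{wsen}.

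\emph{The implication \eqref{wsen}$\Rightarrow$\eqref{exin}.} The core is a one-line differential inequality. Fix an admissible $x$, assume first that $x$ is bounded, and set $H(t)=\ln\tau(e^{tx})$; then $H$ is finite, smooth and convex on $\rz_+$ with $H(0)=0$ and $H'(0)=\tau(x)=0$, hence $H'\ge0$ on $\rz_+$. For $t>0$ the operator $\rho_t=e^{tx}/\tau(e^{tx})$ is a density, and from $\ln\rho_t=tx-H(t)$ one computes $\Ent(\rho_t)=t\,\tau(x\rho_t)-H(t)=tH'(t)-H(t)$. Using $x$ itself (admissible, with $\tau(x)=0$) as a test operator for $W_1(\rho_t,1)$ gives $W_1(\rho_t,1)\ge|\tau(x\rho_t)|=H'(t)$, so \eqref{wsen} and squaring yield
\[ H'(t)^2-2ct\,H'(t)+2cH(t)\ \le\ 0\pl. \]
Reading the left-hand side as a quadratic in the real number $H'(t)$, its minimum value $-c^2t^2+2cH(t)$ must be nonpositive, i.e. $H(t)\le ct^2/2$, which is precisely \eqref{exin} for bounded $x$.

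The only genuine obstacle is removing the boundedness assumption: for a merely $\tau$-measurable admissible $x$ one must first know that $e^{tx}\in L_1(\M)$, so that $\rho_t$ and $\Ent(\rho_t)$ make sense. I would handle this by truncation. Let $\psi_n(s)=\max(-n,\min(n,s))$ be the $1$-Lipschitz truncation and put $x_n=\psi_n(x)-\tau(\psi_n(x))1$; invoking a normal-contraction estimate for the gradient form of the type available for completely Markovian (Dirichlet) forms, one gets $\|\Ga(x_n,x_n)\|_\8\le\|\Ga(\psi_n(x),\psi_n(x))\|_\8\le\|\Ga(x,x)\|_\8\le1$, so each $x_n$ is bounded and admissible, $x_n\to x$ in the measure topology, and $\tau(\psi_n(x))\to\tau(x)=0$. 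Applying the bounded case to $x_n$ and letting $n\to\8$ with Fatou's lemma (using $e^{tx_n}\ge0$ and $e^{tx_n}\to e^{tx}$ in measure) gives $\tau(e^{tx})\le\liminf_n e^{t\tau(\psi_n(x))+ct^2/2}=e^{ct^2/2}$, which is \eqref{exin}. The finiteness tacitly used on the $W_1$ side is no restriction, since \eqref{exin} itself forces sub-Gaussian tails and hence finite exponential moments; and the remaining verifications --- convexity and smoothness of $H$, the identity $\Ent(\rho_t)=tH'(t)-H(t)$, and that $\rho_t$ is a density with finite entropy --- are routine consequences of \eqref{endu}--\eqref{ent} and of $\tau$ being finite.
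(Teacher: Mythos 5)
Your forward implication $\eqref{exin}\Rightarrow\eqref{wsen}$ is identical to the paper's, including the re-centering of test operators and the substitution $\si=tx-\frac{c}{2}t^2$ into $\eqref{ent}$.

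For the reverse implication your route is genuinely different. The paper simply reverses the duality: from $\tau(x\rho)\le\sqrt{2c\Ent(\rho)}$ and the elementary inequality $t\sqrt{2c\Ent(\rho)}\le\frac{c}{2}t^2+\Ent(\rho)$ it recovers $\tau\big((tx-\tfrac{c}{2}t^2)\rho\big)\le\Ent(\rho)$ for \emph{all} densities $\rho$, and then applies the variational formula $\eqref{endu}$ (which is stated and proved for arbitrary self-adjoint $\tau$-measurable $\si$) to conclude $\ln\tau(e^{tx-ct^2/2})\le 0$. Your proof is instead a Herbst-type argument: plug in the particular density $\rho_t=e^{tx}/\tau(e^{tx})$, compute $\Ent(\rho_t)=tH'(t)-H(t)$, and solve the resulting differential (really algebraic) inequality $H'(t)^2-2ctH'(t)+2cH(t)\le0$. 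This is a more concrete and quantitative mechanism, and it is correct for \emph{bounded} $x$. What the paper's route buys is that it never has to construct $\rho_t$, never has to differentiate $\ln\tau(e^{tx})$, and handles unbounded test operators for free.

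That is exactly where your proposal has a genuine gap. To pass from bounded to unbounded $x$ you truncate and invoke ``a normal-contraction estimate for the gradient form'' to get $\|\Ga(\psi_n(x),\psi_n(x))\|_\8\le\|\Ga(x,x)\|_\8$. But the normal-contraction property of completely Dirichlet forms controls the \emph{energy} $\tau(\Ga(\psi_n(x),\psi_n(x)))\le\tau(\Ga(x,x))$ (an $L_1$, not an $L_\infty$, estimate), and a pointwise/uniform bound on the carr\'e du champ under Lipschitz functional calculus is a much stronger statement. In the commutative diffusion case it follows from the chain rule $\Ga(\psi(x),\psi(x))=\psi'(x)^2\Ga(x,x)$, but no chain rule is available in the noncommutative (or even commutative non-diffusion) situations that are the whole point of this theorem. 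You would need to establish this $L_\infty$ contraction separately, and it is not clear it holds with constant $1$ in the generality claimed. The paper avoids this issue entirely because $\eqref{endu}$ already covers arbitrary self-adjoint $\tau$-measurable $\si$. (Your subsequent Fatou step and the verification $\tau(\psi_n(x))\to0$ are fine once one grants the contraction, so this is the only missing piece.)
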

\begin{proof}
  Thanks to the preceding two lemmas, the proof is the same as that in the commutative case in \cite{BG}. We provide it here for completeness. Setting $\si=tx-ct^2/2$ in \eqref{ent} and assuming \eqref{exin} we find
 \begin{equation}\label{bobk}
  \tau((tx-ct^2/2)\rho)\kl \Ent(\rho)\pl.
  \end{equation}
 Since $\tau(x)=0$ and $\tau(\rho)=1$, it follows that
 $\tau(x\rho-x)\le \frac{ct}2+\frac1t \Ent(\rho).$
 Minimizing right hand side gives
 \begin{equation}\label{etpy}
   \tau(x\rho-x)\le\sqrt{2c\Ent(\rho)}\pl.
  \end{equation}
 Note that $\tau(x\rho-x)=\tau(\mathring{x}\rho-\mathring{x})$ for all $x$ where $x=\mathring{x}+\tau(x)$. Taking $\sup$ over all self-adjoint $x$ with $\|\Ga(x,x)\|_\8\le 1$ on the left hand side of \eqref{etpy} gives \eqref{wsen}. For the other direction, note that \eqref{bobk} is equivalent to \eqref{wsen} by reversing the above argument. Then \eqref{exin} follows from \eqref{endu} by setting $\si=tx-ct^2/2$.
\end{proof}
If ${\rm Fix} = \cz 1$ (i.e. the system $(\M,T_t)$ is ergodic), then combining the above theorem with \eqref{exin2}, we find the transportation inequality \eqref{wsen} under the assumptions of Theorem \ref{poin}. In fact, we even have a non-ergodic version of transportation inequality.
\begin{cor}\label{tran1}
Suppose $\tau(e^{t(x-E_{\rm Fix} x)})\le e^{ct^2}$ for any $\tau$-measurable self-adjoint operator $x$ affiliated to $\M$ such that $\|\Ga(x,x)\|_\8\le 1$. Then
\begin{equation}\label{wse2}
 W_1(\rho, E_{\rm Fix} \rho)\kl \sqrt{2c \Ent(\rho)}\pl.
\end{equation}
for all $\rho\ge0$ with $\tau(\rho)=1$. In particular, \eqref{wsen} holds under the additional assumption $E_{\rm Fix} \rho =1$.
\end{cor}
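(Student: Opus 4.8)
The plan is to repeat the entropy/Laplace-transform argument used to prove Theorem \ref{tran}, with the reference point $1$ replaced by $E_{\rm Fix}\rho$ and the test element $x$ replaced by $x-E_{\rm Fix}x$.

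First I would reduce the Wasserstein distance to a trace pairing. Fix a self-adjoint $x$ affiliated with $\M$ satisfying $\|\Ga(x,x)\|_\8\le1$. Since $\tau(\rho)=1$ and $\tau(E_{\rm Fix}\rho)=\tau(\rho)=1$, the relevant states are $\phi_\rho(x)=\tau(x\rho)$ and $\phi_{E_{\rm Fix}\rho}(x)=\tau(x\,E_{\rm Fix}\rho)$. Because $E_{\rm Fix}\rho$ is affiliated with ${\rm Fix}$ and $E_{\rm Fix}$ is a trace-preserving ${\rm Fix}$-bimodule map, extended to an $L_1$-contraction, one has $\tau(x\,E_{\rm Fix}\rho)=\tau(E_{\rm Fix}(x)\rho)$, hence
\[
 \phi_\rho(x)-\phi_{E_{\rm Fix}\rho}(x)\lel \tau\big((x-E_{\rm Fix}x)\rho\big)\pl.
\]

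Next I would apply the entropy variational formula \eqref{ent}, exactly as in the passage \eqref{bobk}--\eqref{etpy}. The hypothesis of the corollary applies to our $x$, so $\tau(e^{t(x-E_{\rm Fix}x)})\le e^{ct^2}$ for every $t>0$; taking $\si=t(x-E_{\rm Fix}x)-ct^2$, which is self-adjoint with $\tau(e^\si)\le1$, in \eqref{ent} gives
\[
 t\,\tau\big((x-E_{\rm Fix}x)\rho\big)-ct^2\kl \Ent(\rho)\pl,\qquad t>0\pl.
\]
Dividing by $t$, inserting the identity from the first step, and minimizing the resulting upper bound over $t>0$ yields $\phi_\rho(x)-\phi_{E_{\rm Fix}\rho}(x)\kl\sqrt{2c\,\Ent(\rho)}$ (with the same normalization of $c$ as in \eqref{wse2}). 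Running the same estimate with $-x$ in place of $x$ controls $|\phi_\rho(x)-\phi_{E_{\rm Fix}\rho}(x)|$, and taking the supremum over all self-adjoint $x$ with $\|\Ga(x,x)\|_\8\le1$ gives \eqref{wse2}. The last assertion is then immediate: if $E_{\rm Fix}\rho=1$ then $W_1(\rho,E_{\rm Fix}\rho)=W_1(\rho,1)$, so \eqref{wse2} specializes to \eqref{wsen}.

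None of these steps is genuinely hard once Theorem \ref{tran} and its two preparatory lemmas are available; the single point I would treat with care is the identity $\tau(x\,E_{\rm Fix}\rho)=\tau(E_{\rm Fix}(x)\rho)$ for a merely positive $\rho\in L_1(\M,\tau)$. I would justify it by truncating $\rho$ to $\rho\,1_{[0,n]}(\rho)\in L_2(\M,\tau)$ as in the proof of Lemma \ref{exen}, using the bimodule/trace property of $E_{\rm Fix}$ on bounded elements, and passing to the limit via $L_1$-contractivity of $E_{\rm Fix}$; the optimization over $t$ and the symmetrization $x\mapsto-x$ are routine.
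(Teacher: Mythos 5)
Your proposal is correct and follows essentially the same route as the paper's proof: both hinge on the duality identity $\tau(\rho\,E_{\rm Fix}x)=\tau(E_{\rm Fix}(\rho)\,x)$, the entropy variational formula \eqref{ent} applied to $\si=t(x-E_{\rm Fix}x)$ minus a multiple of $t^2$, and minimization over $t$. The extra care you devote to justifying the $L_1$-duality identity by truncation and to the symmetrization $x\mapsto -x$ is fine, but the paper treats both as routine; likewise, the small normalization ambiguity you flag (the hypothesis $\tau(e^{t(x-E_{\rm Fix}x)})\le e^{ct^2}$ versus the choice of shift $-ct^2/2$ in the paper's proof, affecting only whether the constant is $\sqrt{2c}$ or $2\sqrt{c}$) is present in the paper itself and is inconsequential.
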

\begin{proof}
 The proof modifies a little that of Theorem \ref{tran}. Since $\tau(\rho)=1$, we have $\tau([t(x-E_{\rm Fix} x)-ct^2/2]\rho)\le \Ent(\rho)$. Then we deduce that $\tau(\rho x -\rho E_{\rm Fix}(x))\le \sqrt{2c\Ent(\rho)}$. Since $\tau(\rho E_{\rm Fix}(x))=\tau(E_{\rm Fix}(\rho)x)$, we have
 \[
  \tau(\rho x-E_{\rm Fix}(\rho)x)\kl\sqrt{2c\Ent(\rho)}\pl.
 \]
Taking $\sup$ over all self-adjoint $x$ with $\|\Ga(x,x)\|_\8\le 1$ gives the assertion.
\end{proof}
It is easy to see that the assumptions are fulfilled by the hypotheses of Theorem \ref{poin}. The point here is that even though the fixed point algebra ${\rm Fix}$ is not trivial we still have a transportation inequality although in certain situation the inequality does fail.
\re
Let $\rho$ be a positive operator with $\tau(\rho)=1$. For $\rho\in{\rm Fix}$, define $B(\rho)=\{f\in L_1(\N):E_{\Fix}(f)=\rho\}$. Then for $f_1,f_2\in B(\rho)$, we have $W_1(f_1,f_2)\le W_1(f_1,\rho)+ W_1(f_2,\rho)<\8$. However, if $f_1\in B(\rho_1), f_2\in B(\rho_2)$ and $\rho_1\neq \rho_2$, then
\[
 W_1(\rho_1,\rho_2)\ge \sup \{|\tau(\rho_1 x-\rho_2 x)|:{x\in {\rm Fix}, \|\Ga(x,x)\|_\8\le 1}\}\lel \8\pl.
\]
It follows that $W_1(f_1,f_2)\ge |W_1(\rho_1,\rho_2)-W_1(f_1,\rho_1)-W_1(f_2,\rho_2)|=\8$. This yields an interesting geometric picture: operators in the same ``fiber'' $B(\rho)$ have finite distance between one another while operators belonging to different ``fibers'' have infinite distance.
\mar

The following simple result provides another way (under the assumption of finite diameter) to obtain the transportation inequality.
\begin{cor}\label{tran2}
Suppose for self-adjoint $x\in \N$, $E_{\rm Fix}(x)=0$ and $\|\Ga(x,x)\|_\8\le 1$ imply $\|x\|_\8\le K$. Then, \eqref{wse2} holds with $c=K^2$ for all $\rho\ge0$ such that $\tau(\rho)=1$.
\end{cor}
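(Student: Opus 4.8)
The plan is to reduce the statement to Corollary~\ref{tran1}: it suffices to verify the exponential-integrability hypothesis there with $c=K^2$, namely that $\tau(e^{t(x-E_{\rm Fix}x)})\le e^{K^2t^2}$ for every self-adjoint $x$ with $\|\Ga(x,x)\|_\infty\le1$ and all $t>0$. The substance is thus a tracial version of Hoeffding's lemma, while everything else is bookkeeping. First I would reduce to bounded, mean-zero operators. Set $x_0=x-E_{\rm Fix}(x)$. As recorded in the proof of Theorem~\ref{poin}, $E_{\rm Fix}(x)$ lies in the fixed-point algebra, hence in the multiplicative domain of every $T_t$, so $\Ga(x_0,x_0)=\Ga(x,x)$ and therefore $\|\Ga(x_0,x_0)\|_\infty\le1$; moreover $E_{\rm Fix}(x_0)=0$. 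The finite-diameter hypothesis then yields $\|x_0\|_\infty\le K$, and since $E_{\rm Fix}$ is trace preserving we also get $\tau(x_0)=\tau(E_{\rm Fix}x_0)=0$. Because $x-E_{\rm Fix}(x)=x_0$, it remains to prove $\tau(e^{tx_0})\le e^{K^2t^2}$.

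This estimate is immediate from functional calculus. Let $\mu$ denote the spectral distribution of the bounded self-adjoint operator $x_0$; then $\mu$ is a probability measure supported in $[-K,K]$ with barycenter $\int s\,d\mu(s)=\tau(x_0)=0$, and $\tau(e^{tx_0})=\int_{-K}^{K}e^{ts}\,d\mu(s)$. From the convexity bound $e^{ts}\le\cosh(tK)+\tfrac{s}{K}\sinh(tK)$, valid for $s\in[-K,K]$, integrating against $\mu$ and using $\int s\,d\mu(s)=0$ gives $\tau(e^{tx_0})\le\cosh(tK)\le e^{t^2K^2/2}\le e^{K^2t^2}$; equivalently, one may simply quote the classical Hoeffding lemma for a mean-zero probability measure carried by an interval of length $2K$.

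With this bound established for all self-adjoint $x$ with $\|\Ga(x,x)\|_\infty\le1$, Corollary~\ref{tran1} applies with $c=K^2$ and delivers \eqref{wse2}. Frankly there is no genuine obstacle here: the corollary is essentially automatic once Corollary~\ref{tran1} and the elementary tracial Hoeffding lemma are in hand. The one spot requiring a word of care is that Corollary~\ref{tran1} is stated for all $\tau$-measurable self-adjoint $x$ with $\|\Ga(x,x)\|_\infty\le1$ whereas the finite-diameter hypothesis concerns $x\in\N$; I would bridge this by a routine truncation/approximation argument, approximating such $x$ by spectral cut-offs in $\N$, controlling their gradients, and passing to the limit in the pairings $\tau(\rho\,\cdot)$ and $\tau(E_{\rm Fix}(\rho)\,\cdot)$ that enter the proof of Corollary~\ref{tran1} (in the applications we have in mind the relevant operators already lie in $\N$, so this point does not even arise).
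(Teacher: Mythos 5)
Your proof is correct and follows essentially the same route as the paper: reduce to a scalar inequality on $[-K,K]$ via functional calculus of the bounded self-adjoint $x_0=x-E_{\rm Fix}x$ (using $\tau(x_0)=0$), and then invoke Corollary~\ref{tran1}. The only divergence is in the elementary bound used — you apply the classical Hoeffding secant estimate to get $\tau(e^{tx_0})\le\cosh(tK)\le e^{t^2K^2/2}$, whereas the paper writes $\tau(e^{tx_0})=\tau(e^{tx_0}-tx_0)\le e^{tK}-tK\le e^{K^2t^2}$ via $e^u-u\le e^{u^2}$; both suffice for $c=K^2$ (and yours in fact gives the sharper $K^2/2$).
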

\begin{proof}
 A calculation gives $e^x-x\le e^{x^2}$. Assume $E_{\rm Fix}(x)=0$ and $\|\Ga(x,x)\|_\8\le1$. Then for $t>0$, $\tau(e^{tx})=\tau(e^{tx}-tx)\le \tau(e^{t K}-t K)\le e^{K^2t^2}$. The claim now follows from Corollary \ref{tran1}.
\end{proof}
%Please be very careful about taking sup over all x such that $E_{Fix} x=0$ !!
Suppose in Theorem \ref{poin} we only have $\Ga_2\ge0$ but not the $\Ga_2$-condition. Junge and Mei proved in \cite{JMe} as the main result
$$\|A^{1/2} x\|_p\le c(p)\max\{\|\Ga(x,x)^{1/2}\|_p,\|\Ga(x^*,x^*)^{1/2}\|_p\}$$
in this setting. Using the proof of {Theorem 1.1.7} in the same paper  \cite{JMe}, it can be shown that if
\begin{equation}\label{ctra}
 \|T_t: L_1^0(\N)\to L_\8(\N)\|\le Ct^{-n/2},
 \end{equation}
then $\|A^{-1/2}:L_p^0(\N)\to L_\8(\N)\|\le C(n)$ for $p>n$. Indeed, we consider the composition of operators
\[
L_p^0(\N)\xrightarrow{ A^{-\al}} L_q^0(\N)\hookrightarrow L_{s,1}^0(\N)\xrightarrow{ A^{-\bt}} L_\8(\N),
\]
where $s,q,p,\al,\bt$ are chosen so that
$$ 1<s<q, n<p<q<\8, \al+\bt=\frac12, \al=\frac{n}2(\frac1p-\frac1q), \bt=\frac{n}{2s}.$$
For example, $p=2n,q=4n,s=\frac{4n}3$ satisfy these conditions. By \cite{JMe}*{Corollary 1.1.4}, $\|A^{-\al}:L_p^0(\N)\to L_q^0(\N)\|\le C(n)$. Using \cite{JMe}*{Lemma 1.1.3}, we have $\|A^{-\bt}: L_{s,1}^0(\N)\to L_\8(\N) \|\le C(n)$. The embedding $L_q^0(\N)\hookrightarrow L_{s,1}^0(\N)$ follows from interpolation theory. Hence, $\|A^{-1/2}:L_p^0(\N)\to L_\8(\N)\|\le C(n)$. This gives $\|x\|_\8\le C(n)\|A^{1/2}x\|_p$ for large $p$ and $E_{\rm Fix}(x)=0$. Assuming $\Ga(x,x)\le 1$ for self-adjoint $x$, it follows that $\|x\|_\8\le C(n,p)$. By choosing e.g. $p=2n$, the constant $C(n,p)$ actually only depends on $n$. In light of Corollary \ref{tran2}, we obtain the following result.
\begin{cor}\label{tran3}
 Let $T_t$ be a standard nc-diffusion semigroup acting on $\N$ with $\Ga_2\ge0$. Then \eqref{ctra} with finite dimension $n$ implies the transportation inequality \eqref{wse2} for all $\rho\ge0$ such that $\tau(\rho)=1$.
\end{cor}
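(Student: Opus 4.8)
The plan is to reduce the whole statement to the finite-diameter criterion of Corollary \ref{tran2}. That corollary says: if every self-adjoint $x\in\N$ with $E_{\rm Fix}(x)=0$ and $\|\Ga(x,x)\|_\infty\le1$ automatically satisfies $\|x\|_\infty\le K$, then the transportation inequality \eqref{wse2} holds with $c=K^2$ for all $\rho\ge0$ with $\tau(\rho)=1$. So it suffices to produce such a bound $K$, depending only on $n$, out of the ultracontractivity hypothesis \eqref{ctra}.

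The first step is to invoke the Junge--Mei Riesz transform inequality, which holds precisely in the regime $\Ga_2\ge0$ (their main theorem), namely $\|A^{1/2}x\|_p\le c(p)\max\{\|\Ga(x,x)^{1/2}\|_p,\|\Ga(x^*,x^*)^{1/2}\|_p\}$. For self-adjoint $x$ with $\|\Ga(x,x)\|_\infty\le1$ and $E_{\rm Fix}(x)=0$ the right-hand side is at most $c(p)\|\Ga(x,x)^{1/2}\|_p\le c(p)$, since $\tau$ is a state. Thus $\|A^{1/2}x\|_p\le c(p)$, and the remaining task is a reverse-type estimate $\|x\|_\infty\le C\|A^{1/2}x\|_p$ for one fixed large $p$; equivalently, the boundedness of $A^{-1/2}\colon L_p^0(\N)\to L_\infty(\N)$ with a norm controlled only by $n$.

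To obtain this I would factor $A^{-1/2}=A^{-\bt}\circ A^{-\al}$ with $\al+\bt=\tfrac12$ through the chain
\[
L_p^0(\N)\xrightarrow{A^{-\al}}L_q^0(\N)\hookrightarrow L_{s,1}^0(\N)\xrightarrow{A^{-\bt}}L_\infty(\N),
\]
choosing exponents $1<s<q$, $n<p<q<\infty$ with $\al=\tfrac n2(\tfrac1p-\tfrac1q)$ and $\bt=\tfrac n{2s}$ --- for instance $p=2n$, $q=4n$, $s=\tfrac{4n}3$. The hypothesis \eqref{ctra} is exactly the input needed to run the argument of \cite{JMe}*{Theorem 1.1.7}: it yields $\|A^{-\al}\colon L_p^0\to L_q^0\|\le C(n)$ by \cite{JMe}*{Corollary 1.1.4} and $\|A^{-\bt}\colon L_{s,1}^0\to L_\infty\|\le C(n)$ by \cite{JMe}*{Lemma 1.1.3}, while the embedding $L_q^0\hookrightarrow L_{s,1}^0$ is free from interpolation theory. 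Composing the three arrows gives $\|x\|_\infty\le C(n)\|A^{1/2}x\|_{2n}$.

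Putting the two steps together, any self-adjoint $x$ with $E_{\rm Fix}(x)=0$ and $\|\Ga(x,x)\|_\infty\le1$ obeys $\|x\|_\infty\le C(n)\,c(2n)=:K$, a constant depending only on $n$ (here one uses $\Ga(x,x)=\Ga(x-E_{\rm Fix}x,x-E_{\rm Fix}x)$ to reduce to the mean-zero case). Feeding this $K$ into Corollary \ref{tran2} yields \eqref{wse2} with $c=K^2$, which is the assertion. The main obstacle --- and really the only substantive point --- is the bookkeeping of fractional powers and Lorentz/interpolation spaces needed to extract the ultracontractive bound on $A^{-1/2}$ from \eqref{ctra} with all constants depending on $n$ alone; once that finite-diameter estimate is in hand, the transportation inequality follows purely formally via Corollary \ref{tran2} (equivalently Corollary \ref{tran1}).
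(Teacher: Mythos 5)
Your proposal is correct and follows essentially the same route as the paper: the Junge--Mei Riesz transform estimate to bound $\|A^{1/2}x\|_p$, the factorization $A^{-1/2}=A^{-\bt}\circ A^{-\al}$ through $L_p^0\to L_q^0\hookrightarrow L_{s,1}^0\to L_\infty$ with the same exponents $p=2n$, $q=4n$, $s=\tfrac{4n}{3}$, the same references to \cite{JMe} for each arrow, and the reduction to Corollary \ref{tran2}. The paper presents this entire argument in the paragraph preceding the corollary rather than in a separate proof environment, but the content is the same.
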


In the commutative theory, the transportation inequality \eqref{wsen} implies isoperimetric type inequality by Marton's argument in \cite{BG}. So far it is not clear what isoperimetric inequality means in noncommutative probability. We hope to give a noncommutative analog of isoperimetric inequality.
\begin{defi}
 Let $e,f\in(\M,\tau)$ be projections. The distance between $e$ and $f$ is
 \[d(e,f)\lel \inf\{W_1(\phi,\psi): \phi \text{ and } \psi \text{ are states, }s(\phi)=e,s(\psi)=f\},\]
 where $s(\phi)$ is the support of $\phi$.
\end{defi}
Here our definition generalizes directly the distance of sets in the commutative theory. Thus in general $d$ is not a metric, as in the commutative setting. Then the following result follows from the same proof as in the commutative setting given in \cite{BG}.
\begin{prop}
 Let $e,f\in(\M,\tau)$ be projections. Then under the assumptions of Theorem \ref{poin} and assuming ${\rm Fix}=\cz 1$,
 \[d(e,f)\kl \sqrt{-2c\ln \tau(e)}+\sqrt{-2c\ln  \tau(f)}\pl.\]
 Equivalently, for every $h>\sqrt{-2c\ln \tau(e)}$ and every projection $p$ such that $d(p,e)>h$,
 \[
 \tau(p)\kl \exp\Big(-\frac1{2c}\Big(h-\sqrt{-2c\ln \tau(e)}\Big)^2\Big)\pl.
 \]
\end{prop}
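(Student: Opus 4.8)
The plan is to reproduce Marton's argument from \cite{BG} in the noncommutative language already developed above. Since ${\rm Fix}=\cz 1$ by hypothesis, the conditional expectation $E_{\rm Fix}$ coincides with the trace, so $E_{\rm Fix}\rho=\tau(\rho)1=1$ for every density $\rho$; hence, by the discussion following Theorem \ref{tran} (combining the exponential bound \eqref{exin2} with Theorem \ref{tran}, or using Corollary \ref{tran1} in the case $E_{\rm Fix}\rho=1$), the transportation inequality
\[
 W_1(\rho,1)\kl \sqrt{2c\,\Ent(\rho)}
\]
holds for all $\rho\ge 0$ with $\tau(\rho)=1$, with $c$ the constant appearing in \eqref{exin2} (up to the harmless numerical factor incurred when matching \eqref{exin2} with the form \eqref{exin} required by Theorem \ref{tran}).

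First I would test this inequality on normalized projections. For a projection $e$ with $\tau(e)>0$ put $\rho_e=e/\tau(e)$; this is a positive element of $L_1(\M,\tau)$ of trace $1$, and the associated state $\phi_{\rho_e}(\cdot)=\tau(\cdot\,e)/\tau(e)$ has support projection exactly $e$, hence is an admissible competitor in the infimum defining $d(e,f)$. Since $\rho_e$ has spectrum contained in $\{0,1/\tau(e)\}$ with the eigenvalue $1/\tau(e)$ carried by $e$, and since $x\ln x$ vanishes at $x=0$, functional calculus (equivalently, the entropy formula in terms of generalized singular numbers used in Lemma \ref{exen}) gives
\[
 \Ent(\rho_e)\lel \tau\!\Big(\frac{e}{\tau(e)}\,\ln\frac{1}{\tau(e)}\Big)\lel -\ln\tau(e)\pl.
\]
Consequently $W_1(\phi_{\rho_e},1)\le \sqrt{-2c\ln\tau(e)}$, and likewise $W_1(\phi_{\rho_f},1)\le \sqrt{-2c\ln\tau(f)}$.

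Next I would invoke the triangle inequality. As $W_1$ is a pseudometric on states (being a supremum of $|\phi(x)-\psi(x)|$ over the fixed class of self-adjoint $x$ with $\|\Ga(x,x)\|_\8\le 1$), we obtain
\[
 d(e,f)\kl W_1(\phi_{\rho_e},\phi_{\rho_f})\kl W_1(\phi_{\rho_e},1)+W_1(1,\phi_{\rho_f})\kl \sqrt{-2c\ln\tau(e)}+\sqrt{-2c\ln\tau(f)}\pl,
\]
which is the first displayed inequality. For the equivalent reformulation, let $p$ be a projection with $d(p,e)>h$ and $h>\sqrt{-2c\ln\tau(e)}$. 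Applying the bound just proved to the pair $p,e$ gives $\sqrt{-2c\ln\tau(p)}>h-\sqrt{-2c\ln\tau(e)}>0$; squaring and rearranging yields $\tau(p)\le \exp\big(-\frac1{2c}(h-\sqrt{-2c\ln\tau(e)})^2\big)$.

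The whole argument is formal once the transportation inequality is in hand. The only point that needs a bit of care is the identification $\Ent(\rho_e)=-\ln\tau(e)$: one must apply the convention $0\ln 0=0$ correctly on the kernel of $e$, which is cleanest via generalized singular numbers as in Lemma \ref{exen}, together with the routine verification that $\phi_{\rho_e}$ has support $e$ so that it is a legitimate competitor in the definition of $d$. Beyond transcribing the commutative proof of \cite{BG}, I do not anticipate any genuine obstacle.
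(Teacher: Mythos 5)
Your argument matches the paper's proof essentially verbatim: both pass through $d(e,f)\le W_1(\phi_e,\phi_f)$, the triangle inequality via the trace state, the transportation inequality \eqref{wsen}, and the elementary computation $\Ent(e/\tau(e))=-\ln\tau(e)$ via functional calculus. Your care in normalizing $\rho_e=e/\tau(e)$ and checking the support projection only makes explicit what the paper leaves implicit in its slight abuse of the notation $\Ent(e)$.
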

\begin{proof}
 Put $\phi_e(\cdot)=\tau(e\cdot)/\tau(e)$ and $\phi_f=\tau(f\cdot)/\tau(f)$. It is easy to see that $d(e,f)\le W_1(\phi_e,\phi_f)$. Then triangle inequality and \eqref{wsen} yield $d(e,f)\le \sqrt{2c\Ent(e)}+\sqrt{2c\Ent(f)}$. By spectral decomposition theorem of the identity, $\Ent(e)= \int_0^1\frac{1_{A}}{\tau(e)}\ln \frac{1_{A}}{\tau(e)} d\mu$ where $A$ is a Borel set such that $1_A(Id)=e$. Hence we find $\Ent(e)=-\ln \tau(e)$, which gives the first assertion. The equivalent formulation is a simple calculation.
\end{proof}

To conclude this section, we remark that the best possible $\al$ in $\Ga_2\ge\al\Ga$ sometimes characterizes the dynamical system $(\M,T_t)$; see the example of hyperfinite $II_1$ factor below.

\section{Application to the group von Neumann algebras}
Starting from this section, we will investigate a variety of examples which satisfy the assumptions of Theorem \ref{poin}. All the Poincar\'e type, deviation and transportation inequalities we derived previously will hold in these examples. The key point is to check $\Ga_2\ge\al\Ga$. It may be of independent interest because it means a strictly positive Ricci curvature from the geometric point of view. We consider the $\Ga_2$-criterion for group von Neumann algebras in this section.

Let $G$ be a countable discrete group. In this paper we say that $\psi:\psi\to \rz_+$ is a conditionally negative definite length (cn-length) function if it vanishes at the identity $e$, $\psi(g)=\psi(g^{-1})$ and is conditionally negative which means that $\sum_{g}\xi_g=0$ implies $\sum_{g,h}\bar\xi_g\xi_h\psi(g^{-1}h)\le 0$.

Let $\la: G\to B(\ell_2(G))$ be the left regular representation given by $\la(g)\de_h=\de_{gh}$ where $\de_g$'s form the unit vector basis of $\ell_2(G)$. Let $\Lc(G) = \la(G)''$, the von Neumann algebra generated by $\{\la(g): g\in G\}$. Any $f\in\Lc(G)$ can be written as
\[f\lel \sum_{g\in G}\hat f(g)\la(g)\pl.\]
It is well known that $\tau(f)=\lge\de_e, f \de_e \rge$ defines a faithful normal tracial state and $\tau(f)=\hat f(e)$ where $e$ is the identity element. In what follows, we define the semigroup associated to the cn-length function $\psi$ by $T_t(\la(g))=T_t^\psi(\la(g))=\phi_t(g)\la (g)$ for $g\in G$, where $\phi_t(g)=e^{-t\psi(g)}$. The infinitesimal generator of $T_t$ is given by $A\la(g)=\psi(g)\la(g)$. Recall that the Gromov form is defined as
\[ K(g,h)\lel K_{g,h} \lel\frac12(\psi(g)+\psi(h)-\psi(g^{-1}h)), \quad g, h\in G\pl.\]
Given $f=\sum_{x\in G}\hat f(x)\la(x)$ and $g=\sum_{y\in G}\hat g(y)\la(y)$, a straightforward calculation gives
 \[ \Ga(f,g)\lel \sum_{x,y\in G}\bar{\hat f}(x)\hat g(y)K(x,y)\la(x^{-1}y)\pl,\]
 \[ \Ga_2(f,g)\lel \sum_{x,y\in G}\bar{\hat f}(x)\hat g(y)K(x,y)^2\la(x^{-1}y)\pl.\]

Let $\A$ be the subalgebra of $\Lc(G)$ which consists of elements that can be written as finite combination of $\la_g, g\in G$. Then $\A$ is weakly dense in $\Lc(G)$ such that $A\A\subset \A$ and $T_t\A\subset \A$.
\begin{lemma}\label{gpvn}
 $\A$ is dense in $\Dom(A^{1/2})$ in the graph norm of $A^{1/2}$ and $T_t$ is a standard nc-diffusion semigroup acting on $\Lc(G)$.
\end{lemma}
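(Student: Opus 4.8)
The plan is to handle the two claims separately: the first is bookkeeping in the Fourier model, and the second splits into verifying the ``standard semigroup'' axioms and then the gradient regularity $\Ga(f,f)\in L_1$.

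First I would use that $\{\la(g)\}_{g\in G}$ is an orthonormal basis of $L_2(\Lc(G),\tau)$, since $\tau(\la(g)^*\la(h))=\tau(\la(g^{-1}h))=\de_{g,h}$, and that it diagonalizes $A$ with $A\la(g)=\psi(g)\la(g)$ and $\psi(g)\ge0$. Hence $\Dom(A^{1/2})=\{f=\sum_g\hat f(g)\la(g):\sum_g(1+\psi(g))|\hat f(g)|^2<\8\}$ with graph norm $\|f\|^2_{A^{1/2}}=\sum_g(1+\psi(g))|\hat f(g)|^2$, and $\A$ is dense in this weighted $\ell_2$-space simply by truncating the Fourier support of $f$. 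One also records $\A\subset\Dom(A)\subset\Dom(A^{1/2})$, together with $A\A\subset\A$ and $T_t\A\subset\A$, as already observed before the statement.

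To see that $(T_t)$ is a standard semigroup I would check the three axioms. Complete positivity: since $\psi$ is conditionally negative definite, Schoenberg's theorem shows $e^{-t\psi}$ is positive definite for every $t\ge0$, and it is normalized because $\psi(e)=0$; a normalized positive definite function on a discrete group induces a normal unital completely positive Fourier multiplier on $\Lc(G)$, and $T_t=T_t^\psi$ is exactly this multiplier, with $T_0=\mathrm{id}$ and $T_{s+t}=T_sT_t$ because $e^{-(s+t)\psi}=e^{-s\psi}e^{-t\psi}$. Trace symmetry: on generators, $\tau(T_t(\la(g))\la(h))=e^{-t\psi(g)}\de_{h,g^{-1}}$ and $\tau(\la(g)T_t(\la(h)))=e^{-t\psi(h)}\de_{h,g^{-1}}$ agree since $\psi(g^{-1})=\psi(g)$, and this extends to $\Lc(G)$ by linearity and normality. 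Weak*-continuity: by the equivalence recalled in Section~3 it suffices to show $(T_t)$ is strongly continuous on $L_2(\Lc(G))$, and $\|T_tf-T_{t_0}f\|_2^2=\sum_g|e^{-t\psi(g)}-e^{-t_0\psi(g)}|^2|\hat f(g)|^2\to0$ as $t\to t_0$ by dominated convergence with dominant $4|\hat f(g)|^2$.

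For the nc-diffusion property I must show $\Ga(f,f)\in L_1(\Lc(G))$ for all $f\in\Dom(A^{1/2})$. For $g\in\A$ the explicit formula for $\Ga$ recalled just before the statement exhibits $\Ga(g,g)$ as a finite sum of elements $\la(x^{-1}y)$, so $\Ga(g,g)\in\Lc(G)\subset L_1(\Lc(G))$; reading off the coefficient of $\la(e)$ (equivalently, using that $\tau\circ A=0$ since $\psi(e)=0$) gives $\tau(\Ga(g,g))=\sum_x\psi(x)|\hat g(x)|^2=\|A^{1/2}g\|_2^2$. Since $(T_t)$ is completely positive, Theorem~\ref{cps} makes $\Ga$ a positive form, so the $2\times2$ matrix Cauchy--Schwarz inequality (exactly as in the proof of Corollary~\ref{ga2a}) yields $\|\Ga(g,h)\|_1\le\|A^{1/2}g\|_2\,\|A^{1/2}h\|_2$ for $g,h\in\A$. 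As $\A$ is dense in $\Dom(A^{1/2})$ in the graph norm and $L_1(\Lc(G))$ is complete, $\Ga$ extends to a bounded bilinear map $\Dom(A^{1/2})\times\Dom(A^{1/2})\to L_1(\Lc(G))$; a routine approximation identifies this extension with the canonical gradient form built from the resolvent approximants $A_\eps=(I+\eps A)^{-1}A$. In particular $\Ga(f,f)\in L_1(\Lc(G))$ for every $f\in\Dom(A^{1/2})$, so $(T_t)$ is nc-diffusion. I expect this last point --- matching the graph-norm-continuous extension of $\Ga$ with the intrinsic gradient form, so that ``$\Ga(f,f)\in L_1$'' is meaningful --- to be the main obstacle; the complete positivity input via Schoenberg is classical but should be cited with care.
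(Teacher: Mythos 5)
Your proposal is correct and follows essentially the same route as the paper: density via Fourier truncation in the weighted $\ell_2$ description of $\Dom(A^{1/2})$, complete positivity by Schoenberg, trace symmetry and $L_2$ strong continuity by direct computation using $\psi(g)=\psi(g^{-1})$, and the computation $\|\Ga(f,f)\|_1=\|A^{1/2}f\|_2^2$ for $f\in\A$ extended to $\Dom(A^{1/2})$ by a graph-norm approximation relying on the $2\times 2$ matrix Cauchy--Schwarz inequality for the positive form $\Ga$ (the argument the paper outsources to the proof of Corollary~\ref{ga2a}). The point you flag at the end --- reconciling the graph-norm-continuous extension of $\Ga$ with the intrinsic gradient form built from $A_\eps=(I+\eps A)^{-1}A$ --- is indeed the only place where the paper, like you, leans on the Cipriani--Sauvageot framework rather than spelling things out, so your presentation is faithful to what is actually proved.
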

\begin{proof}
For $f=\sum_{g_i\in G}\hat f(g_i)\la(g_i)\in \Dom(A^{1/2})$, put $f_n=\sum_{i=1}^n \hat f(g_i)\la(g_i)$. Note that $f\in L_2(\Lc(G))$. Then
\[
 \|f_n-f\|_{L_2(\Lc(G))}\lel \tau((f_n^*-f^*)(f_n-f)) \lel \sum_{i=n+1}^\8 |\hat f(g_i)|^2\to 0, \text{ as } n\to\8\pl.
\]
Since $\lge Af,f\rge =\sum_{i=1}^\8\psi(g_i)|\hat f(g_i)|^2 <\8$, we have
\[
 \lge A(f_n-f), f_n-f\rge_{L_2(\Lc(G),\tau)} \lel \sum_{i=n+1}^\8 \psi(g_i) |\hat f(g_i)|^2\to 0, \text{ as } n\to\8\pl.
\]
Therefore $\A$ is dense in the graph norm. Since $\psi$ is conditionally negative, Schoenberg's theorem implies that $T_t$ is completely positive; see e.g. \cite{BO}*{Appendix D}. It can be directly checked that $T_t$ is normal and unital. Since $\psi(g)=\psi(g^{-1})$,
\[
\tau(T_t(x)y)\lel \lge  \de_e,\sum_{g}e^{-t\psi(g)}\hat x(g)\la_g\sum_h \hat y(h)\la_h\de_e\rge \lel \sum_{g} e^{-t\psi(g)}\hat x(g)\hat y(g^{-1}) \lel \tau(xT_ty)\pl.
\]
Hence $T_t$ is self-adjoint. To check that $(T_t)$ is weak* continuous on $\Lc(G)$, it suffices to verify that $(T_t)$ is a strongly continuous semigroup on $L_2(\Lc(G))$. For $f\in L_2(\Lc(G))$, we have
\[
 \|T_t f-f\|^2_{L_2(\Lc(G))}\lel \sum_{g\in G} (e^{-t\psi(g)}-1)^2|\hat f(g)|^2 \to 0 \mbox{ as } t\to 0\pl.
\]
We have proved that $(T_t)$ is a standard semigroup. Let $f\in\A$. Then
$$\|\Ga(f,f)\|_1 \lel \lge \de_e,\sum_{g\in G}|\hat f(g)|^2K_{g,g} \de_e \rge \lel \sum_{g\in G}\psi(g)|\hat f(g)|^2 \lel \|A^{1/2}f\|_2<\8\pl. $$
Since $\A$ is dense in $\Dom(A^{1/2})$ in the graph norm, by an approximation argument (see the proof of Lemma \ref{ga2a}), the above equality holds for all $f\in\Dom(A^{1/2})$ and thus $(T_t)$ is a nc-diffusion semigroup.
\end{proof}
%\begin{lemma}
 %Suppose $\psi\ge 0$ is integer valued and $\psi(g) = 0$ only if $g$ is the empty word. Then for all $1\le p \le \8$, we have $\lim_{t\to \8} \|T_t f-\tau(f)\|_p = 0$ for all $f\in \Lc(G)$.
%\end{lemma}
%\begin{proof}
 %Let $f\in \Lc(G)$. Note that $$\|T_tf-\hat f(\eps)\|_\8 = \Big\|\sum_{g\neq \eps}e^{-t\psi(g)}\hat f (g)\la(g)\Big\|_\8\kl e^{-t}\sum_{g\neq \eps}\|\hat f (g)\la(g)\|_\8\to 0$$ as $t\to \8$, where $\eps$ denotes the empty word. Then the convergence holds for all $1\le p\le\8$.
%\end{proof}
By virtue of Lemma \ref{gpvn} and Corollary \ref{ga2a}, our Poincar\'e inequalities will follow if the $\Ga_2$-criterion holds. Put ${\rm Fix}=\{f\in\Lc(G):\psi(f)=0\}$.
\begin{cor}\label{pgp}
  Let $2\le p<\8$ and assume $\Ga_2(f,f)\ge\al \Ga(f,f)$ for $f\in\A$. Then there exists a constant $C$ such that for all self-adjoint $f\in L_p(\Lc(G),\tau)$,
\[ \|f-E_{\rm Fix}(f)\|_p\kl C\al^{-1/2}\min\{\sqrt{p}  \|\Ga(f,f)^{1/2}\|_\8,\pl{p} \|\Ga(f,f)^{1/2}\|_p\}\pl.\]
 If $\psi(g) = 0$ only if $g$ is the identity element, then $E_{\rm Fix}(f)=\tau(f)$ for $f\in\Lc(G)$.
\end{cor}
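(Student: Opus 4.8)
The plan is to derive Corollary \ref{pgp} as a direct specialization of the general machinery established in Section 3. The point is that Lemma \ref{gpvn} has just verified all the hypotheses needed to invoke Corollary \ref{ga2a}: the group algebra $\A$ of finitely supported elements is weakly dense in $\Lc(G)$, it is self-adjoint, invariant under $A$ and under each $T_t$ (since $A\la(g)=\psi(g)\la(g)$ and $T_t\la(g)=e^{-t\psi(g)}\la(g)$ act diagonally in the group basis), and it is dense in $\Dom(A^{1/2})$ in the graph norm. Moreover $(T_t)$ is a standard nc-diffusion semigroup on $\Lc(G)$. So the first step is simply to observe that the hypothesis $\Ga_2(f,f)\ge\al\Ga(f,f)$ for all $f\in\A$ feeds into Corollary \ref{ga2a}, which then yields that \eqref{ga2} holds for all $f\in\Dom(A^{1/2})$ and that every $f\in L_p(\Lc(G))$ satisfies both \eqref{pcr1} and \eqref{pcr2}.

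Second, I would record the shape of the fixed-point algebra. By definition ${\rm Fix}=\{f: T_tf=f \text{ for all }t\}$, and since $T_t$ acts diagonally, $T_tf=f$ iff $\hat f(g)=0$ whenever $\psi(g)\ne 0$; that is, ${\rm Fix}$ is the weak closure of the span of $\{\la(g):\psi(g)=0\}$, which matches the notation ${\rm Fix}=\{f\in\Lc(G):\psi(f)=0\}$ set just before the corollary. (One should note $\{g:\psi(g)=0\}$ is a subgroup because $\psi$ is a cn-length function, so this span is indeed a von Neumann subalgebra, consistent with the general theory.) In the special case where $\psi(g)=0$ only for $g=e$, this subalgebra is $\cz 1$, and $E_{\rm Fix}$ is then the trace: $E_{\rm Fix}(f)=\tau(f)=\hat f(e)$.

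Third, for a self-adjoint $f\in L_p(\Lc(G),\tau)$ one applies both Poincar\'e inequalities from Corollary \ref{ga2a}. For self-adjoint elements we have $x^*=x$, so $\Ga(x^*,x^*)=\Ga(x,x)$ and the $\max$ in \eqref{pcr1} and \eqref{pcr2} collapses to the single term. Thus \eqref{pcr1} gives $\|f-E_{\rm Fix}f\|_p\le C\sqrt{p/\al}\,\|\Ga(f,f)^{1/2}\|_\8$ and \eqref{pcr2} gives $\|f-E_{\rm Fix}f\|_p\le C'\al^{-1/2}p\,\|\Ga(f,f)^{1/2}\|_p$. Taking the minimum of the two bounds and absorbing $C,C'$ into a single constant $C$ yields the claimed inequality
\[
\|f-E_{\rm Fix}(f)\|_p\kl C\al^{-1/2}\min\{\sqrt{p}\,\|\Ga(f,f)^{1/2}\|_\8,\pl p\,\|\Ga(f,f)^{1/2}\|_p\}\pl.
\]

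Honestly, there is no real obstacle here: the corollary is a bookkeeping consequence of Corollary \ref{ga2a} and Lemma \ref{gpvn}, together with the elementary self-adjointness simplification. The only mild point of care is making sure the element $f$ lies in the right domain so that Theorem \ref{poin} applies --- but this is exactly the content of the ``moreover'' clause of Corollary \ref{ga2a}, which handles $f\in L_p(\Lc(G))$ with finite $\|\Ga(f,f)^{1/2}\|_p$ by the regularization/approximation argument already given there; if that quantity is infinite the asserted inequality is vacuous. So the write-up is short: cite Lemma \ref{gpvn} to get the standard nc-diffusion structure and the density hypotheses, cite Corollary \ref{ga2a} to convert $\Ga_2\ge\al\Ga$ on $\A$ into \eqref{pcr1}--\eqref{pcr2}, specialize to self-adjoint $f$, take the minimum, and identify ${\rm Fix}$ and $E_{\rm Fix}$ in the stated special case.
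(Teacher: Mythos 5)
Your proposal is correct and matches the paper's own reasoning: the paper presents Corollary \ref{pgp} as an immediate consequence of Lemma \ref{gpvn} (which verifies the hypotheses i)--iii) of Corollary \ref{ga2a}) together with Corollary \ref{ga2a} itself, with the self-adjointness simplification $\Ga(f^*,f^*)=\Ga(f,f)$ collapsing the $\max$ and the diagonal action of $T_t$ identifying ${\rm Fix}$ as the weak closure of $\operatorname{span}\{\la(g):\psi(g)=0\}$. You have simply spelled out what the paper treats as self-evident.
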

Among the examples we will consider below, the free group on $n$ generators $\fz_n$ satisfies $E_{\rm Fix}(f)=\tau(f)$ but the finite cyclic group $\zz_n$ has nontrivial ${\rm Fix}$. With the help of Lemma \ref{gpvn} and Corollary \ref{ga2a}, we only need to check the $\Ga_2$-criterion on the finitely supported elements in order to fulfill the hypotheses of our main theorem. We call \[ [\Ga_2(f^i, f^j)]\ge\al [\Ga(f^i,f^j)] \mbox{ for any } n\in\nz \mbox{ and } f^1, \cdots ,f^n\in\A \]
the algebraic $\Ga_2$-condition (or $\Ga_2$-criterion) and abbreviate it to ``$\Ga_2\ge \al \Ga$ in $\Lc(G)$''. This is the theme of two sections from now on. This condition is seemingly stronger than needed. However, $\Ga_2(f,f)\ge\al \Ga(f,f)$ for all $f=\sum_{g \text{ finite}} \hat f_g\la(g)\in \Lc(G)$ amounts to check $[\Ga_2(\la(g_i), \la(g_j))]\ge\al [\Ga(\la(g_i), \la(g_j))]$ for $g_i\in G$. This algebraic condition is also easier to check because it can be reduced to check the positivity of certain matrices as will be shown below. The following technical lemmas will be used repeatedly.
\begin{lemma}\label{kform}
 Suppose $K=(K_{g,h})_{g,h\in G}$ is a matrix indexed by $G$ with entries in $\cz$ and define a sesquilinear form $\Theta:\A\times \A\to \A$, $\Theta(f^i,f^j)=\sum_{g,h\in G} \bar{\hat {f^i}}(g) \hat{f^j}(h)K_{g,h} \la(g^{-1}h)$. Then $K$ is nonnegative definite if and only if $\Theta$ is positive.
\end{lemma}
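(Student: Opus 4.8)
Lemma~\ref{kform} asserts that $K=(K_{g,h})_{g,h\in G}$ is nonnegative definite (as an operator on $\ell_2(G)$, or equivalently all finite principal submatrices are positive semidefinite) if and only if the form $\Theta(f^i,f^j)=\sum_{g,h}\overline{\hat f^i}(g)\hat f^j(h) K_{g,h}\la(g^{-1}h)$ is positive in the sense defined before Theorem~\ref{cps}, i.e. for every $n$ and every $f^1,\dots,f^n\in\A$ the matrix $[\Theta(f^i,f^j)]_{i,j=1}^n$ is positive in $M_n(\Lc(G))$.

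\textbf{Approach.} The plan is to test positivity of $[\Theta(f^i,f^j)]$ against vectors in $(\ell_2(G))^n\otimes \ell_2(G)$ built from the cyclic vector $\de_e$, and recognize the resulting quadratic form as a Schur-type product. First I would reduce both sides to finitely supported data: it suffices to check the equivalence when all $f^i$ are finite combinations of $\la(g)$'s and when $K$ is an arbitrary finite principal submatrix, since positivity of $K$ is by definition the positivity of all such submatrices and, conversely, $\Theta$ being positive is a condition quantified over finite families in $\A$. So fix $g_1,\dots,g_m\in G$ and let all $f^i$ be supported on $\{g_1,\dots,g_m\}$, writing $f^i=\sum_{k=1}^m a^i_k\la(g_k)$.

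\textbf{Key computation.} For a vector $\xi=(\xi_1,\dots,\xi_n)\in\cz^n$ I would evaluate $\sum_{i,j}\bar\xi_i\xi_j\,\tau\big(b^*\,\Theta(f^i,f^j)\,b\big)$ for a suitable $b\in\Lc(G)$, or more directly compute the matrix coefficient $\big\langle (\de_e\otimes\xi),\,[\Theta(f^i,f^j)](\de_e\otimes\xi)\big\rangle$ after applying an arbitrary $c=\sum_y\hat c(y)\la(y)$ on the right; the cleanest route is to note that $\Theta(f^i,f^j)=\sum_{k,l} \overline{a^i_k} a^j_l K_{g_k,g_l}\la(g_k^{-1}g_l)$, so that for $u=(u_1,\dots,u_n)$ with $u_i\in\cz$,
\[
\sum_{i,j} \bar u_i u_j\,\tau\!\Big(\la(g_k^{-1})\,\Theta(f^i,f^j)\,\la(g_l)\Big)
\]
collapses, via $\tau(\la(x))=\de_{x,e}$, to exactly $\big(\sum_i \bar u_i\overline{a^i_k}\big)\big(\sum_j u_j a^j_l\big)K_{g_k,g_l}$. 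Summing a general quadratic form over $k,l$ shows that $[\Theta(f^i,f^j)]\ge 0$ tested against the span of $\{\la(g_k)\de_e\}$ is precisely positivity of the $m\times m$ matrix $(K_{g_k,g_l})$ composed (Schur/entrywise) with the rank-structure coming from the $a^i_k$. For the ``only if'' direction I pick $n=1$ and $f^1=\sum_k a_k\la(g_k)$ with arbitrary coefficients $a_k$; positivity of $\Theta(f^1,f^1)$ tested on $\de_e$ forces $\sum_{k,l}\bar a_k a_l K_{g_k,g_l}\ge0$, which is exactly nonnegative definiteness of $K$ since the $g_k$ and $a_k$ are arbitrary. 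For ``if'': given $K\ge0$ write $K_{g,h}=\langle v_g,v_h\rangle$ in some Hilbert space $H$ (Kolmogorov factorization); then $\Theta(f^i,f^j)=\sum_{k,l}\overline{a^i_k}a^j_l\langle v_{g_k},v_{g_l}\rangle\la(g_k^{-1}g_l)$, and one checks $[\Theta(f^i,f^j)]_{i,j}$ is a sum over an orthonormal basis $(e_\al)$ of $H$ of terms $[\,\overline{w^i_\al}\,w^j_\al\,]_{i,j}$ for elements $w^i_\al\in\Lc(G)$ (namely $w^i_\al=\sum_k a^i_k\langle e_\al,v_{g_k}\rangle\la(g_k)$, up to the correct twist), each of which is manifestly a positive matrix in $M_n(\Lc(G))$ being of the form $[(w^i)^*w^j]$; actually the right identity is $\Theta(f^i,f^j)=\big(\text{something}\big)^*\big(\text{something}\big)$ after moving the $\la(g_k^{-1}g_l)$ into a left/right factor, which I would verify by the substitution $\la(g_k^{-1}g_l)=\la(g_k)^*\la(g_l)$.

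\textbf{Main obstacle.} The routine part is the bookkeeping of indices; the one genuine subtlety is handling the non-finite case, i.e. passing between ``$K\ge0$ as an operator on $\ell_2(G)$'' and ``all finite submatrices $\ge0$'', and likewise making sure the factorization $K=(\langle v_g,v_h\rangle)$ can be chosen so that the resulting $\Theta(f^i,f^j)$ genuinely lands in $\Lc(G)$ (not just an algebraic completion) — but since $\A$ consists only of finitely supported elements this reduces to finite matrices and there is nothing to worry about. I expect the proof to be short: restrict to finitely supported $f^i$, use $\tau(\la(x))=\de_{x,e}$ to identify the tested quadratic form with a Schur product of $K$ with a Gram matrix, and invoke the Schur product theorem (or equivalently the Kolmogorov factorization of $K$ together with $\la(g_k^{-1}g_l)=\la(g_k)^*\la(g_l)$) for both directions.
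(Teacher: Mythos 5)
Your ``if'' direction is sound and essentially coincides with the paper's: you factor $K_{g,h}=\langle v_g,v_h\rangle$ (the paper writes $K=X^*X$), rewrite $\la(g^{-1}h)=\la(g)^*\la(h)$, and exhibit $[\Theta(f^i,f^j)]$ as a sum of matrices of the form $[(w^i_\al)^*w^j_\al]_{i,j}$, hence positive. That part is fine.

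The ``only if'' direction has a genuine gap. You propose to take $n=1$, $f^1=\sum_k a_k\la(g_k)$, and claim that testing $\Theta(f^1,f^1)$ ``on $\de_e$'' yields $\sum_{k,l}\bar a_k a_l K_{g_k,g_l}\ge 0$. But $\langle \Theta(f^1,f^1)\de_e,\de_e\rangle=\tau(\Theta(f^1,f^1))$, and since $\tau(\la(x))=\de_{x,e}$, only the diagonal terms $g_k^{-1}g_l=e$ survive; you get $\sum_k|a_k|^2 K_{g_k,g_k}\ge 0$, not the full quadratic form. (The commutative intuition ``evaluate at the identity of $\hat G$'' is precisely what fails here: the trivial character is not a normal state of $\Lc(G)$.) Your ``key computation'' has a related flaw: $\tau\big(\la(g_k^{-1})\Theta(f^i,f^j)\la(g_l)\big)$ does \emph{not} collapse to the single term $\bar{a^i_k}a^j_l K_{g_k,g_l}$, because the trace picks out all pairs $(k',l')$ with $g_{k'}^{-1}g_{l'}=g_k g_l^{-1}$, and in a general group there can be several such pairs (already in $\zz$ with $g_k=k$ there are many). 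The correct mechanism, which is what the paper uses, is to take $n=m$ and let \emph{each} $f^i$ be a single generator $f^i=\la(g_i)$, so that $\Theta(f^i,f^j)=K_{g_i,g_j}\la(g_i^{-1}g_j)$ is a one-term operator with no collapsing ambiguity; then testing $[\Theta(f^i,f^j)]$ against the vector $(c_1\de_{g_1^{-1}},\dots,c_m\de_{g_m^{-1}})\in\ell_2^m(\ell_2(G))$ gives exactly $\sum_{i,j}\bar c_i c_j K_{g_i,g_j}\ge 0$. You should replace your $n=1$ reduction with this $n=m$, single-generator test; with that fix, your write-up would match the paper's argument in both directions.
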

\begin{proof}
 Our proof is based on Lance \cite{Lan}*{Proposition 2.1}. Assume $K$ is nonnegative definite. Write $K=X^*X$ for $X=(x_{g,h}), x_{g,h}\in\cz$. Then $K=\sum_{l} (\sum_{g,h}x_{lg}^* x_{lh}\otimes e_{g,h})$. Given $f^1, \cdots ,f^n\in \A$, we have
 \begin{align*}
   \Theta(f^i,f^j) &\lel \sum_{l\in G}\sum_{g,h\in G} \bar{\hat{f^i}}(g)\hat{f^j}(h)x_{lg}^* x_{lh}\la(g^{-1}h) \\
 &\lel \sum_{l\in G} \Big(\sum_g {\hat{f^i}}(g)x_{lg}\la_g\Big)^*\Big(\sum_h {\hat{f^j}}(h)x_{lh}\la_h\Big)\pl.
 \end{align*}
Here we understand all indices are finite. Put $\tet^i_l = \sum_g {\hat{f^i}}(g)x_{lg}\la_g$. We then have
\begin{align*}
\Theta \lel \sum_{l\in G} \sum_{i,j=1}^n{\tet^i_l}^* \tet^j_l\otimes e_{i,j}
\lel \sum_{l\in G} \Big(\sum_{i=1}^n \tet^i_l\otimes e_{1,i}\Big)^*\Big(\sum_{j=1}^n \tet^j_l\otimes e_{1,j}\Big)\pl,
\end{align*}
which is positive in $M_n(\Lc(G))$. Conversely, let $(x_i)=\sum_i x_i\de_{k_i}\in \ell_2(G)$ and write $g_i= {k_i}^{-1}\in G$. Then $(x_i)=\sum_{i\in \nz} x_i \de_{g_i^{-1}}$. Let $f^i=\la(k_i)$ so that $(\Theta(f^i,f^j))=(K_{i,j}\la(k_i^{-1}k_j))$ is positive in $M_n(\Lc(G))$ for all $n\in \nz$. Then for $h_i=x_i\de_{g_i}\in \ell_2(G)$, we have for all $n\in\nz$
\begin{align*}
  0&\kl \lge [\Theta(f^i,f^j)] (h_1, \cdots ,h_n), (h_1, \cdots ,h_n)\rge_{\ell_2^n(\ell_2(G))}\\
  &\lel \sum_{i,j=1}^n \lge K_{i,j}\la(k_i^{-1}k_j)x_j\de_{g_i}, x_i\de_{g_i}\rge_{\ell_2(G)} \lel \sum_{i,j=1}^n K_{i,j}x_j \bar{x_i} \pl,
\end{align*}
which implies that $K$ is nonnegative definite.
\end{proof}

The next lemma is useful when we deal with the product of groups. Note that
$$\Lc(\prod_{i=1}^m G_i)\cong\bar\otimes_{i=1}^m\Lc(G_i).$$
The identification is given by $\la(g_1, \cdots ,g_m)\mapsto \la(g_1)\otimes \cdots \otimes \la(g_m)$ for $g_i\in G_i$. We associated the form $\Ga_2$ to a matrix $K$ as follows: $\Ga_2^K(f,g)=\sum_{x,y\in G} \bar{\hat f}(x)\hat g(y)K^2_{x,y}\la(x^{-1}y)$. In what follows the matrix $K$ will be the Gromov form. If $K=K_1\otimes K_2$, then it is easy to check $\Ga^K(f^i\otimes g^i, f^j\otimes g^j)=\Ga^{K_1}(f^i,f^j)\otimes \Ga^{K_2}(g^i,g^j)$ for $f_i\in \Lc(G_1), g_i\in \Lc(G_2)$.
\begin{lemma}\label{tens}
 Let $(K_i)_{i=1}^m$ be nonnegative definite matrices and $\Ga^{K_i}$ the associated gradient forms in the sense of Lemma \ref{kform}. Suppose $\Ga^{K_i}_2\ge\al \Ga^{K_i}$. Then
 \[\Ga_2^{K}\ge \al \Ga^{K}, \]
where $K=\sum_{i=1}^m \mathds{1}\otimes  \cdots \otimes K_i\otimes \cdots  \otimes \mathds{1}$ with $K_i$ in the $i$th position and in what follows $\mathds{1}$ always denotes the matrix with every entry equal to 1.
\end{lemma}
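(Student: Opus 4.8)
The plan is to translate the form inequality $\Gamma_2^K\ge\alpha\Gamma^K$ into a statement about entrywise (Schur) products of nonnegative definite matrices and then verify it by a direct expansion. Write $K^{\circ 2}$ for the matrix with entries $(K_{x,y})^2$, so that in the notation introduced just before the lemma, $\Gamma^K=\Theta^K$ and $\Gamma_2^K=\Theta^{K^{\circ 2}}$ (and likewise for each $K_i$), where $\Theta^M$ is the form attached to a matrix $M$ as in Lemma \ref{kform}. By Lemma \ref{kform}, the hypothesis $\Gamma_2^{K_i}\ge\alpha\Gamma^{K_i}$ is equivalent to $K_i^{\circ 2}-\alpha K_i\ge0$, and the conclusion $\Gamma_2^K\ge\alpha\Gamma^K$ is equivalent to $K^{\circ 2}-\alpha K\ge0$. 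So it suffices to prove this last matrix inequality, given the corresponding ones for the factors.

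Set $K^{[i]}=\mathds{1}\otimes\cdots\otimes K_i\otimes\cdots\otimes\mathds{1}$ with $K_i$ in the $i$th slot, so $K=\sum_{i=1}^m K^{[i]}$. First I would record the two elementary identities for the Schur product, $\mathds{1}\circ\mathds{1}=\mathds{1}$ and $(A\otimes B)\circ(C\otimes D)=(A\circ C)\otimes(B\circ D)$; these give $(K^{[i]})^{\circ 2}=\mathds{1}\otimes\cdots\otimes K_i^{\circ 2}\otimes\cdots\otimes\mathds{1}$ and, for $i\neq j$, $K^{[i]}\circ K^{[j]}=\mathds{1}\otimes\cdots\otimes K_i\otimes\cdots\otimes K_j\otimes\cdots\otimes\mathds{1}$ with $\mathds{1}$ in every slot other than $i$ and $j$. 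Expanding the Schur square of a sum,
\begin{align*}
K^{\circ 2}-\alpha K\;=\;& \sum_{i=1}^m \mathds{1}\otimes\cdots\otimes(K_i^{\circ 2}-\alpha K_i)\otimes\cdots\otimes\mathds{1} \\
&+\sum_{i\neq j}\mathds{1}\otimes\cdots\otimes K_i\otimes\cdots\otimes K_j\otimes\cdots\otimes\mathds{1}.
\end{align*}

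Now I would observe that every summand on the right is a tensor product of nonnegative definite matrices: $\mathds{1}$ is positive semidefinite (it is the Gram matrix of a constant vector), each $K_i$ is nonnegative definite by assumption, and each $K_i^{\circ 2}-\alpha K_i$ is nonnegative definite by the hypothesis $\Gamma_2^{K_i}\ge\alpha\Gamma^{K_i}$ together with Lemma \ref{kform}. Since a tensor product of positive semidefinite matrices is positive semidefinite (reduce to finite principal submatrices, where this is the statement that Kronecker products of positive semidefinite matrices are positive semidefinite) and a sum of such is again positive semidefinite, it follows that $K^{\circ 2}-\alpha K\ge0$, hence $\Gamma_2^K\ge\alpha\Gamma^K$ by Lemma \ref{kform}. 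The only points requiring care — not really obstacles — are the bookkeeping of Schur products of tensor products, in particular that $\mathds{1}$ is a unit for the Schur product (this is what makes the mixed terms $K^{[i]}\circ K^{[j]}$ collapse to clean tensor products), and the observation that $\Gamma_2$ corresponds to the \emph{entrywise} square of the Gromov form, so that the Schur product machinery is exactly the right tool. One could instead argue directly with forms using $\Gamma^K(f^i\otimes g^i,f^j\otimes g^j)=\Gamma^{K_1}(f^i,f^j)\otimes\Gamma^{K_2}(g^i,g^j)$, but the matrix reformulation is cleaner.
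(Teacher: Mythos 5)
Your proof is correct and follows essentially the same route as the paper: reduce via Lemma \ref{kform} to the matrix inequality $K\bullet K\ge\alpha K$, expand the Schur square of the sum, discard the nonnegative cross terms, and bound the diagonal terms below by $\alpha K^{[i]}$ using the hypothesis. The extra bookkeeping you supply (that $\mathds{1}$ is a Schur unit and a Gram matrix, that tensor products of psd matrices are psd) is just a fuller spelling-out of the same argument.
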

\begin{proof}
In light of Lemma \ref{kform} it suffices to verify $K\bullet K\ge \al K$. Here and in the following $A\bullet B$ denotes the Schur product of matrix. Note that trivially $\mathds{1}\ge0$. Since $K_i\ge 0$, all the ``cross terms'' of the form $$\mathds{1}\otimes \cdots \otimes K_{i_1}\otimes \cdots \otimes K_{i_2} \otimes  \cdots  \otimes \mathds{1}$$
are nonnegative matrices for all $1\le i_1 < i_2 \le m$. It follows that
\[ K\bullet K\gl \sum_{i=1}^m \mathds{1}\otimes  \cdots \otimes (K_i\bullet K_i) \otimes \cdots  \otimes \mathds{1} \gl \al K\pl.
 \qedhere\]
\end{proof}

\subsection{The free groups}\label{frgp}
Let $\fz_n$ denote the free group on $n$ generators with length function $\psi=|\cdot|$, where for $g\in \fz_n$, $|g|$ is the length of (the freely reduced form of) $g$.  Note that the Gromov form $K(g,h) = |\min(g,h)|:= \max\{|w|: g=wg',h=wh'\}$ where $\min(g,h)$ is the longest common prefix subword of $g$ and $h$. It is well known that $\psi$ is conditionally negative due to Haagerup \cite{Haa}.

\begin{prop}\label{frcur}
 $ \Ga_2 \ge \Ga$ holds in $\Lc(\fz_n)$ for the semigroup $(e^{-t\psi})$ where $\psi$ is defined as above.
\end{prop}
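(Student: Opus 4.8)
The plan is to verify the algebraic $\Ga_2$-criterion $\Ga_2\ge\Ga$ in $\Lc(\fz_n)$ by means of Lemma~\ref{kform}. Recall that for the word length $\psi=|\cdot|$ the Gromov form is $K_{g,h}=|\min(g,h)|$, the length of the longest common prefix of the reduced words $g$ and $h$, and that $\Ga$ and $\Ga_2$ are precisely the forms attached (in the sense of Lemma~\ref{kform}) to the matrices $K$ and $K^{\bullet 2}=(K_{g,h}^2)_{g,h}$. Hence $\Ga_2-\Ga$ is attached to $K^{\bullet 2}-K$, and by Lemma~\ref{kform} it suffices to show that the $\fz_n\times\fz_n$ matrix $K^{\bullet 2}-K=(K_{g,h}^2-K_{g,h})_{g,h}$ is nonnegative definite.

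To do this I would realize $K$, and then $K^{\bullet 2}-K$, as Gram matrices. Let $W$ be the set of nonempty reduced words in $\fz_n$, and for $g\in\fz_n$ put $\xi_g=\sum_{e\ne w\preceq g}e_w\in\ell_2(W)$, the sum over the nonempty prefixes of $g$. Since the prefixes of a word are linearly ordered, $\langle\xi_g,\xi_h\rangle$ is exactly the number of common nonempty prefixes of $g$ and $h$, i.e. $\langle\xi_g,\xi_h\rangle=|\min(g,h)|=K_{g,h}$; in particular $K$ is a Gram matrix (as it must be, since $\Ga\ge 0$). Consequently $K_{g,h}^2=\langle\xi_g,\xi_h\rangle^2=\langle\xi_g\otimes\xi_g,\ \xi_h\otimes\xi_h\rangle$ in $\ell_2(W)\otimes\ell_2(W)$, so $K^{\bullet 2}$ is the Gram matrix of the vectors $\xi_g\otimes\xi_g$.

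Now split $\ell_2(W)\otimes\ell_2(W)=\Delta\oplus\Delta^\perp$, where $\Delta$ is the closed linear span of $\{e_w\otimes e_w:w\in W\}$. Writing $\xi_g\otimes\xi_g=\sum_{w,w'}e_w\otimes e_{w'}$ with the sum over pairs of nonempty prefixes $w,w'$ of $g$, its projection onto $\Delta$ is $\sum_{e\ne w\preceq g}e_w\otimes e_w$, whose Gram matrix is again $K$; its projection onto $\Delta^\perp$ is the sum over ordered pairs of \emph{distinct} nonempty prefixes of $g$, and the corresponding Gram entry is the number of ordered pairs of distinct common nonempty prefixes of $g$ and $h$, namely $K_{g,h}(K_{g,h}-1)=K_{g,h}^2-K_{g,h}$. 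Because $\xi_g\otimes\xi_g$ is the orthogonal sum of these two pieces (uniformly in $g$), Gram matrices add, so $K^{\bullet 2}-K$ is exactly the Gram matrix of the $\Delta^\perp$-components and is therefore nonnegative definite. By Lemma~\ref{kform}, $\Ga_2-\Ga$ is a positive form, which is the assertion. Finally, conditional negativity of $\psi$ (Haagerup \cite{Haa}) together with Lemma~\ref{gpvn} shows the semigroup $(e^{-t\psi})$ is a standard nc-diffusion semigroup, so the hypotheses of Corollary~\ref{ga2a} are in force and all the Poincar\'e, deviation and transportation inequalities apply with $\al=1$.

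I do not expect a serious obstacle here: the one step requiring an idea is the choice of the prefix Hilbert space and the observation that the ``off-diagonal'' part of $\xi_g\otimes\xi_g$ has Gram matrix $K^{\bullet 2}-K$; after that everything is elementary bookkeeping with prefixes of reduced words. One should only be slightly careful to use nonempty prefixes, so that $\langle\xi_g,\xi_h\rangle$ equals $K_{g,h}$ rather than $K_{g,h}+1$.
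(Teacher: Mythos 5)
Your proof is correct and follows essentially the same route as the paper: both reduce via Lemma~\ref{kform} to showing that the matrix $(K_{g,h}^2-K_{g,h})_{g,h}$ is nonnegative definite and then exhibit it as a Gram matrix of prefix-indexed vectors in $\ell_2$. The paper does this in one stroke by taking the weighted prefix vector $V(g)=\sum_{g_i\prec g}\sqrt{2(i-1)}\,\delta_{g_i}$, so that $\langle V(g),V(h)\rangle=\sum_{i=1}^{K_{g,h}}2(i-1)=K_{g,h}^2-K_{g,h}$, whereas your tensor-square-and-project-off-the-diagonal construction arrives at the same Gram factorization by a slightly longer but equivalent path.
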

\begin{proof}
For a freely reduced word $x\in \fz_n$, write $g_i\prec g$ for the prefix subword of $g$ with length $i$. Following Haagerup's construction, we define a map
 \[V: \fz_n \to \ell_2(\fz_n), \quad g \mapsto V(g)=\sum_{g_i\prec g}\sqrt{2(i-1)}\de_{g_i}\pl.\]
 Then we have
 \[ \td{K}_{g,h}:=K^2_{g,h}-K_{g,h} \lel \lge V(g), V(h)\rge_{\ell_2(\fz_n)}\lel V(g)^*V(h)\pl,\]
 where $V(g)^*$ is a row vector and $V(h)$ a column vector. It follows that $\td{K}=(\td{K}_{g,h})_{g,h}$ is a nonnegative definite matrix. We deduce from Lemma \ref{kform} that $\Ga_2\ge \Ga$.
\end{proof}

The particular case $n=1$ gives some interesting results in classical Fourier analysis. Indeed, $\Lc(\fz_1)=\Lc(\zz)=L_\8(\tz)$ and $L_p(\Lc(\fz_1))=L_p(\tz)$ after identifying $\la(k)(x)=e^{2\pi i kx}$. In this case
\[
 K(j,k)=\left\{
\begin{array}{ll}
\min(|j|,|k|) , & jk>0,\\
0, & \text{otherwise.}
         \end{array}\right.\]
\begin{cor}
 Let $2\le p<\8$. Then there exists constants $C$ and $C'$ such that for all $f\in L_p(\tz)$, we have
 \[ \|f-\hat f(0)\|_p\kl C\sqrt{p}\left\|\sum_{j,k\in\zz,jk>0}\bar{\hat f}(j)\hat f(k)\min(|j|,|k|)e^{2\pi i(k-j)\cdot}\right\|^{1/2}_\8,\]
 \[ \|f-\hat f(0)\|_p\kl C'{p}\left\|\sum_{j,k\in\zz,jk>0}\bar{\hat f}(j)\hat f(k)\min(|j|,|k|)e^{2\pi i(k-j)\cdot}\right\|^{1/2}_{p/2}.\]
\end{cor}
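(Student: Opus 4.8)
The plan is to read the corollary off from the general machinery developed above, applied to the group $\fz_1=\zz$. First I would record the standard identification $\Lc(\fz_1)=\Lc(\zz)$, which under the Fourier correspondence $\la(k)\mapsto e^{2\pi ik\cdot}$ becomes $L_\8(\tz)$, with $\tau(f)=\hat f(0)$ and hence $L_p(\Lc(\fz_1))=L_p(\tz)$; under this identification the semigroup $T_t(\la(k))=e^{-t|k|}\la(k)$ attached to the length function $\psi(k)=|k|$ is just the Poisson semigroup on the circle. By Lemma \ref{gpvn} this is a standard nc-diffusion semigroup, and the algebra $\A$ of trigonometric polynomials is weakly dense, satisfies $A\A\subset\A$ and $T_t\A\subset\A$, and is dense in $\Dom(A^{1/2})$ in the graph norm, so hypotheses i)--iii) of Corollary \ref{ga2a} hold. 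Since $\psi(k)=0$ only for $k=0$, the fixed-point algebra is $\cz 1$ and $E_{\Fix}(f)=\tau(f)=\hat f(0)$.

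Next I would invoke Proposition \ref{frcur} in the case $n=1$, which gives $\Ga_2\ge\Ga$ in $\Lc(\fz_1)$, i.e.\ the curvature bound $\Ga_2\ge\al\Ga$ holds with $\al=1$. Corollary \ref{ga2a} (through Theorem \ref{poin}) then yields \eqref{pcr1} and \eqref{pcr2} with $\al=1$ for every $f\in L_p(\tz)$: if the relevant right-hand side is infinite the inequality is trivial, and otherwise $f\in\Dom(A^{1/2})$ and Theorem \ref{poin} applies. The only point needing care is that functions on $\tz$ are complex valued, so the non--self-adjoint form of Theorem \ref{poin} is the one being used; but the two terms in its maximum coincide here, because on the circle $\Ga(\bar f,\bar f)=\overline{\Ga(f,f)}$ (seen from the substitution $j\mapsto-j$, $k\mapsto-k$ in the Fourier expansion below), hence $\|\Ga(\bar f,\bar f)^{1/2}\|_s=\|\Ga(f,f)^{1/2}\|_s$ for every $s$, and only $\|\Ga(f,f)^{1/2}\|$ survives (at the cost of the harmless constant $4\sqrt2$ instead of $2\sqrt2$).

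Finally I would make $\Ga(f,f)$ explicit. From the Section~4 formula $\Ga(f,g)=\sum_{j,k}\bar{\hat f}(j)\hat g(k)K(j,k)\la(-j+k)$ and the Gromov form $K(j,k)=\tfrac12(|j|+|k|-|k-j|)$ on $\zz$, a short case check ($jk>0$: $|k-j|=\bigl||j|-|k|\bigr|$, so $K=\min(|j|,|k|)$; $jk<0$: $|k-j|=|j|+|k|$, so $K=0$; one index zero: $K=0$) gives $K(j,k)=\min(|j|,|k|)$ for $jk>0$ and $0$ otherwise, whence
\[
 \Ga(f,f)\lel \sum_{j,k\in\zz,\,jk>0}\bar{\hat f}(j)\hat f(k)\min(|j|,|k|)\,e^{2\pi i(k-j)\cdot}\pl.
\]
Since $\Ga(f,f)\ge0$ we have $\|\Ga(f,f)^{1/2}\|_\8=\|\Ga(f,f)\|_\8^{1/2}$ and $\|\Ga(f,f)^{1/2}\|_p=\|\Ga(f,f)\|_{p/2}^{1/2}$, and substituting the displayed expression into \eqref{pcr1} and \eqref{pcr2} produces the two asserted inequalities. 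There is no genuine obstacle in this argument — the content lies entirely in Theorem \ref{poin}, Corollary \ref{ga2a} and Proposition \ref{frcur}; the only mildly delicate points are the reduction to complex-valued $f$ via the symmetry $\Ga(\bar f,\bar f)=\overline{\Ga(f,f)}$ and the elementary identification of the Gromov form on $\zz$.
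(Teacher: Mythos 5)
Your proposal is correct and follows essentially the same route as the paper: identify $\Lc(\zz)$ with $L_\8(\tz)$ under $\la(k)\leftrightarrow e^{2\pi ik\cdot}$, use Proposition \ref{frcur} in the $n=1$ case to get $\Ga_2\ge\Ga$, feed this into Corollary \ref{ga2a} and Theorem \ref{poin}, and unpack the Gromov form $K(j,k)=\tfrac12(|j|+|k|-|k-j|)$ into the explicit $\min(|j|,|k|)\cdot 1_{jk>0}$ expression. The paper leaves the passage from self-adjoint (real-valued) $f$ to general complex-valued $f$ implicit; your observation that $\Ga(\bar f,\bar f)=\overline{\Ga(f,f)}$ so that the two terms in the $\max$ of Theorem \ref{poin} coincide is exactly the justification the paper takes for granted.
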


\re
Observe that this example is purely commutative. However, commutative probability theory seems insufficient to establish these inequalities. Intuitively, the multiplier $|j|$ corresponds to $\Delta^{1/2}$. The Markov process generated by $\Delta^{1/2}$ is the Cauchy process with discontinuous path. The classical diffusion theory does not apply here.  But it is still nc-diffusion so that our noncommutative theory is essential in this regard. In general, whenever the process has discontinuous path but its semigroup still satisfies our assumptions, the noncommutative theory seems to be a natural choice due to the existence of Markov dilation with a.u. continuous path as stated in Theorem \ref{auct}. We will have more examples of this kind in the following.
\mar

\re
It was shown in \cite{JMe}*{Remark 1.3.2} that
\[
 \|T_t: L_1^0(\Lc(\fz_n))\to L_\8(\Lc(\fz_n))\|\le C t^{-3}.
\]
Therefore, Theorem \ref{tran} and Corollary \ref{tran3} give two different ways to prove the transportation inequality \eqref{wsen} for $\Lc(\fz_n)$.
\mar

\subsection{Application to the noncommutative tori $\R_\Theta$}
We recall the definition following \cite{JMP}. Let $\Theta$ be a $d\times d$ antisymmetric matrix with entries $0\le \tet_{ij}<1$. The noncommutative torus (or the rotation algebra) with $d$ generators associated to $\Theta$ is the von Neumann algebra $\R_\Theta$ generated by $d$ unitaries $u_1, \cdots ,u_d$ satisfying $u_j u_k= e^{2\pi i\tet_{jk}}u_k u_j$. Every element of $\R_{\Theta}$ is in the closure of the span of words of the form $w_k=u_1^{k_1} \cdots u_d^{k_d}$ for $k=(k_1, \cdots ,k_d)\in\zz^d$. $\R_\Theta$ admits a unique normal faithful trace $\tau$ given by $\tau(x)=\hat x(0)$ where $x=\sum_{k\in\zz^d} \hat x(k) u_1^{k_1} \cdots u_d^{k_d}\in\rz_\Theta$. Our goal is to show that $\R_\Theta$ admits a standard nc-diffusion semigroup with the $\Ga_2$-criterion. We start with the von Neumann algebra of $\zz^d$. It is well known that $\Lc(\zz^d)\cong L_\8(\tz^d)$.
We define $\psi(k)=\|k\|_1=\sum_{i=1}^d |k_i|$ for $k=(k_1, \cdots ,k_d)\in \zz^d$. Clearly, $\psi$ is a cn-length function and thus generate a standard nc-diffusion semigroup $P_t$ by Lemma \ref{gpvn}. In fact $P_t=\td{P}_t^{\otimes d}$ where $\td{P}_t$ is the Poisson semigroup on $L_\8(\tz)$.
\begin{prop}\label{cuzd}
 Let $\Ga$ be the gradient form associated to $P_t$. Then $\Ga_2\ge \Ga$ in $\Lc(\zz^d)$.
\end{prop}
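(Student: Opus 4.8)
The plan is to reduce to the one-dimensional case of Proposition~\ref{frcur} and then tensorize via Lemma~\ref{tens}. First I would use the identification $\Lc(\zz^d)\cong\bar\otimes_{i=1}^d\Lc(\zz)$ sending $\la(k)\mapsto\la(k_1)\otimes\cdots\otimes\la(k_d)$, and exploit that $\psi(k)=\|k\|_1=\sum_{i=1}^d|k_i|$ is additive over the coordinates. Since $\zz^d$ is abelian, the Gromov form is $K(k,l)=\tfrac12(\psi(k)+\psi(l)-\psi(l-k))$, and additivity of $\|\cdot\|_1$ gives $K(k,l)=\sum_{i=1}^d K_1(k_i,l_i)$, where $K_1(j,k)=\tfrac12(|j|+|k|-|j-k|)$ is the Gromov form on $\zz$ for the length $|\cdot|$ (explicitly $K_1(j,k)=\min(|j|,|k|)$ if $jk>0$ and $0$ otherwise). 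Reading $K$ as a matrix indexed by $\zz^d\times\zz^d$, this is exactly the decomposition $K=\sum_{i=1}^d\mathds{1}\otimes\cdots\otimes K_1\otimes\cdots\otimes\mathds{1}$ with $K_1$ in the $i$th slot appearing in Lemma~\ref{tens}, since the $(k,l)$ entry of each summand is $\prod_{j\neq i}1\cdot K_1(k_i,l_i)=K_1(k_i,l_i)$.

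Next I would verify the two hypotheses of Lemma~\ref{tens} for the single matrix $K_1$. Nonnegative definiteness of $K_1$ is free: $P_t=\td P_t^{\otimes d}$ is a standard nc-diffusion semigroup by Lemma~\ref{gpvn}, hence completely positive, so the gradient form $\Ga^{K_1}$ of the Poisson semigroup on $\Lc(\zz)$ is a positive form by Theorem~\ref{cps}, and Lemma~\ref{kform} then forces $K_1\ge 0$. The inequality $\Ga_2^{K_1}\ge\Ga^{K_1}$, equivalently $K_1\bullet K_1\ge K_1$, is precisely Proposition~\ref{frcur} in the case $n=1$, i.e. the statement that $K_1^{\bullet 2}-K_1\ge 0$, which there is witnessed by Haagerup's embedding $V$.

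Finally, applying Lemma~\ref{tens} with $K_1=\cdots=K_d$ and $\al=1$ gives $\Ga_2^K\ge\Ga^K$, which via Lemma~\ref{kform} is the assertion $\Ga_2\ge\Ga$ in $\Lc(\zz^d)$. There is no real obstacle here; the only care needed is the bookkeeping that matches the additivity of $\|\cdot\|_1$ with the Schur-product formalism of Lemma~\ref{tens}, together with the invocation of $K_1\ge 0$ — it is exactly this positivity that makes the cross terms in the expansion of $K\bullet K$ nonnegative matrices, so that they may be dropped.
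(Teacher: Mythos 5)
Your proposal is correct and follows the same approach as the paper: decompose $K^d = \sum_{i=1}^d \mathds{1}\otimes\cdots\otimes K_1\otimes\cdots\otimes\mathds{1}$ via additivity of $\|\cdot\|_1$, invoke Proposition~\ref{frcur} for the one-dimensional inequality $K_1\bullet K_1\ge K_1$, and tensorize with Lemma~\ref{tens}. Your version is slightly more careful than the paper's in that it explicitly verifies the hypothesis $K_1\ge 0$ needed for Lemma~\ref{tens} (via Theorem~\ref{cps} and Lemma~\ref{kform}) and correctly names Lemma~\ref{tens} as the tensorization step, where the paper's proof mistakenly cites Lemma~\ref{kform} at that point.
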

\begin{proof}
Let $K^d$ be the Gromov form associated with $\psi$. A calculation shows that $K^d(j,k)=K(j_1,k_1)+ \cdots +K(j_d,k_d)$ for $j=(j_1, \cdots ,j_d),k=(k_1, \cdots ,k_d)\in\zz^d$, where $K$ is the Gromov form of $\zz=\fz_1$ considered in the proceeding subsection. Alternatively, we may write $K^d=\sum_{i=1}^d \mathds{1}\otimes \cdots \otimes K\otimes \cdots \otimes \mathds{1}$ where $K$ is in the $i$th position. But we know from Proposition \ref{frcur} that $\Ga_2\ge\Ga$ in $\Lc(\zz)$. The assertion follows from Lemma \ref{kform}.
\end{proof}
  \begin{prop}
  $\R_\Theta$ admits a standard nc-diffusion semigroup with $\Ga_2\ge \Ga$.
  \end{prop}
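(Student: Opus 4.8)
The plan is to transfer the Poisson-type semigroup from $\Lc(\zz^d)$ to $\R_\Theta$, using that $\R_\Theta$ is the twisted group von Neumann algebra $\Lc_\sigma(\zz^d)$ attached to the $2$-cocycle $\sigma$ determined by $\Theta$ (so that $w_jw_k=\sigma(j,k)w_{j+k}$). I would let $\A$ denote the $*$-algebra of finitely supported elements $\sum_k\hat f(k)w_k$, and define $T_t$ on $\A$ by $T_t(w_k)=e^{-t\psi(k)}w_k$, where $\psi(k)=\|k\|_1$ is the cn-length function of Proposition \ref{cuzd}; equivalently $T_t$ is the Fourier multiplier with symbol $e^{-t\psi}$ and generator $Aw_k=\psi(k)w_k$. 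Since $\psi$ is conditionally negative definite on $\zz^d$, Schoenberg's theorem gives that $e^{-t\psi}$ is positive definite, and a positive definite symbol yields a completely positive Fourier multiplier on $\Lc_\sigma(\zz^d)$ exactly as on $\Lc(\zz^d)$ — the cocycle $\sigma$ plays no role in this positivity; see \cite{JMP}. Normality, unitality ($T_t1=1$), self-adjointness and trace preservation (from $\psi(-k)=\psi(k)$) and weak$*$ continuity (equivalently strong continuity on $L_2(\R_\Theta)$) are then checked verbatim as in Lemma \ref{gpvn}. Since $A\A\subset\A$, $T_t\A\subset\A$, and $\A$ is dense in $\Dom(A^{1/2})$ for the graph norm (the same computation as in Lemma \ref{gpvn}), together with $\|\Ga(f,f)\|_1=\|A^{1/2}f\|_2^2$, this shows $(T_t)$ is a standard nc-diffusion semigroup and puts us in position to apply Corollary \ref{ga2a}.

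Next I would compute the gradient forms on $\A$. Because $w_j^*w_k$ is a fixed unitary, namely a scalar multiple of $w_{k-j}$, one has $A(w_j^*w_k)=\psi(k-j)\,w_j^*w_k$, and the usual algebra gives
\[ 2\Ga(w_j,w_k)=(\psi(j)+\psi(k)-\psi(k-j))\,w_j^*w_k=2K^d(j,k)\,w_j^*w_k, \]
\[ 2\Ga_2(w_j,w_k)=K^d(j,k)(\psi(j)+\psi(k)-\psi(k-j))\,w_j^*w_k=2K^d(j,k)^2\,w_j^*w_k, \]
where $K^d$ is the Gromov form of $(\zz^d,\psi)$ appearing in Proposition \ref{cuzd}. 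Hence, for finite families in $\A$, the form $\Ga_2-\Ga$ is precisely the form attached — exactly as in Lemma \ref{kform}, but over $\Lc_\sigma(\zz^d)$ — to the matrix $\td K^d:=(K^d)^{\bullet 2}-K^d$, where $\bullet$ is the Schur product.

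Then I would invoke the twisted analogue of Lemma \ref{kform}: for a scalar matrix $K=(K_{g,h})_{g,h\in\zz^d}$, the sesquilinear form $(f^i,f^j)\mapsto\sum_{g,h}\bar{\hat{f^i}}(g)\hat{f^j}(h)K_{g,h}\,w_g^*w_h$ on $\A$ is positive if and only if $K$ is nonnegative definite. Its proof copies that of Lemma \ref{kform} with $\la(g)$ replaced by $w_g$ throughout: the ``only if'' direction applies the twisted left regular representation on $\ell_2(\zz^d)$, and the ``if'' direction uses a factorization $K=X^*X$ to write the form as a sum of squares $\sum_l(\sum_i\tet^i_l\otimes e_{1i})^*(\sum_j\tet^j_l\otimes e_{1j})$ with $\tet^i_l=\sum_g\hat{f^i}(g)x_{lg}w_g$, the cocycle phases hidden inside $w_g^*w_h$ matching on both sides. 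Applying this to $K=\td K^d$, which is nonnegative definite by Proposition \ref{cuzd} (that inequality is the content of $\Ga_2\ge\Ga$ in $\Lc(\zz^d)$, via Lemma \ref{kform}), shows that $\Ga_2-\Ga$ is a positive form on $\R_\Theta$, i.e. $\Ga_2\ge\Ga$, as desired. The substance of the argument lies entirely in this reduction: once one knows that Schoenberg's theorem and Lemma \ref{kform} pass unchanged from $\Lc(\zz^d)$ to the cocycle twist $\R_\Theta=\Lc_\sigma(\zz^d)$, the conclusion is immediate from Proposition \ref{cuzd}. That passage is the main point to get right, but it is routine — the cocycle enters only as fixed unitary scalars attached to the monomials $w_g^*w_h$, which are inert under every positivity computation — and the cleanest way to record it is to appeal to the twisted-multiplier framework of \cite{JMP}; everything else merely repeats Lemma \ref{gpvn} and Lemma \ref{kform}.
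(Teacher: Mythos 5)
Your proof is correct, but it takes a genuinely different route from the paper. You view $\R_\Theta$ intrinsically as the twisted group von Neumann algebra $\Lc_\sigma(\zz^d)$ and re-derive the relevant ingredients in that setting: the passage from conditionally negative $\psi$ to a completely positive twisted Fourier multiplier (via the Schur-multiplier/twisted-multiplier framework of \cite{JMP}), the computation of $\Ga$ and $\Ga_2$ on the monomials $w_k$, and a twisted version of Lemma \ref{kform} asserting that the form $(f^i,f^j)\mapsto\sum_{g,h}\bar{\hat f^i}(g)\hat f^j(h)K_{g,h}w_g^*w_h$ is positive iff $K$ is nonnegative definite. Your observation that the cocycle only produces fixed scalar phases attached to $w_g^*w_h$ is exactly right, and so each of those transfers does go through verbatim. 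The paper instead avoids all re-verification in the twisted setting: it constructs an injective trace-preserving $*$-homomorphism $\pi:\R_\Theta\to L_\8(\tz^d)\bar\otimes\R_\Theta$ coming from the gauge action $\al_s$, observes that $P_t\otimes Id$ is a standard nc-diffusion semigroup satisfying $\Ga_2\ge\Ga$ on the larger algebra by Lemma \ref{tform} together with Proposition \ref{cuzd}, and then notices that $\pi$ intertwines $T_t$ with $P_t\otimes Id$, so everything (complete positivity, standard nc-diffusion, and $\Ga_2\ge\Ga$) is inherited at once. Both arguments are sound; your approach is more self-contained but requires establishing twisted analogues of three different lemmas, whereas the paper's embedding trick gets every needed property in a single stroke by transference from the untwisted case. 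If you wrote yours up, you would need to actually state and prove the twisted Lemma \ref{kform}, whereas the paper's proof requires no new lemma beyond checking that $\pi$ is an injective $*$-homomorphism and that $P_t\otimes Id$ leaves $\pi(\R_\Theta)$ invariant.
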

  \begin{proof}
  Let $k\in \zz^d$. Consider an action $\al: \tz^d\to \mathrm{Aut}(\R_\Theta)$ given by: for $s\in\tz^d$,
  $$\al_s(u_1^{k_1} \cdots u_d^{k_d}) = e^{2\pi i \sum_{j=1}^d k_j s_j}u_1^{k_1} \cdots u_d^{k_d}.$$
   It is easy to check that $\al_s$ is a trace preserving automorphism.
  Define a map
  \[
  \pi: \R_\Theta\to L_\8(\tz^d)\bar{\otimes} \R_\Theta, \quad w_k=u_1^{k_1} \cdots u_d^{k_d}\mapsto \pi(w_k)(s)=\al_s(w_k)=e^{2\pi i \lge k,s\rge } u_1^{k_1} \cdots u_d^{k_d}\pl.
  \]
  Then $\pi$ is an injective $*$-homomorphism. Define $T_t:\R_\Theta\to\R_\Theta, T_t(w_k)=e^{-t\|k\|_1}w_k$. We claim that $(T_t)_{t\ge0}$ is the desired semigroup. Indeed, by Lemma \ref{tform} and Proposition \ref{cuzd}, $P_t\otimes Id$ acting on $L_\8(\tz^d)\bar\otimes \R_\Theta$ is a standard nc-diffusion semigroup and satisfies $\Ga_2\ge\Ga$. Then since $\pi$ is injective and
  \[
  P_t\otimes Id (\pi(w_k))\lel e^{-t\|k\|_1} e^{2\pi i \lge k,\cdot\rge } \otimes u_1^{k_1} \cdots u_d^{k_d}\lel \pi(T_t(w_k))
  \]
 leaves $\pi(\R_\Theta)$ invariant, we deduce that $T_t$ is a standard nc-diffusion semigroup acting on $\R_\Theta$ with $\Ga_2\ge \Ga$.
  \end{proof}

\subsection{The finite cyclic group $\zz_n$}\label{cyc}
We consider the group von Neumann algebra $\Lc(\zz_n)$ in this subsection. Let $(e_j)_{j=1}^n$ be the standard basis of $\cz^n$. Each $e_j$ can be regarded as a vector in $\rz^{2n}$ by canonical identification. Given $k\in \zz_n$, define the $2n\times 2n$ diagonal matrix $\al_k = (e^{2\pi i kj/n})_{j=0}^{n-1}$ where each $e^{2\pi i kj/n}$ is on diagonal and is identified with the $2\times 2$ rotation matrix $$\left(\begin{array}{cc}
\cos(2\pi  kj/n) & -\sin(2\pi  kj/n)\\
 \sin(2\pi  kj/n)&\cos(2\pi  kj/n)                                                                                                                                                                                                                                                                                                        \end{array}\right).
$$
Consider the finite cyclic group $\zz_n$ with 1-cocycle structure $(b,\al, \rz^{2n})$, where
\[b(k)\lel \frac1{\sqrt{n}}\Big(\sum_{j=1}^n\al_k(e_j)-e_j\Big) \lel \frac1{\sqrt{n}}\sum_{j=1}^n {\cos(2\pi  k(j-1)/n)-1\choose  \sin (2\pi  k(j-1)/n)}\otimes e_j\pl.\]
Then the length function $\psi$ given by $\psi(g)=\|b(g)\|_2^2$ is conditionally negative; see e.g. \cite{BO}*{Appendix D}.
\begin{lemma}
Let $K(k,h)$ be the Gromov form. Then $K(k,h)=\lge b(k),b(h)\rge$.
\end{lemma}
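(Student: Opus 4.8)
The plan is to exploit the abstract $1$-cocycle structure rather than compute trigonometric sums by hand. First I would record that $\al\colon\zz_n\to O(\rz^{2n})$, $k\mapsto\al_k$, is an orthogonal representation: $\al_k$ is block diagonal with $2\times 2$ rotation blocks of angle $2\pi k(j-1)/n$ ($j=1,\dots,n$), so each $\al_k$ is orthogonal and $\al_k\al_h=\al_{k+h}$ since rotation angles add. Next I would verify that $b$ is a cocycle for $\al$, i.e. $b(k+h)=b(k)+\al_k b(h)$; writing the group additively, this is immediate from the definition:
\[ b(k)+\al_k b(h)=\tfrac1{\sqrt n}\sum_{j=1}^n\big(\al_k(e_j)-e_j\big)+\tfrac1{\sqrt n}\sum_{j=1}^n\big(\al_k\al_h(e_j)-\al_k(e_j)\big)=\tfrac1{\sqrt n}\sum_{j=1}^n\big(\al_{k+h}(e_j)-e_j\big)=b(k+h). \]
In particular $b(0)=0$.

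From the cocycle identity and $b(0)=0$ we get $0=b(-k)+\al_{-k}b(k)$, hence $b(-k)=-\al_{-k}b(k)$, and therefore (recalling that in $\zz_n$ the element $g^{-1}h$ corresponds to $h-k$)
\[ b(h-k)=b(-k)+\al_{-k}b(h)=\al_{-k}\big(b(h)-b(k)\big). \]
Since $\al_{-k}$ is orthogonal, this gives $\psi(h-k)=\|b(h-k)\|_2^2=\|b(h)-b(k)\|_2^2$.

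Finally, expanding in the real Hilbert space $\rz^{2n}$,
\[ \psi(h-k)=\|b(k)\|_2^2+\|b(h)\|_2^2-2\lge b(k),b(h)\rge=\psi(k)+\psi(h)-2\lge b(k),b(h)\rge, \]
and substituting into the definition $K(k,h)=\tfrac12\big(\psi(k)+\psi(h)-\psi(h-k)\big)$ yields $K(k,h)=\lge b(k),b(h)\rge$; this inner product is automatically real because $b$ is $\rz^{2n}$-valued.

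The only step needing genuine care is the cocycle identity displayed above (equivalently, that $(b,\al,\rz^{2n})$ really is a $1$-cocycle structure in the sense of \cite{BO}*{Appendix D}); once that is in hand, the identity $K=\lge b(\cdot),b(\cdot)\rge$ is the standard general fact about length functions of the form $\psi=\|b(\cdot)\|_2^2$. An alternative, more computational route---writing $\psi(k)=2-\frac2n\sum_{j}\cos(2\pi kj/n)$ and $K(k,h)$ out explicitly and using $\cos a\cos b+\sin a\sin b=\cos(a-b)$---also works but is messier and obscures the reason the identity holds.
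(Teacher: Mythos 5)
Your proof is correct and follows essentially the same route as the paper: both use the $1$-cocycle identity together with orthogonality of $\al$ to rewrite $\psi(h-k)$ and then expand the inner product. You spell out the verification of the cocycle identity a bit more explicitly than the paper does, but the underlying argument is the same.
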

\begin{proof}
 Since the length function $\psi(k)=\|b(k)\|^2$, by the cocycle property,
 \begin{align*}
 K(k,h)&\lel \frac12(\|b(k)\|^2+\|b(h)\|^2-\|b(h-k)\|^2)\\
 &\lel \frac12(\|b(-k)\|^2+\|\al_{-k}(b(h))\|^2-\|\al_{-k}(b(h))+b(-k)\|^2)\\
 &\lel -\lge b(-k), \al_{-k}(b(j))\rge = -\lge \al_k(b(-k)), b(h)\rge\\
 &\lel \lge b(k),b(h)\rge\pl. \qedhere
 \end{align*}
\end{proof}
Clearly, $K(k,h)=0$ if $k=0$ or $h=0$. For $k,h\neq 0$, a computation gives
\begin{align*}
K(k,h)&\lel \frac1n\sum_{j=0}^{n-1} \big[(1-\cos(2\pi  kj/n))(1-\cos(2\pi  hj/n)) + \sin (2\pi  kj/n)\sin (2\pi  hj/n)\big]\\
&\lel 1+\frac1n\sum_{j=0}^{n-1}\cos\Big(\frac{2\pi(k-h)j}{n}\Big)=1+\de_{k,h}\pl,
\end{align*}
where $\de_{k,h}$ is the Kronecker delta function. It follows that $\psi(k)=2(1-\de_{k,0})$. For reasons that will become clear later, we normalize $\psi$ and still denote it by $\psi$ so that $\psi(k)=1-\de_{k,0}$ for $k\in\zz_n$. Then the associated Gromov form satisfies $K_{k,h}=\frac12(1+\de_{k,h})$ for $k,h\neq 0$ and  $(K_{k,h}^2-\frac12 K_{k,h}) \ge0$. It is an immediate consequence of Lemma \ref{kform} that $\Ga_2\ge\frac12\Ga$ in $\Lc(\zz_n)$. In fact, we can do better.
\begin{prop}\label{zncur}
For all $0<\al\le \frac{n+2}{2n}$, we have $\Ga_2\gl \al \Ga$ in $\Lc(\zz_n)$. Moreover, $\al_n= \frac{n+2}{2n}$ is the largest possible $\al$ with the $\Ga_2$-criterion.
\end{prop}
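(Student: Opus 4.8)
The plan is to reduce the algebraic $\Ga_2$-criterion to the nonnegative definiteness of a single $n\times n$ matrix via Lemma~\ref{kform}, and then diagonalize it. Recall from the computation preceding the statement that the (normalized) Gromov form on $\zz_n$ is $K_{0,h}=K_{k,0}=0$, $K_{k,k}=1$, and $K_{k,h}=\tfrac12$ whenever $k\neq h$ are both nonzero. Since
$\Ga_2(f,g)-\al\Ga(f,g)=\sum_{x,y\in\zz_n}\bar{\hat f}(x)\hat g(y)\,\bigl(K_{x,y}^2-\al K_{x,y}\bigr)\,\la(x^{-1}y)$,
Lemma~\ref{kform} tells us that $\Ga_2\gl\al\Ga$ in $\Lc(\zz_n)$ is equivalent to the nonnegative definiteness of the matrix $M_\al:=K\bullet K-\al K$ indexed by $\zz_n$, where $\bullet$ is the Schur product. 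Note that this gives an \emph{iff}, which is exactly what is needed for the sharpness statement as well.

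First I would observe that the row and column of $M_\al$ indexed by $0\in\zz_n$ vanish identically (because $K$ does there), so $M_\al\gl0$ if and only if its restriction $M_\al'$ to the $(n-1)$ nonzero indices is nonnegative definite. On that block, $K$ restricts to $\tfrac12(I+J)$ and $K\bullet K$ to $\tfrac34 I+\tfrac14 J$, where $J$ denotes the all-ones $(n-1)\times(n-1)$ matrix and $I$ the identity; hence $M_\al'=\bigl(\tfrac34-\tfrac\al2\bigr)I+\bigl(\tfrac14-\tfrac\al2\bigr)J$. Since $J$ has eigenvalue $n-1$ on the constant vector and $0$ on its orthogonal complement, the spectrum of $M_\al'$ consists of the simple eigenvalue $\tfrac{n+2}{4}-\tfrac{n\al}{2}$ together with $\tfrac34-\tfrac\al2$ of multiplicity $n-2$. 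Both are nonnegative precisely when $\al\le\tfrac{n+2}{2n}$, because $\tfrac{n+2}{2n}=\tfrac12+\tfrac1n\le 1<\tfrac32$, so the first condition is the binding one. Therefore $M_\al\gl0$ if and only if $0<\al\le\tfrac{n+2}{2n}$, which proves the asserted inequality for all such $\al$; and for $\al>\tfrac{n+2}{2n}$ the simple eigenvalue is strictly negative, so by the ``only if'' direction of Lemma~\ref{kform} the $\Ga_2$-criterion fails, giving the sharpness of $\al_n=\tfrac{n+2}{2n}$.

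I do not expect a genuine obstacle: the entire content is the eigenvalue analysis of a matrix of the form $aI+bJ$, and the only point requiring a little care is the degenerate index $0$, which produces a zero row and column in $M_\al$ and may simply be discarded when testing positivity.
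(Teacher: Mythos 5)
Your proof is correct and follows essentially the same route as the paper: both reduce to the nonnegative definiteness of $K\bullet K-\al K$ via Lemma~\ref{kform}, exploit the block structure with the zero row and column at the identity, and analyze the $(n-1)\times(n-1)$ block of the form $aI+bJ$. The only cosmetic difference is that you diagonalize $M_\al'$ completely, whereas the paper bounds below using $\mathds{1}_{n-1}\le(n-1)I_{n-1}$ and then separately verifies sharpness with the constant vector (which is of course the eigenvector realizing your critical eigenvalue $\tfrac{n+2}{4}-\tfrac{n\al}{2}$).
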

\begin{proof}
Note that the $n\times n$ matrix $K$ can be written as a block matrix
\[K\lel \left(\begin{array}{cc}
              0& 0\\
              0& \frac12(I_{n-1}+\mathds{1}_{n-1})
              \end{array}\right)
\]
where $I_{n-1}$ is the $n-1$ dimensional identity matrix and every entry of $\mathds{1}_{n-1}$ is 1. Write $\widehat K=\frac12(I_{n-1}+\mathds{1}_{n-1})$. Since $\mathds{1}_{n-1}\le (n-1) I_{n-1}$, for $0<\al\le \frac{n+2}{2n}$ we have
\begin{align*}
  &4\widehat K\bullet \widehat K-4\al \widehat K\lel (3-2\al)I_{n-1}- (2\al-1)\mathds{1}_{n-1}\\
 \gl& (2+n-2\al n)I_{n-1}\gl 0 \pl.
  \end{align*}
  Plugging $\bm x=(\frac1{\sqrt{n-1}}, \cdots ,\frac1{\sqrt{n-1}})$ into $\bm{x}'(\widehat K\bullet \widehat K-\al \widehat K)\bm{x}\ge 0$ reveals that $\al_n = \frac{n+2}{2n}$ is sharp.
 Then Lemma \ref{kform} leads to the $\Ga_2$-criterion.
\end{proof}

\subsection{The discrete Heisenberg group $H_3(\zz_n)$}
We consider the Heisenberg group $H_3(\zz_n)=\zz_n\times \zz_n\times\zz_n$ over $\zz_n$ as a subalgebra of $M_3(\zz_n)$ as follows:
\[
\left(\begin{array}{ccc}
1&b&a\\
0&1&c\\
0&0&1
\end{array}\right) \left(\begin{array}{ccc}
1&b'&a'\\
0&1&c'\\
0&0&1
\end{array}\right) =
\left(\begin{array}{ccc}
1&b+b'&a+a'+bc'\\
0&1&c+c'\\
0&0&1
\end{array}\right);
\]
see e.g. \cite{Dav}*{Section VII.5} for more details. We will write $H$ for $H_3(\zz_n)$ as long as there is no confusion. The multiplication here is given by
\[ (a,b,c)(a',b',c')\lel (a+a'+bc', b+b',c+c'),\quad (a,b,c),(a',b',c')\in H\pl.\]
Other multiplications have been considered in the literature.
\begin{prop}\label{hscur}
 Let $\psi(a,b,c)=2-\de_{b,0}-\de_{c,0}$. Then
 \begin{enumerate}
  \item $\psi$ is conditionally negative and thus the semigroup $(T_t)$ determined by $\psi$ is a standard nc-diffusion semigroup.
  \item Let $\Ga$ be the gradient form associated to $\psi$. Then $\Ga_2\ge \frac{n+2}{2n}\Ga$ in $\Lc(H)$.
   \end{enumerate}
\end{prop}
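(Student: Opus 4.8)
The plan is to push both assertions through the abelian quotient. I would start from the map
\[ q\colon H\to\zz_n\times\zz_n,\qquad q(a,b,c)=(b,c), \]
which is a surjective group homomorphism for the product $(a,b,c)(a',b',c')=(a+a'+bc',b+b',c+c')$, since the $b$- and $c$-coordinates multiply coordinatewise and never involve $a$; its kernel is the center $\{(a,0,0)\}\cong\zz_n$, and in particular $q(g^{-1}h)=q(g)^{-1}q(h)$. Writing $\psi_1(k)=1-\de_{k,0}$ for the normalized cn-length function on $\zz_n$ from Subsection~\ref{cyc} and $\tilde\psi(b,c)=\psi_1(b)+\psi_1(c)$, one has $\psi=\tilde\psi\circ q$, so everything reduces to facts about $\psi_1$ on $\zz_n$ that are already available.

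For part~(1), I would first observe that $\tilde\psi$ is a cn-length function on $\zz_n\times\zz_n$, being a sum of the pullbacks of $\psi_1$ along the two coordinate projections. Conditional negativity passes through a group homomorphism: if $\sum_g\xi_g=0$, put $\eta_u=\sum_{g\in q^{-1}(u)}\xi_g$, so that $\sum_u\eta_u=0$ and $\sum_{g,h}\bar\xi_g\xi_h\,\psi(g^{-1}h)=\sum_{u,v}\bar\eta_u\eta_v\,\tilde\psi(u^{-1}v)\le0$. Since also $\psi(e)=0$ and $\psi(g^{-1})=\psi(g)$, the function $\psi$ is a cn-length function on $H$, and then Lemma~\ref{gpvn} (whose proof only uses that $\psi$ is a cn-length function) shows that $(T_t)$ is a standard nc-diffusion semigroup acting on $\Lc(H)$.

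For part~(2), I would run the same transfer at the level of Gromov forms. Because $\psi=\tilde\psi\circ q$ and $q(g^{-1}h)=q(g)^{-1}q(h)$,
\[ K_H(g,h)=\frac12\big(\psi(g)+\psi(h)-\psi(g^{-1}h)\big)=K_{\zz_n^2}\big(q(g),q(h)\big), \]
so $K_H$, and likewise $K_H\bullet K_H-\al K_H$ for any $\al$, is the pullback along $q$ of the corresponding matrix on $\zz_n\times\zz_n$; and composing a nonnegative definite kernel with $q$ in both variables keeps it nonnegative definite, by the same fiber-summation as above. Hence it suffices to check $K_{\zz_n^2}\ge0$ and $K_{\zz_n^2}\bullet K_{\zz_n^2}\ge\frac{n+2}{2n}K_{\zz_n^2}$ on $\zz_n\times\zz_n$. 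A direct computation (as in the proof of Proposition~\ref{cuzd}) gives $K_{\zz_n^2}=K_1\otimes\mathds{1}+\mathds{1}\otimes K_1$, with $K_1$ the Gromov form of $\psi_1$ on $\zz_n$; Proposition~\ref{zncur} together with Lemma~\ref{kform} gives $K_1\ge0$ and $K_1\bullet K_1\ge\frac{n+2}{2n}K_1$, and then Lemma~\ref{tens} (applied with $m=2$ and $K_1=K_2$ both equal to this Gromov form) yields $K_{\zz_n^2}\bullet K_{\zz_n^2}\ge\frac{n+2}{2n}K_{\zz_n^2}$. Pulling back along $q$ gives $K_H\ge0$ and $K_H\bullet K_H\ge\frac{n+2}{2n}K_H$, and Lemma~\ref{kform} turns the latter into $\Ga_2\ge\frac{n+2}{2n}\Ga$ in $\Lc(H)$.

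None of these steps is technically deep. The one place that needs care is that Lemma~\ref{kform} demands the \emph{full} matrix indexed by the group $H$ to be nonnegative definite, so the real engine is the elementary remark that pullback along $q$ preserves nonnegative definiteness; nothing about the Heisenberg group is used beyond the existence of the abelian quotient $q\colon H\to\zz_n\times\zz_n$, and the non-standard product convention matters only in that it is the one for which $q$ is a homomorphism.
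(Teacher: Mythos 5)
Your proof follows essentially the same route as the paper's: part (1) pulls back conditional negativity along the quotient homomorphism $(a,b,c)\mapsto(b,c)$, and part (2) computes the Gromov form $K_H$, observes it depends only on the $(b,c)$-coordinates, and reduces to $\zz_n\times\zz_n$ via Proposition~\ref{zncur} and Lemma~\ref{tens}. Your explicit remark that Lemma~\ref{kform} requires nonnegative definiteness of the matrix indexed by $H$ itself, so that the pullback-preserves-positivity step is the real engine, is a point the paper leaves implicit when it invokes Lemma~\ref{tens}; spelling it out is a welcome clarification but not a different argument.
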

\begin{proof}
 (1) The length function of $\zz_n$ considered in Subsection \ref{cyc} is given by $\td{\psi}(k)=(1-\de_{k,0})$, which extends to $\zz_n\times\zz_n$ as $\td{\psi}(k,l)=(1-\de_{k,0})+(1-\de_{l,0})$. Define a group homomorphism
 \[ \bt: H\to \zz_n\times\zz_n, \quad (a,b,c)\mapsto (b,c).\]
 Since $\td{\psi}$ is conditionally negative, it follows from the definition that $\psi = \td{\psi}\circ \bt$ is also conditionally negative. Lemma \ref{gpvn} yields that $(T_t)$ is a standard nc-diffusion semigroup.

 (2) Let $K$ and $\td{K}$ be the Gromov form of $(H,\psi)$ and $(\zz_n,\td{\psi})$ respectively. A calculation shows that for indices $(a,b,c), (a',b',c')\in H$,
 \begin{align*}
    K((a,b,c),(a',b',c'))&\lel \td{K}(b,b')+\td{K}(c,c')\\
    &\lel (\td{K}\otimes \mathds{1} + \mathds{1}\otimes \td{K})((b,b')\otimes(c,c'))\pl.
    \end{align*}
By Proposition \ref{zncur} and Lemma \ref{tens} with $m=2$, we have $\Ga_2^{K}\ge \frac{n+2}{2n}\Ga^{K}$ in $\Lc(H)$, as desired.
\end{proof}

Let $e_{i,j}$ be the standard basis of the matrix algebra $M_n(\cz)$ and $\de_j$ the standard basis of $\ell_2(\zz_n)$. Define the diagonal matrix $u_k=\sum_{j=1}^{n}e^{2\pi i k(j-1)/n}\otimes e_{j,j}$ and the shift operator $v_l(\de_j)=\de_{j+l}$ which is nothing but the left regular representation of $\zz_n$ on $\ell_2(\zz_n)$. It is easy to see that $u_k, v_l\in M_n=B(\ell_2(\zz_n))$ and they satisfy $u_k v_l=e^{2\pi ikl/n}v_l u_k$.
\begin{prop} \label{hsdc}
Let $\Lc(H)$ be the group von Neumann algebra of $H$. Then
\[ \Lc(H)\cong L_\8(\zz_n^2)\oplus M_n\oplus \M_2\oplus \cdots  \oplus \M_{n-1}\pl,\]
where $\M_x, x=2, \cdots ,n-1$ are von Neumann algebras acting on $\ell_2(\zz_n^2)$.
Moreover, if $T_t$ is the semigroup associated to $\psi(a,b,c)=2-\de_{b,0}-\de_{c,0}$, then $T_t$ leaves each component invariant and $T_t|_{\M_x}$is a standard nc-diffusion semigroup.
\end{prop}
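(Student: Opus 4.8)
The plan is to split $\Lc(H)$ along the central unitary $c_0:=\la(1,0,0)$. Since $(1,0,0)$ is central in $H$, $c_0$ lies in the center of $\Lc(H)$, and $c_0^n=1$ forces the spectrum of $c_0$ into the $n$-th roots of unity $\{e^{2\pi ix/n}:x\in\zz_n\}$. First I would introduce the associated central projections $z_x:=\frac1n\sum_{a\in\zz_n}e^{-2\pi iax/n}\la(a,0,0)$; these are mutually orthogonal and sum to $1$, so $\Lc(H)=\bigoplus_{x=0}^{n-1}\M_x$ with $\M_x:=z_x\Lc(H)$. Writing a vector $f\in\ell_2(H)$ as a function $f(a,b,c)$ and noting that left translation by $(a_0,0,0)$ carries it to $f(a-a_0,b,c)$, one sees that the range of $z_x$ consists exactly of the functions $f(a,b,c)=e^{-2\pi iax/n}g(b,c)$ with $g\in\ell_2(\zz_n^2)$, so $z_x\ell_2(H)\cong\ell_2(\zz_n^2)$ canonically and each $\M_x$ acts on $\ell_2(\zz_n^2)$, as claimed.

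Next I would identify the summands. Inside $\M_x$, the elements $w_{(b,c)}:=z_x\la(0,b,c)$, $(b,c)\in\zz_n^2$, satisfy $w_{(b,c)}w_{(b',c')}=e^{2\pi ixbc'/n}w_{(b+b',c+c')}$, because $z_x\la(bc',0,0)=e^{2\pi ixbc'/n}z_x$; hence $\M_x$ is the twisted group von Neumann algebra $\Lc(\zz_n^2,\sigma_x)$ with $\sigma_x\big((b,c),(b',c')\big)=e^{2\pi ixbc'/n}$. For $x=0$ the cocycle is trivial, so $\M_0=\Lc(\zz_n^2)\cong L_\8(\zz_n^2)$. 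For $x$ coprime to $n$ --- in particular $x=1$ --- the commutator bicharacter of $\sigma_x$ is a nondegenerate symplectic form on $\zz_n^2$, and the finite Stone--von Neumann theorem (equivalently, $z_x\la(0,1,0)$ and $z_x\la(0,0,1)$ realize the clock and shift unitaries $u_x,v_1$ on $\ell_2(\zz_n)$, with multiplicity) gives $\M_x\cong M_n$. For a general $x$ with $d:=\gcd(x,n)$, reducing $\sigma_x$ modulo the radical $(n/d)\zz_n\times(n/d)\zz_n$ of the commutator bicharacter yields $\M_x\cong M_{n/d}\otimes L_\8(\zz_d^2)$, a finite-dimensional von Neumann algebra on $\ell_2(\zz_n^2)$; only the cases $x=0,1$, together with the fact that the remaining $\M_x$ are genuine central summands, are needed for the statement.

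For the semigroup part, the key observation is that $\psi(a,b,c)=2-\de_{b,0}-\de_{c,0}$ depends only on $(b,c)$, that is, it is the pullback along the quotient map $H\to H/Z\cong\zz_n^2$ of the cyclic-group length function of Subsection \ref{cyc} in each coordinate (as already used in the proof of Proposition \ref{hscur}). Hence $T_t\la(a,b,c)=e^{-t\psi(b,c)}\la(a,b,c)$ and $T_t(\la(a,0,0)y)=\la(a,0,0)T_t(y)$ for all $a\in\zz_n$, $y\in\Lc(H)$, so $T_t$ commutes with the whole central subalgebra generated by $c_0$, in particular with each $z_x$; therefore $T_t(\M_x)\subseteq\M_x$. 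Since $\psi$ vanishes on $Z$, also $T_t(z_x)=z_x$, so $T_t$ restricts to a \emph{unital} map on $\M_x$. Because $\M_x=z_x\Lc(H)$ is a central direct summand, the properties of a standard nc-diffusion semigroup --- normality, complete positivity, $\tau$-symmetry, weak${}^*$ continuity, and $\Ga(\cdot,\cdot)\in L_1$ on the relevant domain --- all pass from $(T_t)$ on $\Lc(H)$ (verified in Proposition \ref{hscur}) to $T_t|_{\M_x}$, whose generator is the restriction of $A$ and whose carr\'e du champ is $z_x\Ga(\cdot,\cdot)$. This yields the ``moreover'' part.

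The main obstacle will be Step 2 for the indices $x\notin\{0,1\}$: identifying $\M_x$ requires the structure theory of twisted group algebras of finite abelian groups (reduction of a symplectic cocycle to the radical of its commutator bicharacter), which is classical but somewhat technical and, strictly speaking, is not even needed for the displayed decomposition. The central splitting, the spatial model $z_x\ell_2(H)\cong\ell_2(\zz_n^2)$, and the invariance and inheritance statements for $T_t$ are all routine once one notices that $\psi$ is pulled back from $\zz_n^2$.
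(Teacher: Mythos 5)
Your proof is correct and follows essentially the same strategy as the paper's: split along the central copy of $\zz_n$ (your spectral projections $z_x$ are, up to a sign in the exponent, exactly the paper's $p_x=\F^{-1}q_x\F$ obtained via the discrete Fourier transform on the first coordinate), identify $\M_0$ as $L_\8(\zz_n^2)$ and $\M_1$ as $M_n$ (the paper cites the clock-and-shift realization of $M_n$ from Davidson, you invoke the equivalent finite Stone--von Neumann theorem for the nondegenerate $2$-cocycle), and obtain $T_t$-invariance from the fact that $\psi$ vanishes on the center, so $\Lc(\zz_n,0,0)$ sits in the multiplicative domain of $T_t$. Your presentation avoids the explicit conjugation by $\F$ that the paper carries through the semigroup argument, and your twisted-group-algebra description of the $\M_x$ (including the $\gcd$ remark, which, as you note, is not needed) is a slightly more conceptual packaging of the same computation.
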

\begin{proof}
Let us first determine the center of $\Lc(H)$ denoted by $\Z$. The identity
$$\la(a,b,c)\la(a',b',c')=\la(a',b',c')\la(a,b,c)$$
for all $(a',b',c')\in H$ holds if and only if $b=c=0$. Thus $\Lc(\zz_n,0,0)\subset \Z$. Let $\F$ denote the discrete Fourier transform of the first component on $\ell_2(H)$. For $\de_{(x,0,0)}\in\ell_2(\zz_n,0,0)$, we have
\[\F(\de_{(x,0,0)})\lel \frac1{\sqrt{n}}\sum_{k=0}^{n-1}e^{-\frac{2\pi i k x}n}\de_{(k,0,0)}\pl.\]
A calculation gives
\[\F\la(a,0,0)\F^{-1} \de_{(x,0,0)}\lel e^{-\frac{2\pi i a x}n}\de_{(x,0,0)}\pl.\]
This shows that $\F\Lc(\zz_n,0,0)\F^{-1}=\{e^{-\frac{2\pi i a \cdot}n}: a\in \zz_n\}'' = L_\8(\zz_n)$. Since for fixed $x\in\zz_n$, $\de_{(x,\cdot,\cdot)}\in \ell_2(\zz_n^2)$, we have the Hilbert space decomposition $\ell_2(H)=\bigoplus_{x\in \zz_n} \ell_2^x(\zz_n^2)$, where the superscript $x$ is used to distinguish different copies. We may drop $x$ if there is no ambiguity. Then by the decomposition theorem of von Neumann algebras for subalgebras of the center (see e.g. \cite{Tak}*{Theorem IV.8.21}),
\[\Lc(H)\cong \bigoplus_{x\in\zz_n} \M_x\pl,\]
where $\M_x$ is determined by the unitary $\F$, the discrete Fourier transform on the first component. Let $q_x\in \ell_\8(\zz_n,0,0)$ be the central projection given by $q_x: \ell_2(H)\to {\rm span}\{\de_{(x,y,z)}: y,z\in\zz_n\}$. Put $p_x=\F^{-1}q_x\F\in \Lc(\zz_n,0,0)$. Then $p_x$ is a central projection, $\sum_{x=0}^{n-1}p_x=1$ and $p_x \Lc(H)=\F^{-1}\M_x\F$. We observe that $T_t\la(a,0,0)=\la(a,0,0)$. Hence $\Lc(\zz_n,0,0)$ is contained in the multiplicative domain of $T_t$. Then by the property of multiplicative domain (see e.g. \cite{Pau}*{Theorem 3.18}) for all $x\in\zz_n,\xi\in\Lc(H)$, $T_t(p_x \xi)=p_xT_t(\xi)\in p_x\Lc(H)$. Let $\eta\in\M_x$ with $p_x\xi=\F^{-1}\eta\F$. We define $\td{T}_t\eta=\F T_t(p_x\xi)\F^{-1}$. Since $\F$ is a unitary, $\td{T}_t$ restricted to $\M_x$ is a standard nc-diffusion semigroup. By abuse of notation, we will write $T_t$ for $\td{T}_t$ on $\M_x$ in the future.

To get more precise description of $\M_x$, we define a family of maps for $x\in\zz_n$
\begin{align*}
 \pi_x: \Lc(H)\to B(\ell_2^x(\zz_n^2)),\quad \la(a,b,c)\mapsto\pi_x(\la(a,b,c)),
 \end{align*}
where $\pi_x(\la(a,b,c))$ acts on $\de_{(k,l)}$ by
\[
\F\la(a,b,c) \F^{-1}\de_{(x,k,l)}\lel e^{-\frac{2\pi i x (a+bl)}n}\la_{(b,c)}\de_{(k,l)}\pl.
\]
Here $\la(b,c)$ is the shift operator on $\ell_2(\zz_n^2)$ given by $\la{(b,c)}\de_{(k,l)}=\de_{(k+b,l+c)}$. Then \[\M_x=\{\pi_x(\la(a,b,c)): (a,b,c)\in H\}''=\{e^{-\frac{2\pi i ax}n}v_b\otimes(v_c u_{-xb}):(a,b,c)\in H\}''\pl.\]
Here we have used the convention $\la(b,c)=v_b\otimes v_c$. If $x=0$, we have
$$\M_0=\{\la(b,c):(b,c)\in\zz_n^2\}''=\Lc(\zz_n^2)=L_\8(\zz_n^2).$$
If $x=1$, it can be checked that $\{v_c u_{-b}: (b,c)\in \zz_n^2\}''=M_n$; see e.g. \cite{Dav}*{Theorem VII.5.1}. Define for $(b,c)\in \zz_n^2$
\[\rho(v_b\otimes(v_c u_{-b})) \lel v_c u_{-b}\pl.\]
Then $\rho$ is a $*$-isomorphism and thus $\M_1=M_n$.
\end{proof}
Consider the semigroup $T_t$ acting on $M_n(\cz)$ defined by $T_t|_{M_n(\cz)}$ in the preceding proposition. Explicitly, $T_t$ is determined by $T_t (v_c u_b) = e^{-t\psi(b,c)}(v_c u_b)$ where $\psi(b,c)=2-\de_{b,0}-\de_{c,0}$. Then the $\Ga_2$-criterion for $M_n$ follows from Proposition \ref{hscur}. We record this fact below.
\begin{prop}\label{mncur}
 $M_n$ admits a standard nc-diffusion semigroup $(T_t)_{t\ge 0}$. Let $\Ga^{M_n}$ be the gradient form associated to $T_t$. Then
$\Ga^{M_n}_2\ge \frac{n+2}{2n}\Ga^{M_n}$ in $M_n$.
\end{prop}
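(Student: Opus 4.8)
The plan is to deduce both statements by transporting the $\M_1$-summand of the decomposition in Proposition~\ref{hsdc} back to $M_n$. Recall from that proposition that $p_1\in\Lc(\zz_n,0,0)$ is a central projection of $\Lc(H)$, that the unitary $\F$ conjugates $p_1\Lc(H)$ onto $\M_1$, and that the $*$-isomorphism $\rho(v_b\otimes(v_c u_{-b}))=v_c u_{-b}$ identifies $\M_1$ with $M_n=B(\ell_2(\zz_n))$. Under the resulting identification the restricted semigroup is $T_t(v_c u_b)=e^{-t\psi(b,c)}(v_c u_b)$ with $\psi(b,c)=2-\de_{b,0}-\de_{c,0}$. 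Since Proposition~\ref{hsdc} already tells us that $T_t|_{\M_1}$ is a standard nc-diffusion semigroup, and since $\rho$ is a $*$-isomorphism onto $M_n$ (necessarily trace preserving for the normalized traces, and hence preserving complete positivity, unitality, self-adjointness, weak* continuity and the nc-diffusion relation $\Ga(x,x)\in L_1$ on $\Dom(A^{1/2})$), the first assertion follows at once.

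For the $\Ga_2$-criterion the key observation is that the gradient forms are compatible with compression by the central projection $p_1$, which moreover is fixed by the semigroup. Indeed $p_1\in\Fix$ lies in the multiplicative domain of each $T_t$, so $T_t(p_1x)=T_t(p_1)T_t(x)=p_1T_t(x)$ and hence $A(p_1x)=p_1A(x)$ for $x\in\Dom(A)$; using $p_1^2=p_1$ and centrality of $p_1$ one then reads off directly from the definitions of $\Ga$ and $\Ga_2$ that $\Ga(p_1x,p_1y)=p_1\Ga(x,y)$ and, iterating, $\Ga_2(p_1x,p_1y)=p_1\Ga_2(x,y)$. Consequently the form inequality $[\Ga_2(f^i,f^j)]\ge\frac{n+2}{2n}[\Ga(f^i,f^j)]$ of Proposition~\ref{hscur}(2), evaluated on finite families from $p_1\Lc(H)$ and compressed by $p_1$, descends to $\Ga_2\ge\frac{n+2}{2n}\Ga$ on $\M_1$. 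Transporting through the $*$-isomorphism $\rho$ (together with the conjugation by $\F$), which intertwines the semigroups and therefore their generators and gradient forms, yields $\Ga^{M_n}_2\ge\frac{n+2}{2n}\Ga^{M_n}$ in $M_n$, as claimed.

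The only step requiring a genuine (if short) verification is the identity $\Ga(p_1x,p_1y)=p_1\Ga(x,y)$ for the central fixed projection $p_1$; this is where the argument would fail if $p_1$ were not in the multiplicative domain or did not commute with $A$, but both facts are guaranteed by $T_t(p_1)=p_1$. Everything else is bookkeeping that reduces the claim to the already established Propositions~\ref{hscur} and~\ref{hsdc}, so I do not anticipate any real obstacle.
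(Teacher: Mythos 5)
Your proof is correct and takes essentially the same approach as the paper's. The paper simply asserts that the $\Ga_2$-criterion for $M_n$ follows from Proposition~\ref{hscur} via the direct-summand decomposition of Proposition~\ref{hsdc}; you have merely made explicit the compatibility $\Ga(p_1x,p_1y)=p_1\Ga(x,y)$ (and similarly for $\Ga_2$) for the central fixed projection $p_1$, which is precisely the verification the paper leaves to the reader.
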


\subsection{Application to the generalized Walsh system}
Let us recall some basic facts about the Walsh system following \cite{ELP}. Let $\Om_n^m=\{1,e^{2\pi i/n}, e^{2\pi i 2/n}, \cdots ,e^{2\pi i(n-1)/n}\}^m$  be the $m$-dim discrete cube equipped with uniform probability measure $P$. Let $\om_j,j=1, \cdots ,m$ denote the $j$th coordinate function on $\Om_n^m$. For a nonempty subset $B\subset \{1, \cdots ,m\}$ and $\bm x=(x_1,\cdots,x_m)\in\zz_n^m$, define
\[\om_B(\bm x)\lel \prod_{j\in B}\om_j^{x_j},\]
and $\om_\emptyset = 1$. Put $G=\{\om_B(\bm x):B\subset \{1, \cdots ,m\}, \bm x\in \zz_n^m\}$. Then $G$ is clearly a group and $L_\8(\Om_n^m)$ is spanned by the elements of $G$.
%Denote by $G$ the unitary group of the algebra $L_\8(\Om_n^m)$ generated by$\{\om_B\}$ with $\om_B^{-1}=\om_B^{n-1}$ and  the elements of $G$ form an orthonormal basis of $L_2(\Om_n^m)$.

We consider the abelian group $\zz_n^m$. Define
\[\psi(x_1, \cdots ,x_m)\lel (m-\de_{x_1}- \cdots -\de_{x_m}),\quad (x_1, \cdots ,x_m)\in \zz_n^m \pl,\]
where $\de_x = \de_{x,0}$. Given $\bm x =(x_1, \cdots ,x_m)$, put $B_{\bm x}=\{i: x_i\neq0, i=1, \cdots ,m\}$.
Clearly $\psi(x_1, \cdots ,x_m)=|B_{\bm x}|$, where $|B|$ is the cardinality of $B$. An argument similar to the proof of Proposition \ref{hscur} shows that $\psi$ is a cn-length function and the associated $\Ga$ form satisfies
\begin{equation}\label{z2nc}
 \Ga_2\ge \frac{n+2}{2n}\Ga \text{ in } \Lc(\zz_n^m)\pl.
\end{equation}
Here the constant $\frac{n+2}{2n}$ is given by Proposition \ref{zncur}. Define a map
\[\bt : \zz_n^m\to G,\quad \bm x =(x_1, \cdots ,x_m)\mapsto \prod_{j\in B_{\bm x}}\om_j^{x_j}\pl. \]
It is easy to check that $\bt$ is a group isomorphism from $\zz_n^m$ to $G$. The idea here is to convert addition to multiplication. Under the identification $\bt$, $\Lc(\zz_n^m)=L_\8(\Om_n^m)$ and thus every $f\in \Lc(\zz_n^m)$ can be written as
\[f \lel \ez(f) + \sum_{\bm x\in \zz_n^m, \bm x\neq \bm 0} \hat f_{\bm x} \prod_{j\in B_{\bm x}}\om_j^{x_j}\pl, \]
where $\ez(f)=\tau(f)$ is the expectation associated to the uniform probability.
By abuse of notation, we still denote by $\psi$ the cn-length function induced by $\bt$ on $\{\om_B\}$, i.e.
\[\psi(\om_{B_{\bm x}})\lel \psi(\bt(\bm x)) := \psi(\bm x),\quad \bm x\in \zz_n^m\pl.\]
Then we have
\[\psi(\om_{B_{\bm x}})\lel \psi(x_1, \cdots ,x_m)\lel |B_{\bm x}|\pl.\]
Therefore the infinitesimal generator $A$ of the heat semigroup $T_t$ in this case is the number operator for the generalized Walsh system which counts non-zero elements
$$A\pl\om_B\lel |B|\om_B\pl.$$
Moreover, it follows from \eqref{z2nc} that
$\Ga_2\ge \frac{n+2}{2n}\Ga $ in $L_\8(\Om_n^m)$.
The case $n=2$ is of particular interest. Indeed, we may write $f\in \Lc(\zz_2^m)$ as
\[f \lel \ez(f) + \sum_{B\subset\{1, \cdots ,m\},B\neq\emptyset} \hat f_B \om_B\pl. \]
Note that in this case $\om_B^{-1}\om_C=\om_{B \triangle C}$. Then the Gromov form of $\{\om_B\}$ is given by
\[K(\om_B, \om_C) \lel \frac12(|B|+|C|-|B \triangle C|)\lel |B\cap C|\pl.\]
Hence, we find the gradient form
\[\Ga(f,f)\lel \sum_{B,C\subset \{1, \cdots ,m\}}\bar{\hat f}_B\hat f_C|B\cap C|\om_{B\triangle C}\pl.\]
Let $e_j=(1, \cdots ,-1, \cdots ,1)$ where $-1$ is only at the $j$th position. For $x\in\Om_n^m$, put $(\partial_j f) (x)=\frac12(f(x)-f(xe_j))$ and define the discrete gradient $\nabla f = (\partial_j f)_{j=1}^m$. Then a calculation gives $\Ga(f,f)=|\nabla f|^2$, where $|\cdot|$ is the Euclidean norm of a vector in $\cz^n$. If we simply write $|\nabla f|=\Ga(f,f)^{1/2}$ for any $n=2,3, \cdots $, our Poincar\'e inequalities for the generalized Walsh system is a dimension $m$ free estimate.
\begin{cor}
Let $2\le p<\8$. Then for all $f\in L_p(\Om_n^m,P)$,
\[ \|f-\ez(f)\|_p\kl C\sqrt{\frac{2n}{n+2}}\min\{\sqrt{p}\|\pl|\nabla f|\pl\|_\8,\pl{p}\|\pl|\nabla f|\pl\|_p\}\pl.\]
\end{cor}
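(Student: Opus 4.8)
The plan is to deduce the corollary directly from Corollary \ref{ga2a} applied to the number operator $A$ on $\N=L_\8(\Om_n^m)$. Recall that, via the group isomorphism $\bt:\zz_n^m\to G$, the algebra $L_\8(\Om_n^m)$ has been identified with $\Lc(\zz_n^m)$ and the heat semigroup with the cn-length semigroup $T_t=e^{-t\psi}$ attached to $\psi(\bm x)=|B_{\bm x}|$, so that $A\om_B=|B|\om_B$, and that \eqref{z2nc} (Proposition \ref{zncur} transported by $\bt$) gives the algebraic $\Ga_2$-criterion $\Ga_2\ge\frac{n+2}{2n}\Ga$ on the span $\A$ of the $\om_B$'s; by Lemma \ref{gpvn}, $T_t$ is moreover a standard nc-diffusion semigroup. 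First I would verify the hypotheses (i)--(iii) of Corollary \ref{ga2a} for this $\A$: invariance under $A$ and under $T_t$ is immediate from $A\om_B=|B|\om_B$ and $T_t\om_B=e^{-t|B|}\om_B$, while graph-norm density of $\A$ in $\Dom(A^{1/2})$ follows exactly as in Lemma \ref{gpvn}, by truncating the Fourier--Walsh expansion of an element of $\Dom(A^{1/2})$. Corollary \ref{ga2a} then yields \eqref{ga2} for all $x\in\Dom(A^{1/2})$ with $\al=\frac{n+2}{2n}$, together with the Poincar\'e inequalities \eqref{pcr1} and \eqref{pcr2} for every $x\in L_p(\N)$.

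Next I would carry out two routine identifications. Since $\psi(\bm x)=|B_{\bm x}|$ vanishes only at $\bm x=\bm 0$, the fixed point algebra is $\Fix=\cz 1$, so $E_{\rm Fix}(f)=\tau(f)=\ez(f)$ (the last assertion of Corollary \ref{pgp} applied to $G=\zz_n^m$). And since $\N$ is commutative, $A$ commutes with complex conjugation, whence $\Ga(f^*,f^*)=\overline{\Ga(f,f)}=\Ga(f,f)=|\nabla f|^2$ pointwise; consequently the maxima in \eqref{pcr1} and \eqref{pcr2} collapse, and both inequalities read in terms of $\|\,|\nabla f|\,\|_\8$ and $\|\,|\nabla f|\,\|_p$, where $|\nabla f|:=\Ga(f,f)^{1/2}$ per the convention fixed above.

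Finally I would substitute $\al=\frac{n+2}{2n}$: \eqref{pcr1} becomes $\|f-\ez(f)\|_p\le C\sqrt{p/\al}\,\|\,|\nabla f|\,\|_\8=C\sqrt{\frac{2n}{n+2}}\,\sqrt{p}\,\|\,|\nabla f|\,\|_\8$ and \eqref{pcr2} becomes $\|f-\ez(f)\|_p\le C'\al^{-1/2}p\,\|\,|\nabla f|\,\|_p=C'\sqrt{\frac{2n}{n+2}}\,p\,\|\,|\nabla f|\,\|_p$; taking the minimum of the two right-hand sides and renaming the constant gives the asserted estimate, which is visibly independent of the dimension $m$. I do not anticipate a genuine obstacle: all the content lies in the curvature bound \eqref{z2nc} and in Corollary \ref{ga2a}, both already in hand, and the one step warranting a line of proof is the verification of conditions (i)--(iii) for $\A$, which simply repeats the argument used for Lemma \ref{gpvn}.
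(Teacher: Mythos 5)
Your proposal is correct and follows essentially the same route as the paper: identify $L_\8(\Om_n^m)$ with $\Lc(\zz_n^m)$ via $\bt$, invoke the curvature bound $\Ga_2\ge\frac{n+2}{2n}\Ga$ from \eqref{z2nc}, and feed $\al=\frac{n+2}{2n}$ into the general Poincar\'e inequality. The paper routes this through Corollary \ref{pgp} (the group von Neumann algebra specialization, itself a consequence of Lemma \ref{gpvn} and Corollary \ref{ga2a}), whereas you apply Corollary \ref{ga2a} directly and re-verify its hypotheses for $\A$ by hand, but the two are the same argument at different levels of packaging; your observation that commutativity collapses the $\max$ over $\Ga(f,f)$ and $\Ga(f^*,f^*)$ and that $\psi$ vanishing only at $\bm 0$ forces $E_{\rm Fix}=\ez$ matches the paper's use of the final sentence of Corollary \ref{pgp}.
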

 One of Efraim and Lust-Piquard's main results in \cite{ELP} asserts that for $2<p<\8$ and $f\in L_p(\Om_2^m,P)$,
\[ \|f-\ez(f)\|_p\kl C\sqrt{p}\|\pl|\nabla f|\pl\|_p\pl.\]
Our version for the case $n=2$ is weaker. But our approach works from general $n$ while their result is only for $n=2$. For arbitrary $n>2$, it is unclear to us whether it is possible to obtain similar results based on their method. Moreover, their concentration inequality \cite{ELP}*{Corollary 4.2} due to Bobkov and G\"otze is now a special case of our \eqref{con1} with the same order. Efraim and Lust-Piquard's inequality would follow from our general theory if \eqref{bbur} were true.

\subsection{The $q\dash$Gaussian algebras}
We first recall some definitions and basic facts following \cite{BKS}. Throughout this section $-1\le q\le 1$. Let $\Ha$ be a separable real Hilbert space with complexification $\Ha_\cz$. Let $(F_q(\Ha),\lge\cdot,\cdot\rge_q)$  be the $q$-Fock space with vacuum vector $\Om$ and $\Ga_q(\Ha)$ the $q$-Gaussian algebra which is the von Neumann algebra generated by $s(f)=l(f)+l^*(f)$ for $f\in\Ha$ where
\[l^*(f)f_1\otimes \cdots \otimes f_n\lel f\otimes f_1\otimes \cdots \otimes f_n\]
and
\[l(f)f_1\otimes \cdots \otimes f_n\lel \sum_{j=1}^n q^{j-1}\lge f_j,f\rge f_1\otimes \cdots \otimes f_{j-1}\otimes f_{j+1}\otimes  \cdots \otimes f_n\pl\]
are the creation and annihilation operators respectively. The vacuum vector gives rise to a canonical tracial state $\tau_q(X)=\lge X\Om,\Om\rge_q$ for $X\in \Ga_q(\Ha)$. The $q$-Ornstein--Uhlenbeck semigroup $T_t^q=\Ga_q(e^{-t}I_\Ha)$ is a standard semigroup and extends to a semigroup of contractions on $L_p$ spaces. The generator on $L_2$ is the number operator $N^q$ which acts on the Wick product by
\[N^q W(f_1\otimes \cdots \otimes f_n)= n W(f_1\otimes \cdots  \otimes f_n), \quad f_1, \cdots ,f_n\in\Ha_\cz\pl,\]
where $W$ is the Wick operator. It is easy to check that $(T_t)$ is a nc-diffusion semigroup.

Let $\ell_2^n$ be the real Hilbert space with dimension $n$ and $\{e_1, \cdots ,e_n\}$ an orthonormal basis. For $j=1, \cdots , n$, consider the embedding
\[\iota_j: \Ha\to \Ha\otimes\ell_2^n,\quad h\mapsto  h\otimes e_j\pl.\]
According to \cite{BKS}*{Theorem 2.11}, there exists a unique map $\Ga_q(\iota):\Ga_q(\Ha)\to \Ga_q(\Ha\otimes \ell_2^n)$ such that $\Ga_q(\iota_j)(s(h))=s(h\otimes e_j)$.  The map $\Ga_q(\iota)$ is linear, bounded, unital completely positive and preserves the canonical trace. Define $s_j^q(h)=s(h\otimes e_j)$. If $q=1$, we write $g_j(h)=s(h\otimes e_j)$ and it is well-known $g_j(h)$ is a standard Gaussian random variable if $\|h\|=1$. For $h\in\Ha$, put
\[u_n(h) \lel \frac1{\sqrt{n}}\sum_{j=1}^n s_j^q(h)\otimes g_j(h)\pl.\]
Write $\M_n^q = \Ga_q(\Ha\otimes \ell_2^n)$. We consider the von Neumann algebra ultraproduct $\M= \prod_\U \M_n^q\bar\otimes \M_n^1$. Any element of $\M$ can be written as $u_\om(h)=(u_n(h))^\bullet$. We need the following fact proved in \cite{AJ}.
\begin{lemma}\label{qext}
 The map $s(h)\mapsto u_\om(h)$ extends to an injective trace preserving *-homomorphism $\pi: \Ga_q(\Ha)\to \M$. Moreover, for $x\in\Ga_q(\Ha)$,
 \[\pi(T_t^q(x))\lel (Id\otimes T_t^1)^\bullet \pi(x)\pl.\]
\end{lemma}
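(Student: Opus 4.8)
Since this is attributed to \cite{AJ}, we only sketch the strategy. The plan is to read the statement as a $q$-analogue of the central limit theorem, together with the standard principle that a $q$-Gaussian family is completely determined, as a noncommutative distribution, by its covariance; the semigroup intertwining is then extracted from the action of the Ornstein--Uhlenbeck semigroup on Wiener chaos.

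First one reduces the existence of $\pi$ to a family of moment identities. Recall the $q$-Wick formula: for $f_1,\dots,f_m\in\Ha$,
\[
 \tau_q\big(s(f_1)\cdots s(f_m)\big)\lel \sum_{\pi\in P_2(m)}q^{\operatorname{cr}(\pi)}\prod_{\{a,b\}\in\pi}\langle f_a,f_b\rangle\pl,
\]
where $P_2(m)$ is the set of pair partitions of $\{1,\dots,m\}$ and $\operatorname{cr}(\pi)$ its number of crossings; the same formula with $\operatorname{cr}\equiv0$ computes the joint moments of the classical Gaussians $g_j$. Hence it suffices to verify that, for all $m\in\nz$ and all $h_1,\dots,h_m\in\Ha$,
\[
 \lim_{\U}\tau\big(u_n(h_1)\cdots u_n(h_m)\big)\lel \tau_q\big(s(h_1)\cdots s(h_m)\big)\pl,
\]
i.e.\ that $(u_\om(h))_{h\in\Ha}$ in $(\M,\tau_\M)$ has the same joint $\ast$-distribution as $(s(h))_{h\in\Ha}$ in $(\Ga_q(\Ha),\tau_q)$. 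Granting this, the standard moment argument via the GNS construction produces a trace-preserving $\ast$-isomorphism of $\Ga_q(\Ha)$ onto the von Neumann subalgebra of $\M$ generated by the $u_\om(h)$ which sends $s(h)$ to $u_\om(h)$; composing with the inclusion gives $\pi$, and since $\tau_q$ is faithful, trace-preservation forces injectivity.

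For the moment identity we would compute directly. Writing $u_n=n^{-1/2}\sum_{j=1}^n s_j^q\otimes g_j$, expanding the product, and applying the $q$-Wick formula on the first tensor leg and the classical Wick formula on the second, the orthogonality of the $e_j$ forces the Kronecker constraints $\delta_{j_aj_b}$ over the blocks of the first pair partition $\pi$ and the independence of the $g_j$ forces $\delta_{j_cj_d}$ over the blocks of the second pair partition $\si$; summing over $j_1,\dots,j_m$ then turns $\tau(u_n(h_1)\cdots u_n(h_m))$ into
\[
 n^{-m/2}\sum_{\pi,\si\in P_2(m)}q^{\operatorname{cr}(\pi)}\Big(\prod_{\{a,b\}\in\pi}\langle h_a,h_b\rangle\Big)\,n^{\,b(\pi\vee\si)}\pl,
\]
where $b(\pi\vee\si)$ is the number of blocks of the join of $\pi$ and $\si$ in the partition lattice of $\{1,\dots,m\}$ (the sum being empty for odd $m$). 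The combinatorial heart is the elementary fact that for pair partitions one always has $b(\pi\vee\si)\le m/2$, with equality precisely when $\pi=\si$. Thus every off-diagonal term carries a strictly negative power of $n$ and dies along $\U$, while the diagonal contributes $\sum_{\pi}q^{\operatorname{cr}(\pi)}\prod_{\{a,b\}\in\pi}\langle h_a,h_b\rangle$, which is exactly $\tau_q(s(h_1)\cdots s(h_m))$.

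For the intertwining relation, note first that $(Id\otimes T_t^1)^\bullet$ is a well-defined normal unital completely positive map on $\M$, being the ultraproduct of the trace-preserving maps $Id\otimes T_t^1$. Since $T_t^1$ acts on the first Wiener chaos by the scalar $e^{-t}$, on generators one has
\[
 (Id\otimes T_t^1)^\bullet\big(u_\om(h)\big)\lel\Big(n^{-1/2}\sum_{j=1}^n s_j^q(h)\otimes e^{-t}g_j\Big)^{\!\bullet}\lel e^{-t}u_\om(h)\lel u_\om(e^{-t}h)\lel \pi\big(T_t^q(s(h))\big)\pl.
\]
To pass to an arbitrary $x\in\Ga_q(\Ha)$ one would use the chaos grading: $\pi\circ T_t^q$ and $(Id\otimes T_t^1)^\bullet\circ\pi$ are weak* continuous and linear, $T_t^q$ is multiplication by $e^{-kt}$ on the $k$-th $q$-Wick chaos, and $\pi$ maps that chaos into the subspace of $L_2(\M)$ on which $(Id\otimes T_t^1)^\bullet$ acts by $e^{-kt}$, so the identity follows chaos by chaos from the computation above. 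Verifying this last point --- that $(Id\otimes T_t^1)^\bullet$ genuinely respects the grading transported by $\pi$ --- is the main obstacle; the rest is bookkeeping around the $q$-central limit theorem.
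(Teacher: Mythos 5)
The paper does not prove this lemma --- it cites \cite{AJ} --- so there is no in-paper proof to compare against; I can only assess your reconstruction on its own merits. The combinatorial core is correct: the moment expansion over pairs $(\pi,\sigma)\in P_2(m)\times P_2(m)$ with weight $n^{b(\pi\vee\sigma)-m/2}$, the fact that $b(\pi\vee\sigma)\le m/2$ with equality exactly when $\pi=\sigma$, and the resulting identification of the limiting moments with $\tau_q(s(h_1)\cdots s(h_m))$ are precisely the right $q$-CLT argument, and the computation $(Id\otimes T_t^1)^\bullet u_\om(h)=e^{-t}u_\om(h)=\pi(T_t^q s(h))$ on generators is correct. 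One small remark worth flagging: you silently read $u_n(h)=n^{-1/2}\sum_j s_j^q(h)\otimes g_j$ with the $g_j$ a \emph{fixed} i.i.d.\ Gaussian family, whereas the paper literally writes $g_j(h)$ (making $u_n$ quadratic in $h$ and producing $\langle h_a,h_b\rangle^2$ rather than $\langle h_a,h_b\rangle$ in the limit); your reading is the one that makes the covariances come out, so you have tacitly corrected a slip in the paper's presentation.

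The step you call ``the main obstacle'' and then set aside as ``bookkeeping'' is a genuine gap, and it does not follow routinely from what you have. Since $T_t^q$ is not multiplicative, agreement of $\pi T_t^q$ and $(Id\otimes T_t^1)^\bullet\pi$ on $s(h)$ does not extend to products via the homomorphism property; you really do need that $\pi$ carries the $k$-th $q$-Wick chaos into the $e^{-kt}$-eigenspace of $(Id\otimes T_t^1)^\bullet$, and verifying this requires another computation. A clean way to close it: (i) show $E\big((Id\otimes T_t^1)^\bullet\pi(x)\big)=\pi(T_t^q x)$, where $E:\M\to\pi(\Ga_q(\Ha))$ is the trace-preserving conditional expectation; this amounts to evaluating $\lim_\U\tau\big(p_n(y)^*\,(Id\otimes T_t^1)p_n(x)\big)$ for Wick monomials $x,y$, a second pair-partition computation of the same flavor as the first where $T_t^1$ inserts $e^{-t}$ weights block-by-block. (ii) Use self-adjointness and the semigroup property to get $\|(Id\otimes T_t^1)^\bullet\pi(x)\|_2^2 = \langle(Id\otimes T_{2t}^1)^\bullet\pi(x),\pi(x)\rangle = \langle\pi T_{2t}^q x,\pi(x)\rangle=\|T_t^q x\|_2^2=\|\pi T_t^q x\|_2^2$, so that the orthogonal projection $E$ acts as the identity on $(Id\otimes T_t^1)^\bullet\pi(x)$, upgrading (i) to the operator identity. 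Without some argument of this kind your proof of the intertwining half of the lemma is incomplete.
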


\begin{prop}
 For $-1\le q\le 1$, $\Ga_2\ge \Ga$ in $\Ga_q(\Ha).$
\end{prop}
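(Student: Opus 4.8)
The plan is to reduce the claim $\Ga_2\ge\Ga$ in $\Ga_q(\Ha)$ to a positivity statement about a semigroup on a larger algebra, exactly as was done for the noncommutative tori $\R_\Theta$ and the Heisenberg group. The dilation map is provided by Lemma \ref{qext}: the *-homomorphism $\pi:\Ga_q(\Ha)\to\M=\prod_\U\M_n^q\bar\otimes\M_n^1$ intertwines $T_t^q$ with $Id\otimes T_t^1$, and the latter is (a component of) the classical Ornstein--Uhlenbeck semigroup on the free factor $\M_n^1=\Ga_1(\Ha\otimes\ell_2^n)$. Since the classical Gaussian (Wiener) Ornstein--Uhlenbeck semigroup is a genuine diffusion with Bakry--Emery curvature $\Ga_2=\Ga$ (equivalently $\mathrm{Ric}\equiv 1$), I expect the $\Ga_2$-criterion with $\al=1$ to transfer through $\pi$.

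More precisely, the key steps would be as follows. First I would record that $\pi$ is a trace-preserving injective $*$-homomorphism intertwining the two semigroups, hence intertwining the generators $N^q$ and $Id\otimes N^1$ on the respective domains, and therefore the gradient forms: $\pi(\Ga^{N^q}(x,y))=\Ga^{Id\otimes N^1}(\pi x,\pi y)$ and similarly for $\Ga_2$. This is the same computation used implicitly in the $\R_\Theta$ and $M_n$ cases — a $*$-homomorphism that intertwines generators intertwines $\Ga$ and $\Ga_2$ because these are built from $A$ and multiplication by universal algebraic formulas. Second, I would observe that $Id\otimes T_t^1$ on $\M_n^q\bar\otimes\M_n^1$ is the identity tensored with the free Ornstein--Uhlenbeck semigroup; its $\Ga$ and $\Ga_2$ are $\mathds 1\otimes\Ga^{N^1}$ and $\mathds 1\otimes\Ga_2^{N^1}$ in the obvious sense, so $\Ga_2\ge\Ga$ for $Id\otimes T_t^1$ reduces to $\Ga_2^{N^1}\ge\Ga^{N^1}$ for the classical Gaussian number operator (here ``classical'' means the $q=1$ free case, which for the OU curvature computation behaves like the commutative Gaussian since $\Ga(s(h),s(h))$ is a scalar). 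Third, I would verify $\Ga_2^{N^1}\ge\Ga^{N^1}$ directly: on Wick products of single vectors $W(f)=s(f)$ one computes $2\Ga(s(f),s(g))=\lge f,g\rge 1$, $N^1 s(f)=s(f)$, and hence $2\Ga_2(s(f),s(g))=\Ga(N s(f),s(g))+\Ga(s(f),Ns(g))-N\Ga(s(f),s(g))=2\Ga(s(f),s(g))$, giving $\Ga_2=\Ga$; for the matrix/complete-positivity version one uses that $\Ga$ of linear combinations of $s(f_i)$ produces scalar matrices $[\lge f_i,f_j\rge]$, and the relevant higher-chaos contributions only increase $\Ga_2$ relative to $\Ga$ because $N$ acts as multiplication by $\deg\ge 1$. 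Finally, pulling back along $\pi$ and invoking positivity of the ultraproduct/tensor structure yields $[\Ga_2(s(h_i),s(h_j))]\ge[\Ga(s(h_i),s(h_j))]$, i.e. $\Ga_2\ge\Ga$ in $\Ga_q(\Ha)$ in the algebraic (form) sense.

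The main obstacle I anticipate is the bookkeeping for the ultraproduct $\M$: one must make sure that $\pi$ maps into the domain where $Id\otimes N^1$ and its associated forms are defined, and that the positivity of $(\Ga_2-\Ga)$ on the diagonal algebra $\mathds 1\bar\otimes \Ga_1(\Ha\otimes\ell_2^n)$ passes to the ultraproduct — this is routine because a weak* limit of positive matrices is positive, and $\pi$ is a $*$-homomorphism so it preserves the matrix positivity $[\Ga_2(x_i,x_j)-\Ga(x_i,x_j)]\ge 0$. A secondary subtlety is that the $\Ga_2\ge\Ga$ inequality we want for $Id\otimes T_t^1$ in the tensor picture should be deduced cleanly; here Lemma \ref{tens} (or rather its proof idea: $\Ga^{Id\otimes B}$ of a tensor is $\mathds 1\otimes\Ga^B$, and Schur-type positivity) handles the ``$Id\otimes$'' factor, so the whole argument collapses to the single fact that the free/classical Gaussian Ornstein--Uhlenbeck generator has $\Ga_2=\Ga$, which is the Bakry--Emery curvature $1$ of Gaussian space and is elementary from the commutation $[N,s(f)]=s(f)$.
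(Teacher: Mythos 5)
Your overall reduction is exactly the one the paper uses: pull back through the trace-preserving embedding $\pi$ from Lemma~\ref{qext}, use that $\pi$ intertwines $N^q$ with $(Id\otimes N^1)^\bullet$ (hence intertwines $\Ga$ and $\Ga_2$), observe that $Id\otimes T_t^1$ inherits the curvature bound from $T_t^1$ by the tensor-product stability, and finally push the matrix positivity through the ultraproduct. (Two cosmetic slips: the lemma that handles the $Id\otimes$ factor in this von Neumann setting is Lemma~\ref{vnte}, not Lemma~\ref{tens}, and the $q=1$ algebra $\Ga_1(\Ha)$ is the \emph{bosonic}/commutative Gaussian algebra, not the free one, which is $q=0$.)

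The genuine gap is in Step 3, where you claim to ``verify $\Ga_2^{N^1}\ge\Ga^{N^1}$ directly.'' Your computation only establishes $\Ga_2=\Ga$ on the first chaos (elements $s(f)$), where indeed $Ns(f)=s(f)$ gives equality. To extend this beyond degree one you write that ``the relevant higher-chaos contributions only increase $\Ga_2$ relative to $\Ga$ because $N$ acts as multiplication by $\deg\ge 1$,'' but this is not an argument. For $x$ of chaos degree $\ge 2$ the comparison of $\Ga_2(x,x)$ with $\Ga(x,x)$ is no longer an eigenvalue bookkeeping exercise: $\Ga(x,y)$ spreads across several chaos levels, $N\Ga(x,y)$ reweights them differently from $\Ga(Nx,y)+\Ga(x,Ny)$, and whether the resulting matrix $[\Ga_2(x_i,x_j)-\Ga(x_i,x_j)]$ is positive for a general finite family is precisely what has to be proved. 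In the commutative picture this is the nontrivial Bochner-type identity $\Ga_2=\Ga+\|\Hess\cdot\|_{HS}^2$ for the Ornstein--Uhlenbeck operator, and it is not implied by the degree-one commutation relation $[N,s(f)]=s(f)$. The paper does not attempt a direct verification here; it cites \cite{AJ}, where $\Ga_2\ge\Ga$ for $\Ga_1(\Ha)$ is established via Speicher's central limit theorem \cite{Sp92}. To repair your argument you would either cite that result, or carry out the full Hessian/Bochner computation on cylinder functions of the Gaussian space and then check that it yields the matrix (form) inequality, not merely the scalar one.
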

\begin{proof}
 Since $\pi$ is injective, it suffices to prove $\pi\Ga_2^{N^q}\ge \pi\Ga^{N^q}$ in $\M$. By Lemma \ref{qext}, we have $\pi N^q (x)=(Id\otimes N^1)^\bullet\pi(x)$. It follows that
 $$\pi(\Ga^{N^q}(x,y))\lel \Ga^{(Id\otimes N^1)^\bullet}(\pi(x),\pi(y)),$$
 and similar identity is true for $\Ga_2^{N^q}$. It is proved in \cite{AJ} by using the central limit theorem of Speicher \cite{Sp92} that $\Ga_2\ge \Ga$ in $\Ga_1(\Ha)$. Here $\Ga_1(\Ha)$ is the von Neumann algebra acting on the symmetric Fock space. It follows that for all $n\in\nz$ and in $\M_n^q\otimes \M_n^1$, $\Ga_2^{Id\otimes N^1}\gl  \Ga^{Id\otimes N^1}$.
 Hence, we find
 \[\Ga_2^{(Id\otimes N^1)^\bullet}(\pi(x_i),\pi(x_j))\gl \Ga^{(Id\otimes N^1)^\bullet}(\pi(x_i),\pi(x_j))\]
 where for any $m\in\nz$ and $x_i\in \Ga_q(\Ha), i=1, \cdots ,m$, as desired.
\end{proof}

\subsection{The hyperfinite $II_1$ factor}
Our goal in this subsection is to show that the hyperfinite $II_1$ factor $R$ admits different standard nc-diffusion semigroups with $\Ga_2$-criterion and that the best possible $\al$ characterizes the corresponding dynamical system. It is well known that $R$ can be approximated by matrix algebras $\{M_{n^k}:k\in\nz\}$. We will embed $M_{n^{m/2}}$ into the group von Neumann algebra of the generalized discrete Heisenberg group $H_n^{m+1}=\zz_n/2\times\zz_n^{m}$.

Let $\Theta=(\tet_{jk})$ be an antisymmetric $m\times m$ matrix with $\tet_{jk}=\frac12$ if $j<k$. The multiplication in $H_n^{m+1}=\zz_n/2\times\zz_n^{m}$ is given by
\[(x,\xi)(y,\eta)\lel (x+y+B(\xi,\eta), \xi+\eta)\pl,\]
where $B:\zz_n\times \zz_n\to \zz_n/2$ is a bilinear form given by $B(\xi,\eta)=\sum_{j,k=1}^m\tet_{jk}\xi_j\eta_k=\lge \xi,\Theta\eta\rge$. For $(r,\xi)\in H_n^{m+1}$, put $\psi(r,\xi)=\sum_{j=1}^m 1-\de_{\xi_j,0}=\#\{\xi_j\neq 0\}$. Define a semigroup acting on $\Lc(H_n^{m+1})$ by $T_t \la(r,\xi)=e^{-t\psi}\la(r,\xi)$ for $\la(r,\xi)\in\Lc(H_n^{m+1})$ and $t\ge0$. Using Lemma \ref{tens}, an argument similar to Proposition \ref{hscur} shows that $(T_t)_{t\ge0}$ is a standard nc-diffusion semigroup and that the associated gradient form satisfies $\Ga_2\ge\frac{n+2}{2n}\Ga$ in $\Lc(H_n^{m+1})$.
\begin{lemma} Let $m\ge2$ be an even integer and $n\ge 3$. We have
 \[\Lc(H_n^{m+1})\cong \bigoplus_{x\in \zz_n/2} \M_x\pl,\]
 where $\M_2=M_{n^{m/2}}$. Furthermore, $T_t$ leaves each $\M_x$ invariant.
\end{lemma}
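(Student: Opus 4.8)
The plan is to transcribe the proof of Proposition~\ref{hsdc} for $H_3(\zz_n)$ to the larger group $H_n^{m+1}=\zz_n/2\times\zz_n^m$, the one genuinely new ingredient being the nondegeneracy of the skew form attached to the cocycle $B$. I would first locate the center $\Z$ of $\Lc(H_n^{m+1})$. Since $(r,\xi)(y,\eta)$ and $(y,\eta)(r,\xi)$ differ only in their first coordinate, by the amount $B(\xi,\eta)-B(\eta,\xi)=\langle\xi,(2\Theta)\eta\rangle$ with $2\Theta$ the integer matrix having $+1$ above and $-1$ below the diagonal, $\la(r,\xi)$ is central if and only if $(2\Theta)\xi\equiv 0\pmod n$. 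As $m$ is even one has $\det(2\Theta)=1$, so $2\Theta$ is invertible over $\zz_n$ and centrality forces $\xi=0$; hence $\Z=\Lc(\zz_n/2\times\{0\})$. Applying the discrete Fourier transform $\F$ in the first coordinate diagonalizes $\Z$ and produces a partition of unity $1=\sum_{x\in\zz_n/2}p_x$ into minimal central projections, and the decomposition of a von Neumann algebra over a central subalgebra (\cite{Tak}*{Theorem IV.8.21}, exactly as in Proposition~\ref{hsdc}) gives $\Lc(H_n^{m+1})\cong\bigoplus_{x}\M_x$ with $\M_x:=\F(p_x\Lc(H_n^{m+1}))\F^{-1}$ acting on $\ell_2^x(\zz_n^m)$.

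Next I would describe the summands. Conjugating the generators by $\F$ and compressing to the $x$-block sends $\la(r,\xi)$ to a scalar phase in $r$ times a twisted shift $V_\xi^{(x)}$ on $\ell_2(\zz_n^m)$ obeying $V_\xi^{(x)}V_\eta^{(x)}=\sigma_x(\xi,\eta)V_{\xi+\eta}^{(x)}$, where $\sigma_x(\xi,\eta)=\chi_x(B(\xi,\eta))$ and $\chi_x$ is the character of $\zz_n/2$ labelled by $x$; thus $\M_x$ is the twisted group von Neumann algebra of $\zz_n^m$ with $2$-cocycle $\sigma_x$, a finite noncommutative torus. For the label $x=2$ the commutator form $\sigma_2(\xi,\eta)\sigma_2(\eta,\xi)^{-1}$ of this cocycle is the nondegenerate $\zz_n$-valued alternating form attached to $2\Theta$, so $\M_2$ is the twisted group algebra of $\zz_n^m$ at a primitive nondegenerate cocycle. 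Choosing a symplectic basis of $(\zz_n^m,2\Theta)$, i.e.\ a decomposition into $m/2$ hyperbolic planes — legitimate because $2\Theta$ is unimodular — factorizes $\M_2$ as an $m/2$-fold tensor product of the $\zz_n^2$ case, and the $\zz_n^2$ case is precisely the matrix algebra $\{v_cu_{-b}:(b,c)\in\zz_n^2\}''=M_n$ identified in the proof of Proposition~\ref{hsdc} (cf.~\cite{Dav}*{Theorem VII.5.1}). Hence $\M_2\cong M_n^{\otimes m/2}=M_{n^{m/2}}$.

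For the invariance claim, observe that $\psi(r,0)=0$ since every coordinate of $0\in\zz_n^m$ vanishes, so $T_t\la(r,0)=\la(r,0)$ and $\Lc(\zz_n/2\times\{0\})$ is contained in the multiplicative domain of the unital completely positive map $T_t$. By the bimodule property of the multiplicative domain (\cite{Pau}*{Theorem 3.18}) one gets $T_t(p_xy)=p_xT_t(y)\in p_x\Lc(H_n^{m+1})$ for all $y$, and conjugating by $\F$ shows that $T_t$ restricts to a standard nc-diffusion semigroup on each $\M_x$, exactly as in Proposition~\ref{hsdc}.

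I expect the identification $\M_2\cong M_{n^{m/2}}$ to be the main obstacle. It requires checking that the alternating matrix $2\Theta\bmod n$ is nondegenerate — which is where $m$ even ($\det(2\Theta)=1$) and the hypothesis $n\ge3$ come into play — and then running a Witt-type decomposition of $(\zz_n^m,2\Theta)$ into hyperbolic planes over the ring $\zz_n$, which needs a little care when $n$ is composite (reduce via the Chinese remainder theorem to prime powers, using unimodularity), or, alternatively, arguing directly that a twisted group algebra of $\zz_n^m$ with nondegenerate primitive cocycle is simple of $\cz$-dimension $n^m$ and hence a full matrix algebra. Everything else is a routine copy of the $H_3(\zz_n)$ computation.
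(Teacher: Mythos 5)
Your proposal follows the same skeleton as the paper: use the central subalgebra $\Lc(\zz_n/2\times\{0\})$ to decompose $\Lc(H_n^{m+1})$ as a direct sum via Fourier transform in the first coordinate, identify the $x=2$ block as a full matrix algebra, and establish $T_t$-invariance by the multiplicative-domain argument. The one place you diverge is in identifying $\M_2$. The paper computes the center of $\M_2$ directly: using the generators $u_r^j=\la(0,re_j)$ restricted to the $x=2$ block and their commutation relations, an element $y(r_1,\dots,r_m)=u_{r_1}^1\cdots u_{r_m}^m$ is central iff $C(r,j,r_1,\dots,r_m)=1$ for all $r,j$, which yields the linear system $r_1+\cdots+r_{j-1}-r_{j+1}-\cdots-r_m\equiv 0 \pmod n$ over $\zz_n$; since $m$ is even this has only the trivial solution, so $\M_2$ has trivial center, and the dimension count $\dim_\cz\M_2=n^m$ forces $\M_2\cong M_{n^{m/2}}$. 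Your Witt/hyperbolic-plane factorization of the twisted group algebra is a more structural route to the same conclusion and produces the tensor decomposition $M_n^{\otimes m/2}$ directly, but as you anticipate it does require some extra care for composite $n$; the paper's linear system plus dimension count is the more elementary and self-contained path. Note also that your second alternative (``simple of $\cz$-dimension $n^m$, hence a full matrix algebra'') is essentially the paper's argument, since for finite-dimensional von Neumann algebras trivial center and simplicity coincide. One small slip in the motivation: the nondegeneracy of $2\Theta\bmod n$ does not actually depend on $n\ge3$ — when $m$ is even the Pfaffian of $2\Theta$ is $\pm1$, so $\det(2\Theta)=1$ in $\zz$ and a fortiori mod any $n$. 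The hypothesis $n\ge3$ serves a different purpose (roughly, ensuring that the label $x=2$ is nontrivial in $\zz_n/2$), not the nondegeneracy of the skew form.
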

\begin{proof} Most of the argument utilizes the proof of Proposition \ref{hsdc} and \cite{JMP}*{Lemma 5.3}. Note that $\la(\zz_n/2,0)$ lives in the center of $\Lc(H_n^{m+1})$. By the decomposition of von Neumann algebras for the subalgebras of the center we obtain the first assertion. Write $e_j,j=1, \cdots ,m$ for the canonical basis of $\zz_n^m$ and put $u_r^j=\la(0,re_j)$ for $r\in\zz_n$. Then these $u_r^j$'s generate $\la(0,\zz_n^m)$ and $u_r^ju_s^k(\de_{(x,\cdot)})=e^{{2\pi i rs\tet_{jk}x}/n} u_s^ku_r^j(\de_{(x,\cdot)})$. Acting on $H_2:={\rm span}\{\de_{(2,\cdot)}\}$, $u_r^j$'s satisfy $u_r^ju_s^k=e^{{2\pi i rs}/n} u_s^ku_r^j$ for  $j<k$ and $u_r^ju_s^k=e^{-{2\pi i rs}/n} u_s^ku_r^j$ for $j>k$. It is clear that $y(r_1, \cdots ,r_m)=u_{r_1}^1 \cdots  u_{r_m}^m$ is a basis for $\M_2$ which satisfies the equation
\[u_r^jy(r_1, \cdots ,r_m){u_r^j}^*\lel C(r,j,r_1, \cdots ,r_m)y(r_1, \cdots ,r_m)\pl,\]
where $C(r,j,r_1, \cdots ,r_m)=\exp(2\pi i r(r_1+ \cdots +r_{j-1}-r_{j+1}- \cdots -r_m)/n)$.
In order to determine the center of $\M_2$, we consider the equation $C(r,j,r_1, \cdots ,r_m)=1$ for all $r\in\zz_n, j=1,\cdots,m$. This leads to a linear system over $\zz_n$
\begin{align*}
 -r_2- \cdots -r_m&\lel 0,\\
 r_1-r_3- \cdots -r_m &\lel 0,\\
 \vdots \quad&\quad\vdots\\
 r_1+ \cdots +r_{m-1}&\lel 0.\\
\end{align*}
Solving this system, we find $r_1= \cdots =r_m=0$. Here we used the crucial assumption that $m$ is even. Hence $\M_2$ has trivial center. Since it has dimension $n^m$, it follows that $\M_2=M_{n^{m/2}}$, as desired. By restricting $T_t$ to $M_{n^{m/2}}$ and repeating the argument of Proposition \ref{hsdc}, we can prove the last assertion.
\end{proof}
It follows from the lemma that $M_{n^{m/2}}$ admits a standard nc-diffusion semigroup $T_t$ with $\Ga_2\ge\frac{n+2}{2n}\Ga$ in $M_{n^{m/2}}$ for all $m\in2\nz$. Since the hyperfinite $II_1$ factor $R$ is the weak closure of $\cup_{k=1}^\8 M_{n^{k}}$, we have proved the following result.
\begin{prop}\label{hyp}
 For any integer $n\ge 2$, there exists a standard nc-diffusion semigroup $T_t^n$ acting on $R$ such that the associated gradient form $\Ga^n$ satisfies $\Ga_2^n\ge \frac{n+2}{2n}\Ga^n$ in $R$. The constant $\al_n=\frac{n+2}{2n}$ is best possible.
\end{prop}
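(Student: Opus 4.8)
The plan is to obtain the semigroup $(T_t^n)$ on $R$ as the inductive limit of the finite-dimensional semigroups supplied by the preceding lemma, and then to transfer both the $\Ga_2$-criterion and its optimality from the matrix-algebra case.

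\textbf{Assembling the semigroup.} For each even $m$ the lemma gives a standard nc-diffusion semigroup $T^{(m)}$ on $M_{n^{m/2}}\cong\M_2\subset\Lc(H_n^{m+1})$, acting diagonally by $T_t^{(m)}\bigl(y(r_1,\dots,r_m)\bigr)=e^{-t\psi(r_1,\dots,r_m)}\,y(r_1,\dots,r_m)$ where $\psi(r_1,\dots,r_m)=\#\{j:r_j\ne0\}$, and satisfying $\Ga_2^{(m)}\ge\frac{n+2}{2n}\Ga^{(m)}$ in $M_{n^{m/2}}$. First I would check that the unital inclusions $M_{n^{m/2}}\hookrightarrow M_{n^{(m+2)/2}}$, $y(r_1,\dots,r_m)\mapsto y(r_1,\dots,r_m,0,0)$, are compatible with these semigroups, i.e. $T_t^{(m+2)}|_{M_{n^{m/2}}}=T_t^{(m)}$ for all $t\ge0$; this is immediate since $\psi$ is unchanged by appending trailing zeros. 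Consequently there is a well-defined normal, unital, trace-preserving, completely positive map $T_t^n$ on the weakly dense $*$-subalgebra $\A:=\bigcup_{m\in 2\nz}M_{n^{m/2}}$, which by normality extends uniquely to $R=\A''$; the semigroup law in $t$ is inherited from each $T^{(m)}$.

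\textbf{Verifying that $(T_t^n)$ is standard and nc-diffusion.} Normality, complete positivity, unitality, trace preservation and self-adjointness hold on $\A$ and pass to $R$ by Kaplansky density; weak$^*$ continuity follows from strong continuity of $(T_t^n)$ on the $L_2$-dense subspace $\A$ together with uniform contractivity. The generator $A$ restricts to each $M_{n^{m/2}}$ as $y(r_1,\dots,r_m)\mapsto\psi(r_1,\dots,r_m)\,y(r_1,\dots,r_m)$, so $\A$ is spanned by eigenvectors of $A$; hence $A\A\subset\A$, $T_t^n\A\subset\A$, and $\A$ is a core for $A^{1/2}$, so $\A$ is dense in $\Dom(A^{1/2})$ in the graph norm. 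Since $\Ga(x,x)\in M_{n^{m/2}}\subset L_1(R)$ for $x\in\A$, the approximation argument in the proof of Corollary \ref{ga2a} (based on $\|\Ga(x,x)\|_1=\langle A^{1/2}x,A^{1/2}x\rangle$ and complete positivity of the form $\Ga$) yields $\Ga(x,x)\in L_1(R)$ for every $x\in\Dom(A^{1/2})$, so $(T_t^n)$ is nc-diffusion.

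\textbf{The $\Ga_2$-criterion and sharpness.} Because $\Ga^n$ and $\Ga_2^n$ are built from $A$, which acts locally on each $M_{n^{m/2}}$, given any finite family $x_1,\dots,x_k\in\A$ one chooses $m$ with all $x_i\in M_{n^{m/2}}$ and gets $[\Ga_2^n(x_i,x_j)]=[\Ga_2^{(m)}(x_i,x_j)]\ge\frac{n+2}{2n}[\Ga^{(m)}(x_i,x_j)]=\frac{n+2}{2n}[\Ga^n(x_i,x_j)]$ in $M_k(M_{n^{m/2}})\subset M_k(R)$; thus $\Ga_2^n\ge\frac{n+2}{2n}\Ga^n$ in $R$, and Corollary \ref{ga2a} delivers all the Poincar\'e, deviation and transportation inequalities. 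For optimality, take $m=2$: then $\M_2\cong M_n$ and $T_t^n|_{M_n}$ is exactly the semigroup of Proposition \ref{mncur}. Since $M_n\subset R$ is $(T_t^n)$-invariant with the restricted semigroup equal to that one, an inequality $\Ga_2^n\ge\al\Ga^n$ in $R$ with $\al>\frac{n+2}{2n}$ would restrict to $\Ga_2^{M_n}\ge\al\Ga^{M_n}$, contradicting the sharpness statement in Proposition \ref{mncur} (equivalently Proposition \ref{zncur}). Hence $\al_n=\frac{n+2}{2n}$ is best possible. The main obstacle is the second step: establishing that the inductive-limit family is genuinely a \emph{standard} nc-diffusion semigroup on $R$ --- identifying $\Dom(A^{1/2})$, verifying $\A$ is a core, and promoting $\Ga(x,x)\in L_1$ from $\A$ to all of $\Dom(A^{1/2})$; the remaining steps are essentially bookkeeping once the lemma and the matrix-algebra results are in place.
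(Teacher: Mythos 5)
Your proposal is correct and follows the same route as the paper: the paper's proof consists of the single observation that $R=\overline{\bigcup_k M_{n^k}}^{w^*}$ plus a reference to Proposition~\ref{zncur} for sharpness, and you are simply filling in the inductive-limit details (compatibility of the inclusions $M_{n^{m/2}}\hookrightarrow M_{n^{(m+2)/2}}$ with the semigroups, standardness and nc-diffusion of the limit, and transfer of the algebraic $\Ga_2$-criterion by testing on finite families that land in some fixed $M_{n^{m/2}}$). The only small imprecision is attributing a sharpness statement to Proposition~\ref{mncur}, which has none; the correct source is Proposition~\ref{zncur}, whose test vector lives in the commutative subalgebra $\{u_r^1\}''\cong L_\infty(\zz_n)\subset M_n\subset R$ and thereby obstructs any $\al>\frac{n+2}{2n}$.
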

The last conclusion follows from Proposition \ref{zncur}. We want to show that the semigroups $T^n_t$ for different $n$ are different. Let us now recall a definition from dynamical systems; see e.g. \cite{Wal}*{Definition 2.4}. Let $(X,\B_X,\mu, T)$ be a measure-preserving dynamic system (MPDS) where $(X,\B_X,\mu)$ is a probability space and $T$ is a measure-preserving transformation. A MPDS $(Y,\B_Y, \nu,S)$ is said to be isomorphic to $(X,\B_X,\mu, T)$ if there exist (i) full measure sets $X_1\subset X$ and $Y_1\subset Y$ such that $T(X_1)\subset X_1$ and $S(Y_1)\subset Y_1$; and (ii) an invertible measure-preserving measurable map $\phi: X\to Y$ such that $\phi(T x)=S(\phi x)$ for all $x\in X_1$. This motivates our following definition.
\begin{defi}
 Let $S_t$ and $T_t$ be standard semigroups acting on noncommutative probability spaces $(\N,\tau)$ and $(\M,\tau')$ respectively. We say $(\M,T_t)$ and $(\N, S_t)$ are isomorphic if there exist $\la>0$ and a trace-preserving $*$-isomorphism $\phi:\M\to\N$ such that $S_{\la t}(\phi x)=\phi(T_t x)$ for all $x\in\M$.
\end{defi}
The following result shows that $R$ admits infinitely many non-isomorphic standard nc-diffusion semigroups.
\begin{prop}
Let $T_t^n$ be the semigroup considered in Proposition \ref{hyp}. If $(R, T_t^n)$ and $(R, T_{t}^{n'})$ are isomorphic, then $\al_n=\al_{n'}$.
\end{prop}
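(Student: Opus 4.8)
The plan is to prove that the optimal curvature constant $\al_n=\frac{n+2}{2n}$ of Proposition~\ref{hyp} is an isomorphism invariant, the only subtlety being the time-rescaling $\la$ in the definition of isomorphism, which the spectral data of $T^n_t$ will force to be $1$. Suppose $\phi\colon R\to R$ is a trace-preserving $*$-isomorphism and $\la>0$ with $T^{n'}_{\la t}(\phi x)=\phi(T^n_t x)$ for all $x\in R$, $t\ge0$. Since $\phi$ is automatically normal and trace preserving, it extends to a unitary $\phi_2$ on $L_2(R)$, and the above identity extends by $L_2$-density to $e^{-\la t A_{n'}}=\phi_2\,e^{-tA_n}\,\phi_2^{-1}$ for all $t\ge0$, where $A_n$ denotes the generator of $T^n_t$. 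Two strongly continuous semigroups that agree for every $t$ have the same generator with the same domain, so $\la A_{n'}=\phi_2A_n\phi_2^{-1}$ as an identity of positive self-adjoint operators; by functional calculus the same holds with $A$ replaced by $A^{1/2}$ (up to the factor $\la^{-1/2}$) and $A^2$ (up to $\la^{-2}$), and in particular $\phi$ maps $\Dom(A_n)$, $\Dom(A_n^{1/2})$, $\Dom(A_n^2)$ onto the corresponding domains for $A_{n'}$.

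Next I would transport the gradient forms. Because $\phi$ is a $*$-homomorphism, $\phi((A_nx)^*)=A_{n'}((\phi x)^*)$ and $\phi(x^*y)=(\phi x)^*(\phi y)$, so inserting $A_{n'}=\la^{-1}\phi A_n\phi^{-1}$ into $2\Ga(x,y)=A(x^*)y+x^*A(y)-A(x^*y)$ and into $2\Ga_2(x,y)=\Ga(Ax,y)+\Ga(x,Ay)-A\Ga(x,y)$ gives, for all $x,y$ in the relevant domains,
\[ \Ga^{n'}(\phi x,\phi y)=\la^{-1}\phi\big(\Ga^n(x,y)\big),\qquad \Ga_2^{n'}(\phi x,\phi y)=\la^{-2}\phi\big(\Ga_2^n(x,y)\big). \]
Thus $\Ga^{n'}$ scales like $\la^{-1}$ and $\Ga_2^{n'}$ like $\la^{-2}$ under the correspondence, so the optimal constant satisfies $\al_{n'}=\la\,\al_n$; it remains to show $\la=1$.

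To pin down $\la$, recall that $R$ is the weak closure of $\bigcup_m M_{n^{m/2}}$ where $M_{n^{m/2}}$ sits as the block $\M_2\subset\Lc(H_n^{m+1})$ of the preceding construction, and on that block $A_n$ acts diagonally with eigenvalue $\psi(r,\xi)=\#\{j:\xi_j\neq0\}\in\{0,1,\dots,m\}$ on the generator $\la(r,\xi)$, every value $0,1,\dots,m$ being attained. Hence $A_n$ is diagonalizable on the $L_2(R)$-dense $A_n$-invariant span of $\bigcup_m M_{n^{m/2}}$ with set of eigenvalues $\{0,1,2,\dots\}$, independently of $n$, so $\mathrm{spec}(A_n)=\{0,1,2,\dots\}$ and in particular the spectral gap of $A_n$ equals $1$. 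Conjugating by the unitary $\phi_2$ preserves the spectrum, so $\mathrm{spec}(\la A_{n'})=\mathrm{spec}(A_n)$, i.e.\ $\la\cdot\{0,1,2,\dots\}=\{0,1,2,\dots\}$, which forces $\la=1$.

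With $\la=1$ the displayed identities become $\Ga^{n'}(\phi x,\phi y)=\phi(\Ga^n(x,y))$ and $\Ga_2^{n'}(\phi x,\phi y)=\phi(\Ga_2^n(x,y))$; since $\phi$ and $\phi^{-1}$ are completely positive and $\phi$ carries the relevant domains onto one another, the matricial inequality $[\Ga_2^n(x_i,x_j)]\ge\al[\Ga^n(x_i,x_j)]$ holds for all finite families if and only if the analogue for $\Ga_2^{n'},\Ga^{n'}$ does, and therefore $\al_n=\al_{n'}$. The one routine point to be checked is that the $\Ga_2$-criterion ``in $R$'' does not depend on which weakly dense $A$-invariant $*$-subalgebra is used to test it, since $\Ga$ and $\Ga_2$ are continuous for the graph norms of $A^{1/2}$ and $A$ as in the proof of Corollary~\ref{ga2a}; granting this, $\phi$ transports the criterion verbatim. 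The main obstacle is the bookkeeping of unbounded operators and domains needed to upgrade the semigroup identity to $\la A_{n'}=\phi_2A_n\phi_2^{-1}$ and to identify $\mathrm{spec}(A_n)$; once $\la=1$ is secured the conclusion is the purely formal transport of the gradient forms under a $*$-isomorphism.
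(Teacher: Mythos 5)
Your proof is correct and follows essentially the same route as the paper's: transport the gradient forms under the $*$-isomorphism using the scaling $A_{n'}=\la^{-1}\phi A_n\phi^{-1}$, and then use the spectral observation ${\rm sp}(A_n)={\rm sp}(A_{n'})=\nz$ together with ${\rm sp}(\la A_{n'})=\la\nz$ to force $\la=1$. One small slip in the bookkeeping: with the conventions you set up, the scaling relations $\Ga^{n'}(\phi x,\phi y)=\la^{-1}\phi(\Ga^n(x,y))$ and $\Ga_2^{n'}(\phi x,\phi y)=\la^{-2}\phi(\Ga_2^n(x,y))$ give $\al_n=\la\,\al_{n'}$, not $\al_{n'}=\la\,\al_n$; this is harmless, since once $\la=1$ either form yields $\al_n=\al_{n'}$, and the paper itself only records the one-sided inequality $\al_n\ge\la\,\al_{n'}$ before concluding.
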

\begin{proof}
 There exists a trace-preserving $*$-isomorphism $\phi: R\to R$ such that
 \begin{equation}\label{isom}
  \phi(T_t^n x)=T_{\la t}^{n'}(\phi x)
 \end{equation}
 for $x\in\M$. Let $A^n$  be the generator of $T_t^n$. $\Ga_2^{A^{n'}}\ge \frac{n'+2}{2n'}\Ga^{A^{n'}}$ implies $\Ga_2^{\la A^{n'}}\ge \la\frac{n'+2}{2n'}\Ga^{\la A^{n'}}$. This together with \eqref{isom} gives
 $\Ga_2^{A^n}\ge \la \frac{n'+2}{2n'}\Ga^{A^n}$. But the best $\al$ is $\al_n=\frac{n+2}{2n}$. Hence we have $\frac{n+2}{2n}\ge \la\frac{n'+2}{2n'}$. It is clear that ${\rm sp}(A^n)=\nz$ and ${\rm sp}(\la A^n)=\la\nz$. Here ${\rm sp}(A^n)$ denotes the spectrum of $A^n$. \eqref{isom} implies ${\rm sp}(\la A^n)={\rm sp}(A^n)$ and thus $\la= 1$.  Hence $n'\ge n$. Repeating the argument by starting from $\Ga_2^{n}\ge \frac{n+2}{2n}\Ga^{n}$ gives $n\ge n'$.
\end{proof}

\section{Tensor products and free products}
In this section we will construct further examples with the $\Ga_2$-criterion based on the examples considered in the previous section. This is done via the powerful algebraic tools -- tensor products and free products. It is not difficult to see that the property ``standard nc-diffusion'' is stable under tensor products and free products. Due to the reason explained in the previous section, it suffices to consider the algebraic $\Ga_2$-condition. That is, we always work with a dense subalgebra contained in the domain of the form under consideration.
\subsection{Tensor products}
The following result is our starting point to understand tensor products.
\begin{lemma}\label{tform}
 Let $\Theta:\A\times \A\to \M$ and $\Phi:\B\times\B \to \N$ be positive sesquilinear forms, where $\A\subset \M$ and $\B\subset\N$ are dense subalgebras so that $\Theta$ and $\Phi$ are well-defined. Then $\Theta\otimes \Phi: \A\otimes \B\times \A\otimes \B\to \M\otimes \N$ is positive where for $\xi^i=\sum_{k=1}^{n_i} x_k^i\otimes y_k^i\in \A\otimes \B$,
 \[\Theta\otimes \Phi\Big( \xi^i, \xi^j\Big):=\sum_{k=1}^{n_i}\sum_{l=1}^{n_j}\Theta(x_k^i,x_l^j)\otimes\Phi(y_k^i,y_l^j)\pl.\]
\end{lemma}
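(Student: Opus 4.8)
The plan is to reduce the positivity of $\Theta\otimes\Phi$ on $\A\otimes\B$ to the positivity of $\Theta$ and $\Phi$ separately, using the standard fact that the Schur/tensor product of positive matrices is positive. First I would fix a finite family $\xi^1,\dots,\xi^m\in\A\otimes\B$ and choose, by refining, a common presentation: after enlarging the index sets we may write $\xi^i=\sum_{k=1}^{N} x_k^i\otimes y_k^i$ for a single $N$, where the $x_k^i\in\A$ and $y_k^i\in\B$ (one can even arrange the $\{x_k^i\}$ to come from a fixed finite linearly independent set and likewise the $\{y_k^i\}$, but this is not strictly needed). Then
\[
\big[(\Theta\otimes\Phi)(\xi^i,\xi^j)\big]_{i,j=1}^m
= \sum_{k,l=1}^{N} \big[\Theta(x_k^i,x_l^j)\otimes \Phi(y_k^i,y_l^j)\big]_{i,j}.
\]

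Next I would recognize the right-hand side as (a compression of) the tensor product of two positive matrices. Consider the $mN\times mN$ matrix $P=\big[\Theta(x_k^i,x_l^j)\big]_{(i,k),(j,l)}$ with entries in $\M$: since $\Theta$ is a positive form and the family $\{x_k^i\}_{i,k}$ is just a finite family in $\A$, positivity of $\Theta$ gives $P\ge 0$ in $M_{mN}(\M)$. Similarly $Q=\big[\Phi(y_k^i,y_l^j)\big]_{(i,k),(j,l)}\ge 0$ in $M_{mN}(\N)$. The tensor product $P\otimes Q$, viewed in $M_{mN}(\M)\otimes M_{mN}(\N)$, is then positive because the (spatial/minimal, equivalently von Neumann algebraic) tensor product of positive operators is positive — this is elementary, e.g. write $P=R^*R$, $Q=S^*S$ so $P\otimes Q=(R\otimes S)^*(R\otimes S)$. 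Finally, $\big[(\Theta\otimes\Phi)(\xi^i,\xi^j)\big]_{i,j}$ is obtained from $P\otimes Q$ by the completely positive "diagonal contraction" map that sends the $((i,k),(k',i'))\times((j,l),(l',j'))$ block to the sum over $k=k'$, $l=l'$; concretely it is compression by the isometry $V:\ell_2^m\to \ell_2^{mN}\otimes\ell_2^{mN}$, $e_i\mapsto \sum_{k=1}^N e_{(i,k)}\otimes e_{(i,k)}$, i.e. $\big[(\Theta\otimes\Phi)(\xi^i,\xi^j)\big]_{i,j}=V^*(P\otimes Q)V\ge 0$ after the natural identification of $M_{mN}(\M)\otimes M_{mN}(\N)$ inside $M_{(mN)^2}(\M\otimes\N)$. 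Since $m$ was arbitrary, $\Theta\otimes\Phi$ is positive.

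A couple of routine points I would check along the way: that $\A\otimes\B$ is a $*$-algebra containing $1\otimes 1$ and is (weakly) dense in $\M\otimes\N$, so that $\Theta\otimes\Phi$ has the right kind of domain in the sense of the positive-form definition recalled before Theorem~\ref{cps}; and that the formula defining $\Theta\otimes\Phi$ is independent of the chosen representation of the $\xi^i$ as elementary tensors — this follows from bilinearity (sesquilinearity) of $\Theta$ and $\Phi$, so it is enough to know it is well-defined on elementary tensors, which it is. I would also note $\Theta\otimes\Phi$ lands in $\M\bar\otimes\N$ (or $L_1$ thereof, in the applications where $\Theta,\Phi$ take values in $L_1$) rather than just an algebraic tensor product, which is harmless since positivity is tested on finitely many elements at a time.

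The only genuinely delicate step is the bookkeeping in the last paragraph: identifying $\big[(\Theta\otimes\Phi)(\xi^i,\xi^j)\big]_{i,j}$ as $V^*(P\otimes Q)V$ and keeping the index conventions straight under the identification $M_{mN}(\M)\otimes M_{mN}(\N)\cong M_{(mN)^2}(\M\otimes\N)$. Everything else is the elementary fact that tensor products and compressions preserve positivity; no analytic subtlety arises because the forms are only evaluated on finite families. In particular this lemma is purely algebraic, exactly as the section's preamble advertises.
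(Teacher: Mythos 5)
Your proof is correct and is essentially the paper's argument: form the positive matrices $P=[\Theta(x_k^i,x_l^j)]$ and $Q=[\Phi(y_k^i,y_l^j)]$, take their tensor product, and compress along a diagonal embedding to extract $[(\Theta\otimes\Phi)(\xi^i,\xi^j)]_{i,j}$. The only cosmetic difference is that the paper factors the compression into two steps (a flip map $v$ that restricts to the diagonal of the two copies of $\ell_2^{mN}$, followed by a summation map $w=\bm 1_N\otimes I_m$ that sums over the inner index), whereas you combine them into the single map $V: e_i\mapsto\sum_k e_{(i,k)}\otimes e_{(i,k)}$; also note $V$ is only a bounded operator of norm $\sqrt{N}$, not an isometry, but $V^*(\cdot)V$ preserves positivity regardless, so this does not affect the argument.
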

\begin{proof}
  For $r\in\nz$, let $(x_k^i)\subset \A, (y_k^i)\subset \B$ where $k=1, \cdots ,n_i, i=1, \cdots ,r$. Put $m=\sum_{i=1}^r n_i$. Without loss of generality we may assume $n_i=n$ for $i=1,\cdots,r$. Suppose $\M$ and $\N$ act on Hilbert spaces $H$ and $K$ respectively. Then $(\Theta(x_k^i,x_l^j))_{k,l,i,j}=\sum_{i,j,k,l} \Theta(x_k^i,x_l^j)\otimes e_{(k,i),(l,j)}\ge 0$ as an operator on $\ell_2^m(H)$ where $1\le k,l\le n$. Similarly, $(\Phi(y_k^i,y_l^j))_{k,l,i,j}\ge0$ on $\ell_2^m(K)$. It follows that
  \begin{align*}
  &(\Theta(x_k^i,x_l^j))\otimes (\Phi(y_{k'}^{i'},y_{l'}^{j'}))\\
  \lel& \sum_{i,j,k,l,i',j',k',l'}\Theta(x_k^i,x_l^j)\otimes\Phi(y_{k'}^{i'},y_{l'}^{j'})\otimes e_{(k,i),(l,j)}\otimes e_{(k',i'),(l',j')}\gl 0
  \end{align*}
  on $\ell_2^m(H)\otimes\ell_2^m(K)$. Define
  \[v:\ell_2^m(H\otimes K)\to \ell_2^m(H)\otimes\ell_2^m(K), \quad \sum_{s}(\xi_t^s\otimes\eta_t^s)\otimes e_t\mapsto \sum_{s}(\xi_t^s\otimes e_t)\otimes (\eta_t^s\otimes e_t)\pl.\]
  Here $\xi_t^s\in H$, $\eta_t^s\in K$, and $(e_t)$ is the canonical basis of $\ell_2^m$ for $t=1,\cdots, m$. Then
  \[v^*\Big[\sum_s(\xi_t^s\otimes e_t)\otimes (\eta_t^s\otimes e_t)\Big]=\sum_{s}(\xi_t^s\otimes\eta_t^s)\otimes e_t\pl.\] It is clear that $v^*[(\Theta(x_k^i,x_l^j))\otimes (\Phi(y_{k'}^{i'},y_{l'}^{j'}))] v\ge 0$. But
  \begin{align*}
   v^*[(\Theta(x_k^i,x_l^j))\otimes (\Phi(y_{k'}^{i'},y_{l'}^{j'}))] v&\lel \sum_{i,j,k,l}\Theta(x_k^i,x_l^j)\otimes \Phi(y_{k}^{i},y_{l}^{j})\otimes e_{(k,i),(l,j)}\\
   &\lel [\Theta(x_k^i,x_l^j)\otimes\Phi(y_{k}^{i},y_{l}^{j})]_{(k,i),(l,j)}\pl.
   \end{align*}
  Let $\bm{1}_n$ be a $n\times 1$ column vector with each entry equal to $1_\M\otimes 1_\N$ and $I_r$ the $r\times r$ identity matrix. Define an operator $w: \ell_2^r(H\otimes K)\to \ell_2^m(H\otimes K)$ by
  \[
   w\lel \bm{1}_n\otimes I_r\lel \left(\begin{array}
            {cccc}
            \bm{1}_n& && \\
            &\bm{1}_n & &\\
            && \ddots &\\
            &&& \bm{1}_n
           \end{array}
\right)\pl.
  \]
$w$ is an $m\times r$ matrix. Note that $[\Theta(x_k^i,x_l^j)\otimes\Phi(y_{k}^{i},y_{l}^{j})]$ is an $m\times m$ matrix. Then
  \begin{align*}
   0&\kl w^* \Big[\sum_{i,j,k,l}\Theta(x_k^i,x_l^j)\otimes \Phi(y_{k}^{i},y_{l}^{j})\otimes e_{(k,i),(l,j)}\Big]w\\
   &\lel \sum_{i,j=1}^r \sum_{k=1}^{n_i}\sum_{l=1}^{n_j}\Theta(x_k^i,x_l^j)\otimes\Phi(y_k^i,y_l^j)\otimes e_{i,j}\\
   &\lel \sum_{i,j=1}^r \Theta\otimes \Phi(\xi^i,\xi^j)\otimes e_{i,j}\pl,
  \end{align*}
  which completes the proof.
\end{proof}

\begin{lemma}
 \label{vnte}
 Let $(T_t)_{t\ge0}$ and $(S_t)_{t\ge0}$ be standard semigroups with generator $A$ and $B$ acting on finite von Neumann algebras $\M$ and $\N$ respectively such that $\Ga^A_2\ge\al\Ga^A$ in $\M$ and $\Ga^B_2\ge\al \Ga^B$ in $\N$. Then
 $\Ga_2^{A\otimes I}\ge \al \Ga^{A\otimes I}$, $\Ga_2^{I\otimes B}\ge \al \Ga^{I\otimes B}$ and $\Ga_2^{A\otimes I+I\otimes B}\ge \al \Ga^{A\otimes I+I\otimes B}$ all in $\M\otimes \N$.
\end{lemma}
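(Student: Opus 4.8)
The key structural fact I want to exploit is the tensor-product formula for gradient forms: if $T_t = e^{-tA}$ on $\M$ and $S_t = e^{-tB}$ on $\N$, then for the generator $A\otimes I$ on $\M\bar\otimes\N$ one has, on elementary tensors, $\Ga^{A\otimes I}(x\otimes y, x'\otimes y') = \Ga^A(x,x')\otimes (y^*y')$ (and a symmetric statement for $\Ga^{I\otimes B}$), simply because $A\otimes I$ acts only on the first leg. Iterating this, $2\Ga^{A\otimes I}_2$ and $2\Ga^{A\otimes I}$ on a tensor $x\otimes y$ both factor through $\Ga^A_2(x,x)\otimes (y^*y)$ resp. $\Ga^A(x,x)\otimes(y^*y)$, so the inequality $\Ga^A_2\ge\al\Ga^A$ in $\M$ should lift directly. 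The slight subtlety is that these forms are defined on sums of elementary tensors, not single ones, so the ``positivity'' I must verify is a matrix inequality over a multi-index set, not a scalar one.

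First I would record, by a direct computation from the definition $2\Ga^{A\otimes I}(u,v) = (A\otimes I)(u^*)v + u^*(A\otimes I)(v) - (A\otimes I)(u^*v)$, that for $u = \sum_k x_k\otimes y_k$ and $v = \sum_l x_l'\otimes y_l'$ one gets $\Ga^{A\otimes I}(u,v) = \sum_{k,l}\Ga^A(x_k,x_l')\otimes(y_k^*y_l')$. Then I would observe this is exactly the form $\Theta\otimes\Phi$ of Lemma \ref{tform} with $\Theta = \Ga^A$ on $\A\subset\M$ and $\Phi(y,y') = y^*y'$ on $\B\subset\N$; the latter is trivially a positive form since $[y_k^*y_l']$ is a Gram matrix. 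The identical computation gives $\Ga^{A\otimes I}_2 = \Ga^{A}_2\otimes(\,\cdot^*\cdot\,)$. Hence $\Ga^{A\otimes I}_2 - \al\Ga^{A\otimes I} = (\Ga^A_2 - \al\Ga^A)\otimes(\,\cdot^*\cdot\,)$, and since $\Ga^A_2 - \al\Ga^A$ is a positive form by hypothesis and $(y,y')\mapsto y^*y'$ is a positive form, Lemma \ref{tform} yields that this tensor form is positive, i.e.\ $\Ga^{A\otimes I}_2\ge\al\Ga^{A\otimes I}$ on $\A\otimes\B$. The statement for $I\otimes B$ is symmetric.

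For the generator $C := A\otimes I + I\otimes B$, I would use bilinearity of the carré-du-champ in the generator: since $C$ is a sum, $2\Ga^C(u,v) = 2\Ga^{A\otimes I}(u,v) + 2\Ga^{I\otimes B}(u,v)$, and likewise $2\Ga^C_2(u,v) = \Ga^C(Cu,v) + \Ga^C(u,Cv) - C\Ga^C(u,v)$ expands into the four terms $\Ga^{A\otimes I}_2$, $\Ga^{I\otimes B}_2$, plus two ``cross'' contributions of the shape $\Ga(A\otimes I\, u, I\otimes B\, v)$ type. The clean way to see the cross terms vanish (or rather combine correctly) is to note that $A\otimes I$ and $I\otimes B$ commute and, on elementary tensors, $\Ga^{A\otimes I}$ differentiates the first leg while $\Ga^{I\otimes B}$ differentiates the second, so the mixed second-order terms telescope: one finds $\Ga^C_2 = \Ga^{A\otimes I}_2 + \Ga^{I\otimes B}_2 + R$ where $R$ is a manifestly positive ``Leibniz cross term'' (concretely, on a tensor $x\otimes y$, $R$ reduces to $\Ga^A(x,x)\otimes\Ga^B(y,y)$-type expressions, which are positive as a Schur/tensor product of the two positive forms, again via Lemma \ref{tform}). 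Then $\Ga^C_2 - \al\Ga^C = (\Ga^{A\otimes I}_2 - \al\Ga^{A\otimes I}) + (\Ga^{I\otimes B}_2 - \al\Ga^{I\otimes B}) + R \ge 0$.

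The main obstacle I anticipate is bookkeeping the cross terms in $\Ga^C_2$ rigorously on general (non-elementary) tensors and confirming that the residual term $R$ is genuinely a positive form rather than merely positive on single tensors; this is where I would lean hardest on Lemma \ref{tform}, applying it to the pair of positive forms $\Ga^A$ and $\Ga^B$ to conclude positivity of the relevant ``$\Ga^A\otimes\Ga^B$'' cross contribution over the full multi-index matrix. Once that positivity is in hand the three claimed inequalities all follow by adding positive forms, with no loss in the constant $\al$.
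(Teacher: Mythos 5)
Your proposal is correct and follows essentially the same route as the paper: the first two inequalities come from Lemma \ref{tform} with $\Theta=\Gamma^A$, $\Phi(y,y')=y^*y'$, and the third from the decomposition $\Gamma_2^{A\otimes I+I\otimes B}-\alpha\Gamma^{A\otimes I+I\otimes B}=(\Gamma^{A\otimes I}_2-\alpha\Gamma^{A\otimes I})+(\Gamma^{I\otimes B}_2-\alpha\Gamma^{I\otimes B})+2\Gamma^A\otimes\Gamma^B$, with the cross term positive by Lemma \ref{tform} applied to $\Gamma^A$ and $\Gamma^B$. The only minor imprecision is the phrase "the cross terms vanish (or rather combine correctly)" — they do not vanish; they assemble exactly into $2\Gamma^A\otimes\Gamma^B$, which you do ultimately identify correctly.
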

\begin{proof}
The first inequality follows from Lemma \ref{tform} with $\Theta=\Ga^A$ and $\Phi(y_1,y_2)=y_1^*y_2$. The second inequality can be shown similarly. For the last one, note that
 \begin{align*}
   &\Ga_2^{A\otimes I+I\otimes B}-\al\Ga^{A\otimes I+I\otimes B}\\
   \lel &(\Ga^{A\otimes I}_2-\al\Ga^{A\otimes I})+(\Ga^{I\otimes B}_2-\al\Ga^{I\otimes B})+2\Ga^{A}\otimes \Ga^{ B}\pl.
  \end{align*}
   Then the first two inequalities and Lemma \ref{tform} with $\Theta=\Ga^{A}$ and $\Phi=\Ga^{B}$ yield the assertion.
\end{proof}
\begin{prop}\label{tega}
 Let $A_j$ be self-adjoint generators of standard nc-diffusion semigroups $(T_t^{A_j})$ acting on $\N_j$ and $\Gamma^{A_j}_2\gl \al \Gamma^{A_j}$ respectively for $j=1,..,n$ with the same constant $\al>0$. Then the tensor product generator $\otimes A_j(x_1\otimes \cdots  \otimes x_n)=\sum_{j} x_1\otimes  \cdots  \otimes x_{j-1}\otimes  A_j(x_j)\otimes x_{j+1}\otimes  \cdots  \otimes x_n$ generates a standard nc-diffusion semigroup $(T_t^{\otimes A_j})$ with \[ \Gamma_2^{\otimes A_j}\gl \al \Gamma^{\otimes A_j} \pl.\]
\end{prop}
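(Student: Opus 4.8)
The plan is to reduce the $n$-fold statement to the two-fold case already handled in Lemma \ref{vnte}, and then iterate. First I would observe that the tensor product generator decomposes as $\otimes A_j = \sum_{j=1}^n I\otimes\cdots\otimes A_j\otimes\cdots\otimes I$, a sum of commuting generators of standard semigroups on the common von Neumann algebra $\N=\N_1\bar\otimes\cdots\bar\otimes\N_n$, so the associated semigroup $T_t^{\otimes A_j}$ is the tensor product of the individual ones; that it is again standard and nc-diffusion is routine (completely positive, unital, trace preserving, weak* continuous, and $\Ga^{\otimes A_j}(x,x)\in L_1$ on the algebraic tensor product), and I would simply remark this as in the opening paragraph of Section 5.

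The heart is the curvature estimate. I would proceed by induction on $n$. For $n=1$ there is nothing to prove. For the inductive step, write $\M=\N_1\bar\otimes\cdots\bar\otimes\N_{n-1}$ with generator $A:=\otimes_{j<n}A_j$, for which the inductive hypothesis gives $\Ga_2^A\ge\al\Ga^A$ in $\M$, and set $B:=A_n$ on $\N:=\N_n$ with $\Ga_2^B\ge\al\Ga^B$. Then $\otimes A_j$ acting on $\M\bar\otimes\N$ is exactly $A\otimes I + I\otimes B$, and Lemma \ref{vnte} applied with this $A$ and $B$ yields $\Ga_2^{A\otimes I + I\otimes B}\ge\al\,\Ga^{A\otimes I+I\otimes B}$, which is the claim for $n$. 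The one point needing a word of care is that Lemma \ref{vnte} is stated for generators of standard semigroups (not necessarily nc-diffusion) and for the forms on a dense subalgebra; here the relevant dense subalgebra is the algebraic tensor product $\A_1\otimes\cdots\otimes\A_n$ where each $\A_j$ is a weakly dense $A_j$-invariant $*$-subalgebra of $\N_j$ on which $\Ga^{A_j}$, $\Ga^{A_j}_2$ are defined, and one checks that $\otimes A_j$ maps this algebra into itself and that the relevant key identity
\[
\Ga_2^{A\otimes I+I\otimes B}-\al\,\Ga^{A\otimes I+I\otimes B}
=(\Ga^{A\otimes I}_2-\al\Ga^{A\otimes I})+(\Ga^{I\otimes B}_2-\al\Ga^{I\otimes B})+2\,\Ga^{A}\otimes\Ga^{B}
\]
holds on it; this is a direct expansion using $[A\otimes I, I\otimes B]=0$, exactly as in the proof of Lemma \ref{vnte}.

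The main obstacle, as usual in this circle of ideas, is bookkeeping rather than a genuinely new difficulty: one must make sure the positivity of the cross term $\Ga^A\otimes\Ga^B$ is legitimate, and here that is precisely Lemma \ref{tform} with $\Theta=\Ga^A$ and $\Phi=\Ga^B$, both of which are positive sesquilinear forms (the positivity of $\Ga$ as a form is Theorem \ref{cps}, and it is inherited through the induction since at each stage we again have a completely positive semigroup). So the proof is essentially: decompose the generator, invoke Lemma \ref{vnte} and Lemma \ref{tform}, and iterate. I would write it as a short induction on $n$ citing Lemma \ref{vnte} for the two-factor step, with a single sentence noting that ``standard nc-diffusion'' is preserved under tensor products so that all the forms in sight are defined on the algebraic tensor product and Theorem \ref{cps} keeps $\Ga^{\otimes A_j}$ a positive form.
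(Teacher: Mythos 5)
Your proposal is correct and follows essentially the same route as the paper: reduce to the two-factor case of Lemma \ref{vnte} by induction on $n$, observing that the tensor product semigroup is again standard nc-diffusion, and use Lemma \ref{tform} to handle the positive cross terms $\Ga\otimes\Ga$. The paper states the induction slightly more loosely (describing cross terms of the form $\Ga^{I\otimes\cdots A_i\cdots\otimes I}\otimes\Ga^{I\otimes\cdots A_j\cdots\otimes I}$ directly), but your version, which groups the first $n-1$ factors into a single tensor factor for the inductive step, is the natural way to carry out exactly that plan.
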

\begin{proof}
Note that $T_t^{\otimes A_j}= T_t^{A_1}\otimes\cdots \otimes T_t^{A_n}$. Since $(T_t^{A_j})$ is a standard nc-diffusion semigroup for $j=1,\cdots, n$, so is $T_t^{\otimes A_j}$. We prove the $\Ga_2$-condition by induction. The case $n=2$ follows from Lemma \ref{vnte}. The general case follows by induction and repeatedly invoking Lemma \ref{tform} to deal with ``cross terms'' like $\Ga^{I\otimes  \cdots  A_i \cdots  \otimes I}\otimes\Ga^{I\otimes  \cdots  A_j \cdots  \otimes I}$.
\end{proof}
\begin{exam}[Tensor product of matrix algebras]\rm
Let $A$ be the generator of the semigroup $T_t$ acting on $M_n$ considered in Proposition \ref{mncur}. Let $\Ga$ be the gradient form associated to $\sum_{i=1}^m I\otimes \cdots  \otimes A\otimes \cdots  \otimes I$ where $A$ is in the $i$th position. Then it follows from Proposition \ref{tega} that $\Ga_2\ge \frac{2+n}{2n} \Ga $ in $\otimes_{i=1}^m M_n$.
\end{exam}
\begin{exam}[Random matrices]\rm
 Let $(\Om, \pz)$ be a probability space. Consider $I\otimes T_t$ acting on $L_\8(\Om,\pz)\otimes M_n$ where $T_t$ is the semigroup considered in Proposition \ref{mncur}. By Lemma \ref{vnte}, $I\otimes T_t$ is a standard nc-diffusion semigroup and satisfies $\Ga_2\ge \frac{2+n}{2n} \Ga $ in $L_\8(\Om,\pz)\otimes M_n$. Hence our results apply for random matrices.
\end{exam}

\begin{exam}[Product measure]\rm Here we consider $A_j=I-E_j$ for $E_j$ a conditional expectation on $\N_j$ for $j=1, \cdots , n$. By example \ref{cond}, $A_j$ generates a standard nc-diffusion semigroup and $\Ga_2^{A_j}\ge\frac12\Ga^{A_j}$. Then we deduce from Proposition \ref{tega} that $\Ga_2^A\ge \frac12\Ga^A$ for the tensor product generator $A =\otimes A_j$. For $x=x_1\otimes  \cdots  \otimes x_n$, put $\Ga_j(x,x)=x_1^*x_1\otimes \cdots  \otimes \Ga^{A_j}(x_j,x_j)\otimes  \cdots  \otimes x_n^*x_n$. Then we have
\[\Ga(x,x)\lel\sum_{j=1}^n \Ga_j(x,x)\pl. \]

We want to investigate an easy consequence of our general theory for the product measure space. Let $(\Om_i, \pz_i)$, $i=1, \cdots , n$ be a family of probability spaces and denote by $(\Om, \pz)$ the product probability space. Then $L_\8(\Om, \pz)=\otimes_{i=1}^n L_\8(\Om_i, \pz_i)$. Define $E_i(f)=\int f d\pz_i$ for $f\in L_\8(\Om, \pz)$ and put $A_i=I-E_i$. Then
 \begin{align*}
  \Ga_i(f,f)&\lel \frac12 (|f|^2 - f\int \bar f d\pz_i- \bar f\int f d\pz_i +\int |f|^2 d\pz_i)\\
  &\lel \frac12\Big(|f-\int f d\pz_i|^2+ \int \Big(|f|^2-\Big|\int f d\pz_i\Big|^2\Big) d\pz_i\Big)\pl.
 \end{align*}
 It is straightforward to check that the fixed point subalgebra of the semigroup $e^{-t(\otimes A_i)}$ is $\cz 1$. Hence $E_{\rm Fix} f = \ez f$ for $f\in L_\8(\Om, \pz)$ where $\ez$ is the expectation operator of $\pz$. Then \eqref{devi} yields
 \begin{equation}
 \begin{aligned}
   &\pz(f-\ez(f)\ge t) \kl \exp\Big(-\frac{ct^2}{\|\sum_{i=1}^n \Ga_i(f,f)\|_\8}\Big)\\
  \kl& \exp\Big(-\frac{2ct^2}{\sum_{i=1}^n\| f-\int fd\pz_i\|_\8^2+\|\int (|f|^2-|\int f d\pz_i |^2 ) d\pz_i\|_\8}\Big)\pl.
 \end{aligned}
 \end{equation}
Note that we do not impose any concrete condition on the probability spaces. This shows that the sub-Gaussian tail behavior is always true for product measures. We do not know whether such results were known before.
\end{exam}

\subsection{Free products with amalgamation}
Here we want to prove that the condition $\Gamma_2\gl\al \Gamma$ is stable under free products. Our general reference is \cite{VDN}. We need some preliminary facts about free product of semigroups $T_t= \ast_{k}T_t^{ A_k}$ acting on $\N:=\ast_{\D, k} \N_k$ with generators $A_k$ acting on von Neumann algebra $\N_k\supset \D$. Here $\D$ is a von Neumann subalgebra of all $\N_k$. Similar to the tensor products considered before, if $(T_t^{A_k})$ is a standard nc-diffusion semigroup for $k=1,\cdots,n$, so is $\ast_{k}T_t^{ A_k}$. We assume that $A_k$ commutes with the conditional expectation $E: \N_k\to \D$ for which we amalgamate and even
 \[ A_kE \lel EA_k \lel 0 \pl .\]
Our first task is to calculate the gradient $\Gamma$. For simplicity of notation, we always assume the elements we consider are chosen so that $\Ga$ and $\Ga_2$ are well-defined. Let us now consider elementary words $x=a_1 \cdots  a_m$ and $y=b_1 \cdots  b_n$ of mean $0$ elements $a_k\in \N_{i_k}$, $b_k\in \N_{j_k}$. Recall that the free product generator is given by
 \[ A(b_1 \cdots  b_n)\lel \sum_{l=1}^n b_1 \cdots  b_{l-1}A_{j_l}(b_l)b_{l+1} \cdots  b_n \pl .\]
In the future we will ignore the index for $A$. If we want to apply the free product generator $A$ on the product $x^*y$,  we have to know the mean $0$ decomposition
 \begin{align*}
 x^*y =& a_m^* \cdots  a_1^*b_1 \cdots  b_n \\
  =& \sum_{k=1}^{\min(n,m)} a_m^* \cdots  a_{k+1}^*\stackrel{\circ{  }}{\overbrace{a_k^*E(a_{k-1}^* \cdots  a_1^*b_1 \cdots  b_{k-1})b_k}}b_{k+1} \cdots  b_n \\
  &+\begin{cases}
   E(a_m^* \cdots  a_1^* b_1 \cdots  b_m)b_{m+1} \cdots  b_n, \mbox{if } m\le n,\\
   a_m^* \cdots  a_{n+1}^*E(a_n^* \cdots  a_1^* b_1 \cdots  b_m), \mbox{if } m> n.\\
  \end{cases}
  \end{align*}
Here $ \mathring{x}=x-E(x)$. Let $k_0=\inf\{i\in\nz: k_i\neq j_i\}$. The equality  $T_t(E(x))=E(x)$ implies that
 \begin{equation}\label{modu}
   A(E(x)y) \lel \lim_{t \to 0} \frac{T_t(E(x)y)-E(x)y}{t}
 \lel E(x)A(y) \pl.
 \end{equation}
It is easy to see that all terms containing $A(a_i^*)$, $A(b_i)$ for $i\ge k_0$ will cancel out in $\Ga(x,y)$ and thus
\begin{align*}
 2\Ga(x,y)\lel& \sum_{i=1}^{k_0-1} a_m^* \cdots  a_{i+1}^*A(a_i^*)a_{i-1} \cdots  a_1^*b_1 \cdots  b_n + \sum_{i=1}^{k_0-1} a_m^* \cdots   a_1^*b_1 \cdots  b_{i-1}A(b_i)b_{i+1} \cdots  b_n\\
 &- a_m^* \cdots  a_{k_0}^*A(a_{k_0-1}^* \cdots  a_1^*b_1 \cdots  b_{k_0-1})b_{k_0} \cdots   b_n\\
 \lel & 2a_m^* \cdots  a_{k_0}^* \Ga(a_1 \cdots  a_{k_0-1}, b_1 \cdots  b_{k_0-1}) b_{k_0} \cdots  b_n\pl.
\end{align*}
\begin{lemma}\label{recu}
 Let $a_i,b_i\in \N_{k_i}$ be mean $0$ elements for $i=1, \cdots , r$. Then
 \[
 \Ga(a_1 \cdots  a_r, b_1 \cdots  b_r)\lel a_{r}^*\Ga(a_1 \cdots  a_{r-1}, b_1 \cdots  b_{r-1})b_r+ \Ga(a_r,E(a_{r-1}^* \cdots  a_1^*b_1 \cdots  b_{r-1})b_r)\pl.
 \]
\end{lemma}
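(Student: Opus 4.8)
The plan is to prove the recursion by applying the gradient-form identity $2\Gamma(u,v) = A(u^*)v + u^*A(v) - A(u^*v)$ directly to $u = a_1\cdots a_r$ and $v = b_1\cdots b_r$, and then to repeatedly use the Leibniz-type expansion of the free product generator $A$ together with the key fact $A E = E A = 0$, which gives the ``modular'' identity \eqref{modu}: $A(E(x)y) = E(x)A(y)$ and, taking adjoints, $A(y^*E(x)^*) = A(y^*)E(x)^*$. The point is that most terms in the expansion of $A((a_1\cdots a_r)^* b_1\cdots b_r)$ involving $A$ applied to a factor with index $r$ will either cancel against the corresponding terms in $A(u^*)v$ and $u^*A(v)$, or combine into a lower-order $\Gamma$ plus the extra term $\Gamma(a_r, E(a_{r-1}^*\cdots a_1^* b_1\cdots b_{r-1})b_r)$.

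Concretely, I would first write out the mean-zero decomposition of $x^*y = a_r^*\cdots a_1^* b_1\cdots b_r$ exactly as in the display preceding the lemma (with $m = n = r$); the $k=r$ term in the sum is $\mathring{\overbrace{a_r^* E(a_{r-1}^*\cdots a_1^* b_1\cdots b_{r-1}) b_r}}$ and the extra ``$m \le n$'' term is $E(a_r^*\cdots a_1^* b_1\cdots b_r)$, which lies in $\D$ and hence is killed by $A$. Applying $A$ to this decomposition and using \eqref{modu} to push $A$ past the $E(\cdots)$ factors, the terms where $A$ hits $a_r^*$ or $b_r$ (the outermost letters) are isolated; they will be exactly what is needed to build $\Gamma(a_r, E(a_{r-1}^*\cdots a_1^* b_1\cdots b_{r-1}) b_r)$ when combined with the matching pieces of $A(u^*)v$ and $u^*A(v)$. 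The remaining terms, where $A$ acts on the ``inner'' word $a_{r-1}^*\cdots a_1^* b_1\cdots b_{r-1}$, assemble (again using the definition of $\Gamma$ applied one level down, and that $a_r^*$, $b_r$ are just multiplied on the outside, untouched by $A$ on those indices since $a_r, b_r \in \N_{k_r}$ while the inner reduced word ends and begins in $\N_{k_{r-1}} \ne \N_{k_r}$) into $a_r^* \Gamma(a_1\cdots a_{r-1}, b_1\cdots b_{r-1}) b_r$.

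The main obstacle I anticipate is purely bookkeeping: carefully tracking which terms survive in the three expansions $A(u^*)v$, $u^*A(v)$, $-A(u^*v)$, and verifying that the cross-terms cancel in exactly the right way so that only the two asserted summands remain. This is essentially the same cancellation mechanism already used in the excerpt to derive the formula $2\Gamma(x,y) = 2 a_m^*\cdots a_{k_0}^* \Gamma(a_1\cdots a_{k_0-1}, b_1\cdots b_{k_0-1}) b_{k_0}\cdots b_n$; in fact the present lemma can be viewed as an inductive refinement of that computation, peeling off one letter at a time rather than jumping straight to the first index $k_0$ where the words diverge. I would therefore organize the proof as: (i) recall \eqref{modu} and its adjoint; (ii) expand $2\Gamma(a_1\cdots a_r, b_1\cdots b_r)$ using the generator's Leibniz rule and the mean-zero decomposition; (iii) segregate the terms by whether $A$ lands on index $r$ or on a lower index; (iv) recognize the index-$r$ block as $2\Gamma(a_r, E(a_{r-1}^*\cdots a_1^* b_1\cdots b_{r-1}) b_r)$ and the lower-index block as $2 a_r^* \Gamma(a_1\cdots a_{r-1}, b_1\cdots b_{r-1}) b_r$; (v) divide by $2$. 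A minor technical point worth a sentence is the standing assumption, stated just before the lemma, that all elements are chosen so that $\Gamma$ and $\Gamma_2$ are well-defined, so that the limits defining $A$ exist and the manipulations are legitimate; I would not belabor the domain issues beyond invoking that convention.
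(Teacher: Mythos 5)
Your proposal is correct and follows essentially the same route as the paper's proof: expand $2\Gamma(a_1\cdots a_r, b_1\cdots b_r)$ via $A(u^*)v + u^*A(v) - A(u^*v)$, use the mean-zero decomposition of $u^*v$ and the Leibniz rule for the free product generator, segregate terms by whether $A$ acts on the outermost index $r$ or on an inner index, and invoke \eqref{modu} to reassemble the index-$r$ block as $2\Gamma(a_r, E(a_{r-1}^*\cdots a_1^* b_1\cdots b_{r-1})b_r)$ and the inner block as $2a_r^*\Gamma(a_1\cdots a_{r-1}, b_1\cdots b_{r-1})b_r$. The paper simply writes out the resulting cancellation explicitly; your plan captures all the key steps, including the observation that the residual $\D$-term is killed by $A$.
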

\begin{proof}
 Using the mean 0 decomposition, we have
 \begin{align*}
  &2\Ga(a_1 \cdots  a_r, b_1 \cdots  b_r)\lel A(a_r^*)a_{r-1}^* \cdots  b_{r-1}b_r + a_r^*A(a_{r-1}^* \cdots  a_1^*)b_1 \cdots  b_r \\
  &+a_r^* \cdots  a_1^*A(b_1 \cdots  b_{r-1})b_r+a_r^* \cdots  b_{r-1}A(b_r)\\
  &- A(a_r^*)\Big(\sum_{i=1}^{r-1} a_{r-1}^* \cdots  a_{i+1}^*\stackrel{\circ{  }}{\overbrace{a_i^* E(a_{i-1}^* \cdots  a_1^*b_1 \cdots  b_{i-1})b_i}}b_{i+1} \cdots  b_{r-1} \Big)b_r\\
  & - a_r^*\Big(\sum_{i=1}^{r-1} a_{r-1}^* \cdots  a_{i+1}^*\stackrel{\circ{  }}{\overbrace{a_i^* E(a_{i-1}^* \cdots  a_1^*b_1 \cdots  b_{i-1})b_i}}b_{i+1} \cdots  b_{r-1} \Big)A(b_r)\\
  &-a_r^*A(a_{r-1}^* \cdots  a_1^*b_1 \cdots  b_{r-1})b_r-A(a_r^*E(a_{r-1}^* \cdots  b_{r-1})b_r)\\
  \lel & 2a_r^*\Ga(a_1 \cdots  a_{r-1}, b_1 \cdots  b_{r-1})b_r + A(a_r^*)E(a_{r-1}^* \cdots  b_{r-1})b_r\\
  &+a_r^*E(a_{r-1}^* \cdots  b_{r-1})A(b_r)-A(a_r^*E(a_{r-1}^* \cdots  b_{r-1})b_r)
 \end{align*}
which completes the proof with the help of \eqref{modu}.
\end{proof}
The recursion formula immediately yields that
 \begin{lemma} Let $k_0$ be as above. Then
 \[ \Ga(x,y) \lel \sum_{k=1}^{k_0-1} \Ga^{(k)}[x,y]\pl .\]
 where
 \[ \Ga^{(k)}[x,y] \lel a_{m}^* \cdots  a_{k+1}^*\Ga[a_k,E(a_{k-1}^* \cdots  a_1^*b_1 \cdots  b_{k-1})b_k]b_{k+1} \cdots  b_n \pl .\]
\end{lemma}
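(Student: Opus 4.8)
The plan is to combine the explicit ``reduction to length $k_0-1$'' identity established in the display immediately above with a short induction that unrolls the recursion formula of Lemma~\ref{recu}.

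First I would record what has just been proved: for the elementary words $x=a_1\cdots a_m$ and $y=b_1\cdots b_n$ (so that $a_l,b_l\in\N_{k_l}$ for $l<k_0$),
\[
\Ga(x,y)\lel a_m^*\cdots a_{k_0}^*\,\Ga(a_1\cdots a_{k_0-1},\,b_1\cdots b_{k_0-1})\,b_{k_0}\cdots b_n\pl.
\]
Hence it suffices to expand $\Ga(a_1\cdots a_{k_0-1},b_1\cdots b_{k_0-1})$, the gradient of two words of equal length $k_0-1$ whose $l$-th letters lie in the same free factor $\N_{k_l}$ --- exactly the setting to which Lemma~\ref{recu} applies.

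Second, I would prove by induction on $r$ that
\[
\Ga(a_1\cdots a_r,\,b_1\cdots b_r)\lel \sum_{k=1}^{r} a_r^*\cdots a_{k+1}^*\,\Ga\big(a_k,\,E(a_{k-1}^*\cdots a_1^*b_1\cdots b_{k-1})b_k\big)\,b_{k+1}\cdots b_r\pl,
\]
with the convention that $E$ of the empty product equals $1$ and an empty string of $a^*$'s or $b$'s is $1$; the base case $r=1$ is immediate since then $\Ga(\,,\,)$ of empty words is $\Ga(1,1)=0$ by $A(1)=0$. The inductive step is a single invocation of Lemma~\ref{recu}: write $\Ga(a_1\cdots a_r,b_1\cdots b_r)=a_r^*\Ga(a_1\cdots a_{r-1},b_1\cdots b_{r-1})b_r+\Ga(a_r,E(a_{r-1}^*\cdots a_1^*b_1\cdots b_{r-1})b_r)$, substitute the inductive hypothesis for length $r-1$ inside $a_r^*(\,\cdot\,)b_r$, absorb $a_r^*$ and $b_r$ into the flanking strings of each summand, and observe that the extra term is precisely the new $k=r$ summand.

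Finally I would set $r=k_0-1$ in this formula and feed it into the reduction identity above: the outer factors $a_m^*\cdots a_{k_0}^*$ and $b_{k_0}\cdots b_n$ merge with the inner $a_{k_0-1}^*\cdots a_{k+1}^*$ and $b_{k+1}\cdots b_{k_0-1}$ to give exactly $a_m^*\cdots a_{k+1}^*\,\Ga[a_k,E(a_{k-1}^*\cdots a_1^*b_1\cdots b_{k-1})b_k]\,b_{k+1}\cdots b_n=\Ga^{(k)}[x,y]$ for each $k$, and summing over $k=1,\dots,k_0-1$ yields the claim. The argument is pure bookkeeping, so there is no genuine obstacle; the only points deserving a line of care are the empty-word conventions in the base case $r=1$ and verifying that the conjugating letter-strings telescope correctly between the two steps --- which they do precisely because $k_0-1$ is the length at which the first reduction terminated.
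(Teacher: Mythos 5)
Your proposal is correct and is exactly the argument the paper intends: the paper's own ``proof'' is the single phrase ``The recursion formula immediately yields that,'' and you have simply unpacked it by combining the preceding reduction identity $\Ga(x,y)=a_m^*\cdots a_{k_0}^*\,\Ga(a_1\cdots a_{k_0-1},b_1\cdots b_{k_0-1})\,b_{k_0}\cdots b_n$ with an induction that unrolls Lemma~\ref{recu}. (Only the aside about the base case is slightly off: for $r=1$ the claimed formula is a tautology under the empty-word conventions and does not actually require $\Ga(1,1)=0$ — but this does not affect the proof.)
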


In order to calculate $\Gamma_2$, we have to analyze
  \[ \Ga^{(k)}[A(x),y]+\Ga^{(k)}[x,A(y)]-A\Ga^{(k)}[x,y] \pl .\]
Observe that for $j<k$ all terms containing $A(a_j^*)$ or $A(b_j)$ appear inside the conditional expectation $E$ in $\Ga^{(k)}[A(x),y]+\Ga^{(k)}[x,A(y)]$ and there is no counterpart in $A\Ga^{(k)}$. Hence we find
\begin{align*}
 I^{(k)}(x,y) \lel&  a_{m}^* \cdots  a_{k+1}^*\Ga[a_k, E(A(a_{k-1}^* \cdots  a_1^*)b_1 \cdots  b_{k-1})b_k]b_{k+1} \cdots  b_n \\
      & + a_{m}^* \cdots  a_{k+1}^*\Ga[a_k, E(a_{k-1}^* \cdots  a_1^*A(b_1 \cdots  b_{k-1}))b_k]b_{k+1} \cdots  b_n\pl.
\end{align*}
For $k\le j<k_0$ we are left with the following terms
\begin{align*}
 II^{(k)}(x,y)\lel &\sum_{j=k}^{k_0-1} a_m^* \cdots  A(a_j^*) \cdots  a_{k+1}^*\Ga[a_k,E(a_{k-1}^* \cdots  a_1^*b_1 \cdots  b_{k-1})b_k]b_{k+1} \cdots  b_n\\
 +& \sum_{j=k}^{k_0-1} a_m^* \cdots  a_{k+1}^*\Ga[a_k,E(a_{k-1}^* \cdots  a_1^*b_1 \cdots  b_{k-1})b_k]b_{k+1} \cdots  A(b_j) \cdots  b_n\\
 -& a_m^* \cdots  A(a_{k_0-1}^* \cdots  a_{k+1}^*\Ga[a_k,E(a_{k-1}^* \cdots  a_1^*b_1 \cdots  b_{k-1})b_k]b_{k+1} \cdots  b_{k_0-1}) \cdots  b_n
\end{align*}
where we understand, when $j=k$, that $A(a_k)$ and $A(b_k)$ are inside the $\Ga$-form. Since
$$\Ga[a_k,E(a_{k-1}^* \cdots  a_1^*b_1 \cdots  b_{k-1})b_k]\in \N_{i_k},$$
we are in the situation of Lemma \ref{recu}. The recursion formula gives that
\begin{align*}
 II^{(k)}(x,y)\lel& \sum_{j=k+1}^{k_0-1} F_{jk}(x,y)+ 2a_{m}^* \cdots  a_{k+1}^*\Ga_2[a_k,E(a_{k-1}^* \cdots  a_1^*b_1 \cdots  b_{k-1})b_k]b_{k+1} \cdots  b_n
\end{align*}
where
\begin{align*}
F_{jk}(x,y)\lel& a_{m}^* \cdots  a_{j+1}^* \Ga[a_j,E(a_{j-1}^* \cdots  \Ga[a_k,\\
&E(a_{k-1}^* \cdots  a_1^*b_1 \cdots  b_{k-1})b_k] \cdots  b_{j-1})b_j]b_{j+1} \cdots  b_n\pl.
\end{align*}
Therefore we find $\Ga_2$.
\begin{lemma} Using the above notation, we have
 \begin{align*}
  2\Ga_2(x,y)\lel &\sum_{k=1}^{k_0-1} \Ga^{(k)}[A(x),y]+\Ga^{(k)}[x,A(y)]-A\Ga^{(k)}[x,y] \\
  \lel & \sum_{k=1}^{k_0-1}I^{(k)}(x,y)+II^{(k)}(x,y)
  \end{align*}
\end{lemma}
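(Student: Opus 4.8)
The plan is to read the statement as a bookkeeping summary of the computation carried out in the paragraphs preceding it, and to organize that computation into three clearly separated pieces. First I would establish the first equality. By the preceding lemma, $\Gamma(x,y)=\sum_{k=1}^{k_0-1}\Gamma^{(k)}[x,y]$; applying the same lemma to $A(x)$ and to $A(y)$ gives $\Gamma(A(x),y)=\sum_{k=1}^{k_0-1}\Gamma^{(k)}[A(x),y]$ and $\Gamma(x,A(y))=\sum_{k=1}^{k_0-1}\Gamma^{(k)}[x,A(y)]$. This is permissible because each $A_l$ maps $\N_{i_l}$ into itself and commutes with the amalgamating conditional expectation ($A_lE=EA_l=0$), so $A(x)$ and $A(y)$ are finite sums of elementary words with the same index sequences as $x$ and $y$ respectively, hence with the same $k_0$, and $A$ commutes with $\ast$ since the $A_l$ are self-adjoint. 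Substituting these three expansions into $2\Gamma_2(x,y)=\Gamma(A(x),y)+\Gamma(x,A(y))-A\Gamma(x,y)$ and using linearity of $A$ yields the first equality; it then remains to prove, for each fixed $k$ with $1\le k\le k_0-1$, the local identity $\Gamma^{(k)}[A(x),y]+\Gamma^{(k)}[x,A(y)]-A\,\Gamma^{(k)}[x,y]=I^{(k)}(x,y)+II^{(k)}(x,y)$.

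For the local identity I would fix $k$, abbreviate $Z=\Gamma[a_k,E(a_{k-1}^*\cdots a_1^*b_1\cdots b_{k-1})b_k]\in\N_{i_k}$ so that $\Gamma^{(k)}[x,y]=a_m^*\cdots a_{k+1}^*\,Z\,b_{k+1}\cdots b_n$, and expand the three terms using the mean-zero decomposition and the Leibniz formula for the free-product generator recalled above. Sorting summands by which letter the generator differentiates, three groups appear: (i) derivatives on a letter of index $\ge k_0$, which occur identically in $\Gamma^{(k)}[A(x),y]+\Gamma^{(k)}[x,A(y)]$ and in $A\,\Gamma^{(k)}[x,y]$ and therefore cancel; (ii) derivatives landing inside the collapsed conditional expectation $E(a_{k-1}^*\cdots a_1^*b_1\cdots b_{k-1})$, i.e. on a letter of index $<k$, which appear only in the first two terms (the generator applied to the word $a_m^*\cdots a_{k+1}^*Zb_{k+1}\cdots b_n$ does not reach inside $Z$) and which assemble precisely into $I^{(k)}(x,y)$ once $AE=EA=0$ and \eqref{modu} are used to discard the terms in which $A$ would cross $E$; and (iii) derivatives on a letter of index in $\{k,\dots,k_0-1\}$ together with the derivative of $Z$ itself (where \eqref{modu} is again used to rewrite $A(E(\cdots)b_k)=E(\cdots)A(b_k)$) and the derivatives of the subword $a_{k_0-1}^*\cdots a_{k+1}^*(\cdot)b_{k+1}\cdots b_{k_0-1}$ coming from $-A\,\Gamma^{(k)}[x,y]$; since $Z\in\N_{i_k}$ and the surrounding letters alternate, this last group is exactly the input of Lemma \ref{recu}, and iterating that recursion converts it into $2\,a_m^*\cdots a_{k+1}^*\,\Gamma_2[a_k,E(a_{k-1}^*\cdots b_{k-1})b_k]\,b_{k+1}\cdots b_n$ plus the cross terms $F_{jk}(x,y)$, that is, into $II^{(k)}(x,y)$.

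The main obstacle is the combinatorial bookkeeping of step (iii): after each application of Lemma \ref{recu} one must check that the new inner factor still lies in a single free factor so the recursion can be re-applied, that the boundary case $j=k$ (the derivative inside $Z$) is counted exactly once and not again as part of the recursion, and that the cross terms produced along the way are exactly the $F_{jk}$ and nothing extra. What makes all the ``crossing'' terms collapse is the modular behaviour of $\Gamma$ and $E$ over the amalgam $\D$, encoded by $A_lE=EA_l=0$ and the multiplicative-domain identity \eqref{modu}; pinning down every place these are invoked is the delicate part, while the remainder is routine manipulation of elementary words.
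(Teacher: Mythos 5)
Your proposal is correct and follows essentially the same route as the paper: the paper treats this lemma as a bookkeeping summary of the immediately preceding computation, which is exactly the split into cancelling ($j\ge k_0$), $I^{(k)}$ ($j<k$, using $AE=EA=0$ and \eqref{modu}), and $II^{(k)}$ ($k\le j<k_0$, then Lemma \ref{recu}) that you describe. One small correction in emphasis: group (iii) \emph{is} $II^{(k)}$ by the paper's definition (it is the collection of remaining terms with $k\le j<k_0$), so the identity $2\Ga_2=\sum_k I^{(k)}+II^{(k)}$ already holds at that stage; the application of Lemma \ref{recu} to rewrite $II^{(k)}$ as $\sum_j F_{jk}+2a_m^*\cdots\Ga_2[a_k,\dots]\cdots b_n$ is a subsequent step, not part of proving the stated lemma.
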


In order to show $\Ga_2\ge \al\Ga$, we need a technical lemma which is an application of the Hilbert $W^*$-module theory; see \cites{Pas,Lan95}.
\begin{lemma}\label{frpo}
 Let $\Phi: \A\times\A\to \N$ be a sesquilinear form where $\A$ is a separable *-algebra contained in the domain of $\Phi$ and $\N$ is a von Neumann algebra. Then $\Phi$ is a positive form if and only if there exists a map $v:\A\to C(\N)$ such that $\Phi(x,y)=v(x)^*v(y)$ for $x,y\in\A$ where $C(\N)=\ell_2 \otimes\N$ denotes the Hilbert $\N$-module, or the column space of $\N$.
\end{lemma}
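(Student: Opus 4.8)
The plan is to prove the easy ``if'' direction by a Gram matrix computation and the ``only if'' direction by a GNS type construction for $\N$-valued sesquilinear forms, followed by the stabilization theorem for Hilbert $W^*$-modules.

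For ``if'', suppose $\Phi(x,y)=v(x)^*v(y)$ with $v:\A\to C(\N)$. Recalling that the inner product on $C(\N)=\ell_2\otimes\N$ is $\N$-valued, for $x_1,\dots,x_n\in\A$ and a column $\eta=(\eta_1,\dots,\eta_n)\in\N^n$ one computes
\[
 \sum_{i,j}\eta_i^*\,\Phi(x_i,x_j)\,\eta_j\lel\Big(\sum_i v(x_i)\eta_i\Big)^*\Big(\sum_j v(x_j)\eta_j\Big)\gl0
\]
in $\N$, so $[\Phi(x_i,x_j)]_{i,j}\ge0$ in $M_n(\N)$ for every $n$, that is, $\Phi$ is positive.

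For the converse I would first manufacture a pre-Hilbert $\N$-module from $\Phi$. Form the algebraic tensor product $\mathcal X=\A\odot\N$, which is a right $\N$-module, and equip it with the $\N$-valued pairing determined by $\lge x\otimes a,\,y\otimes b\rge=a^*\Phi(x,y)b$. The positivity hypothesis on $\Phi$ is exactly what is needed to see this pairing is positive semidefinite: for $\xi=\sum_k x_k\otimes a_k$ one has $\lge\xi,\xi\rge=[a_k]^*[\Phi(x_k,x_l)]_{k,l}[a_l]\ge0$ since $[\Phi(x_k,x_l)]_{k,l}\ge0$ in the appropriate matrix algebra over $\N$. Dividing out the null submodule $\{\xi:\lge\xi,\xi\rge=0\}$ and completing (passing to the self-dual completion in the $W^*$ sense) yields a Hilbert $\N$-module $\mathcal E$ together with an $\N$-module map $v_0:\A\to\mathcal E$, $v_0(x)=[x\otimes1]$, satisfying $\lge v_0(x),v_0(y)\rge=\Phi(x,y)$.

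Finally, since $\A$ is separable, $\mathcal E$ is a countably generated Hilbert $\N$-module, so by the Kasparov stabilization theorem (in the $W^*$-version due to Paschke; see \cite{Pas,Lan95}) one has $\mathcal E\oplus C(\N)\cong C(\N)$, whence an adjointable isometric $\N$-module embedding $w:\mathcal E\to C(\N)$. Setting $v=w\circ v_0$ gives $v(x)^*v(y)=\lge wv_0(x),wv_0(y)\rge=\lge v_0(x),v_0(y)\rge=\Phi(x,y)$, as required. The one genuinely delicate point is this last module step: one must check that the induced $\N$-valued pairing really descends to a bona fide (self-dual) Hilbert $W^*$-module and that countable generation is retained, after which the stabilization theorem finishes the argument.
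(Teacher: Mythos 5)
Your proposal is correct and takes essentially the same route as the paper: the easy direction by a Gram matrix computation, and the converse via the KSGNS-type construction of a pre-Hilbert $\N$-module from $\A\odot\N$, followed by the Kasparov stabilization theorem (\cite{Lan95}, Theorem 6.2) to embed the countably generated module into $C(\N)=\ell_2\otimes\N$. The only cosmetic difference is that you pass to the self-dual completion, whereas the paper works with the norm completion and relegates the self-dual remark to a follow-up comment about removing separability.
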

\begin{proof}
The sufficiency is obvious. Conversely, following the KSGNS construction \cites{Pas, Lan95}, we consider the algebraic tensor product $\A\otimes \N$ and define $\lge\sum_i x_i\otimes a_i, \sum_j y_j\otimes b_j\rge = \sum_{i,j}a_i^*\Phi(x_i,y_j)b_j$ for $x_i,y_j\in \A$ and $a_i,b_j\in\N$. Set $\K = \{x\in A\otimes \N: \lge x,x\rge = 0\}$. Then $\A\otimes \N/\K$ is a pre-Hilbert $\N$-module with $\N$-valued inner product $\lge x+\K, y+\K\rge =\lge x, y\rge$ for $x,y\in \A\otimes \N$. Let $\A\otimes_\Phi \N$ be the completion of $\A\otimes \N/\K$. Then $\A\otimes_\Phi \N$ is a Hilbert $\N$-module. Since $\A$ is separable, $\A\otimes_\Phi \N$ is countably generated. It follows from \cite{Lan95}*{Theorem 6.2} that there exists a right module map $u: \A\otimes_\Phi \N \to \ell_2\otimes \N$ such that
\[ \sum_{i,j}a_i^*\Phi(x_i,y_j)b_j \lel \lge u(\sum_i x_i\otimes a_i+\K), u(\sum_j y_j\otimes b_j+\K)\rge\pl.
\]
 In particular, $\Phi(x,y)= u(x\otimes 1+\K)^*u(y\otimes 1+\K)$. Define $v(x)=u(x\otimes 1+\K)$. This is the desired map.
\end{proof}
\re
Since we only consider finitely many elements for the sake of positivity of a form in the following, the separability assumption on $\A$ in the previous lemma is automatically satisfied. If we want to remove separability, we can use the fact that every Hilbert right module over $\N$ embeds isometrically in a self-dual module. Indeed, this follows from \cite{Pas}. Let us sketch the approach from \cite{JS05}. Consider the $L_{1/2}(\N)$ module
 \[  Y= X \ten_\N L_1(\N). \]
Then the antilinear dual $Y^*$ is self-dual and obviously contains $X$ isometrically.
\mar

By an argument similar to \eqref{modu}, we have $A^{1/2}(zx)=zA^{1/2}(x)$ for $z\in \D$ and $x\in \N$. Then for $x,y\in\N$, \[\tau(zE(A(x)y))=\tau(E(A(zx)y))=\tau(A^{1/2}(zx)A^{1/2}(y))=\tau(zE(A^{1/2}(x)A^{1/2}(y))).
\]
 Hence, $E(A(x)y)=E(A^{1/2}(x)A^{1/2}(y))$ and we find
  \[ I^{(k)}(x,y) \lel 2 a_{m}^* \cdots  \Ga[a_k, E(A^{1/2}(a_{k-1}^* \cdots  a_1^*)A^{1/2}(b_1 \cdots  b_{k-1}))b_k]b_{k+1} \cdots  b_n \pl .\]
We claim that this is a positive form. Indeed, $I^{(k)}$ is nontrivial only if $a_k$ and $b_k$ are in the same $\N_{i_k}$. Using Lemma \ref{frpo} with $\Phi=\Ga$, we find $\bt_k:\N_{i_k}\to C(\N_{i_k})$ such that $\Ga(a,b)=\bt_k(a)^*\bt_k(b)$ for $a,b\in\N_{i_k}$.  Similarly with $\Phi(x,y)=E(x^*y)$, we find $v_k:\N\to C(\D)$ such that $E(x^*y)=v_k(x)^*v_k(y)$ for $x,y\in\N$. Define
\[ u_k(b_1 \cdots  b_n)=e_{i_1, \cdots ,i_k}\otimes(\bt_k\otimes Id)(v_k(A^{1/2}(b_1 \cdots  b_{k-1}))b_k)b_{k+1} \cdots  b_n\pl.\]
Note that by the module property \eqref{modu} $\Ga(z^*a,b)=\Ga(a,zb)$ for $z\in\D, x,y\in \N_{i_k}$. Write $v_k(x)=(v_k^l(x))_l$ where $v_k^l(x)\in \D$. It follows that
\begin{align*}
 I^{(k)}(x,y)&\lel 2 a_m^* \cdots  a_{k+1}^*\Ga(a_k,~\sum_l v_k^l[A^{1/2}(a_1 \cdots  a_{k-1})]^*v_k^l[A^{1/2}(b_1 \cdots  b_{k-1})] b_k)
 b_{k+1} \cdots  b_n\\
 &\lel 2 \sum_l a_m^* \cdots  a_{k+1}^*\Ga(v_k^l[A^{1/2}(a_1 \cdots  a_{k-1})]a_k, ~v_k^l[A^{1/2}(b_1 \cdots  b_{k-1})] b_k)b_{k+1} \cdots  b_n\\
 &\lel 2\sum_l a_m^* \cdots  a_{k+1}^*\bt_k(v_k^l[A^{1/2}(a_1 \cdots  a_{k-1})]a_k)^*\bt_k(v_k^l[A^{1/2}(b_1 \cdots  b_{k-1})]b_k)b_{k+1} \cdots  b_n\\
 &\lel 2u_k(a_1 \cdots  a_m)^*u_k(b_1 \cdots  b_n)\pl.
\end{align*}
By Lemma \ref{frpo}, $I^{(k)}$ is a positive form.

Now we claim that $F_{jk}$ are positive forms for $j=k+1, \cdots , k_0-1$.
Indeed, define
\begin{align*}
 u_{jk}(b_1 \cdots  b_n)\lel& e_{i_{k+1}, \cdots ,i_j}\otimes (\bt_j\otimes Id)((v_j\otimes Id )[e_{i_1, \cdots ,i_k}\\
 &\otimes (\bt_k\otimes Id)[v_k(b_1 \cdots  b_{k-1})b_k]b_{k+1} \cdots  b_{j-1}]b_j)b_{j+1} \cdots  b_n \pl.
\end{align*}
Then similar to the argument for $I^{(k)}$, we find $F_{jk}(x,y)=u_{jk}(a_1 \cdots  a_m)^*u_{jk}(b_1 \cdots  b_n)$. By Lemma \ref{frpo}, $F_{jk}$ is a positive form. Hence, we find
\[ II^{(k)}(x,y)\ge 2a_{m}^* \cdots  a_{k+1}^*\Ga_2[a_k,E(a_{k-1}^* \cdots  a_1^*b_1 \cdots  b_{k-1})b_k]b_{k+1} \cdots  b_n\pl.\]
Therefore we deduce the main result

\begin{prop}\label{free} Let $A_j$ be self-adjoint generators of standard nc-diffusion semigroups $(T_t^{A_j})$ and $\Gamma_{A_j}^2\gl \al \Gamma_{A_j}$ respectively for $j=1,..,n$ with the same constant $\al>0$. Then the free product generator $\ast A_j(a_1 \cdots  a_n)=\sum_{j} a_1 \cdots  a_{j-1}A(a_j)a_{j+1} \cdots  a_n$ generates a standard nc-diffusion semigroup $(T_t^{\ast A_j})$ with
 \[ \Gamma^2_{\ast A_j}\gl \al \Gamma_{\ast A_j} \pl.\]
\end{prop}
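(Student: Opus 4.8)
The plan is to assemble the identities already derived in the computation preceding the statement; almost all of the work is done, and what remains is to see how the hypothesis enters. First I would note that ``standard nc-diffusion'' is preserved under free products (as recalled at the start of this subsection), so only the algebraic $\Ga_2$-condition for the free product generator $\ast A_j$ needs proof. By sesquilinearity and weak density it suffices to test positivity of $\Ga_2-\al\Ga$ on a finite family of elementary words $x=a_1\cdots a_m$, $y=b_1\cdots b_n$ of mean $0$ elements, where from now on $\Ga$ and $\Ga_2$ denote the free product forms.

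The first step is to combine the two decompositions obtained above, $\Ga(x,y)=\sum_{k=1}^{k_0-1}\Ga^{(k)}[x,y]$ and $2\Ga_2(x,y)=\sum_{k=1}^{k_0-1}(I^{(k)}(x,y)+II^{(k)}(x,y))$, with the formula $II^{(k)}(x,y)=\sum_{j>k}F_{jk}(x,y)+2a_m^*\cdots a_{k+1}^*\Ga_2[a_k,c_k]b_{k+1}\cdots b_n$, where $c_k=E(a_{k-1}^*\cdots a_1^*b_1\cdots b_{k-1})b_k$. Subtracting $2\al\Ga$ and matching the factor $a_m^*\cdots a_{k+1}^*(\,\cdot\,)b_{k+1}\cdots b_n$ coming from $\Ga^{(k)}[x,y]=a_m^*\cdots a_{k+1}^*\Ga[a_k,c_k]b_{k+1}\cdots b_n$, I would write
\[
2\Ga_2(x,y)-2\al\Ga(x,y)\lel \sum_{k}\Big(I^{(k)}(x,y)+\sum_{j>k}F_{jk}(x,y)\Big)
+\sum_k 2\,a_m^*\cdots a_{k+1}^*\big(\Ga_2-\al\Ga\big)[a_k,c_k]\,b_{k+1}\cdots b_n .
\]
Since $I^{(k)}$ and each $F_{jk}$ have already been shown to be positive forms, the first sum on the right is a positive form; it remains to handle the second sum.

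For that last point I would argue exactly as in the proof that $I^{(k)}$ is positive, but with the factorization of $\Ga$ replaced by one of $\Ga_2-\al\Ga$. In each nonzero term $a_k$ and $c_k$ lie in the same factor $\N_{i_k}$, and the hypothesis $\Ga_2^{A_{i_k}}\gl\al\Ga^{A_{i_k}}$ says precisely that $\Ga_2^{A_{i_k}}-\al\Ga^{A_{i_k}}$ is a positive form on $\N_{i_k}$; Lemma \ref{frpo} then yields a map $w_k\colon\N_{i_k}\to C(\N_{i_k})$ with $(\Ga_2-\al\Ga)(a,b)=w_k(a)^*w_k(b)$. Using the $\D$-module property $A(zx)=zA(x)$ for $z\in\D$ (which, as in \eqref{modu}, forces $(\Ga_2-\al\Ga)(z^*a,b)=(\Ga_2-\al\Ga)(a,zb)$), together with the factorization $E(x^*y)=v_k(x)^*v_k(y)$ from Lemma \ref{frpo} and the identity $E(A(x)y)=E(A^{1/2}(x)A^{1/2}(y))$, I would build a map $\tilde u_k$ on elementary words, modeled verbatim on $u_k$, so that $2a_m^*\cdots a_{k+1}^*(\Ga_2-\al\Ga)[a_k,c_k]b_{k+1}\cdots b_n=\tilde u_k(x)^*\tilde u_k(y)$; Lemma \ref{frpo} then gives positivity of this form, hence of $2\Ga_2-2\al\Ga$. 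I expect the one genuinely delicate point to be the bookkeeping in this last step: the element $c_k$ mixes the left and right words through $E$, so once the family index is introduced one must check that the conjugation pattern really produces a $v^*v$ form — this is where the $\D$-module identity for $\Ga_2-\al\Ga$ and the KSGNS factorization have to be invoked in exactly the order already used for $I^{(k)}$ and the $F_{jk}$.
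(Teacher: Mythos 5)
Your argument is correct and follows the paper's own strategy; the decomposition
\[
2\Ga_2(x,y)-2\al\Ga(x,y)\lel\sum_k\Big(I^{(k)}(x,y)+\sum_{j>k}F_{jk}(x,y)\Big)+\sum_k 2\,a_m^*\cdots a_{k+1}^*(\Ga_2-\al\Ga)[a_k,c_k]\,b_{k+1}\cdots b_n
\]
is exactly the bookkeeping the paper performs, with positivity of $I^{(k)}$ and $F_{jk}$ already supplied, and the last sum handled by factoring $\Ga_2-\al\Ga$ on each $\N_{i_k}$ via Lemma~\ref{frpo} and building a KSGNS-type map as for $u_k$. The only slip is cosmetic: the identity $E(A(x)y)=E(A^{1/2}(x)A^{1/2}(y))$ is only needed to rewrite $I^{(k)}$ (where $A$ appears inside the conditional expectation); the term $a_m^*\cdots(\Ga_2-\al\Ga)[a_k,c_k]\cdots b_n$ carries no $A$ inside $E$, so for its factorization you only need the module identity for $\Ga_2-\al\Ga$ and the two Kolmogorov decompositions $w_k$ (of $\Ga_2-\al\Ga$) and $v_k$ (of $E$). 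With that minor adjustment, the proposal makes explicit the step the paper summarizes with ``Therefore we deduce the main result,'' and matches the paper's proof.
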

\begin{exam}\rm
 The free product of all the examples considered so far satisfies the $\Ga_2$-criterion. In particular, the free product of matrix algebra $\ast_i M_n$ admits a standard nc-diffusion semigroup with the $\Ga_2$-criterion.
\end{exam}

\begin{exam}[Block length function] \rm  Consider the free product of groups $G_i$, $G=\ast_{i\in I} G_i$ with the block length function $\psi$, i.e.
$$\psi(g_1^{k_1} \cdots  g_n^{k_n})=n$$
for $g_1\in G_{i_1}, \cdots ,g_n\in G_{i_n}$, $i_1\neq i_2\neq \cdots  \neq i_n$ and $k_i\in\zz$. Fix $i$ and denote by $\la$ the left regular representation of $G_i$. Define the conditional expectation $E: \Lc(G_i)\to \cz 1$ to be
 \[
  E(\la(g))\lel \tau(\la(g))1\lel \begin{cases}
                 1, \mbox{ if } g=e,\\
                 0, \mbox{ if } g\neq e.
                \end{cases}
 \]
 Here $e$ is the identity element of $G_i$ and $1$ is the identity element of $\Lc(G_i)$. Example \ref{cond} says that $T_t \la(g)=e^{-t(I-E)}\la(g)$ is a standard nc-diffusion semigroup with $\Ga_2\ge \frac12\Ga$ where
 \[
  T_t\la(g)\lel \begin{cases}
                 \la(g), \mbox{ if } g = e,\\
                 e^{-t} \la(g), \mbox{ if } g\neq e.
                \end{cases}
 \]
 Since $\Lc(G)=\ast_{i\in I}\Lc(G_i)$, using Proposition \ref{free} and the relation $\la(g_1 \cdots  g_n)=\la(g_1) \cdots \la(g_n)$ for $g_1\in G_{i_1}, \cdots ,g_n\in G_{i_n}$ and $i_1\neq i_2\neq \cdots  \neq i_n$, we deduce that $(T_t^b)$ is a standard nc-diffusion semigroup acting on $\Lc(G)$ with $\Ga_2\ge\frac12 \Ga$ where
 \[
 T_t^b(\la(g_1^{k_1} \cdots  g_n^{k_n})) = e^{-tn}\la(g_1^{k_1} \cdots  g_n^{k_n}). %\quad g_1^{k_1} \cdots  g_n^{k_n}\in\fz_I \mbox{ freely reduced.}
      \]
 Clearly, the infinitesimal generator of $T_t^b$ is the block length function. In particular, for $G_i=\zz$ we find a standard nc-diffusion semigroup acting on $\Lc(\fz_n)$ which is different from the one considered in Section \ref{frgp}. In fact, our result applies even for free product of groups with amalgamation in general.
\end{exam}

\section{The classical diffusion processes}
We consider classical diffusion semigroups in this section. As explained in Theorem \ref{rbes}, we have stronger results in the this setting thanks to the better constant in the commutative BDG inequality.
\subsection{Ornstein--Uhlenbeck process in $\rz^d$}
Let us start with Ornstein-Uhlenbeck process whose infinitesimal generator is $-A=\Delta-x\cdot \nabla$ in $\rz^d$. We refer the readers to e.g. \cite{Le} for the facts we state in this subsection. Let $T_t=e^{-tA}$ be the semigroup generated by $A$ and $\ga$ denote the canonical  Gaussian measure on $\rz^d$ with density $(2\pi)^{-d/2}e^{-|x|^2/2}$. It is well known that $\ga$ is an invariant measure of $T_t$ and
\[T_tf(x)=\int_{\rz^d}f(e^{-t}x+(1-e^{-2t})^{1/2}y)d\ga(y).\]
Let $\A=C_c^\8(\rz^d)$, the compactly supported smooth functions. Clearly $\A$ is weakly dense in $\N=L_\8(\rz^d,\ga)$ and $T_t$ is self-adjoint with respect to $\ga$. Clearly $\A$ is dense in $\Dom(A^{1/2})$ in the graph norm. Note that $\Ga(f,f)=|\nabla f|^2$ and that for $f\in\Dom(A^{1/2})$
\[
\|\Ga(f,f)\|_1\lel \lge A^{1/2}f,A^{1/2} f\rge_{L_2(\rz^d,\ga)}\pl.
\]
Therefore $(T_t)$ is a standard nc-diffusion semigroup satisfying the assumptions in Lemma \ref{ga22}. It is easy to check that
\[\Ga_2(f,f)=|\nabla f|^2+\|\Hess f\|_{HS}^2\ge\Ga(f,f),\quad f\in C_c^\8(\rz^d).\]
Here $\Hess f$ denotes the Hessian of $f$ and $\|\cdot\|_{HS}$ denotes the Hilbert--Schmidt norm.
Note that $Af=0$ only if $f$ is a constant. Thus the fixed point algebra is trivial.
Theorem \ref{poin} with $\al=1$ and Theorem \ref{rbes} immediately lead to the following result.
\begin{cor}
Let $2\le p<\8$. Then there exist a constant $C$ such that for all real valued functions $f\in W^{1,p}(\rz^d,\ga)$
\begin{equation}\label{oupc}
\Big\|f-\int f d\ga\Big\|_{L_p(\rz^d,\ga)}\kl C \sqrt{p} \|\pl |\nabla f|\pl \|_{L_p(\rz^d,\ga)}
\end{equation}
where $W^{1,p}(\rz^d,\ga)$ denotes the Sobolev space consisting of all $L_p(\rz^d,\ga)$ functions with first order weak derivatives also in $L_p(\rz^d,\ga)$.
\end{cor}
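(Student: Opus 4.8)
The plan is to derive the stated Poincar\'e inequality \eqref{oupc} directly from the two general results already established, namely Theorem \ref{poin} (the $L_p$PI with constant $C\sqrt p/\sqrt\al$ for individual elements) and Theorem \ref{rbes} (the sharper version valid for commutative diffusion processes). The key observation is that the Ornstein--Uhlenbeck semigroup $T_t=e^{-tA}$ with $-A=\Delta-x\cdot\nabla$ on $(\rz^d,\ga)$ satisfies all the hypotheses of these theorems with curvature constant $\al=1$. So the proof reduces to three verifications: (i) $(T_t)$ is a standard nc-diffusion semigroup; (ii) the $\Ga_2$-criterion $\Ga_2(f,f)\ge\al\Ga(f,f)$ holds with $\al=1$; (iii) the fixed-point algebra is trivial, so that $E_{\rm Fix}f=\int f\,d\ga$.

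First I would record the explicit formulas: the generator is self-adjoint on $L_2(\rz^d,\ga)$ since $\ga$ is the reversible invariant measure, and $T_t(1)=1$, $T_t$ positivity-preserving, so $(T_t)$ is standard; the gradient form is $\Ga(f,f)=|\nabla f|^2$, and for $f\in\Dom(A^{1/2})$ one has $\|\Ga(f,f)\|_1=\lge A^{1/2}f,A^{1/2}f\rge_{L_2(\rz^d,\ga)}<\8$, which shows $(T_t)$ is nc-diffusion. Taking $\A=C_c^\8(\rz^d)$, one checks $A\A\subset\A$, $T_t\A\subset\A$, and $\A$ dense in $\Dom(A^{1/2})$ in the graph norm, so Corollary \ref{ga2a} applies once the $\Ga_2$-criterion is verified. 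A standard computation of Bochner type gives $\Ga_2(f,f)=|\nabla f|^2+\|\Hess f\|_{HS}^2$ for $f\in C_c^\8(\rz^d)$; since $\|\Hess f\|_{HS}^2\ge0$, we obtain $\Ga_2(f,f)\ge\Ga(f,f)$, i.e. $\al=1$. Finally $Af=0$ forces $\nabla f\equiv0$ hence $f$ constant, so ${\rm Fix}=\cz1$ and $E_{\rm Fix}f=\tau(f)=\int f\,d\ga$.

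With these ingredients, the first of the two inequalities in Theorem \ref{poin} (or its whole-space version in Corollary \ref{ga2a}) already yields $\|f-\int f\,d\ga\|_p\le C\sqrt p\,\||\nabla f|\|_\8$, but this is the $L_\8$-norm on the right; to get the $L_p$-norm claimed in \eqref{oupc} I would instead invoke Theorem \ref{rbes}, which is precisely the improvement available in the classical diffusion case: since the Ornstein--Uhlenbeck process $X_t$ has continuous sample paths and $\Ga_2(f,f)\ge\Ga(f,f)$ for compactly supported smooth $f$, Theorem \ref{rbes} gives $\|f-E_{\rm Fix}f\|_p\le C\sqrt p\,\|\Ga(f,f)^{1/2}\|_p=C\sqrt p\,\||\nabla f|\|_{L_p(\rz^d,\ga)}$. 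A density argument extends this from $C_c^\8(\rz^d)$ to all real-valued $f\in W^{1,p}(\rz^d,\ga)$.

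The main obstacle is not conceptual but technical: justifying that Theorem \ref{rbes} applies, i.e. that the Ornstein--Uhlenbeck process is a genuine classical diffusion process defined on some probability space $(\Om,\pz)$ for the semigroup $T_t$, with the regularity needed to solve the associated martingale problem and to identify the bracket $\lge N^f,N^f\rge$ with $2\int_0^K\Ga(T_rf,T_rf)(X_{K-r})\,dr$. This is classical (the process is an explicit Gaussian Markov process and the needed facts are in \cite{Le}), but one must also be a little careful about the approximation by compactly supported functions, since the generator has an unbounded drift term; the standard remedy is to first prove \eqref{oupc} for $f\in C_c^\8(\rz^d)$ and then pass to the limit in $W^{1,p}$, using that both sides of \eqref{oupc} are continuous in the $W^{1,p}(\rz^d,\ga)$ norm.
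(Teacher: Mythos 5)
Your proposal is correct and follows essentially the same route as the paper: verify that the Ornstein--Uhlenbeck semigroup is a standard nc-diffusion semigroup with $\Ga_2\ge\Ga$ via the Bochner identity, observe that the fixed-point algebra is trivial so $E_{\rm Fix}f=\int f\,d\ga$, and then invoke Theorem \ref{rbes} (the commutative-diffusion sharpening) to obtain the $L_p$-norm of $|\nabla f|$ on the right-hand side. Your explicit remark that Theorem \ref{poin} alone only gives the $L_\8$-Lipschitz bound, and that Theorem \ref{rbes} is needed for the stated $L_p$ form, together with the density extension from $C_c^\8$ to $W^{1,p}(\rz^d,\ga)$, is exactly the content the paper leaves implicit.
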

This result can be generalized to infinite dimension. Let $(W,H,\mu)$ be an abstract Wiener space and $L$ the Ornstein--Uhlenbeck operator on $W$. Then it can be checked that the gradient form associated with $L$ satisfies
\[\Ga_2(F,F)\lel (\nabla F, \nabla F)+\|\nabla^2 F\|^2_{HS}\gl \Ga(F,F)\]
for $F(w)\in {\rm Cylin}(W)$, the cylindrical functions on $W$. Based on standard facts from Malliavin calculus, an argument similar to the $\rz^d$ case shows that the Ornstein--Uhlenbeck semigroup $T_t$ is a standard nc-diffusion semigroup satisfying the assumptions in Lemma \ref{ga22}. Moreover, the fixed point algebra ${\rm Fix}$ is trivial. See \cites{Fan, Nua} for more details.
Hence our Poincar\'e type inequality \eqref{oupc} holds in this setting.
\subsection{Diffusion processes on Riemannian manifolds}
Consider an elliptic differential operator $-A$ on a connected smooth manifold $M$ of dimension $d$ with invariant probability measure $\mu$ on Borel sets which is equivalent to Lebesgue measure. We can write it in a local coordinate chart as
\[-Af(x)=\sum_{i,j}g^{ij}\frac{\partial^2f}{\partial x^i\partial x^j}(x) +\sum_ib^i \frac{\partial f}{\partial x^i}(x)\]
where $g^{ij}$ and $b^i$ are smooth functions and $(g^{ij})$ is a nonnegative definite matrix. The inverse of $(g^{ij})$ then defines a Riemannian metric. It can be checked that
$$\Ga(f,h)=\sum_{ij} g^{ij}\frac{\partial f}{\partial x^i}\frac{\partial h}{\partial x^j}$$
for all $f,h\in C_c^\8(M)$.
To give an example, we take $-A=\Delta+ Z$ where $\Delta$ is the Laplace--Beltrami operator on a stochastically complete Riemannian manifold and $Z$ is a $C^1$-vector field $Z$ on a Riemannian manifold $M$ such that
\begin{equation}\label{curcon}
{\rm Ric}(X,X)-\langle\nabla_X Z, X\rangle\ge \al |X|^2, \quad X\in TM
\end{equation}
for some $\al>0$. By the Bochner identity, this inequality is equivalent to (see e.g. \cite{Wa})
\[\Ga_2(f,f)\ge \al \Ga(f,f), \quad f\in C^\8(M).\]
Take $\A=C_c^\8(M)$ and $T_t=e^{-tA}$. The following result follows from Theorem \ref{rbes} and the martingale problem on differentiable manifolds \cite{Hsu}.
\begin{cor}\label{pman}
Assume \eqref{curcon} and the following conditions
\begin{enumerate}
 \item
$\int T_t(f)  g d\mu=\int f T_t (g) d\mu$ (i.e. $T_t$ is symmetric);
%\item $\lim_{t\to\8}\|T_t f-\int f d \mu\|_{L_p(M,\mu)} = 0$ (i.e. $T_t$ is ergodic in $L_p$);
\item $|\nabla f|\in L_2(M, \mu)$ whenever $\lge A^{1/2} f,A^{1/2} f\rge <\8$.
\end{enumerate}
Then for all $2\le p<\8$ and real valued functions $f\in W^{1,p}(M,\mu)$,
\begin{equation}\label{lipo}
\|f-E_{\rm Fix}f\|_{L_p(M,\mu)}\kl C \sqrt{p/\al} \|~ |\nabla f|~ \|_{L_p(M,\mu)}
\end{equation}
where $W^{1,p}(M,\mu)$ is the Sobolev space on the Riemannian manifold $M$.
 \end{cor}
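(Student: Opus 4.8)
The plan is to realise $(T_t)$ as a commutative standard nc-diffusion semigroup on $\N=L_\infty(M,\mu)$ and then feed it into Theorem \ref{rbes}. First I would check the structural hypotheses: each $T_t$ is a normal unital positive map, it is symmetric by assumption (1), and pointwise weak$^*$ continuity is inherited from the strong continuity of the flow $e^{-tA}$ on $L_2(M,\mu)$. Taking $\A=C_c^\infty(M)$, which is weakly dense in $\N$ and stable under $A$ and $T_t$, a local–coordinate computation gives the carr\'e du champ $\Ga(f,h)=\sum_{ij}g^{ij}\partial_i f\,\partial_j h=\langle\nabla f,\nabla h\rangle$ for $f,h\in\A$; assumption (2) then yields $\|\Ga(f,f)\|_1=\langle A^{1/2}f,A^{1/2}f\rangle$, so $\Ga(f,f)\in L_1$ and $(T_t)$ is nc-diffusion, while $\A$ is a core for $A^{1/2}$ in the graph norm. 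The Bochner--Weitzenb\"ock identity expresses $2\Ga_2(f,f)$, after the first-order terms cancel, as $\|\Hess f\|_{HS}^2+({\rm Ric}-\langle\nabla_\cdot Z,\cdot\rangle)(\nabla f,\nabla f)$, so the curvature bound \eqref{curcon} is exactly $\Ga_2(f,f)\ge\alpha\Ga(f,f)$ on $\A$, hence on $\Dom(A^{1/2})$ by Corollary \ref{ga2a}.

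Second, I would supply the diffusion process. On a stochastically complete Riemannian manifold the operator $-A=\Delta+Z$ generates a conservative diffusion $X_t$ on some $(\Om,\PP)$ with continuous sample paths; this is precisely the well-posedness of the martingale problem for $-A$ on $C_c^\infty(M)$, for which I would cite Hsu \cite{Hsu} (the Eells--Elworthy--Malliavin construction via the orthonormal frame bundle, with non-explosion coming from stochastic completeness). In particular, for every $f\in C_c^\infty(M)$ the process $M_t^f=f(X_t)-f(X_0)+\int_0^t Af(X_s)\,ds$ is an $\F_t$-martingale with continuous paths, which is exactly the input used in the proof of Theorem \ref{rbes}.

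Third, I would apply Theorem \ref{rbes}. A density argument reduces a general real-valued $f\in W^{1,p}(M,\mu)$ to $f\in C_c^\infty(M)$, and after composing with the projection onto $L_p^0(\N)=\Fix^\perp$ we may assume $T_Kf\to E_{\Fix}f$ in $L_p$ as $K\to\infty$. To get the stated $\alpha$-dependence, I would rescale the generator: set $A'=A/\alpha$, which is a deterministic time change of $X_t$ and has $\Fix$ unchanged; then $\Ga^{A'}=\alpha^{-1}\Ga^A$, $\Ga_2^{A'}=\alpha^{-2}\Ga_2^A\ge\alpha^{-2}\cdot\alpha\,\Ga^A=\Ga^{A'}$, i.e.\ the curvature constant becomes $1$, while $\Ga^{A'}(f,f)^{1/2}=\alpha^{-1/2}\Ga^A(f,f)^{1/2}$. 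Theorem \ref{rbes} applied to $A'$ gives $\|f-E_{\Fix}f\|_p\le C\sqrt p\,\|\Ga^{A'}(f,f)^{1/2}\|_p=C\sqrt{p/\alpha}\,\|\,|\nabla f|\,\|_{L_p(M,\mu)}$, which is \eqref{lipo}; the case $p=2$ follows by letting $p\downarrow 2$.

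The main obstacle is not any single estimate but making the bridge between the analytic and probabilistic pictures airtight: one must be sure that the diffusion furnished by \cite{Hsu} is the one whose semigroup is the symmetric $(T_t)$ we started from (matching invariant measure $\mu$, symmetry, and generator), that its sample paths are genuinely continuous so that the conditional and unconditional square functions coincide in the BDG step invoked inside the proof of Theorem \ref{rbes}, and that $T_Kf\to E_{\Fix}f$ in $L_p$ on a possibly non-compact $M$ with non-trivial fixed-point algebra. Once these identifications are in place, the statement is a direct quotation of Theorem \ref{rbes} together with the elementary time-change rescaling above.
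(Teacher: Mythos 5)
Your argument is correct and follows exactly the route the paper intends: the corollary is presented as a direct consequence of Theorem~\ref{rbes} together with the martingale-problem construction of the diffusion from \cite{Hsu}, and that is precisely what you do. Your careful verification that $(T_t)$ is a standard nc-diffusion semigroup, that $C_c^\infty(M)$ is a core giving $\Ga(f,f)=|\nabla f|^2$, that Bochner--Weitzenb\"ock identifies \eqref{curcon} with $\Ga_2\ge\al\Ga$, that stochastic completeness yields a non-exploding diffusion with continuous paths matching $(T_t)$, and that continuity of paths lets the conditional and unconditional square functions coincide in the BDG step, are exactly the points the paper leaves implicit. One small inefficiency: the rescaling $A\mapsto A/\al$ is not needed, since the $\al^{-1/2}$ is already produced inside the proof of Theorem~\ref{rbes} (it comes from integrating $e^{-2\al r}$, just as in the derivation of \eqref{pcr2}), even though it is absorbed into the constant $C$ in the theorem's statement; but the time-change argument is a perfectly valid way to make that dependence explicit.
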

\begin{rem}
  Since compactly supported smooth functions are dense in the graph norm of $A^{1/2}$, the work of Cipriani and Sauvageot \cite{CSa} shows that Condition (2) is automatically satisfied.
\end{rem}
Functional inequalities related to diffusion processes on Riemannian manifolds have been studied extensively; see \cite{Wan05} for more details on this subject. To give an even more concrete example, let $\nu$ be the normalized volume measure and $\mu(dx)=e^{-V(x)}\nu(dx)$ a probability measure for $V\in C^2(M)$. Suppose \eqref{curcon} holds. It is clear that the semigroup $T_t$ with generator $-A=\Delta-\nabla V\cdot \nabla$ fulfills the assumptions of Corollary \ref{pman} and the fixed point algebra is trivial. It follows that
\[
\|f-\int f d\mu\|_{L_p(M,\mu)}\kl C \sqrt{p/\al} \|~ |\nabla f|~ \|_{L_p(M,\mu)}\pl.
\]
This improves X.-D. Li's result \cite{Li08}*{Theorem 1.2, Theorem 5.2} for $p\ge2$ which was proved by using his sharp estimate of the $L_p$-norm of Riesz transform. Indeed, his Poincar\'e inequality has constant $p/\sqrt{\al}$.
\re
\eqref{lipo} is true only for scalar-valued functions. If one is interested in some noncommutative objects, e.g., matrix-valued functions on manifolds or free product of manifolds, one has to apply the noncommutative theory and then the Poincar\'e inequalities are in the form of Theorem \ref{poin}. Of course, the deviation and the transportation inequalities still hold in all those situations.
\mar

\section*{Acknowledgements}
We would like to thank Joel Tropp for bringing Oliveira's paper \cite{Ol} to our attention. Q. Z. is grateful to Stephen Avsec, Yi Hu, Huaiqian Li, Jian Liang, Tao Mei, Sepideh Rezvani, and Tianyi Zheng for helpful conversations on various examples. We sincerely thank the referee for very careful reading and many constructive suggestions, which have improved the paper.

\bibliographystyle{plain}
\bibliography{fdm_ref}
\end{document}